\theoremstyle{plain}
\newtheorem{thm}{\indent\bf Theorem}[section]
\newtheorem{lem}[thm]{\indent\bf Lemma}
\newtheorem{prop}[thm]{\indent\bf Proposition}
\newtheorem{cor}[thm]{\indent\bf Corollary}
\theoremstyle{definition}
\newtheorem{rem}{\indent\it Remark}[section]
\numberwithin{equation}{section}
\numberwithin{figure}{section}
\def \re {\mathrm{Re\,}}
\def \im {\mathrm{Im\,}}
\def \diag {\mathrm{diag}}
\def \z {\mathfrak{z}}
\def \l {\mathbf{c}}
\def \sn {\mathrm{\check{s}n}}
\begin{document}
\title[Elliptic asymptotic representation]{Elliptic asymptotic 
representation of the fifth Painlev\'e transcendents
 {\rm (corrected version)}}
\author[Shun Shimomura]{Shun Shimomura} 
%%%%%%%%%%%%%%%%%%%%%%%%%%%%%%%%%%%%%%%%%%%%%
%%%%%%%%%%%%%%%%%%%%%%%%%%%%%%%%%%%%%%%%%%%%%%
\address{Department of Mathematics, 
Keio University, 
3-14-1, Hiyoshi, Kohoku-ku,
Yokohama 223-8522 
Japan \quad
{\tt shimomur@math.keio.ac.jp}}
\date{}
%%%%%%%%%%%%%%%%%%%%%%%%%%%%%%%%%%%%%%%
% \subjclass[2010]{34M55, 34M56, 34M35}
% \keywords{elliptic asymptotic representation; fifth Painlev\'e transcendents;
% isomonodromy deformation; monodromy data; Jacobi elliptic functions}
%%%%%%%%%%%%%%%%%%%%%%%%%%%%%%%%%%%%%%%
\maketitle
\begin{abstract}
For the fifth Painlev\'e transcendents an asymptotic representation by the 
Jacobi $\mathrm{sn}$-function is presented in cheese-like strips along generic 
directions near the point at infinity. Its elliptic main part may be understood
to depend on the phase shift as a single integration constant, which  
is parametrised by monodromy data for the associated isomonodromy deformation.
The other integration constant is contained in the error term or in a correction
function. This paper contains corrections of the Stokes graph and of the related
results in the early version. 
\vskip0.2cm
\par
2010 {\it Mathematics Subject Classification.} 
{34M55, 34M56, 34M40, 34M60, 33E05.}
\par
{\it Key words and phrases.} 
{elliptic asymptotic representation; fifth Painlev\'e transcendents; 
Boutroux ansatz; WKB analysis; isomonodromy deformation; monodromy data; 
Jacobi elliptic functions; $\vartheta$-function.}
\end{abstract}
%\ams{34M55}
\allowdisplaybreaks
%%%%%%%%%%%%%%%%%%%%%%%%%%%%%%%%%%%%%
%%%%%% section 1 %%%%%%%
\section{Introduction}\label{sc1}
%%%%%%%%%%%%%%%%%%%%%%%%%%%%%%%
%%%%%%%%%%%%%%%%%%%%%%%%%%%%%%%%%%
The fifth Painlev\'e equation 
%%%%%%%%%%%%%%%%%%%%%%%%%%%%%%%%
\begin{align*}
\tag*{(P$_\mathrm{V}$)}
&\frac{d^2y}{d x^2}=  \Bigl(\frac 1{2y} + \frac 1{y-1}
 \Bigr) \Bigl(\frac{d y}{d x} \Bigr)^{\!\! 2}
- \frac 1x \frac{d y}{d x}
\\[0.2cm]
&  +\frac{(y-1)^2}{8x^2} \Bigl((\theta_0-\theta_1+\theta_{\infty}
)^2 y - \frac{(\theta_0 -\theta_1 - \theta_{\infty})^2 }
{y}\Bigr) + (1-\theta_0-\theta_1) \frac{y}{x} -\frac{y(y+1)}{2(y-1)}
\end{align*}
%%%%%%%%%%%%%%%%%%%%%%%%
with $\theta_0,$ $\theta_1,$ $\theta_{\infty} \in \mathbb{C}$ defines nonlinear
special functions, which are meromorphic on the universal covering space of
$\mathbb{C} \setminus \{0\}$. 
A general solution is expressed by a convergent series in a spiral domain 
around $x=0$, and admits asymptotic representations as $x\to \infty$ along the
real and the imaginary axes (cf. e.g. \cite{S3}, \cite{S1}).  
Using the isomonodromy property and the WKB analysis,
for a generic case of (P$_{\mathrm{V}}$)
Andreev and Kitaev \cite{Andreev-Kitaev} obtained families of solutions
near $x=0$ and $x=\infty$ on the positive real axis, and connection formulas
for these solutions. Along the imaginary axis asymptotic solutions and 
the associated monodromy data have been studied by 
\cite{Andreev-Kitaev-2019}, \cite{S-2018}.   
Furthermore Lisovyy et al. \cite{L} gave a connection formula 
for the tau-function $\tau_{\mathrm{V}}(x)$ between $x=0$ and $x=i\infty$ 
and the ratios of multipliers of $\tau_{\mathrm{V}}(x)$ as $x\to 0, +\infty, 
i\infty$.
\par
Near $x=\infty$ along directions other than the real or imaginary axis,
a general solution of (P$_{\mathrm{V}}$) behaves quite differently.
In generic directions it is known that, for solutions of the Painlev\'e 
equations (P$_{\mathrm{I}}$), $\ldots$, (P$_{\mathrm{IV}}$) except  
truncated or classical ones, the Boutroux ansatz holds
\cite{Boutroux}, \cite{Boutroux-2}. Elliptic asymptotic representations 
have been studied for (P$_{\mathrm{I}}$), (P$_{\mathrm{II}}$)
by Joshi and Kruskal \cite{Joshi-Kruskal-1}, \cite{Joshi-Kruskal-2},
Its and Kapaev \cite{Its-Kapaev},
Kapaev \cite{Kapaev-1}, \cite{Kapaev-2}, Kapaev and Kitaev \cite{Ka-Ki},
Kitaev \cite{Kitaev-2}, 
\cite{Kitaev-3} and Novokshenov \cite{Novokshenov-1}, \cite{Novokshenov-2};
for (P$_{\mathrm{III}}$) by Novokshenov \cite{Novokshenov-3}, \cite
{Novokshenov-4}; and
for (P$_{\mathrm{IV}}$) by Kapaev \cite{Kapaev-3}, Vereshchagin \cite{Vere}.
The elliptic representations for (P$_{\mathrm{II}}$) and (P$_{\mathrm{III}}$), 
nonlinear Stokes phenomena and connection problems are also in the monograph 
\cite{FIKN} (see also \cite{IN}). 
Concerning the elliptic representation
for solutions of (P$_{\mathrm{I}}$) Iwaki's recent work \cite{Iwaki} by the 
topological recursion is remarkable. 
%%%%%%%%%%%%%%%%%%%%%%%%%%%
\par
In this paper we show the Boutroux ansatz for the fifth Painlev\'e equation
(P$_{\mathrm{V}}$), that is, present an elliptic asymptotic representation
for a general solution of (P$_{\mathrm{V}}$), which is given by the Jacobi
$\mathrm{sn}$-function, along generic directions near the point at infinity 
(Theorem \ref{thm2.1}).
In deriving our results we employ the isomonodromy
property described as follows:
for the complex parameter $x$ the isomonodromy deformation of the 
two-dimensional linear system 
%%%%% (1.1) %%%%%%%%%%%%%%%%%%
\begin{align} \label{1.1}
& \frac{d\Xi}{d\xi} = \Bigl(\frac x2 \sigma_3 + \frac{\mathcal{A}_0}{\xi}
 + \frac{\mathcal{A}_1}{\xi-1} \Bigr) \Xi,
\qquad  
\sigma_3 = \begin{pmatrix}  1  &  0  \\  0  &  -1
\end{pmatrix},
\\ \notag
& \mathcal{A}_{0} =
\begin{pmatrix}
 \z+ \theta_0/2  & -u(\z+ \theta_0) \\
\z/u   &            -\z-\theta_0/2 \\
\end{pmatrix},
\\ \notag
& \mathcal{A}_1=
\begin{pmatrix}
-\z-(\theta_0+\theta_{\infty})/2  & uy(\z+(\theta_0-\theta_1
 +\theta_{\infty})/2) \\
-(uy)^{-1}(\z+(\theta_0+\theta_1 +\theta_{\infty})/2) & 
\z+(\theta_0+\theta_{\infty})/2  \\
\end{pmatrix}
\end{align}
%%%%%%%%%%%%%%%%%%%%%%%%%%%%%%%%%%%%
with $(y, \z, u)=(y(x), \z(x), u(x))$ is governed by the system of equations
%%%%%%%%%%%%%%%%%%%%%%%%%%%%%%%%%%%
\begin{align*}
& x\frac{dy}{dx} =xy -2\z (y-1)^2 -(y-1) \Bigl(\frac 12 (\theta_0-\theta_1
+\theta_{\infty})y -\frac 12 (3\theta_0 +\theta_1 +\theta_{\infty})\Bigr),
\\
& x\frac{d\z}{dx} =y\z  \Bigl(\z+ \frac 12 (\theta_0-\theta_1
+\theta_{\infty})\Bigr)  - \frac{(\z+\theta_0)}{y} \Bigl(\z+ \frac 12 
(\theta_0 +\theta_1 +\theta_{\infty})\Bigr),
\\
& x\frac{d}{dx}\log u = -2\z -\theta_0 + y \Bigl(\z+ \frac 12 (\theta_0-\theta_1
+\theta_{\infty})\Bigr)  + \frac{1}{y} \Bigl(\z+ \frac 12 
(\theta_0 +\theta_1 +\theta_{\infty})\Bigr),
\end{align*}
which is equivalent to (P$_{\mathrm{V}}$) \cite[Appendix C]{JM}, \cite{Jimbo}, 
\cite{Andreev-Kitaev}; that is, 
$y(x)$ solves (P$_{\mathrm{V}}$) if and only if the monodromy data $M^0,$
$M^1,$ $S_1,$ $S_2$ for \eqref{1.1} (defined in Section \ref{sc2}) remain 
invariant under a small change of $x.$  
We apply the WKB analysis to calculate the monodromy data, 
which is used in deriving the elliptic expression of the main results. 
The main term expressed by the $\mathrm{sn}$-function
%% with the modulus $A_{\phi}^{1/2}$ 
may be understood to depend on the phase shift only 
as a single integration constant parametrised by the monodromy data.  
The other integration constant 
is hidden in the error term, and is also deeply related to the correction 
function $B_{\phi}(t)=t(a_{\phi}(t)-A_{\phi})$ with $\phi=\arg x.$  
%% Solving equations \eqref{6.5}, \eqref{6.6} on the Lagrangian, 
%% we may derive $a_{\phi}$, and find an explicit asymptotic expression of 
%% the error term under a certain supposition. 
%%% The error term is expressed by an explicit asymptotic formula, 
%%% whose 
%%% leading term is written in terms of integrals of the $\mathrm{sn}$-function 
%%% and the $\vartheta$-function, and contains the other integration constant  
The error term admits an explicit asymptotic formula 
containing the other integration constant \cite[Theorems 2.2 and 2.3]{S-2024}.
%% \par
%% In the early version of \cite{S} the Stokes graph is incorrect, and
%% it affects the phase shifts of asymptotic solutions in \cite[Theorems 2.1 and
%% 2.2]{S}. The present version contains the amended Stokes graph and 
%% the subsequent modifications including the corrected phase shifts in 
%% these theorems (Corrigendum of \cite{S}).
\par
This paper is organised as follows.
Section \ref{sc2} describes our main results on the expression of a general
solution by the 
Jacobi $\mathrm{sn}$-function for $0<|\phi|<\pi/2$  
(Theorem \ref{thm2.1}) and for $0<|\phi - \pi|<\pi/2$ (Theorem \ref{thm2.2}).
%%% and on the error term with the explicit leading term 
%%% (Theorem \ref{thm2.1a} from \cite{S-2024}).  
%%% Theorems \ref{thm2.1} and \ref{thm2.2} contain corrected phase shifts.
Section \ref{sc3} provides basic facts necessary in proving the main 
results: parametrisation of $y(x)$ by the monodromy data; 
turning points and Stokes curves for the symmetric linear system \eqref{3.6}
with $\lambda=e^{i\phi}(2\xi-1)$; and a WKB-solution in the canonical domain
and local solutions around turning points. 
Section \ref{sc4} is devoted to a direct monodromy problem for system 
\eqref{3.6}. The monodromy matrices are obtained from connection matrices
along suitably chosen paths, which are calculated by WKB 
analysis under the supposition \eqref{4.1}. 
%% Our discussion follows
%% the justification scheme of Kitaev \cite{Kitaev-1}, and the WKB analysis is 
%% carried out under the supposition \eqref{4.1}. 
The monodromy matrices are expressed in term of integrals related
to the characteristic roots and the WKB-solutions. In Section \ref{sc5}, 
these integrals are represented by elliptic integrals and the 
$\vartheta$-function (Propositions \ref{prop5.3}, \ref{prop5.5} and 
Corollary \ref{cor5.4}).
In Section \ref{sc6}, considering an inverse monodromy problem by the use of the 
propositions in Section \ref{sc5} and following the justification scheme 
by Kitaev \cite{Kitaev-1}, we arrive at the elliptic asymptotic representation 
of $y(x)$ as in Theorem \ref{thm2.1} and an asymptotic expression of 
the correction function $B_{\phi}(t)$. For Theorem \ref{thm2.2}  
the proof is sketched.
Section \ref{sc7} gives an alternative approach to 
$B_{\phi}(t)$, by using a system equivalent 
to (P$_{\mathrm{V}}$) via its Lagrangian. This system is employed 
in the study of the error term \cite{S-2024}.
Several properties of the Boutroux equations and its solution
are important in our arguments, which are discussed in the final section. 
\par
In the early version of \cite{S} the Stokes graph is incorrect, and
it affects the phase shifts of asymptotic solutions in \cite[Theorems 2.1 and
2.2]{S}. The amended Stokes graph and 
the subsequent modifications including the corrected phase shifts  
are contained in the second version (Corrigendum of \cite{S}).
In the preceding version, the further correction of the phase shift in 
Theorem \ref{thm2.1} ($-\Omega_{\mathbf{a}}/2$ is added), and
the following changes were made: 
\par
(i) Theorem \ref{thm2.2} is renewed.  
Theorems \ref{thm2.1a} and \ref{thm2.4} are replaced 
with unconditional results cited from \cite{S-2024}, and 
the early proofs are removed from Section \ref{sc6}.   
\par
(ii) In the proof of Proposition \ref{prop3.6} a minor revision is made. 
Estimates in Proposition \ref{prop3.7} are improved. 
Remarks \ref{rem3.100} and \ref{rem4.1} are added, which are related to 
Proposition \ref{prop3.7}.
\par
(iii) Subsection \ref{ssc4.2} is revised and Figure \ref{comploops} is added.
\par
(iv) The contents of Section \ref{sc7} of the early version is replaced 
with a description on $B_{\phi}(t)$, which was contained in Section \ref{sc6}.
\par\noindent
This version contains several minor revisions for readability, e.g. 
Theorem \ref{thm2.1}.
\par
Throughout this paper we use the following symbols:
\par
(1) $\sigma_1,$ $\sigma_2$, $\sigma_3$ are the Pauli matrices
$$
\sigma_1=\begin{pmatrix}
0 & 1 \\ 1 & 0
\end{pmatrix},
\quad
\sigma_2=\begin{pmatrix}
0 & -i \\ i & 0
\end{pmatrix},
\quad
\sigma_3=\begin{pmatrix}
1 & 0 \\ 0 &-1 
\end{pmatrix};
$$
\par
(2) for complex-valued functions $f$ and $g$, we write $f\ll g$ or $g\gg f$
if $f=O(|g|)$, and write $f \asymp  g$ if $g \ll f \ll g$.
%% \vskip0.2cm
%%%%%%%%%%%%%%%%%%%%%%%%%%%%%%%%%%%%%
%%%%%% Section 2 %%%%%%%
\section{Main results}\label{sc2}
%%%%%%%%%%%%%%%%%%%%%%%%%%%%%%%%%%%%
To state our main theorems, we make necessary preparations.
Let system \eqref{1.1} admit the isomonodromy property with respect to 
a fundamental matrix solution of the form
%%%%%%%%%%% (1.2) %%%%%%%%%%%%%
\begin{equation}\label{1.2}
\Xi(\xi) =\Xi(x, \xi) = (I+O(\xi^{-1})) \exp\Big(\frac 12 (x\xi -\theta_{\infty}
\log\xi)\sigma_3 \Bigr)
\end{equation}
%%%%%%%%%%%%%%%%%%%
as $\xi\to\infty$ through the sector $|\arg (x\xi) -\pi/2|<\pi$. 
Let $M^0, M^1, M^{\infty} \in SL_2(\mathbb{C})$ be the monodromy matrices 
defined by $\Xi^{(l_{\nu})}(\xi)=\Xi(\xi)M^{\nu}$ for $\nu=0,1,\infty.$
Here $\Xi^{(l_{\nu})}(\xi)$, $\nu=0,1,\infty$ denote the 
analytic continuations of $\Xi(\xi)$ along the respective loops 
$l_0,$ $l_1$, $l_{\infty}
\in \pi_1(P^1(\mathbb{C})\setminus\{0,1,\infty\})$ as in Figure \ref{Loops}
defined for $-\pi/2 <\arg x<\pi/2$, 
which start from the point 
$p_{\mathrm{st}}$ satisfying $100<|p_{\mathrm{st}}|<\infty$ and 
$\arg(xp_{\mathrm{st}})=\pi/2$ and surround, respectively, 
$\xi=0$, $\xi=1$ and $\xi=\infty$ anticlockwise. 
It is easy to see $M^{\infty}M^1 M^0 = I.$ 
%%%%%%%%%%%%%%%%%%%%%%%%%%%%%%%%%%%%%%%%%%%%%%%%%%%%%
%%%%%%%%%%%%%%%%%%%%%%%%%%%%%%%%%%%%%%%%%%%%%%%%%%%%%
%%%%%%%%%%%%%%% Figure 2.1 %%%%%%%%%%%%%%%%%%%%%%%%%%
%%%%%%%%%%%%%%%%%%%%%%%%%%%%%%%%%%%%%%%%%%
{\small
\begin{figure}[htb]
\begin{center}
\unitlength=0.63mm
\begin{picture}(80,53)(-10,-5)
\thicklines
\put(0,40){\circle*{1}}
\put(0,40){\line(0,-1){35}}
\put(0,0){\circle{10}}
\put(0,0){\circle*{1}}
\put(0,40){\line(1,-1){28.3}}
\put(31.8, 8.2){\circle{10}}
\put(31.8, 8.2){\circle*{1}}
\put(0,40){\line(1,0){70}}
\put(75,40){\circle{10}}
\put(75,40){\circle*{1}}
%%%%%%%%%%%%%%%%%%%%%%%%%%%%
\thinlines
%% \put(-2,25){\vector(0,-1){8}}
 \put(-2,17){\vector(1,-3){0}}
\qbezier(-2,25)(-2,17)(-2,17)
%% \put(2,17){\vector(0,1){8}}
 \put(2,25){\vector(-1,3){0}}
\qbezier(2,17)(2,25)(2,25)
\qbezier(-2,-7)(0,-8)(2,-7)
\put(2,-7){\vector(3,2){0}}
%% \put(12,25){\vector(1,-1){5}}
 \put(17,20){\vector(2,-1){0}}
\qbezier (12,25) (17,20) (17,20)
%% \put(21,22){\vector(-1,1){5}}
 \put(16,27){\vector(-2,1){0}}
\qbezier(21,22)(16,27)(16,27)
\qbezier(34.7, 1.9)(36.8, 2.6)(37.6, 4.7)
\put(37.6, 4.7){\vector(1,4){0}}
%% \put(35,38){\vector(1,0){8}}
 \put(43,38){\vector(3,1){0}}
\qbezier (35,38)(43,38)(43,38)
%%\put(43,42){\vector(-1,0){8}}
\put(35,42){\vector(-3,-1){0}}
\qbezier (43,42)(35,42)(35,42)
\qbezier(82, 38)(83, 40)(82, 42)
\put(82, 42){\vector(-2,3){0}}
%%%%%%%%%%%%%%%%%%
\put(-3,43){\makebox{$p_{\mathrm{st}}$}}
\put(70,28){\makebox{$\xi=\infty$}}
\put(43,4){\makebox{$\xi=1$}}
\put(9,-6){\makebox{$\xi=0$}}
\put(56,43){\makebox{$l_{\infty}$}}
\put(25,18){\makebox{$l_{1}$}}
\put(3,9){\makebox{$l_{0}$}}
\end{picture}
\end{center}
\caption{Loops $l_0$, $l_1$, $l_{\infty}$ for $-\pi/2 <\arg x <\pi/2$}
\label{Loops}
\end{figure}
}
%%%%%%%%%%%%%%%%%%%%%%%%%%%%%%%%%%%%
%%%%%%%%%%%%%%%%%%%%%%%%%%%%%%%
To define Stokes matrices, for each $k\in \mathbb{Z}$,
denote by $\Xi_k(\xi)$ the matrix solution of system \eqref{1.1} admitting 
the same asymptotic representation as \eqref{1.2} as $\xi\to \infty$ through 
the sector 
$|\arg (x\xi) -\pi/2 -(k-2)\pi|<\pi$. Then the Stokes matrices $S_k$ are defined 
by $\Xi_{k+1}(\xi)=\Xi_k(\xi)S_k$, in particular,
for $\Xi(\xi)=\Xi_2(\xi)$ 
%%%%%%%% (1.3) %%%%%%%%%%%%
\begin{equation}\label{1.3}
\Xi(\xi)=\Xi_1(\xi)S_1, \quad \Xi_3(\xi)=\Xi(\xi) S_2, \quad
S_1=\begin{pmatrix}
1  &  0 \\  s_1  & 1 
\end{pmatrix},  \quad
S_2=\begin{pmatrix}
1  &  s_2 \\  0  & 1 
\end{pmatrix}. 
\end{equation}
%%%%%%%%%%%%%
%% These monodromy data $M^0,$ $M^1,$ $S_1,$ $S_2$ are the same as in 
The monodromy data $M^0=(m^0_{ij})$ and $M^1=(m^1_{ij})$ satisfy 
\eqref{3.1} \cite{Andreev-Kitaev}. 
Set
\begin{align*}
\mathcal{M}_{(\theta_0,\theta_1,\theta_{\infty})}:=\{(M^0,M^1)
\in SL_2(\mathbb{C})^2\,|\,&
 m^1_{11}m^0_{11}+m^1_{12}m^0_{21}= e^{-\pi i\theta_{\infty}},\,
\\
&\mathrm{tr}\,M^0=2\cos \pi\theta_0,\, \mathrm{tr}\,M^1=2\cos\pi\theta_1 \}
/\sim,
\end{align*}
where $(M^0,M^1)\sim (\tilde{M}^0,\tilde{M}^1)$ if 
$(M^0,M^1)=c^{\sigma_3}(\tilde{M}^0,\tilde{M}^1)c^{-\sigma_3}$ for some 
$c\in\mathbb{C}\setminus\{0\}.$ Then $\mathrm{dim}_{\mathbb{C}}\mathcal{M}
_{(\theta_0,\theta_1,\theta_{\infty})}=2.$ Let us call 
$\mathcal{M}_{(\theta_0,\theta_1,\theta_{\infty})}$
the {\it monodromy manifold}.
As shown in Proposition \ref{prop3.1}, for every  
$(M^0,M^1) \in \mathcal{M}_{(\theta_0,\theta_1,\theta_{\infty})}$ 
such that $M^0,$ $M^1 \not= \pm I$, there exists  
uniquely a solution $y(x)$ of (P$_{\mathrm{V}}$) corresponding to $(M^0,M^1)$,  
and then $y(x)$ is labelled with $(M^0,M^1)$.
%% Thus solutions of (P$_{\mathrm{V}}$) is 
%% parametrised by equivalence classes of monodromy data $(M^0,M^1)$. 
%% The solution corresponding to $(M^0,M^1)$ is denoted by 
%% $y(x,M^0,M^1)$, and is called a solution {\it labelled
%% with} $(M^0,M^1)$.
\par
For $w(A, z) :=\sqrt{(1-z^2)(A-z^2) }$ 
consider the Riemann surface $\Pi_{A} =\Pi_+ \cup \Pi_-$ such that  
$\Pi_+$ and $\Pi_-$ are glued along the cuts $[-1,-A^{1/2}]$ and $[A^{1/2}, 1]$
with $\re A^{1/2}\ge 0$.   
Let the branches be such that $z^{-2} \sqrt{(1-z^2)(A-z^2)} \to -1$ and 
$ \sqrt{(A-z^2)/(1-z^2)} \to 1$ as $z\to \infty$ on the upper 
sheet $\Pi_+.$\footnote[1]
%%%%%%%%%%%% [1] %%%%%%%%%%%%%%%%%%% 
{Then $A^{-1/2}\sqrt{(1-z^2)(A-z^2)}\to 1$ and $A^{-1/2}\sqrt{(A-z^2)/(1-z^2)}
\to 1$ as $z\to 0$ on $\Pi_+.$}
%%%%%%%%%%%%%%%%%%%%%%%%%%%%%%%%%%%%%%%%%%
Let $\mathbf{a}$ and $\mathbf{b}$ denote basic cycles on $\Pi_{A}$
as drawn in Figure \ref{cycles1} when $A=A_{\phi}.$ 
%%%%%%%%%%%%%%%%%%%%%%%%%%%
For a given number $\phi \in \mathbb{R}$, the Boutroux equations
%%%%%% (2.1) %%%%%%%%%%%%%%%%%
\begin{equation}\label{2.1}
\re e^{i\phi} \int_{\mathbf{a}} \sqrt{\frac{A-z^2}{1-z^2} } dz =
\re e^{i\phi} \int_{\mathbf{b}} \sqrt{\frac{A-z^2}{1-z^2} } dz =0
\end{equation}
%%%%%%%%%%%%%%%%%%%%%%%%%%%%%%
admit a unique solution $A=A_{\phi}\in \mathbb{C}$ having the properties:
\par 
(i) $0 \le \re A_{\phi} \le 1$ for $\phi \in \mathbb{R}$, and $A_0=0,$
$A_{\pm \pi/2}=1;$
\par 
(ii) $A_{-\phi}=\overline{A_{\phi}},$ $A_{\phi\pm \pi}=A_{\phi}$ for $\phi\in
\mathbb{R};$
\par
(iii) for $0 \le \phi \le \pi/2,$ $\im A_{\phi}\ge 0$,
and, for $-\pi/2 \le \phi \le 0,$ $\im A_{\phi}\le 0$
\par\noindent
(cf. Proposition \ref{propA.17} and Figure \ref{trajectories}, (a)). 
By (i) we fix $\re A^{1/2}_{\phi} \ge 0,$ and then 
$(\overline{A_{\phi}})^{1/2} =\overline{A^{1/2}_{\phi}}.$
%% Here $\mathbf{a}$ and $\mathbf{b}$ denote basic cycles as in 
%% Figure \ref{cycles1} on the elliptic curve 
%% $\Pi_{A_{\phi}} =\Pi_+ \cup \Pi_-$  
%% given by $w(A_{\phi}, z) =\sqrt{(1-z^2)(A_{\phi}-z^2) }$ such that  
%% $\Pi_+$ and $\Pi_-$ are glued along the cuts 
%% $[-1,-A_{\phi}^{1/2}]$ and $[A_{\phi}^{1/2}, 1]$. The branches of 
%% these algebraic functions are fixed in such a way that $z^{-2} 
%% \sqrt{(A_{\phi}-z^2)(1-z^2)} \to -1$ and $\sqrt{(A_{\phi}-z^2)/(1-z^2)}\to 1$ 
%% as $z\to \infty$ on the upper sheet $\Pi_+.$ 
In what follows let $\mathbf{a}$ and $\mathbf{b}$ be on $\Pi_{A_{\phi}}
=\Pi_+ \cup\Pi_-$ as in Figure \ref{cycles1}. Write
$$
\Omega_{\mathbf{a,\, b}}= \Omega_{\mathbf{a,\, b}}(\phi) 
=\int_{\mathbf{a, \,b}} \frac{dz}{w(A_{\phi}, z)},\quad
%% \Omega_{\mathbf{b}} =\int_{\mathbf{b}} \frac{dz}{w(A_{\phi}, z)}, \quad
\mathcal{E}_{\mathbf{a,\,b}}= \mathcal{E}_{\mathbf{a,\,b}}(\phi) 
=\int_{\mathbf{a, \, b}} \sqrt{\frac{A_{\phi}-z^2}{1-z^2}}dz, \,\,\,\,
%\mathcal{E}_{\mathbf{b}}=\int_{\mathbf{b}} \sqrt{\frac{A_{\phi}-z^2}{1-z^2}}dz, 
$$
and let $\mathrm{sn}(u; k)$ denote the Jacobi $\mathrm{sn}$-function with modulus
$k$.
%%%%%%%%%%%%%%%%%%%%%%%%%%%%%%%%%%%%%%%%%%%%%%%%%%%%%%%%
%%%%%%%%%%%%%%%%%%%%%%%%%%%%%%%%%%%%%%%%%%%%%%%%%%%%%%
%%%%%%%%%%%%%%%%% Figure 2.2 %%%%%%%%%%%%%%%%%
%%%%%%%%%%%%%%%%%%%%%%%%%%%%%%%%%%%%%%%%%%
{\small
\begin{figure}[htb]
\begin{center}
\unitlength=0.83mm
%%%%%%%%%%%%%%%%%%%%%%%%%%%%%%%%%%
%%%%%%%%%%%%%%%%%%%%%%%%%%%%%%%%%%
%%%%%%%%%%%%%%%%%%
%\begin{picture}(80,50)(-10,0)
\begin{picture}(80,43)(0,3)
\put(0,17){\makebox{$-1$}}
\put(22,3){\makebox{$-A_{\phi}^{1/2}$}}
\put(73,25){\makebox{$1$}}
\put(52,39){\makebox{$A_{\phi}^{1/2}$}}
\thinlines
\put(10,25.5){\line(2,-1){17}}
\put(10,24.5){\line(2,-1){17}}
\put(70,25.5){\line(-2,1){17}}
\put(70,24.5){\line(-2,1){17}}
\qbezier(35,33.5) (40,37) (46,38)
\qbezier(9,31.5) (14,32.7) (19,30.5)
\put(46,38){\vector(4,1){0}}
\put(9,31.5){\vector(-4,-1){0}}
\put(31,31){\makebox{$\mathbf{a}$}}
\put(20,29){\makebox{$\mathbf{b}$}}
\put(65,8){\makebox{$\Pi_{+}$}}
\thicklines
\put(10,25){\circle*{1}}
\put(27,16.5){\circle*{1}}
\put(53,33.5){\circle*{1}}
\put(70,25){\circle*{1}}
\qbezier(24,18) (25,25) (40,32.5)
\qbezier(40,32.5) (56,40) (56.5,31.7)
\qbezier[15](40,18) (54,25.5) (56,30)
\qbezier[15](24,16) (26,11) (40,18)
\qbezier(6,27) (10,32.5) (25.9,24.8)
\qbezier(29,23.2) (35.5,18) (34,13)
\qbezier(6,27) (3.5,21) (14,15)
\qbezier (14,15)(30,7)(34,13)
\end{picture}
\end{center}
\caption{Cycles $\mathbf{a},$ $\mathbf{b}$ on $\Pi_{A_{\phi}}$}
\label{cycles1}
\end{figure}
}
%%%%%%%%%%%%%%%%%%%%%%%%%%%%%%%%%%%%%%%%%%
%%%%%%%%%%%%%%%%%%%%%%%%%%%%%%%%%%%%%%%%%%%%%%%
%%%%%%%%%%%%%%%%%%%%%%%%%%%%%%%%%%%%%%
%%%%%%%%%%%%%%%%%%%%%%%%%%%%%%%%%%%%%%
%%%%% ssc2.1 %%%%%%
\subsection{Elliptic representation}\label{ssc2.1}
%%%%%%%%%%%%%%%%%%%%%%%%%%%%%%%%%%%%%%%%%%%%%%%%%%%%%%%%%
%% Let $y(x,M^0,M^1)$ denote a solution of (P$_{\mathrm{V}}$) corresponding 
%% to $(M^0,M^1).$ 
For $0<|\phi|<\pi/2$ we have the following.
%%%%% Theorem 2.1 %%%%%%%
\begin{thm}\label{thm2.1}
Let $M^0=(m_{ij}^0)$ and $M^1=(m_{ij}^1)$ be such that
$m_{11}^0 m_{21}^0m_{12}^1 \not=0$ (respectively, 
$m_{11}^1 m_{21}^0m_{12}^1 \not=0$). Then, for $-\pi/2<\phi <0$ (respectively,
$0<\phi<\pi/2$), 
a solution $y(x)$ labelled with $(M^0,M^1)$ 
admits an elliptic representation of the form  
\begin{equation*}
\frac{y(x) +1}{y(x)-1}
=A^{1/2}_{\phi} \mathrm{sn} ((x-x_0)/2 +\Delta(x); A^{1/2}_{\phi}),
\end{equation*}
with $\Delta(x)= O(x^{-\delta})$
as $x=e^{i\phi} t \to \infty$ through the cheese-like strip
\begin{align*}
& S(\phi, t_{\infty}, \kappa_0, \delta_0)= \{ x=e^{i\phi}t \,| \,\,
\re t > t_{\infty}, \,\, |\im t |<\kappa_0 \} \setminus \bigcup_{\rho\in
\mathcal{P}_0} \{|x-\rho|<\delta_0 \},
\\
&\mathcal{P}_0 = \{ \rho\,| \,\, \mathrm{sn}((\rho-x_0)/2 ; A_{\phi}^{1/2})
=\infty \}= \{ x_0 + \Omega_{\mathbf{a}}  \mathbb{Z}
 +\Omega_{\mathbf{b}}(2\mathbb{Z}+1) \},
\end{align*}
$\delta>0$ being some positive number,
$\kappa_0 >0$ a given number, $\delta_0>0$ a given small number,
and $t_{\infty}=t_{\infty}(\kappa_0,\delta_0)$ a large number depending on 
$(\kappa_0,\delta_0).$ 
Furthermore $x_0$ is such that 
%%% $x_0 \in S(\phi, t_{\infty}, \kappa_0, \delta_0)$ and that
\begin{align*}
x_0 &\equiv \frac {-1}{\pi i} \Bigl(\Omega_{\mathbf{b}} \log(m^0_{21}m^1_{12})
+ \Omega_{\mathbf{a}} \log \mathfrak{m}_{\phi} \Bigr) -\Bigl( \frac {\Omega
_{\mathbf{a}}}2 +\Omega_{\mathbf{b}}\Bigr) (\theta_{\infty} + 1)   
-\frac{\Omega_{\mathbf{a}}}2
\\
 &= \frac {-1}{\pi i} \Bigl(\Omega_{\mathbf{b}} \log(e^{\pi i\theta_{\infty}}
m^0_{21}m^1_{12}) + \Omega_{\mathbf{a}}\log \mathfrak{m}_{\phi,\theta_{\infty}} 
\Bigr) - \Omega_{\mathbf{a}} -\Omega_{\mathbf{b}}
\mod 2\Omega_{\mathbf{a}}\mathbb{Z}+ 2\Omega_{\mathbf{b}}\mathbb{Z},
\end{align*}
where $\mathfrak{m}_{\phi,\theta_{\infty}} = e^{\pi i\theta_{\infty}/2}m^0_{11}$ if 
$-\pi/2 <\phi<0,$ and
$= e^{-\pi i\theta_{\infty}/2} (m^1_{11})^{-1}$ if $0 <\phi<\pi/2,$ and
$\mathfrak{m}_{\phi}=e^{-\pi i\theta_{\infty}/2}\mathfrak{m}
_{\phi,\theta_{\infty}}$.
%%%%%%%%%%%%%%%%%%%%%%%%%%%%%%%%%%%%%%%%%%%%%%%%%%
\end{thm}
%%%%%%%%%%%%%%%%%%%%%%%%%%%%%%%%%%%%%%%%%%%%%%%%%%
%%%%%%%%%%%%%%%%%%%%%%%%%%%%%%%%
%%%%% Remark 2.1 %%%%%%%%%%%%%%
\begin{rem}\label{rem2.1}
In Theorem \ref{thm2.1}
the complementary cases, say $m^1_{11}m^0_{21}m^1_{12}=0$ for $0<\phi<\pi/2$
correspond to truncated or triply-truncated solutions \cite[Corollaries 5.2
and 5.3]{Andreev-Kitaev}, \cite[\S 5]{Andreev-Kitaev-2},
\cite[Theorem 1.1]{Andreev}, \cite[Theorem 2.21]{S-2018} (see also Remark
\ref{rem3.3c}).
In Theorem \ref{thm2.2} below similar truncated solutions exist in the
complementary cases. 
\end{rem}
%%%%%% Remark 2.4 %%%%%%%%%%%%%%%
\begin{rem}\label{rem2.4}
The pair $(\mathfrak{m}_{\phi},m^0_{21}m^1_{12})$ uniquely determines 
$(M^0,M^1)\in \mathcal{M}_{(\theta_0,\theta_1,\theta_{\infty})}$.  
\end{rem}
%%%%%%%%%%%%%%%%%%%%%%%%%%%%%%
%%%% Remark 2.2 %%%%%%%%%%%%%%
\begin{rem}\label{rem2.2}
%% At every excluded point $\rho\in \mathcal{P}_0,$ $y(\rho)=1.$
The point $x=\rho$ is in $\mathcal{P}_0$ if and only if $y(\rho)=1.$
%% Among the singular values $y=0,$ $1$, $\infty$ of (P$_{\mathrm{V}}$), the
%% neighbourhoods of only $1$-points are excluded from the cheese-like domain
%% of the elliptic expression of $y(x)$. 
\end{rem}
%%%%%%%%%%%%%%%%%%%%%%%%%%%%%%%
%%%%%% Remark 2.3 %%%%%%%%%%
\begin{rem}\label{rem2.3}
Our calculation of Section \ref{sc4} leads to $\delta=1/4-\varepsilon$ for any 
$0<\varepsilon<1/4$, whose numerical value is caused by a technical reason.  
In fact we have $\delta=1$ in Theorem \ref{thm2.1a}, \cite[Theorem 2.1]{S-2024}.
\end{rem}
%%%%%%%%%%%%%%%%%%%%%%%%%%%%%
%%%%%%%%%%%%%%%%%%%%%%%%%%%%%%%%%%%%%%%%%%%%%%%%%%%%%%%%%%%%
The solution in Theorem \ref{thm2.1} labelled with $(M^0,M^1)$ 
may be written as $y(x,M^0, M^1)$.
Let us denote by the same symbol $y(x,M^0, M^1)$
the analytic continuation to the sectors $0<|\phi-\pi|<\pi/2$.
Note that $\Omega_{\mathbf{a}}$ and $\Omega_{\mathbf{b}}$ are
determined by $A_{\phi}$, which does not depend on $M^{0}, M^1$, and satisfy
$\Omega_{\mathbf{a},\mathbf{b}}(\phi+\pi)=\Omega_{\mathbf{a},\mathbf{b}}(\phi)$. 
%%%%%%%%%%%%%%%%%%%%%%%%%%%%%%%%%%%%%%%%%
%%%%%%%%%%%%%%%%%%%%%%%%%%%%%%%%%%%%%%%
%%%%%%% Theorem 2.2 %%%%%%%%%%%%%
%%%%%%%%%%%%%%%%%%%%%%%%%%%%%%%%
\begin{thm}\label{thm2.2}
Write $\breve{M}^0 =(\breve{m}^0_{ij})
 =S^{-1}_2 M^0 S_2$ and $\breve{M}^1=(\breve{m}^1_{ij})=S^{-1}_2 M^1 S_2$. 
Suppose that $\breve{m}^0_{22}\breve{m}^0_{12}\breve{m}^1_{21}\not=0$ 
(respectively, $\breve{m}^1_{22}\breve{m}^0_{12}\breve{m}^1_{21}\not=0$). 
Then, for $\pi/2<\phi<\pi$ (respectively, $\pi<\phi<3\pi/2$),
$y(x)=y(x,M^0, M^1)$ admits an elliptic representation of the form  
$$
\frac{y(x)+1}{y(x)-1}=A^{1/2}_{\phi}\mathrm{sn}((x-\breve{x}_0)/2 +\Delta(x);
A^{1/2}_{\phi}),
$$
as $x=e^{i\phi}t \to\infty$ through $S(\phi,t_{\infty},\kappa_0,\delta_0)$,
where the phase shift is given by
\begin{align*}
\breve{x}_0&\equiv 
 \frac {-1}{\pi i} \Bigl(\Omega_{\mathbf{b}} \log(\breve{m}^0_{12}
\breve{m}^1_{21})
+ \Omega_{\mathbf{a}}\log\breve{\mathfrak{m}}_{\phi}\Bigr)-\Bigl(\frac {\Omega
_{\mathbf{a}}}2 +\Omega_{\mathbf{b}}\Bigr) (\theta_{\infty} + 1)   
-\frac{\Omega_{\mathbf{a}}}2
\\
&=\frac{-1}{\pi i}\Bigl(\Omega_{\mathbf{b}}\log(e^{\pi i
\theta_{\infty}}(\breve{m}^0_{12}\breve{m}^1_{21})^{-1})+\Omega_{\mathbf{a}}
\log\breve{\mathfrak{m}}_{\phi,\theta_{\infty}}\Bigr)-\Omega_{\mathbf{a}}
-\Omega_{\mathbf{b}}
\mod 2\Omega_{\mathbf{a}}\mathbb{Z}+2\Omega_{\mathbf{b}}\mathbb{Z}
\end{align*}
with $\breve{\mathfrak{m}}_{\phi,\theta_{\infty}}=e^{\pi i\theta_{\infty}/2}
(\breve{m}^0_{22})^{-1}$ if $\pi/2<\phi<\pi,$ and $=e^{-\pi i\theta_{\infty}/2}
\breve{m}^1_{22}$ if $\pi<\phi<3\pi/2,$ and 
$\breve{\mathfrak{m}}_{\phi}=e^{-\pi i
\theta_{\infty}/2} \breve{\mathfrak{m}}_{\phi,\theta_{\infty}}.$ 
\end{thm}
%%%%%%%%%%%%%%%%%%%%%%%%%%%%%
%%%%%% Remark 2.5 %%%%%%%%%%%%%%%%%%%%%%%
\begin{rem}\label{rem2.5}
Let $0< |\phi - 2p\pi| <\pi/2$ or $0<|\phi - 2p\pi -\pi|<\pi/2$ with 
$p\in \mathbb{Z}\setminus \{0\}$. Set
\begin{align*}
& M^0_p =((m_p^0)_{ij}) =U_p^{-1}M^0 U_p,
& M^1_p =((m_p^1)_{ij}) =U_p^{-1}M^1 U_p,
\\
& \breve{M}^0_p =((\breve{m}_p^0)_{ij}) =\breve{U}_p^{-1}{M}^0 \breve{U}_p,
&\breve{M}^1_p =((\breve{m}_p^1)_{ij}) =\breve{U}_p^{-1}{M}^1 \breve{U}_p,
\end{align*}
where
\begin{equation*}
 U_p= \begin{cases}
 S_2S_3 \cdots S_{2p}S_{2p+1},   \qquad  &\text{if $p>0$},
\\
I, \qquad &\text{if $p=0$},
\\
 S_1^{-1}S_0^{-1} \cdots S_{2p+3}^{-1} S_{2p+2}^{-1}, \qquad &\text{if $p<0$,} 
\end{cases}
\qquad
  \breve{U}_{p} = U_p S_{2p+2} \quad \text{for $p\in \mathbb{Z}$}.
\end{equation*}
Since $S_{k+2}= e^{i\pi\theta_{\infty}\sigma_3} S_k e^{-i\pi \theta_{\infty}
\sigma_3}$ for $k \in\mathbb{Z}$ \cite[(2.5)]{Andreev-Kitaev}, we have
$U_p=(M^1M^0)^{-p}e^{-\pi i\theta_{\infty}p\sigma_3}.$
Then $y(M_0,M_1; x)$ admits an elliptic representation as of Theorem \ref{thm2.1}
or \ref{thm2.2} with the following substitutions in the phase shift 
(see Section \ref{ssc6.5}):
\par
(1) for $0<|\phi - 2p\pi|<\pi/2,$ 
$m^0_{21}m^1_{12} \mapsto (m^0_p)_{21} (m^1_p)_{12},$
$\mathfrak{m}_{\phi} \mapsto \mathfrak{m}^p_{\phi} 
=\mathfrak{m}_{\phi} |_{m^{0,1}_{11} \mapsto (m^{0,1}_p)_{11}}$ in Theorem
\ref{thm2.1}; and
\par
(2) for $0<|\phi - 2p\pi -\pi|<\pi/2,$ 
$(\breve{m}^0_{12}\breve{m}^1_{21})^{-1}
\mapsto((\breve{m}^0_p)_{12}(\breve{m}^1_p)_{21})^{-1},$
$\breve{\mathfrak{m}}_{\phi} \mapsto \breve{\mathfrak{m}}^p_{\phi}
  =\breve{\mathfrak{m}}_{\phi}
 |_{\breve{m}^{0,1}_{22} \mapsto (\breve{m}^{0,1}_p)_{22}}$ in Theorem 
\ref{thm2.2}. 
\end{rem}
%%%%%%%%%%%%%%%%%%%%%%%%%%%%%%%%%%%%%%%%%%%%%
%% The facts in the remark above suggest the nonlinear monodromy and Stokes
%% structure of (P$_{\mathrm{V}}$), which will be discussed elsewhere.
%%%%%%%%%%%%%%%%%%%%%%%%%%%%%%%%%%%%%%%%%%%%%
%%%%%% ssc2.2 %%%%%%%%%%%%%%%%%%%%%
\subsection{Error term $\Delta(x)$}\label{ssc2.2}
%%%%%%%%%%%%%%%%%%%%%%%%%%%%%%%%%%%%%%%%%%%%%%%%%%
The elliptic expression above apparently contains the single integration 
constant $x_0$, and the other one is hidden in the error term $\Delta(x) \ll 
x^{-\delta}.$ Let us define necessary functions and constants to express 
$\Delta(x)$. 
\par
For $y(x)=y(x,M^0,M^1)$ of Theorem $\ref{thm2.1}$, let $b(x)$ be such that 
$$
a_{\phi}=A_{\phi}+ \frac{B_{\phi}(t)}t =A_{\phi}+ \frac{b(x)}x \quad 
\text{with $x=e^{i\phi}t$, \,\,\, i.e. $ b(e^{i\phi}t)=e^{i\phi} B_{\phi}(t) $},
$$
which is related to $y(x)$ via \eqref{3.12}, \eqref{6.7} with $y^*=y'(t)$. Set
\begin{align*}
 \psi_0(x) &= A_{\phi}^{1/2}\mathrm{sn} ((x-x_0)/2; A_{\phi}^{1/2}),
\\
 b_0(x)&= \beta_0 -\frac{2\mathcal{E}_{\mathbf{a}}}{\Omega_{\mathbf{a}}} x
- \frac 8{\Omega_{\mathbf{a}}} \frac{\vartheta'}{\vartheta}\Bigl( \frac 1
{2\Omega_{\mathbf{a}}} (x-x_0), \tau_0 \Bigr), 
\\
 \tau_0&=\frac{\Omega_{\mathbf{b}}}{\Omega_{\mathbf{a}}},
\qquad
\beta_0 = - \frac 8{\Omega_{\mathbf{a}}}( \log(m^0_{21}m^1_{12}) +\pi i
(\theta_{\infty}+1)), 
\end{align*} 
where
$$
\vartheta(z,\tau)=\sum_{n\in \mathbb{Z}} e^{\pi i\tau n^2 + 2\pi i zn},
\qquad \im \tau >0
$$
with $\vartheta'(z,\tau)=(d/dz)\vartheta(z,\tau)$ is the $\vartheta$-function
(cf. Section \ref{ssc5.2}).
Then $\psi_0(x)$ solves $2\psi'_0 =w(A_{\phi},\psi_0) 
=\sqrt{(1-\psi_0^2)(A_{\phi}-\psi_0^2)},$
and $b_0(x)$ fulfills $b_0(x)-b(x)
= b_0(e^{i\phi}t)- e^{i\phi}B_{\phi}(t) \ll t^{-\delta}$
in $S(\phi,t_{\infty}, \kappa_0,\delta_0)$ (Proposition \ref{prop5.7} and 
Corollary \ref{cor6.1}).  
Furthermore as in Section \ref{sc7}, $b'_0(x)=2(\psi_0(x)^2-A_{\phi}) 
+4\psi_0'(x)$. Write
\begin{align*}
&\check{S}(\phi, t_{\infty}, \kappa_0, \delta_0)=S(\phi, t_{\infty},\kappa_0,
\delta_0) \setminus \bigcup_{\rho \in \mathcal{Q} } \{|x-\rho|<\delta_0 \},
\\
& \mathcal{Q}=\{\rho\,| \,\, \mathrm{sn}((\rho-x_0)/2; A_{\phi}^{1/2})=
\pm A^{-1/2}_{\phi}, \,\pm 1 \}.
\end{align*}
For $\sigma=e^{i\phi}t_{\sigma}\in \mathcal{Q}$ let $l(\sigma)$ be the line
defined by $x=e^{i\phi}(\re t_{\sigma}+i\eta)$ with $\eta \ge \im t_{\sigma}$
if $\im t_{\sigma}\ge 0$ (respectively, $\eta <\im t_{\sigma}$ if
$\im t_{\sigma} <0$); and, if necessary, modify $l(\sigma)$ not to touch
other circles $|x-\sigma'|=\delta_0$ with $\sigma'\in\mathcal{P}_0\cup
\mathcal{Q} \setminus \{\sigma\}$. Then $\check{S}_{\mathrm{cut}}
(\phi,t_{\infty},\kappa_0,\delta_0)$ denote $\check{S}(\phi,t_{\infty},\kappa_0,
\delta_0)$ equipped with the cuts along $l(\sigma)$ or its modification
for all $\sigma\in \mathcal{Q}.$
\par
For $\Delta(x)$ and $b(x)-b_0(x)$ we have the following 
\cite[Theorems 2.2 and 2.3]{S-2024}. 
%%%%%%%%%%%%%%%%%%%%%%%%%%%%%%%%%%
%%%%%%%%%%%%%%%%%%
%%%%% Theorem 2.3
%%%%%%%%%%%%%%%%%%%%%%%%%%
\begin{thm}\label{thm2.1a}
The error term $\Delta(x)=h(x)/2$ is explicitly represented by
$$
h(x)=-\frac{2((\theta_0-\theta_1)^2+\theta_{\infty}^2)}{A_{\phi}-1}x^{-1}
-\int^x_{\infty}F_1(\psi_0,b_0)\frac{d\xi}{\xi}-\frac 32\int^x_{\infty}
F_1(\psi_0,b_0)^2\frac{d\xi}{\xi^2}+O(x^{-2}),
$$
with
$$
F_1(\psi_0, b_0)= \frac{4(\theta_0+\theta_1)\psi_0(\xi)-b_0(\xi)}
{2(A_{\phi}-\psi_0(\xi)^2)}.
$$
Here
$$
\int^x_{\infty}F_1(\psi_0,b_0)\frac{d\xi}{\xi} \ll x^{-1}, \quad
\int^x_{\infty}F_1(\psi_0,b_0)^2\frac{d\xi}{\xi^2} \ll x^{-1}
$$ 
as $x\to\infty$ through $\check{S}_{\mathrm{cut}}(\phi,t_{\infty},\kappa_0,
\delta_0).$ Furthermore,
$$
xh(x)=h_0\beta_0^2+h_1(x)\beta_0+h_2(x)+O(x^{-1}),
$$
where $h_0=(1/8)A_{\phi}^{-1}(1-A_{\phi})^{-1},$ $h_1(x)\ll x^{-1}$ 
and $h_2(x)\ll x^{-1}.$
\end{thm} 
%%%%%%%%%%%%%%%%%%%%%%%%%%%%%
%%%%% Theorem 2.4 %%%%%%%%%%%%%
\begin{thm}\label{thm2.4}
We have 
\begin{align*}
b(x)-b_0(x)=& b_0'(x)h(x)
\\
&-4((\theta_0-\theta_1)^2+\theta_{\infty}^2)x^{-1}
-\int^x_{\infty}(A_{\phi}-\psi_0^2)F_1(\psi_0,b_0)^2\frac{d\xi}{\xi^2}
+O(x^{-2}),
\end{align*}
in which $b_0'(x)=4\psi_0'-2(A_{\phi}-\psi_0^2),$ and
$$
\int^x_{\infty}(A_{\phi}-\psi_0^2)F_1(\psi_0,b_0)^2\frac{d\xi}{\xi^2} \ll x^{-1} $$
as $x\to\infty$ through $\check{S}_{\mathrm{cut}}(\phi,t_{\infty},\kappa_0,
\delta_0)$, and $b(x)-b_0(x)\ll x^{-1}$ in $S(\phi,t_{\infty},\kappa_0,
\delta_0).$
\end{thm}
By Theorem \ref{thm2.1a} we also have
$$
\frac{y(x)+1}{y(x)-1}=A^{1/2}_{\phi}\mathrm{sn}((x-x_0)/2;A_{\phi}^{1/2})
+O(x^{-1})
$$
in $S(\phi,t_{\infty},\kappa_0,\delta_0)$ \cite[Theorem 2.1]{S-2024}.
%%%%%%%%%%%%%%%%%%%%%%%%%%%%%%%%%%%%%%%%%%%%%%%%%%%%%%%%%%
%%%%%% Remark 2.3 %%%%%%%
%%%%%%%%%%%%%%%%%%%%%%%%%
%% For (P$_{\mathrm{I}}$) or (P$_{\mathrm{II}}$) the relation between the error 
%% term $h(x)$ and $b(x)-b_0(x)$ in the 
%% $\tau$-function is referred to by Kitaev \cite[p. 121]{Kitaev-3}.
%%%%%%%%%%%%%%%%%%%%%%%%%%%%%%%%%%%%%
%%%%%% Section 3 %%%%%%%
%%%%%%%%%%%%%%%%%%%%%%%%%%%%%%%%
\section{Basic facts}\label{sc3}
%%%%%%%%%%%%%%%%%%%%%%%%%
%%%%%%%%% ssc 3.1 %%%%%%%%%%%%%%
%%%%%%%%%%%%%%%%%%%%%%%%%%
\subsection{Parametrisation of $y(x)$ by the monodromy data}\label{ssc3.1}
%%%%%%%%%%%%%%%%%%%%%%%%%%
Note that
%%%%%%%%%% (3.2) %%%%%%%%%%%%%%
\begin{equation}\label{3.2}
M^1M^0 = S_1^{-1} e^{-\pi i \theta_{\infty}\sigma_3} S_2^{-1}
\end{equation}
\cite[(2.8), (2.13)]{Andreev-Kitaev}.  
For the monodromy matrices $M^0=(m^0_{ij}),$ $M^1=(m^1_{ij})$, let
$\mathcal{M}^*_{(\theta_0,\theta_1,\theta_{\infty})}$ be the algebraic variety 
consisting of $(M^0, M^1) \in SL_2(\mathbb{C})^2$ such that
%%%%%%%%%%%%%%%%%%%%%%%%%%%%%%%%
%%%%% (3.1) %%%%%%%%
\begin{equation}\label{3.1}
\begin{split}
&\mathrm{tr}\, M^0= 2\cos \pi \theta_0, \quad 
 \mathrm{tr}\, M^1= 2\cos \pi \theta_1, 
\\
 & (M^1M^0)_{11}=m^1_{11}m^0_{11}+m^1_{12}m^0_{21}=e^{-\pi i \theta_{\infty}},
\end{split}
\end{equation}
which is called the {\it manifold of monodromy data} in \cite{Andreev-Kitaev}. 
Then $\dim_{\mathbb{C}} \mathcal{M}^*_{(\theta_0,\theta_1,\theta
_{\infty})}=3.$ 
Our monodromy manifold $\mathcal{M}_{(\theta_0,\theta_1,
\theta_{\infty})}$ defined in Section \ref{sc2} is written as
$\mathcal{M}_{(\theta_0,\theta_1,\theta_{\infty})} 
=\mathcal{M}^*_{(\theta_0,\theta_1,\theta_{\infty})}/\sim$. 
For $d_0\in \mathbb{C}\setminus \{0\}$,
the gauge transformation $\Xi = d_0^{\sigma_3} \hat{\Xi} $
changes \eqref{1.1} with $(y,\z, u)$ to an isomonodromy system with 
$(y, \z, d_0^{-2}u)$, 
and the monodromy matrices for the canonical solution $\hat{\Xi}(\xi)$ 
%% of the same asymptotic form as of \eqref{1.2} 
become $d_0^{-\sigma_3} M^0 d_0^{\sigma_3},$ 
$d_0^{-\sigma_3} M^1 d_0^{ \sigma_3}.$ 
By this fact combined with the surjectivity of the Riemann-Hilbert 
correspondence \cite[\S\S 3, 4, 5]{Andreev-Kitaev}, 
\cite{Andreev, Andreev-Kitaev-1, Andreev-Kitaev-2, Bol, Put}, 
and the uniqueness \cite[Propositions 2.1 and 2.2]{Andreev-Kitaev} (see also
Proposition \ref{prop3.a}, \cite[Proposition 5.9 and Theorem 5.5]{FIKN}, 
\cite{FMZ}) we have the following.
%%%%%%%%%%%%%%%%%%%%%%%%%%%%%%%%%%%%%
%%%%%%% Proposition 3.1 %%%%%%%%%%%%%%%%%%%
\begin{prop}\label{prop3.1}
Let $\mathcal{Y}(\mathrm{P}_{\mathrm{V}})$ be the family of solutions of 
$(\mathrm{P}_{\mathrm{V}})$, and let 
$\varphi:\, \mathcal{Y}(\mathrm{P}_{\mathrm{V}}) \to \mathcal{M}_{(\theta_0,
\theta_1,\theta_{\infty})}$ be such that $y\mapsto (M^0,M^1)$ if
$(y,\z,u)$ governs the isomonodromy deformation of \eqref{1.1} with
monodromy data $(M^0,M^1)$.
%% a map such that the image of each solution of $(\mathrm{P}_{\mathrm{V}})$ is 
%% (an equivalence orbit containing) the corresponding monodromy data.
%% For $(M^0, M^1),$ $(\tilde{M}^0, \tilde{M}^1) \in \mathcal{M},$
%% write $(M^0, M^1) \sim (\tilde{M}^0, \tilde{M}^1) $ if there exists $d_0\in
%% \mathbb{C}\setminus \{0\}$ such that
%% $(\tilde{M}^0, \tilde{M}^1) = d_0^{-\sigma_3} (M^0, M^1) d_0^{\sigma_3}$. 
Then we have the canonical bijection
$$
\varphi: \,\,\, %% \mathcal{Y}^*(\mathrm{P}_{\mathrm{V}}) 
 \mathcal{Y}(\mathrm{P}_{\mathrm{V}}) \setminus 
\mathcal{Y}_0(\mathrm{P}_{\mathrm{V}}) 
\to %% \mathcal{M}^*
\mathcal{M}_{(\theta_0,\theta_1,\theta_{\infty})} \setminus \mathcal{M}_0, 
%% \quad
%%  \mathcal{Y}_0(\mathrm{P}_{\mathrm{V}})= \{ y\in \mathcal{Y}(\mathrm{P}
%% _{\mathrm{V}})\,| \,\,\, \varphi(y) \in \mathcal{M}_0 \},
$$
where $\mathcal{M}_0 = \{(M^0, M^1) \in \mathcal{M}_{(\theta_0,\theta_1,\theta
_{\infty})} \,|\,\, M^0 = \pm I \,\, \text{or} \,\, M^1 = \pm I \}$, 
$ \mathcal{Y}_0(\mathrm{P}_{\mathrm{V}})= \varphi^{-1}( \mathcal{M}_0 ).$
\end{prop}
%%%%%%%%%%%%%%%%%%%%%%%%%%%%%%%%%
Thus the solutions in $\mathcal{Y}(\mathrm{P}_{\mathrm{V}})\setminus 
\mathcal{Y}_0(\mathrm{P}_{\mathrm{V}})$ are parametrised
by $(M^0, M^1) \in \mathcal{M}_{(\theta_0,\theta_1,\theta_{\infty})}\setminus
\mathcal{M}_0$, essentially two parameters.
%%%%%%%%%%%%%%%%%%%%%%%%%%%%%%%%%%%%
%% \begin{rem}\label{rem3.0}
%% Instead of $(M_0,M_1)$ as above, i.e., a pair of the monodromy matrices
%% for the solution \eqref{1.2}, we may alternatively define $(M_0, M_1)$ 
%% as a conjugate class for the relation $(M_0,M_1)\sim T^{-1}(M_0,M_1)T$ 
%%with some $T\in SL_2(\mathbb{C})$ and $\mathcal{M}$ as the family of them. 
%% Then the bijection $\varphi^*$ becomes $mathcal{Y}^*(\mathrm{P}_{\mathrm{V}}) 
%% = \mathcal{Y}(\mathrm{P}_{\mathrm{V}}) \setminus
%% \mathcal{Y}_0(\mathrm{P}_{\mathrm{V}}) \to  \mathcal{M}^*
%% = \mathcal{M} \setminus \mathcal{M}_0.$ 
%% \end{rem}
%%%%%%% Remark 3.1 %%%%%%%%%%%%%%%%%%%%%%%%%%%%%
\begin{rem}\label{rem3.1}
If $\theta_0\in \mathbb{Z}$ or $\theta_1 \in \mathbb{Z}$, then there exists
a one-parameter family of classical solutions corresponding to $M^0=I$ or 
$M^1=I$, which is represented by the Whittaker function. In particular, if
$\theta_0=0$ and $\mathfrak{z}=0$, then $\mathcal{A}_0$ in \eqref{1.1} vanishes
and (P$_{\mathrm{V}}$) admits a family of solutions solving
$$
x\frac{dy}{dx}=xy-\frac 12(y-1)(\theta_{\infty}(y-1) -\theta_1(y+1))
$$
\cite[Remark 2.1]{Andreev-Kitaev}.
\end{rem}
%%%%%%%%%%%%%%%%%%%%%%%%%%%%%
%%%%%%% ssc 3.2 %%%%%%
%%%%%%%%%%%%%%%%%%%
%%%%%%%%%%%%%%%%%%%%%%%%%%
\subsection{Symmetric linear system}\label{ssc3.3}
To consider $y(x)\in \mathcal{Y}(\mathrm{P}_{\mathrm{V}})$  %%% \setminus 
%%% \mathcal{Y}_0(\mathrm{P}_{\mathrm{V}})  
along a ray 
$\arg x=\phi$ with $|\phi|<\pi/2,$ and to convert \eqref{1.1} to a symmetric
form, set
%%%%% (3.5) %%%%%%
\begin{equation}\label{3.5}
x= e^{i\phi}t,\quad t>0, \qquad \xi=(e^{-i\phi}\lambda +1)/2.
\end{equation}
%%%%%%%%%%%%%%%%%%%%%% 
Then by the gauge transformation $Y= \exp(-\varpi(t,\phi)\sigma_3)\Xi$ with
$\varpi(t,\phi)=e^{i\phi}t/4+(\theta_{\infty}/2)(i\phi +\log2)$ system
\eqref{1.1} is taken to
%%%%%%% (3.6) %%%%%%%%%
\begin{equation}\label{3.6}
\frac{dY}{d\lambda} =t \mathcal{B}(t,\lambda) Y
\end{equation}
with $\mathcal{B}(t,\lambda)=\hat{u}^{\sigma_3/2}(b_3\sigma_3 +b_2\sigma_2 +b_1\sigma_1)
\hat{u}^{-\sigma_3/2}.$ Here $\hat{u}=u\exp(-2\varpi(t,\phi))$ and
%%%%%%% (3.7) %%%%%%%%%%
\begin{equation}\label{3.7}
\begin{split}
& b_3 = b_3(t,\lambda) =\frac 14 + t^{-1}\Bigl( \frac{a^{11}_0}{\lambda
+e^{i\phi}} + \frac{a^{11}_1}{\lambda-e^{i\phi} } \Bigr),
\\
& b_2 = b_2(t,\lambda) =\frac{i t^{-1}}{2} \Bigl( \frac{a^{12}_0-a^{21}_0}
{\lambda +e^{i\phi}} + \frac{a^{12}_1- a^{21}_1}{\lambda-e^{i\phi} } \Bigr),
\\
& b_1 = b_1(t,\lambda) =\frac{t^{-1}}{2} \Bigl( \frac{a^{12}_0+a^{21}_0}
{\lambda +e^{i\phi}} + \frac{a^{12}_1+ a^{21}_1}{\lambda-e^{i\phi} } \Bigr),
\end{split}
\end{equation}
%%%%%%%%%%%%%%%%
$a^{ij}_0$ and $a^{ij}_1$ being the entries of $\mathcal{A}_0 |_{u=1},$
$\mathcal{A}_1 |_{u=1},$ that is,
\begin{align*}
a^{11}_0=&\z +\frac{\theta_0}2, \quad  &  a^{12}_0&= -(\z+\theta_0),  
\\
a^{21}_0=&\z , \quad  &a^{22}_0&= -a^{11}_0;  
\\
a^{11}_1=& -\z - \frac{\theta_0+\theta_{\infty}}2, \quad &  
a^{12}_1&= y\Bigl(\z + \frac{\theta_0 -\theta_1+\theta_{\infty}}2 \Bigr),  
\\
a^{21}_1=& -\frac 1y\Bigl(\z+\frac{\theta_0 +\theta_1+\theta_{\infty}}2 \Bigr),  
\quad
&a^{22}_1&= -a^{11}_1.
\end{align*}
%%%%%%%%%%%%%%%%%%%%%%%%%%%
%%%%%%%%%%%%%%%%%%%%%%%%%%%%%%%%%
Note that
\begin{align*}
&(\lambda^2-e^{2i\phi})b_3 = \frac 14(\lambda^2-e^{2i\phi}) - 2e^{i\phi}t^{-1}
\z -\frac 12 (\theta_{\infty}\lambda +(2\theta_0+\theta_{\infty})e^{i\phi})
t^{-1},
\\
&y(\lambda^2-e^{2i\phi})(b_1+ib_2)= ((y-1)(\lambda+e^{i\phi})-2e^{i\phi}y)
t^{-1}\z 
%\\
%&\phantom{-----------}
 -\frac 12 (\theta_0+\theta_1+\theta_{\infty})
(\lambda+e^{i\phi})t^{-1},
\\
&(\lambda^2-e^{2i\phi})(b_1-ib_2)= ((y-1)(\lambda+e^{i\phi})+2e^{i\phi})
t^{-1}\z 
\\
&\phantom{---------------} +\Bigl(\frac y2 (\theta_0-\theta_1+\theta_{\infty})
(\lambda+e^{i\phi})- \theta_0(\lambda-e^{i\phi}) \Bigr) t^{-1}.
\end{align*}
Let loops $\hat{l}_0,$ $\hat{l}_1$ and a point $\hat{p}_{\mathrm{st}}$ 
in the $\lambda$-plane be the images of $l_0$, $l_1$ and $p_{\mathrm{st}}$
under \eqref{3.5}. The loops $\hat{l}_0,$ $\hat{l}_1$ start from 
$\hat{p}_{\mathrm{st}}$ and surround $\lambda=-e^{i\phi}$, $\lambda=e^{i\phi}$, 
respectively; and $\arg \hat{p}_{\mathrm{st}}=\pi/2$ (cf. Figure \ref{loops0}).
%%%%%%%%%%%%%%%%%%%%%%%%%%%%%%%%%%%%%%%%%%%%%%%%%%%%%%%%%%%%%%%%%%%%
%%%%%%%%%%%%%%%%%%%%%%%%%%%%%%%%%%%%%%%%%%%%%%%%%%%%%%%%%%%%%%%%%%%
%%%%%%%%% Figure 3.0 %%%%%%%%%%%%%%%%%%%%%%%%%%%%%%%%
%%%%%%%%%%%%%%%%%%%%%%%%%%%%%%%%%%%%%%%%%%%%
{\small
\begin{figure}[htb]
\begin{center}
\unitlength=0.77mm
%%%%%%%%%%%%%%%%%%%%%%%%%%%%%%%%%%%%%
%%%%%%%%%%%%%%%%%%%%%%%%%%%%
\begin{picture}(55,50)(-10,-5)
\thicklines
\put(16,40){\circle*{1}}
\put(16,40){\line(-2,-5){14.13}}
\put(16,40){\line(1,-2){13.57}}
\put(0,0){\circle{10}}
\put(0,0){\circle*{1}}
\put(31.8, 8.2){\circle{10}}
\put(31.8, 8.2){\circle*{1}}
%%%%%%%%%%%%%%%%%%%%%%%%%%%%
\thinlines
\put(12,25){\line(-2,-5){3}}
\put(12,25){\vector(1,3){0}}
\put(6,20.2){\line(2,5){3}}
\put(6,20.2){\vector(-1,-3){0}}
\qbezier(-2,-7)(0,-8)(2,-7)
\put(2,-7){\vector(3,2){0}}
\put(22,23.5){\line(-1,2){3}}
\put(23,30){\line(1,-2){3}}
\put(22,23.5){\vector(1,-2){0}}
\put(23,30){\vector(-1,2){0}}
\qbezier(34.7,1.9)(36.8,2.6)(37.6,4.7)
\put(37.6,4.7){\vector(1,4){0}}
%%%%%%%%%%%%%%%%%%
\put(14,45){\makebox{$\hat{p}_{\mathrm{st}}$}}
\put(38.5,8.5){\makebox{$e^{i\phi}$}}
\put(-17,0){\makebox{$-e^{i\phi}$}}
\put(28.8,19){\makebox{$\hat{l}_{1}$}}
\put(0.5,15){\makebox{$\hat{l}_{0}$}}
\end{picture}
%%%%%%%%%%%%%%%%%%%%%%%%%%%%%%%%%
\end{center}
\caption{Loops $\hat{l}_0$ and $\hat{l}_1$ on the $\lambda$-plane}
\label{loops0}
\end{figure}
}
%%%%%%%%%%%%%%%%%%%%%%%%%%%%%%%%%%%%%%%%%%%%%%%%%%
\par
Then \eqref{3.6} admits the matrix solution $Y(t,\lambda)=\exp(-\varpi(t,\phi)
\sigma_3) \Xi(e^{i\phi}t, (e^{-i\phi}\lambda +1)/2)$ (cf. \eqref{1.2}) with
the properties:
\par
(i) $Y(t,\lambda)$ has the asymptotic representation
%%%%%%% (3.8) %%%%%%%%%%%%%%%%%%
\begin{equation}\label{3.8}
Y(t,\lambda) =(I+O(\lambda^{-1})) \exp\Bigl(\frac 14 (t\lambda-2\theta_{\infty}
\log\lambda) \sigma_3 \Bigr)
\end{equation}
as $\lambda\to\infty$ through the sector $|\arg\lambda -\pi/2|<\pi,$ the
branch of $\log\lambda$ being taken in such a way that $\im(\log\lambda)
\to \pi/2$ as $\lambda\to \infty$ through this sector;
\par
(ii) the isomonodromy deformation yields the same
monodromy data $M^0,$ $M^1,$ $S_1,$ $S_2$ as in Section \ref{sc2}, where 
$M^0$ and $M^1$ are defined by $Y^{(\hat{l}_{\nu})}(t,\lambda)=Y(t,\lambda)
M^{\nu}$ for $\nu=0,1$ with $Y^{(\hat{l}_{\nu})}(t,\lambda)$ denoting  
the analytic continuation of $Y(t,\lambda)$ 
along the loop $\hat{l}_{\nu}$, and $S_k$ are such that
$Y_{k+1}(t,\lambda)=Y_k(t,\lambda)S_k$ with $Y_k(t,\lambda)$ 
($Y_2(t,\lambda)=Y(t,\lambda)$) having the same 
asymptotic representation as \eqref{3.8} in the sector 
$|\arg \lambda-\pi/2-(k-2)\pi|<\pi$; 
\par
(iii) system \eqref{3.6} has the isomonodromy property if and only if
$(y,\z,\hat{u})$ with
$\hat{u}=u\exp(-2\varpi(t,\phi))$ satisfies
%%%%%% (3.9) %%%%%%%%%%%%%%%%%%%%%
\begin{equation}\label{3.9}
\begin{split}
&ty_t= e^{i\phi}ty - 2\z (y-1)^2 -\frac{(y-1)} 2 \Bigl( (\theta_0-\theta_1+
\theta_{\infty})y -  (3\theta_0+\theta_1+\theta_{\infty}) \Bigr),
\\
&t\z_t= y\z \Bigl(\z+\frac 12 (\theta_0-\theta_1+\theta_{\infty}) \Bigr)
 - \frac 1y (\z+\theta_0) \Bigl(\z+ \frac 12(\theta_0+\theta_1+\theta_{\infty})
 \Bigr),
\\
&\frac{t\hat{u}_t}{\hat{u}}= -2\z -\theta_0
+y \Bigl(\z+\frac 12 (\theta_0-\theta_1+\theta_{\infty}) \Bigr)
 + \frac 1y \Bigl(\z+ \frac 12(\theta_0+\theta_1+\theta_{\infty}) \Bigr)
\end{split}
\end{equation}
$(y_t=dy/dt)$, and then $y(e^{i\phi}t)$ is parametrised by 
%%%% \in \mathcal{Y}^*(\mathrm{P}_{\mathrm{V}})$ 
$(M^0,M^1)\in \mathcal{M}_{(\theta_0,\theta_1,\theta_{\infty})}.$
%%%%%%% Remark 3.3 %%%%%%%%%%%%%%%%%%%%%%%%%%
\begin{rem}\label{rem3.2}
In what follows we denote $y(e^{i\phi}t)$ by $y(t)$ for brevity,
and set
%%%%%%% (3.10) %%%%%%%
\begin{equation}\label{3.10}
\z= -\frac{(y_t -e^{i\phi}y)t}{2(y-1)^2} - \frac{1}{4}
(\theta_0-\theta_1+\theta_{\infty}) +\frac{\theta_0+\theta_1}{2(y-1)},
\end{equation}
which is the first equation of \eqref{3.9}.
\end{rem}
%%%%%%%%%%%%%%%%%%%%%%%%%%%
%%%%%%%%%%%%%%%%%%%%%%%%%
%%%%%%%% Remark 3.4 %%%%%%
\begin{rem}\label{rem3.6}
Let $\mathbf{s}$ be a substitution given by $e^{i\phi} \mapsto -e^{i\phi},$ 
$y \mapsto y^{-1},$ $(\theta_0,\theta_1)\mapsto (\theta_1,\theta_0)$. It is 
easy to see that $\mathbf{s}(\z)=-\z-(\theta_0+\theta_1+\theta_{\infty})/2.$
Then system \eqref{3.6} is invariant under the extension of $\mathbf{s}$: 
$(\mathbf{s}, Y \mapsto y^{\sigma_3/2} Y).$ 
\end{rem}
%%%%%%%%%%%%%%%%%%%%%%%%%%%%%%%%%%%%
The uniqueness of the Riemann-Hilbert correspondence for system \eqref{3.6}
will be used in the justification procedure in Section \ref{sc6}.
%%%%%%%%%%%%%%%%%%%%%%%%%%%%%%%%%%%%
%%%% Proposition 3.a %%%%%%%%%%%%%%%%
\begin{prop}\label{prop3.a}
If $M^0\not=I$ and $M^1\not=I$, a canonical solution $Y_2(t,\lambda)$ of 
\eqref{3.6} corresponding to $(M^0,M^1)$ is uniquely determined.
\end{prop}
%%%%%%%%%%%%%%%%%%%%%%%%%%
\begin{proof}
Let $Y_2(\lambda)$ and $\tilde{Y}_2(\lambda)$ be canonical solutions defining
$(M^0,M^1).$ Suppose that $Y_2(\lambda)$ and $\tilde{Y}_2(\lambda)$ admit the
same asymptotic representation \eqref{3.8} as $\lambda\to\infty$ in the sector
$ |\arg\lambda-\pi/2|<\pi,$ and set, around $\lambda=-e^{i\phi}$, i.e.,
$\lambda_-:=\lambda+e^{i\phi}=0,$
\begin{equation*}
Y_2(\lambda)=G_0(I+O(\lambda_-))\lambda_-^{(\theta_0/2)\sigma_3+L}C_0,
\quad
\tilde{Y}_2(\lambda)=\tilde{G}_0(I+O(\lambda_-))
\lambda_-^{(\theta_0/2)\sigma_3+\tilde{L}}\tilde{C}_0,
\end{equation*}
where $G_0,$ $\tilde{G}_0,$ $C_0,$ $\tilde{C}_0 \in SL_2(\mathbb{C})$ and
$L=(l_{ij})$ and $\tilde{L}=(\tilde{l}_{ij})$ are upper-triangular (respectively,
lower-triangular) matrices with vanishing diagonal entries.
Then we have $M^0=C_0^{-1}e^{\pi i\theta_0\sigma_3}e^{2\pi iL}C_0
=\tilde{C}_0^{-1}e^{\pi i\theta_0\sigma_3}e^{2\pi i\tilde{L}}\tilde{C}_0,$
and hence $M^0\not=\pm I$ implies either of the cases: (i) $\theta_0\not\in 
\mathbb{Z}$ and $L=\tilde{L}=0;$ (ii) $\theta_0 \in \mathbb{Z}_{\ge 0}$ and
$l_{12}\tilde{l}_{12}\not=0$ (respectively, $\theta_0 \in \mathbb{Z}_{\le 0}$ 
and $l_{21}\tilde{l}_{21}\not=0$). The relation $(C_0\tilde{C}_0^{-1})^{-1}
e^{\pi i\theta_0\sigma_3}e^{2\pi iL}C_0\tilde{C}_0^{-1}=e^{\pi i\theta_0\sigma_3}
e^{2\pi i\tilde{L}}$ leads to 
$C_0\tilde{C}_0^{-1}=\mathrm{diag}[\alpha,\alpha^{-1}]$
or $\mathrm{diag}[\alpha,\alpha^{-1}]+L_*$ with some $\alpha\not=0$ 
and some upper-triangular (respectively, lower-triangular) matrix $L_*$ with
vanishing diagonal entries. By this fact
$$
Y_2(\lambda)\tilde{Y}_2(\lambda)^{-1}=G_0(I+O(\lambda_-))
\lambda_-^{(\theta_0/2)\sigma_3}e^{2\pi iL}C_0\tilde{C}_0^{-1}
e^{-2\pi i\tilde{L}}\lambda_-^{-(\theta_0/2)\sigma_3}
(I+(\lambda_-))\tilde{G}_0^{-1}
$$
is holomorphic around $\lambda=-e^{i\phi}$. Similarly 
$Y_2(\lambda)\tilde{Y}_2(\lambda)^{-1}$ is also holomorphic
around $\lambda=e^{i\phi}$ if $M^1\not=\pm I.$ Observing that
$Y_2(\lambda)\tilde{Y}_2(\lambda)^{-1}=I+O(\lambda^{-1})$ as $\lambda\to \infty$
through $|\arg \lambda-\pi/2|<\pi$, we conclude 
$Y_2(\lambda)=\tilde{Y}_2(\lambda)$ by the Phragm\'en-Lindel\"of reasoning.
\end{proof}
%%%%%%%%%%%%%%%%%%%%%%%%%%%%%%%%%%%%%%%%%%%
%%%%% ssc 3.4 %%%%%%%%%%%%%%%%%%%%%
\subsection{Characteristic roots, turning points and Stokes curves}\label{ssc3.4}%%%%%%%%%%%%%%%%%
In the remaining part of this section, Sections \ref{sc4} and \ref{sc5} we
are concerned with the direct monodromy problem for system \eqref{3.6}, 
that is, calculation of the monodromy. Then $y,$ $\mathfrak{z}$ and $u$ may 
be treated as arbitrary
complex parameters. It is also possible, in place of $\mathfrak{z}$, to choose
$y^*=y_t$ related to $\mathfrak{z}$ via \eqref{3.10}, and then let us suppose
that $y,$ $y^*$ and $u$ are arbitrary complex parameters. 
\par
To calculate the monodromy data for system \eqref{3.6} we need to know the 
characteristic roots of $\mathcal{B}(t,\lambda)$ and their turning points.
The characteristic roots $\pm \mu=\pm \mu(t,\lambda)$ are given by
$$
\mu^2 = - \det \mathcal{B}(t,\lambda) =b_3^2 +(-ib_2+b_1)(ib_2+b_1)=
b_1^2 + b_2^2 + b_3^2.
$$
Using \eqref{3.10}, we obtain
%%%%% (3.11) %%%%%%%%%
\begin{align}\label{3.11}
4( e^{2i\phi}-\lambda^2)^2\mu^2 &= \frac 14 (e^{2i\phi}-\lambda^2)
( e^{2i\phi}a_{\phi}-\lambda^2+4\theta_{\infty} t^{-1}\lambda)
\\
\notag
&\phantom{---} +2(\theta_1^2-\theta_0^2)e^{i\phi}t^{-2}\lambda +
2(\theta_0^2+\theta_1^2) e^{2i\phi}t^{-2},
\end{align}
%%%%%%%%
where $a_{\phi}=a_{\phi}(t)$ is given by
%%%%%% (3.12) %%%%%%%%%
\begin{align}\label{3.12}
a_{\phi} = & 1- \frac{4(e^{-2i\phi} (y^*)^2- y^2)}{y(y-1)^2} + 4e^{-i\phi}
(\theta_0+\theta_1)\frac{y+1}{y-1} t^{-1}
\\
\notag
& \phantom{---} + e^{-2i\phi}\frac{y-1}{y} ((\theta_0-\theta_1 +\theta_{\infty}
)^2 y -(\theta_0-\theta_1 -\theta_{\infty})^2 )t^{-2}.
\end{align}
%%%%%%%%%%%
%%% as $t\to\infty$. 
%%% outside the exceptional set $\bigcup_{\sigma\in Z(1)}
%%% \{ |e^{i\phi}t -\sigma |<\delta_1 \}$, $Z(1)=\{\sigma; y(\sigma)=1\}.$ 
%%% Note that this is also valid in the 
%%% cheese-like strip containing the ray $\arg x=\phi$:
%%% $$
%%% S'(\phi, t'_{\infty}, \kappa_0, \delta_1) = \{e^{i\phi}t\,; \,\, \re t
%%% >t'_{\infty},\, |\im t|<\kappa_0 \} \setminus \bigcup_{\sigma\in Z(1)}
%%% \{|e^{i\phi}t-\sigma| \le \delta_1 \},
%%% $$
%%% $t'_{\infty}$ and $\kappa_0$ $(>2\delta_1)$ being given numbers. 
%%%%%%%%%%%%%%%%%%%%
By \eqref{3.11}, as long as $y\not=0,1,\infty$ and $y^*\not=\infty,$ 
the turning points, that is, the zeros of $\mu$ are given by the following.
%%%%%%%%%%%%%%%%%%%%%%%%%%%%%%%%
%%%%%% Proposition 3.2 %%%%%%%%%%
\begin{prop}\label{prop3.5}
For each $t$, let the square roots of $a_{\phi}=a_{\phi}(t)$ be denoted by
$\pm a_{\phi}^{1/2}$, where $\re a_{\phi}^{1/2} \ge 0$ and $\im a_{\phi}^{1/2}
>0$ if $\re a_{\phi}^{1/2}=0.$ 
Then, for $|\phi|<\pi/2,$ the turning points are
\begin{align*}
\lambda_1(t) &= e^{i\phi} a_{\phi}^{1/2} + 2\theta_{\infty}t^{-1} +O(t^{-2}),
\quad & 
\lambda_2(t) &= - e^{i\phi} a_{\phi}^{1/2} + 2\theta_{\infty}t^{-1} +O(t^{-2}),
\\
\lambda_1^0(t) &= e^{i\phi} +O(t^{-2}),
\quad &
\lambda_2^0(t) &= - e^{i\phi} +O(t^{-2})
\end{align*}
as $t\to\infty$, and these are simple. Furthermore
$$
\mu^2 = \frac{(\lambda-\lambda_1)(\lambda-\lambda_2)
(\lambda-\lambda_1^0)(\lambda-\lambda_2^0)}{16(\lambda-e^{i\phi})^2(\lambda
+e^{i\phi})^2}.
$$
\end{prop}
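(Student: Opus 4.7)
The plan is to identify the zeros of $\mu$ with the roots of a degree-four polynomial extracted from (3.11) and to locate them by a standard perturbation argument. Multiplying (3.11) through by $4$ and setting
$$
P(\lambda):=16(e^{2i\phi}-\lambda^2)^2\mu^2,
$$
one finds that $P(\lambda)$ is a monic polynomial of degree four in $\lambda$ with $t$-dependent coefficients, namely
$$
P(\lambda)=(e^{2i\phi}-\lambda^2)\bigl(e^{2i\phi}a_\phi-\lambda^2+4\theta_\infty t^{-1}\lambda\bigr)+8(\theta_1^2-\theta_0^2)e^{i\phi}t^{-2}\lambda+8(\theta_0^2+\theta_1^2)e^{2i\phi}t^{-2}.
$$
Its leading (in $t$) part is $P_0(\lambda)=(e^{2i\phi}-\lambda^2)(e^{2i\phi}a_\phi-\lambda^2)$, whose four simple zeros are $\pm e^{i\phi}$ and $\pm e^{i\phi}a_\phi^{1/2}$. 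Since $0<|\phi|<\pi/2$ and the Boutroux analysis yields $a_\phi=A_\phi+O(t^{-1})$ with $A_\phi\in\C\setminus\{0,1\}$, these four zeros remain simple and uniformly separated for large $t$.

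To refine each root I would substitute $\lambda=\lambda^{(0)}+\varepsilon$ into $P(\lambda)=0$ and match orders in $t^{-1}$. Near $\lambda^{(0)}=\pm e^{i\phi}a_\phi^{1/2}$ the prefactor $e^{2i\phi}-\lambda^2=e^{2i\phi}(1-a_\phi)+O(\varepsilon)$ is bounded away from zero, while the second factor of $P$ becomes
$$
e^{2i\phi}a_\phi-\lambda^2+4\theta_\infty t^{-1}\lambda=\mp 2e^{i\phi}a_\phi^{1/2}\bigl(\varepsilon-2\theta_\infty t^{-1}\bigr)+O(\varepsilon^2)+O(t^{-1}\varepsilon);
$$
balancing this against the $O(t^{-2})$ remainder gives $\varepsilon=2\theta_\infty t^{-1}+O(t^{-2})$, which is precisely the claimed asymptotics of $\lambda_1,\lambda_2$. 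Near $\lambda^{(0)}=\pm e^{i\phi}$, by contrast, the prefactor $e^{2i\phi}-\lambda^2=\mp 2e^{i\phi}\varepsilon+O(\varepsilon^2)$ vanishes linearly in $\varepsilon$, while the other factor equals $e^{2i\phi}(a_\phi-1)+O(t^{-1})$, which stays bounded away from zero. Thus the $O(t^{-1})$ contribution from the $4\theta_\infty t^{-1}\lambda$ piece is automatically killed by the vanishing prefactor, and the equation reduces at leading order to an $O(\varepsilon)$ term being balanced by the $O(t^{-2})$ tail, forcing $\varepsilon=O(t^{-2})$ and hence $\lambda_1^0,\lambda_2^0=\pm e^{i\phi}+O(t^{-2})$. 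The implicit function theorem (or equivalently Rouch\'e's theorem applied to $P$ against $P_0$) supplies the required uniformity.

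Once all four zeros are located and shown to be simple, the factorization is automatic: $P$ is monic of degree four with exactly these four roots, so
$$
16(e^{2i\phi}-\lambda^2)^2\mu^2=(\lambda-\lambda_1)(\lambda-\lambda_2)(\lambda-\lambda_1^0)(\lambda-\lambda_2^0),
$$
and dividing by $16(\lambda-e^{i\phi})^2(\lambda+e^{i\phi})^2=16(e^{2i\phi}-\lambda^2)^2$ yields the stated formula for $\mu^2$. The only substantive issue I anticipate is the uniformity of the perturbation as $t\to\infty$ through the cheese-like strip: one needs $a_\phi$ to remain in a fixed compact subset of $\C\setminus\{0,1\}$ so that the outer pair $\pm e^{i\phi}a_\phi^{1/2}$ and the inner pair $\pm e^{i\phi}$ never collide and no spurious $t^{-1}$ terms appear. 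This non-degeneracy is exactly what the hypothesis $0<|\phi|<\pi/2$ together with $a_\phi\to A_\phi$ delivers, and once it is in hand the rest of the argument is routine implicit-function-theorem bookkeeping.
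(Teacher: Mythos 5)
Your proposal is correct and is essentially the paper's own argument: the paper treats Proposition \ref{prop3.5} as an immediate consequence of \eqref{3.11}, reading off the four roots of the monic quartic $16(e^{2i\phi}-\lambda^2)^2\mu^2$ by exactly this perturbation of the leading part $(e^{2i\phi}-\lambda^2)(e^{2i\phi}a_\phi-\lambda^2)$, after which the stated factorization of $\mu^2$ is automatic because the quartic is monic with precisely these zeros. Your closing caveat is also the right one: the uniform non-degeneracy of $a_\phi$ (bounded and bounded away from $0$ and $1$) is what the error terms and simplicity require, and in the paper this is part of the working supposition (cf.\ \eqref{4.1} and Remark \ref{rem3.3b}) rather than something proved at this stage.
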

%%%%%%%%%%%%%%%%%%%%%%%%%%%%%%
%%%%%%% Remark 3.5 %%%%%%%%%%%%%%
\begin{rem}\label{rem3.3b}
For a solution $y(t)$ of (P$_{\mathrm{V}}$), let us consider $a_{\phi}(t)$
with $(y,y^*)=(y(t),y_t(t))$. 
On the positive real axis all the solutions corresponding
to the monodromy data such that $m^0_{11}m^0_{21}m^1_{11}m^1_{12} \not=0$ are 
given by \cite[Theorems 3.1 and 4.1]{Andreev-Kitaev}. By using the expressions
of these solutions it is easy to verify $a_0(t)$ $(=a_{\phi}(t)|_{\phi=0}) 
 \ll t^{-\varepsilon}$ as 
$t \to \infty$ (for solutions of \cite[Theorem 4.1]{Andreev-Kitaev}, as 
$t\to \infty$ along a suitable path avoiding poles). Then 
$\re a_{\phi}(t)^{1/2} \ll |t^{-\varepsilon}|+o(1)$ uniformly in $t$ 
for sufficiently small $|\phi|$, which implies that, as long as 
$m^0_{11}m^0_{21}m^1_{11}m^1_{12} \not=0$, every corresponding 
solution fulfills $0 \le \re a_{\phi}(t)^{1/2} <1$ if $|\phi|$ is sufficiently 
small. On the other hand, for a general solution in \cite[Theorem 2.18]{S-2018} 
along the imaginary axis, $a_{\pi/2}(t)= 1+O(t^{-\varepsilon}).$ 
\end{rem}
%%%%%%%%%%%%%%%%%%%%%%%%%%
%%%%%%% Remark 3.6 %%%%%%%%%%%%
\begin{rem}\label{rem3.3c}
To the monodromy data such that $m^0_{11}m^0_{21}m^1_{11}m^1_{12}=0$ correspond
truncated solutions in sectors containing the positive real axis \cite{Andreev},
\cite{Andreev-Kitaev-2}. Let us consider $a_{\phi}(t)$
with $(y,y^*)=(y(t),y_t(t))$. Then we have $a_{\phi}(t) \ll t^{-1}$
for $\phi$ in some intervals containing $\phi=0$. In the case $m^1_{11}=0$ the
solution $y(x) \sim -1+ cx^{-1/2} e^{ix/2}$ in $0 \le \arg x \le \pi$
\cite[Proposition 5]{Andreev-Kitaev-2}, \cite[Corollary 5.2]{Andreev-Kitaev}
satisfies $a_{\phi}(t) \ll t^{-1}$ for $0\le \phi \le \pi.$ If $m^0_{11}=
m^1_{11}=0,$ then $y(x) \sim -1+ 4(\theta_0+\theta_1-1)x^{-1}$ in $|\arg x|
<\pi$ \cite[Proposition 2]{Andreev-Kitaev-2}, \cite[Corollary 5.3]
{Andreev-Kitaev} satisfies $a_{\phi}(t)\ll t^{-2}$ for $|\phi|<\pi.$ For
$m^1_{12}=0$ or $m^0_{21}=0$, truncated solutions such that 
$a_{\phi}(t) \ll t^{-1}$ for $|\phi|<\pi/2$ are given by \cite{Andreev}.
\end{rem}
%%%%%%%%%%%%%%%%%%%%%%%%%%%
%%%%%%%%%%%%%%%%%%%%%%%%%%%%%%%%%
%%%%%%%%%%%%%%%%%%%%%%%
By \eqref{3.11} the characteristic root $\mu=\mu(t,\lambda)$ is written in the
form
%%%%%%%%%%%%%%%%%%%%%%%%%%%%%%%%%%%
%%%%% (3.13) %%%%%%%%%%%%%%%
\begin{equation}\label{3.13}
\mu= \frac 14 \sqrt{ \frac{e^{2i\phi}a_{\phi}-\lambda^2}{e^{2i\phi}-\lambda^2}}
+ \frac {\theta_{\infty} \lambda t^{-1} }{2 \sqrt{(e^{2i\phi}-\lambda^2)
(e^{2i\phi}a_{\phi} -\lambda^2) }} +g_2(t,\lambda) t^{-2}
\end{equation}
%%%%%%%%%%%%%%%%%%%%%%%%%%
as $t \to \infty.$
Here $g_2(t,\lambda)$ has branch points at $\lambda^0_{1,2},$ $\lambda_{1,2}$,
$\pm e^{i\phi}$ and $\pm e^{i\phi} a_{\phi}^{1/2}$, but it fulfills 
$g_2(t,\lambda) \ll 1$ if  
$|\lambda^2-e^{2i\phi}a_{\phi}|^{-1} +|\lambda^2-e^{2i\phi} |^{-1}\ll 1.$
The algebraic function $\mu(t,\lambda)$ is given on the Riemann surface
consisting of two copies of $\lambda$-plane $\mathbb{P}_+$ and $\mathbb{P}_-$
glued along the cuts 
$[\lambda_1, \lambda^0_1],$ $[\lambda^0_2, \lambda_2]$ (cf. Figure 
\ref{stokes0}, (a)).
In \eqref{3.13}, each square root is fixed in such a way that 
$$
 \sqrt{\frac {e^{2i\phi}a_{\phi} -\lambda^2}{e^{2i\phi}-\lambda^2}}\to 1, \quad
 \lambda^{-2} \sqrt{(e^{2i\phi}a_{\phi} -\lambda^2)
(e^{2i\phi}-\lambda^2)} \to - 1
$$
as $\lambda \to \infty$ on $\mathbb{P}_{+}.$ Then, 
for $a_{\phi}^{1/2}$ as in Proposition \ref{prop3.5},
$a^{-1/2}_{\phi}\sqrt{(e^{2i\phi}a_{\phi}-\lambda^2)/(e^{2i\phi}-\lambda^2)}$, 
$e^{-2i\phi} a^{-1/2}_{\phi} \sqrt{(e^{2i\phi}a_{\phi}-\lambda^2)
(e^{2i\phi}-\lambda^2)} \to 1$ as $\lambda\to 0$ on $\mathbb{P}_+$.
\par
A Stokes curve is defined by
$$
\re \int^{\lambda}_{\lambda_*} \mu(t,\tau) d\tau =0,
$$
where $\lambda_*$ is a turning point \cite{F}. This curve connects 
$\lambda_*$ to another turning point, $\pm e^{i\phi}$ or $\infty.$ 
The Stokes graph consists of Stokes curves, turning points and singular points.
\par
Our WKB analysis will be carried out under the supposition 
$a_{\phi}(t)\to A_{\phi}$ as $t\to \infty$ (cf. \eqref{4.1}), where
$A_{\phi}$ is a unique solution of the Boutroux equations \eqref{2.1}. 
Suppose that $0<|\phi|<\pi/2$.  
Let us consider the {\it limit} Stokes graph for $t=\infty$, in which the 
limit turning points $\lambda_{1,\,2}(\infty)$ are also denoted by 
the same symbols $\lambda_{1,\,2}$ as for $t\not=\infty$. 
Set $\mathbb{P}_+^{\infty} \cup \mathbb{P}_-^{\infty}=\lim_{t\to
\infty} \mathbb{P}_+ \cup \mathbb{P}_-$, which is a two-sheeted Riemann
surface glued along the cuts $[-e^{i\phi},\lambda_2],$ $[\lambda_1, e^{i\phi}]$
with $\lambda_{1,\,2}=\lambda_{1,\,2}(\infty)=\pm e^{i\phi}A_{\phi}^{1/2}$ and
$\lambda^0_{1,\,2}(\infty)=\pm e^{i\phi}$.
As long as the turning points do not coalesce,  
the limit Stokes curve for $t=\infty$ reflects the Boutroux equations.  
Then this limit
Stokes graph on $\mathbb{P}_+^{\infty}$ is considered to be as in 
Figure \ref{stokes0} (b), (c) (cf. Proposition \ref{propA.17}), 
in which, if $0<\phi <\pi/2,$ the limit
Stokes curves connect $\lambda_1$ to $e^{i\phi}$, $\lambda_2$ and $ i \infty,$ 
and $\lambda_2$ to $-e^{i\phi}$ $\lambda_1$ and $- i \infty.$ 
%%%%%%%%%%%%%%%%%%%%%%%%%%%%%%%%%%%%%%%%%%%%%%%%%%%%%%%%%%%%
%%%%%%%%%%%%%%%%%%%%%%%%%%%%%%%%%%%%%%%%%%%%%%%%%%%%%
%%%%%%%%%%%%%%  Figure 3.1 %%%%%%%%%%%%%%%%%%
%%%%%%%%%%%%%%%%%%%%%%%%%%%%%%%%%%%%%%%
%%%%%%%%%%%%%%%%%%%%%%%%%%%%%%%%%%%%%%%
{\small
\begin{figure}[htb]
\begin{center}
\unitlength=0.75mm
%%%%%%%%%%%%%%%%%%%%%%%%%%%%%%%%%%
%%%%%%%%%%%%%%%%%%%%%%%%%%%%%%%%%%
%%%%%%%%%%%%%%%%%%
\begin{picture}(55,55)(-25,-30)
\put(-22,-5){\circle*{1}}
\put(-7,-8){\circle*{1}}
\put(7,8){\circle*{1}}
\put(22,5){\circle*{1}}
\put(-23,-8){\circle*{1}}
\put(23,8){\circle*{1}}

\put(-22,-4.5){\line(5,-1){15}}
\put(-22,-5.5){\line(5,-1){15}}
\put(7,7.5){\line(5,-1){15}}
\put(7,8.5){\line(5,-1){15}}
\put(-25,-1){\makebox{$\lambda^0_2$}}
\put(-4,-9){\makebox{$\lambda_2$}}
\put(-1,8){\makebox{$\lambda_1$}}
\put(20,-1){\makebox{$\lambda^0_1$}}
\put(-29,-14){\makebox{$-e^{i\phi}$}}
\put(22,10){\makebox{$e^{i\phi}$}}
\put(-25,-34){\makebox{(a) $[\lambda_1,\lambda_1^0]$, 
$[\lambda_2^0,\lambda_2]$}}
\end{picture}
%%%%%%%%%%%%%%%%%%
\quad\quad
%%%%%%%%%%%%%%%%%%%%%%%%%%%%%%%%%%
\begin{picture}(60,55)(-30,-30)
\put(19,-6){\circle*{1.5}}
\put(8,-8){\circle*{1.5}}
\put(-8,8){\circle*{1.5}}
\put(-19,6){\circle*{1.5}}

\thicklines
 \qbezier (8,-8) (6,-4) (0, 0)
\qbezier (-8,8) (-6,4) (0, 0)
\qbezier (8,-8) (3,-16) (2, -24)
\qbezier (8,-8) (17,-8) (19, -6)
\qbezier (-8,8) (-3,16) (-2, 24)
\qbezier (-8,8) (-17,8) (-19, 6)
\put(21,-11.5){\makebox{$e^{i\phi}$}}
\put(-25,7.5){\makebox{$-e^{i\phi}$}}
\put(0,-10){\makebox{$\lambda_1$}}
\put(-5,7){\makebox{$\lambda_2$}}
\put(-17,-34){\makebox{(b) $-\pi/2<\phi <0$}}
\end{picture}
%%%%%%%%%%%%%%%%%%
\quad\quad
%%%%%%%%%%%%%%%%%%%%%%%%%%%%%%%
\begin{picture}(60,55)(-30,-30)
\put(-19,-6){\circle*{1.5}}
\put(-8,-8){\circle*{1.5}}
\put(8,8){\circle*{1.5}}
\put(19,6){\circle*{1.5}}

\thicklines
 \qbezier (-8,-8) (-6,-4) (0, 0)
\qbezier (8,8) (6,4) (0, 0)
\qbezier (-8,-8) (-3,-16) (-2, -24)
\qbezier (-8,-8) (-17,-8) (-19, -6)
\qbezier (8,8) (3,16) (2, 24)
\qbezier (8,8) (17,8) (19, 6)
\put(-31,-11.5){\makebox{$-e^{i\phi}$}}
\put(20,7.5){\makebox{$e^{i\phi}$}}
\put(-5,-10){\makebox{$\lambda_2$}}
\put(1,7){\makebox{$\lambda_1$}}
\put(-17,-34){\makebox{(c) $0 <\phi <\pi/2$}}
\end{picture}
%%%%%%%%%%%%%%%%%%
%%%%%%%%%%%%%%%%%%%%%%%%%%%%%%
%%%%%%%%%%%%%%%%%%%%%%%%%%%%%%%%%
\end{center}
\caption{Cuts on $\mathbb{P}_+$ and the limit Stokes graph on $\mathbb{P}_+
^{\infty}$}
\label{stokes0}
\end{figure}
}
%%%%%%%%%%%%%%%%%%%%%%%%%%%%%%%%%%%%%%%%%%%%%%%%%%%%%%%%
%%%%%%%%%%%%%%%%%%%%%%%%%%%%%%%%%%%%%%%%%%%%%%%%%%%%%%%%%%%%%%%%%%%%%%%5
%%%%%%%%%%%%%%%%%%%%%%%%%%%%%%%%%%%%%%%%%%%%
\par
An unbounded domain $\mathcal{D} \subset \mathbb{P}_+^{\infty} \cup 
\mathbb{P}_-^{\infty}$
is called a canonical domain if, for each $\lambda \in \mathcal{D}$, there
exist contours $\mathcal{C}_{\pm}(\lambda)\subset \mathcal{D}$ ending at 
$\lambda$ such that
%%%%%%%%%%%%%%%%%%%%%%%%%%%%
\begin{align}\label{3.a}
\re \int^{\lambda}_{\lambda_-} \mu(\tau) d\tau \to -\infty
\quad \biggl (\text{respectively,} \,\,\, 
\re \int^{\lambda}_{\lambda_+} \mu(\tau) d\tau \to +\infty \biggr )
\end{align}
%%%%%%%%%%%%%%%%%%%%%%%%%%%
as $\lambda_- \to \infty$ along $\mathcal{C}_-(\lambda)$ 
(respectively, as $\lambda_+ \to \infty$ along $\mathcal{C}_+(\lambda)$)
(see \cite{F}, \cite[p. 242]{FIKN}). The interior of a canonical domain 
contains exactly one Stokes curve, and
its boundary consists of Stokes curves.  
%%%%%%%%%%%%%%%%%%%%%%%%%%%%%%%%%%%%%%%
%%%%%% ssc 3.5 %%%%%%
\subsection{WKB-solution}\label{ssc3.5}
%%%%%%%%%%%%%%%%%%%%%%
The following is a WKB-solution of \eqref{3.6} in a canonical domain.
%%%%%%%%%%%%%%%%%%%%%%%%%%%%%%%%%%%
%%%%%%%% Proposition 3.3 %%%%%%%%%
\begin{prop}\label{prop3.6}
In the canonical domain whose interior contains a Stokes curve issuing from
the turning point $\lambda_1$ or $\lambda_2$, system \eqref{3.6} with $\hat{u}
\equiv 1$ admits a solution expressed by
$$
\Psi_{\mathrm{WKB}}(\lambda) = T(I+O(t^{-\delta})) \exp\biggl(\int
^{\lambda}_{\tilde{\lambda}_*} \Lambda(\tau) d\tau \biggr)
$$
outside suitable neighbourhoods of zeros and poles of $b_1\pm ib_2$ as long as
$|\lambda\pm e^{i\phi} |\gg t^{-2+2 \delta},$ %% $|\lambda-\lambda_{\iota}^0|,$
$|\lambda-\lambda_{\iota}| \gg t^{-2/3 +(2/3)\delta}$ $(\iota =1,2)$,  
$0 < \delta <1$ being arbitrary. 
Here $\tilde{\lambda}_*$ is a base point near $\lambda_1$ or
$\lambda_2$, and $\Lambda(\tau)$ and $T$ are given by
$$
\Lambda(\lambda) = t\mu \sigma_3 -\diag T^{-1}T_{\lambda}, \quad
T= \begin{pmatrix}
    1  &  \dfrac{b_3-\mu}{b_1+ib_2} \\
  \dfrac{\mu - b_3}{b_1-ib_2}   &  1
\end{pmatrix}.
$$
\end{prop}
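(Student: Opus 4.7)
The plan is to build $\Psi_{\mathrm{WKB}}$ by the standard matrix WKB scheme, i.e. diagonalise $\mathcal{B}$, factor out the leading exponential, and solve a Volterra equation for the remainder, following the justification scheme of Kitaev~\cite{Kitaev-1}. First, from $\mathcal{B}|_{\hat u\equiv 1}=b_3\sigma_3+b_2\sigma_2+b_1\sigma_1$ with characteristic roots $\pm\mu$, a direct calculation using the columns of $T$ as eigenvectors gives $T^{-1}\mathcal{B}T=\mu\sigma_3$ (with $\det T=2\mu(\mu-b_3)/(b_1^2+b_2^2)$), valid away from zeros of $b_1\pm ib_2$ and away from turning points. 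After the substitution $Y=TZ$ the system becomes
$$
\frac{dZ}{d\lambda}=\bigl(t\mu\sigma_3-T^{-1}T_\lambda\bigr)Z = \bigl(\Lambda - N_0\bigr)Z,
$$
where $\Lambda=t\mu\sigma_3-\diag T^{-1}T_\lambda$ is the diagonal part and $N_0$ the off-diagonal part of $T^{-1}T_\lambda$.

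Next, seek $Z=(I+R)E$ with $E(\lambda)=\exp\bigl(\int^\lambda_{\tilde\lambda_*}\Lambda(\tau)d\tau\bigr)$. Substitution yields
$$
R'=[\Lambda,R]-N_0(I+R),
$$
which, entrywise, is $R_{ii}'=-(N_0 R)_{ii}$ on the diagonal and $R_{ij}'-(\Lambda_{ii}-\Lambda_{jj})R_{ij}=-(N_0(I+R))_{ij}$ off the diagonal. Integrating against the factor $\exp\bigl(-\int(\Lambda_{ii}-\Lambda_{jj})d\tau\bigr)$ converts the off-diagonal equations into a Volterra system, with the lower endpoint taken at the $\pm\infty$ end of the appropriate contour $\mathcal{C}_\pm(\lambda)\subset\mathcal{D}$: for $R_{12}$, since $\Lambda_{11}-\Lambda_{22}=2t\mu+O(1)$, the integration is along $\mathcal{C}_+$, and for $R_{21}$ along $\mathcal{C}_-$; for the diagonal entries one integrates from $\tilde\lambda_*$. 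The defining property of a canonical domain guarantees $\exp\bigl(\int^\lambda_\tau(\Lambda_{ii}-\Lambda_{jj})ds\bigr)\to 0$ as $\tau$ recedes to $\infty$ along $\mathcal{C}_\pm$, so the improper integrals converge.

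It remains to estimate the kernel and close the iteration. Using Proposition~\ref{prop3.5} one has, near a turning point $\lambda_\iota$, $\mu\asymp |\lambda-\lambda_\iota|^{1/2}$ while $b_1\pm ib_2=O(1)$, whence the entries of $N_0=T^{-1}T_\lambda\mid_{\mathrm{off}}$ satisfy $N_0=O(|\lambda-\lambda_\iota|^{-1})$; near $\pm e^{i\phi}$, $N_0$ has a simple pole. The exclusion radii $|\lambda-\lambda_\iota|,\,|\lambda-\lambda_\iota^0|\gg t^{-2/3+(2/3)\delta}$ and $|\lambda\pm e^{i\phi}|\gg t^{-2+2\delta}$ are chosen precisely so that, after the turning-point rescaling $\lambda=\lambda_\iota+t^{-2/3}\zeta$ (which makes $t\int \mu\,d\lambda$ comparable to $\zeta^{3/2}$), the total $L^1$-norm of $\exp(\int(\Lambda_{ii}-\Lambda_{jj}))\cdot N_0$ along $\mathcal{C}_\pm$ is $O(t^{-\delta})$. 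A Picard iteration in $L^\infty$ on the canonical domain with the excluded disks then produces a unique solution $R=O(t^{-\delta})$; substituting back $Y=TZ=T(I+R)\exp\bigl(\int^\lambda_{\tilde\lambda_*}\Lambda\,d\tau\bigr)$ gives the asserted form.

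The main obstacle is Step three, the uniform estimate of the Volterra kernel. The off-diagonal coupling $N_0$ blows up at the turning points $\lambda_\iota$ and at the regular poles $\pm e^{i\phi}$, while the decay of $\exp\bigl(\int(\Lambda_{ii}-\Lambda_{jj})\bigr)$ along $\mathcal{C}_\pm$ is only marginal and degenerates at those same points. Balancing these two effects is exactly what fixes the exclusion radii and the exponent $t^{-\delta}$; the bookkeeping must be done carefully so as to ensure uniformity in $\lambda$ over the full canonical domain, and in particular up to the Stokes curves on its boundary. This is the essential technical content of the justification scheme of~\cite{Kitaev-1}, which we adapt to the present situation.
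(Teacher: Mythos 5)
Your proposal is correct in outline and follows the same general scheme as the paper (diagonalise by $T$, pass to an integral equation along progressive contours in the canonical domain, and let the exclusion radii around $\lambda_\iota$, $\lambda_\iota^0$, $\pm e^{i\phi}$ fix the exponent $t^{-\delta}$), but it differs in the mechanism by which the crucial factor of $t^{-1}$ is extracted. You work directly with the one-step Volterra equation $R'=[\Lambda,R]-N_0(I+R)$, whose off-diagonal kernel $\exp\bigl(\int(\Lambda_{ii}-\Lambda_{jj})\bigr)N_0$ is $O(1)$ in $t$ pointwise; the claimed $O(t^{-\delta})$ bound is therefore not an $L^1$ property of $N_0$ itself but comes from the localisation of the oscillatory factor to a $\tau$-interval of length $\sim (t|\mu(\lambda)|)^{-1}$, giving the effective bound $|N_0(\lambda)|/(t|\mu(\lambda)|)$, which is $\ll t^{-\delta}$ exactly under the stated exclusion radii; moreover no such exponential decay is available for the diagonal entries, where smallness must instead be inherited from the quadratic term $N_0R$, and integrability at $\lambda=\infty$ on both sheets needs $T=I+O(\lambda^{-1})$ (and $\mu+1/4\ll t^{-1}\lambda^{-1}$ on $\mathbb{P}_-$). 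The paper avoids these oscillatory-kernel estimates by first performing the shearing transformation $\tilde Y=(I+T_1)(I+X_1)Z$ with $T_1=(2t\mu)^{-1}\bigl(\begin{smallmatrix}0&R_{12}\\-R_{21}&0\end{smallmatrix}\bigr)$ solving $[t\mu\sigma_3,T_1]=R$, which converts the $O(1)$ off-diagonal perturbation into a new kernel $Q$ that is manifestly $O(t^{-1})$ times controlled singular factors ($\ll t^{-1}|\lambda\pm e^{i\phi}|^{-3/2}$ near $\mp e^{i\phi}$, $\ll t^{-1}|\lambda-\lambda_\iota|^{-5/2}$ near the turning points); the remaining integral equation for $X_1$ then only requires boundedness of the exponential along the contours, handled as in the proof of Theorem 7.2 of \cite{FIKN}, and yields the explicit error $t^{-1}(|\lambda\pm e^{i\phi}|^{-1/2}+|\lambda-\lambda_1|^{-3/2}+|\lambda-\lambda_2|^{-3/2}+1)\ll t^{-\delta}$. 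So your route is viable, but the step you flag as "the essential technical content" is precisely what the paper's $T_1$ makes algebraic; if you keep the one-step scheme you must carry out the localisation bookkeeping uniformly along contours that run near the Stokes curve (where $\re\int\mu=0$ and the exponential does not decay), and treat the diagonal entries and the behaviour at infinity separately.
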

%%%%%%%%%%%%%%%%%%%%%%%%%%%%
%%%% Remark 3.7 %%%%%%%%%%%%%
\begin{rem}\label{rem3.4}
In this proposition
\begin{align*}
\diag T^{-1}T_{\lambda} &= \frac 1{2\mu(\mu+b_3)} \bigl( i(b_1b'_2 -b'_1b_2)
\sigma_3 + (b_3 \mu' - b'_3\mu)I ) \quad\quad
(b'_{\iota}=\partial b_{\iota} /\partial\lambda)
\\
&= \frac 14 \Bigl(1-\frac{b_3}{\mu} \Bigr) \frac{\partial}{\partial \lambda}
\log \frac{b_1+ib_2}{b_1-ib_2} \sigma_3  +\frac 12 \frac{\partial}{\partial
\lambda} \log \frac{\mu}{\mu +b_3} I.
\end{align*} 
\end{rem}
%%%%%%%%%%%%%%%%%%%%%
\begin{proof}
By $Y=T\tilde{Y}$ system \eqref{3.6} with $\hat{u}\equiv 1$ becomes
%%%%%% (3.15) %%%%%%
\begin{equation}\label{3.15}
\tilde{Y}_{\lambda} =(t\mu \sigma_3 -T^{-1}T_{\lambda})\tilde{Y}.
\end{equation}
To remove the off-diagonal part 
$R=T^{-1}T_{\lambda} -\diag T^{-1}T_{\lambda}$ set
$$
T_1 =\frac 1{2t\mu} \begin{pmatrix}  0 & R_{12}  \\  -R_{21} &  0 
\end{pmatrix},
\quad R_{12} =\frac{\mu +b_3}{2\mu} \frac{\partial}{\partial\lambda}
\Bigl( \frac{b_3-\mu}{b_1+ib_2} \Bigr), 
\quad R_{21} =\frac{\mu +b_3}{2\mu} \frac{\partial}{\partial\lambda}
\Bigl( \frac{\mu-b_3}{b_1-ib_2} \Bigr), 
$$
which fulfills $[t\mu\sigma_3, T_1] = R.$ Now we would like to find $X$
such that the transformation $\tilde{Y}=(I+T_1)(I+X) Z$ takes 
\eqref{3.15} to
$$
Z_{\lambda} =\Lambda Z =(t\mu\sigma_3 -\diag T^{-1}T_{\lambda})Z,
$$
that is,
$$
(T_1)_{\lambda}(I+X) +(I+T_1)X_{\lambda} +(I+T_1)(I+X)
\Lambda = (\Lambda -R)(I+T_1)(I+X).
$$
It follows that
%%%%%%% (3.16) %%%%%%%%%
\begin{equation}\label{3.16}
X_{\lambda} =[\Lambda,X] +(I+T_1)^{-1} Q(I+X)
\end{equation}
with $Q =-(T_1)_{\lambda} -R(I+T_1) +[\Lambda, T_1] = -(T_1)
_{\lambda} -T^{-1}T_{\lambda} T_1 +T_1 \diag T^{-1}T_{\lambda}.$
Then $\|Q\|$ is estimated as follows:
\par
(1) Near $\lambda= \mp e^{i\phi},$ we have $b_3,$ 
$|b_1 \pm ib_2| \ll |\lambda \pm e^{i\phi} |^{-1},$ 
$|b_1b'_2 -b'_1b_2 | \ll |\lambda \pm e^{i\phi}|^{-2},$
$\mu \asymp |\lambda \pm e^{i\phi}|^{-1/2},$ and hence $\| R\| \ll |\lambda
\pm e^{i\phi}|^{-1} \asymp \mu^2,$ $\| \diag T^{-1}T_{\lambda} \| \ll
|\lambda \pm e^{i\phi} |^{-1},$ $\|T_1\| \ll t^{-1} |\lambda \pm e^{i\phi}|
^{-1/2}$ and 
$\| Q \| \ll t^{-1}|\lambda \pm e^{i\phi}|^{-3/2} $.  
\par
(2) Near $\lambda= \lambda_{\iota}$ $(\iota=1,2)$ we have 
$b_3,$ $|b_1 b'_2-b'_1 b_2 | \ll 1,$
$\mu \asymp |\lambda - \lambda_{\iota}|^{1/2},$ and hence $\| R\| \ll |\lambda
-\lambda_{\iota}|^{-1},$ $\| \diag T^{-1}T_{\lambda} \| 
\ll |\lambda -\lambda_{\iota} |^{-1},$ 
$\|T_1\| \ll t^{-1} |\lambda -\lambda_{\iota}|^{-3/2},$ and
$\| Q \| \ll t^{-1} |\lambda-\lambda_{\iota}|^{-5/2} $. 
\par
(3) Near $\lambda=\infty,$ observe that $\mu=1/4+O(\lambda^{-1})$ on
$\mathbb{P}_+^{\infty}$ and $b_3=1/4 +O(\lambda^{-1})$. 
Then $\pm(b_3-\mu)/(b_1\pm ib_2)
= -(b_1\mp ib_2)/(\mu+b_3) \ll \lambda^{-1},$ which means $T=I+O(\lambda^{-1}).$
It is easy to see that $\|T_1 \| \ll t^{-1}\lambda^{-2},$ 
$\| Q\|\ll t^{-1}\lambda^{-3}$ near $\lambda=\infty$ on $\mathbb{P}_+^{\infty}$.
Near $\infty$ on $\mathbb{P}_-^{\infty},$ $\|T_1\| \ll t^{-1}\lambda^{-1},$ 
$\|Q\| \ll t^{-1}\lambda^{-2},$ since $\mu + 1/4 \ll t^{-1}\lambda^{-1}.$
\par\noindent
The $\sigma_3$-component of $\diag T^{-1}T_{\lambda}$ is  
$D_3(\lambda)=\tfrac14 (1-{b_3}/{\mu})(\log((b_1+ib_2)/(b_1-ib_2)))_{\lambda}
\sigma_3$ (cf.~Remark \ref{rem3.4}), which satisfies 
$\| D_3(\lambda)\|\ll |\lambda \mp e^{i\phi}|^{-1/2}$ near 
$\lambda=\pm e^{i\phi};$ $\ll |\lambda-\lambda_{\iota}|^{-1/2}$ near 
$\lambda=\lambda_{\iota};$ and $\ll \lambda^{-2}$ near $\lambda=\infty.$
\par
Every solution of the integral equation
\begin{align*}
X(\lambda) &= \int_{\mathcal{C}(\lambda)} \exp\Bigl(\int^{\lambda}_{\xi}
\Lambda(\tau)d\tau \Bigr) (I+T_1(\xi))^{-1} Q(\xi) (I+X(\xi))\exp\Bigl(
-\int^{\lambda}_{\xi}\Lambda(\tau) d\tau \Bigr) d\xi
\\
 &= \int_{\mathcal{C}(\lambda)} e^{(tM(\lambda,\xi)-J(\lambda,\xi))\sigma_3}
 (I+T_1(\xi))^{-1} Q(\xi) (I+X(\xi)) 
e^{-(tM(\lambda,\xi)-J(\lambda,\xi))\sigma_3} d\xi
\end{align*}
with
$$
M(\lambda,\xi)=\int^{\lambda}_{\xi} \mu(\tau)d\tau,\quad
J(\lambda,\xi)=\int^{\lambda}_{\xi} D_3(\tau)d\tau
$$
solves \eqref{3.16}, where the set of contours $\mathcal{C}(\lambda)$
ending in $\lambda$ is chosen in such a way that for 
(1,2)- (respectively, (2,1)-) entry is $\mathcal{C}_{-}(\lambda)$ 
(respectively, $\mathcal{C}_+(\lambda)$) (cf. \eqref{3.a}), and
that those for (1,1)- and (2,2)-entries are paths joining $\lambda_{\iota}$
to $\lambda$. In the canonical domain $J(\lambda,\xi)$ is
bounded uniformly as long as $\lambda$ and $\xi$ are outside neighbourhoods
of $\pm e^{i\phi}$ and $\lambda_{\iota}$.
In the canonical domain, as long as 
$|\lambda \pm e^{i\phi}|\gg t^{-2(1-\delta)}$, 
$|\lambda-\lambda_{\iota}|\gg t^{-(2/3)(1-\delta)}$, we have $\|T_1\| \ll
t^{-\delta}$, and
\begin{equation*}
\|X \| \ll \|Q\| \ll t^{-1}(|\lambda \pm e^{i\phi} |^{-1/2}
+|\lambda-\lambda_1|^{-3/2} +|\lambda-\lambda_2|^{-3/2} +1)
\\
\ll t^{-\delta},
\end{equation*}
which implies the proposition (cf.~the proof of \cite[Theorem 7.2]{FIKN}). 
\end{proof}
%%%%%%%%%%%%%%%%%%%%%%%%%%%%%%%%%%
%%%%%%%%%%%%%%%%%%%%%%%%%%%%%%%%%%
%%%%% ssc 3.6 %%%%%%
\subsection{Local solutions around turning points}\label{ssc3.6}
%%%%%%%%%%%%%%%%%%%%%%%%%%%
For $\iota=1$ or $2$, if $|\lambda-\lambda_{\iota}| \ll t^{-2/3},$ the 
WKB-solution given above fails in expressing the asymptotic behaviour. 
Consider the system
%%%%%%%%%%%%%%%%%%%%%%%%%%%%%%%%%%%%%
\begin{equation}\label{3.18}
\frac{dW}{d\zeta} = \begin{pmatrix}  0 & 1 \\ \zeta & 0 \end{pmatrix} W,
\end{equation}
which admits canonical matrix solutions 
$W_{\nu}(\zeta)$ $(\nu= 0, \pm1, \pm2, \ldots)$ such that
$$
W_{\nu}(\zeta) =\zeta^{-(1/4)\sigma_3}(\sigma_3+\sigma_1)(I+O(\zeta^{-3/2}))
\exp\bigl( \tfrac 23\zeta^{3/2} \sigma_3 \bigr)
$$
as $\zeta \to \infty$ through the sector
$\Sigma_{\nu}:$ $|\arg \zeta -(2\nu -1)\pi/3 |<2\pi/3,$ and that 
$W_{\nu+1}(\zeta)=W_{\nu}(\zeta)G_{\nu}$ with
$$
G_1= \begin{pmatrix}   1  &  -i \\  0  & 1 \end{pmatrix},  \quad
G_2= \begin{pmatrix}   1  &  0 \\  -i  & 1 \end{pmatrix},  \quad
G_{\nu+1} =\sigma_1 G_{\nu} \sigma_1.
$$
In particular 
$$
W_1(\zeta)=\begin{pmatrix}  \mathrm{Bi}(\zeta) & \mathrm{Ai}(\zeta) \\
       \mathrm{Bi}_{\zeta}(\zeta) & \mathrm{Ai}_{\zeta}(\zeta) \end{pmatrix},
$$
where $\mathrm{Ai}(\zeta)$ and $\mathrm{Bi}(\zeta)$ are the Airy functions
\cite{AS, HTF} such that
$\mathrm{Ai}(\zeta) \sim \zeta^{-1/4}\exp(-\frac 23\zeta^{3/2})$ as $\zeta
\to\infty$ in $|\arg\zeta|<\pi$ and that $\mathrm{Bi}(\zeta)=\omega^{-1/4}
\mathrm{Ai}(\omega^{-1}\zeta)$ with $\omega=e^{2\pi i/3}.$
Then we have the following local solution around each turning point.
%%%%%%%%%%%%%%%%%%%%%%%%%%%%%
%%%%%%% Proposition 3.4 %%%%%%%%
\begin{prop}\label{prop3.7}
For each turning point $\lambda_{\iota}$ $(\iota=1, 2)$ write $c_k=b_k(\lambda
_{\iota}),$ $c'_k=(b_k)_{\lambda}(\lambda_{\iota})$ $(k=1,2,3),$ and suppose
that $c_k,$ $c'_k$ are bounded and $c_1 \pm ic_2 \not=0.$ 
Let $W(\zeta)$ be a given matrix solution of \eqref{3.18}, and let
$\lambda-\lambda_{\iota} = (2\kappa)^{-1/3}t^{-2/3}(\zeta +\zeta_0)$
with $\kappa=c_1c'_1 +c_2c'_2 +c_3c'_3,$ $|\zeta_0| \ll t^{-1/3}.$  
Then system \eqref{3.6} with $\hat{u}\equiv 1$ admits a matrix solution given by
\begin{equation*}
 \Phi_{\iota}(\lambda)= T_{\iota} (I+O(t^{-\delta'}) ) 
\begin{pmatrix}
1  &  0  \\  0 &  \hat{t}^{-1}  \end{pmatrix} W(\zeta), \quad  
 T_{\iota} = \begin{pmatrix}
  1  &   - \dfrac{c_3}{c_1 + ic_2}   \\   - \dfrac{c_3}{c_1 -ic_2}  &  1
\end{pmatrix}
\end{equation*}
with $\hat{t} = 2(2\kappa)^{-1/3} (c_1-i c_2) t^{1/3}$  
as long as $|\zeta| \ll t^{1/3-\delta'/3},$ that is, 
$|\lambda-\lambda_{\iota} |\ll t^{-1/3 -\delta'/3}$, 
$0<\delta'< 1$ being arbitrary. 
\end{prop}
%%%%%%%%%%%%%%%%%%%%%%%
%%%%%%%%%%%%%%%%%%%%%%%%%%%%%%%%%%%
%%%%%%%%%%%%%%%%%%%%%%%%%%%%%%%%%%%%
\begin{proof}
Since $\mu^2=b_1^2 +b_2^2 +b_3^2,$ we have $c_1^2 +c_2^2 +c_3^2 =\mu(\lambda
_{\iota})^2 =0.$ Write $\mathcal{B}(t,\lambda) =\mathcal{B}_0(t) +\mathcal{B}_1
(t,\lambda)$ with
$$
\mathcal{B}_0(t)= \mathcal{B}(t,\lambda_{\iota}) 
=\begin{pmatrix}
c_3  &  c_1 -i c_2    \\ c_1+i c_2  &  -c_3  
\end{pmatrix},  \quad
\mathcal{B}_1(t,\lambda) =\begin{pmatrix}
\delta_3  &  \delta_1 -i\delta_2  \\ \delta_1+i\delta_2  & -\delta_3  
\end{pmatrix}, 
$$
where $\delta_k=b_k -c_k$ $(k=1,2,3).$ Set $\eta=\lambda-\lambda_{\iota}.$ 
Observing that 
$\delta_k=\sum_{j\ge 1}c_{k,j}\eta^j$ with $c_{k,1}=c'_k,$ 
and that 
\begin{align*}
&T_{\iota}^{-1}\mathcal{B}_0 (t) T_{\iota} = \begin{pmatrix}
0  &  2(c_1-i c_2)  \\ 0 & 0 
\end{pmatrix},  
\\
&T_{\iota}^{-1}\mathcal{B}_1(t,\lambda) T_{\iota} 
= \begin{pmatrix}
ic_3^{-1}(c_2\delta_1 -c_1\delta_2) & 
(c_1+ic_2)^{-1}(c_1\delta_1+ c_2\delta_2 -c_3\delta_3)  \\ 
 (c_1-ic_2)^{-1}(c_1\delta_1+ c_2\delta_2 +c_3\delta_3)  & 
- ic_3^{-1}(c_2\delta_1 -c_1\delta_2) 
\end{pmatrix},  
\end{align*}
we have
\begin{align*}
 T_{\iota}^{-1} & \mathcal{B}(t,\lambda) T_{\iota}
 = \begin{pmatrix} 0  &  \gamma_0  \\ 0 & 0 \end{pmatrix}
 +   \begin{pmatrix}
 f(\eta) & g(\eta)  \\  h(\eta)  & -f(\eta) \end{pmatrix} \eta,
\end{align*}
where $f(\eta),$ $g(\eta)$ and $h(\eta)$ are analytic around $\eta=0$ and
\begin{align*}
& \gamma_0= 2(c_1-i c_2),  \quad 
f(0)= ic_3^{-1}(c'_1c_2 -c_1c'_2),   
\\
&g(0)= (c_1+ic_2)^{-1}(c_1c'_1+ c_2c'_2 -c_3c'_3), \quad
 h(0)=(c_1-ic_2)^{-1}(c_1c'_1+ c_2c'_2 +c_3c'_3). 
\end{align*}
Note that 
$$
\begin{pmatrix} f(\eta)\eta  &  \gamma_0+g(\eta)\eta \\
              h(\eta)\eta  &  -f(\eta)\eta      \end{pmatrix}(I+ H(\eta))
=(I+ H(\eta)) \begin{pmatrix} 0  &  \gamma_0 \\
              h_*(\eta)\eta  &  0     \end{pmatrix},
$$
in which $H(\eta)=L(\eta)\eta$ with
\begin{align*}
L(\eta)&=
\begin{pmatrix}  l(\eta)  &   0 \\  l_*(\eta)  & -l(\eta)
\end{pmatrix}
:=\frac{1}{2\gamma_0+g(\eta)\eta}\Biggl(
\begin{pmatrix}   0  &  0  \\    -2f(\eta) &  0 \end{pmatrix}
+ g(\eta)\sigma_3 \Biggr), 
\\
h_*(\eta)&=\frac{h(\eta)(1+l(\eta)\eta)-f(\eta)l_*(\eta)\eta}
{1-l(\eta)\eta} =h(\eta)+O(\eta).
\end{align*}
By
$$
Y=T_{\iota}(I+H(\eta))Z=T_{\iota}(I+L(\eta)\eta)Z, \quad L(0)
=\begin{pmatrix} q & 0 \\
 p & -q \end{pmatrix}
%%%% \quad p=-\frac{i(c'_1c_2-c_1c'_2)}{2c_3(c_1-ic_2)}, \quad
%%% q=  -\frac{c_1c'_1 +c_2c'_2 - c_3c'_3}{4c_3^2}
$$
system \eqref{3.6} is changed into
\begin{align*}
\frac{dZ}{d\eta} &= t \mathcal{B}_{\iota}(t,\eta) Z,
\\
 \mathcal{B}_{\iota}(t,\eta)&= \begin{pmatrix}  0  & \delta_0 
        \\ h_*(\eta)\eta   & 0  \end{pmatrix} -t^{-1}(I+L(\eta)\eta)^{-1}
(L(\eta)\eta)'
\\
& = \begin{pmatrix}  0  & 2(c_1-i c_2) 
 \\
\kappa (c_1-ic_2)^{-1} \eta h_0(\eta)  &  0    \end{pmatrix}
+ \begin{pmatrix}  -q  &  0  \\ -p & q  
\end{pmatrix} t^{-1} + t^{-1}\sum_{j\ge 1}B^*_j \eta^j, 
\end{align*}
where $h_0(\eta)=h_*(\eta)h(0)^{-1}=1+\eta h_1(\eta)=1+O(\eta).$
Let $\varphi(\xi)$ be such that $\varphi(0)=1$ and that
$\varphi(\xi) h_0(\xi\varphi(\xi))((\xi\varphi(\xi))')^2
=\varphi(\xi) h_0(\xi\varphi(\xi))(\varphi(\xi)+\xi\varphi'(\xi))^2=1.$
Then the change of variables $\eta=\xi\varphi(\xi)$ takes this system
to
\begin{equation*}
\frac{dZ}{d\xi} = t \biggl(\hat{\mathcal{B}}_{\iota,0}(t,\xi) 
 +(1+O(\xi))t^{-1} \Bigl( L(0) + \sum_{j\ge 1}\hat{B}^*_j \xi^j\Bigr)\biggr)Z
\end{equation*}
with
\begin{align*}
 \hat{\mathcal{B}}_{\iota,0}(t,\xi) 
& = \begin{pmatrix}  0  & 2(c_1-i c_2)(\xi\varphi(\xi))' 
 \\
\kappa(c_1-ic_2)^{-1}\xi\varphi(\xi) h_0(\xi\varphi(\xi))(\xi\varphi(\xi))'
  &  0    \end{pmatrix}
\\
& = \begin{pmatrix}  0  & 2(c_1-i c_2)(\xi\varphi(\xi))' 
 \\
\kappa(c_1-ic_2)^{-1}\xi/(\xi\varphi(\xi))'  &  0    \end{pmatrix}.
\end{align*}
Apply the additional transformation $Z=\diag [1, 1/ (\xi\varphi(\xi))' ]V$, 
and set $\xi=\eta+O(\eta^2)=\beta z$ with $\beta=(2\kappa)^{-1/3} t^{-2/3}$
and $\hat{t}=2(c_1-ic_2)\beta t\asymp t^{1/3}$. Then
%%%%%%%%%%%%%%%%%%%%%%%%%%%%%%%%%%%%%%%
\begin{equation}\label{3.e}
\frac{dZ}{dz} = \Biggl( \begin{pmatrix} -q\beta  &  \hat{t}  \\
\hat{t}^{-1}z- p\beta   &  q\beta  \end{pmatrix} + O(\hat{t}^{-4}z ) \Biggr)Z
\end{equation}
%%%%%%%%%%%%%%%%%%%%%%%%%%%%%%%%%%%%%%%
as long as $|\beta z|\asymp t^{-2/3}z$ is sufficiently small. The further 
change of variables
$$
Z=\begin{pmatrix}  1 & 0 \\ 0 & \hat{t}^{-1}  \end{pmatrix}
\begin{pmatrix}  1 & 0 \\ q\beta & 1  \end{pmatrix} V,
\quad \zeta=z-\zeta_0, \quad  \zeta_0=p\beta \hat{t} -q^2\beta^2 \ll t^{-1/3}
$$
yields
%%%%%% (3.17) %%%%%%
\begin{equation}\label{3.17}
\frac{dV}{d\zeta} = \Biggl( \begin{pmatrix}  0 & 1 \\ \zeta & 0 \end{pmatrix}
+ \Delta(t,\zeta)  \Biggr) V, \quad \Delta(t,\zeta) \ll {t}^{-1}(|\zeta|+
|t^{-1/3}|).
%%%%% |{t}^{-2/3}\zeta^2| 
\end{equation}
Suppose that $V=P(t,\zeta)W$ reduces \eqref{3.17} to \eqref{3.18}. 
Then $P=P(t,\zeta)$ satisfies
\begin{equation}\label{3.b}
\frac{dP}{d\zeta}=\begin{pmatrix} 0 & 1 \\ \zeta & 0 \end{pmatrix} P
-P\begin{pmatrix} 0 & 1 \\ \zeta & 0 \end{pmatrix} +\Delta(t,\zeta)P.
\end{equation}
Note that, in each sector $\Sigma_{\nu}$ $(\nu=0,\pm 1)$, the function 
$P_{\nu}(\zeta)=I+X_{\nu}(\zeta)$ such that 
%%%%%%%%%%%%%%%%%%%%%%%%%%%%%%%%%%%%%
\begin{equation}\label{3.c}
X_{\nu}(\zeta) = \int_{\mathcal{C}_{\nu}(\zeta)} W_{\nu}(\zeta)W_{\nu}(\xi)^{-1}
\Delta(t,\xi)(I+X_{\nu}(\xi)) W_{\nu}(\xi) W_{\nu}(\zeta)^{-1} d\xi
\end{equation}
%%%%%%%%%%%%%%%%%%%%%%%%%%%%%%%%%
solves \eqref{3.b} in $\Sigma_{\nu}$.
Here $\mathcal{C}_{\nu}(\zeta)$ is a set of contours $\gamma(\zeta)$ ending at 
$\zeta$ for each term of the integrand chosen according to the multiplier 
$g(\xi,\zeta)$ caused by $W_{\nu}(\zeta)W_{\nu}(\xi)^{-1}$, which 
belongs to $G_0\cup G_+ \cup G_-$ up to $(1+O(|\zeta|^{-3/2}+|\xi|^{-3/2})),$
where
$$
G_0=\{1,\,\,  \xi^{\pm 1/2},\,\, \zeta^{\pm 1/2},\,\, (\xi\zeta)^{\pm 1/2},\,\,
(\xi/\zeta)^{\pm 1/2} \}, \quad
G_{\pm}=\{\rho \exp(\pm\tfrac 43(\zeta^{3/2}-\xi^{3/2}))\,|
\,\, \rho \in G_0  \}.
$$
The choice of $\gamma(\zeta)$ is described as follows: 
(i) for $g(\xi,\zeta) \in G_0$ let $\gamma(\zeta)$ be a segment joining $0$ to
$\zeta$; and
(ii) if, say, $\zeta\in \Sigma_1$, for $g(\xi,\zeta)
\in G_+$ (respectively, $\in G_-$) choose $\gamma(\zeta)$ to be a line
issuing from $\zeta$ and tending to $+\infty$ (respectively, $-\infty$).
Then it is easy to see that $\int_{\gamma(\zeta)}g(\xi,\zeta)d\xi \ll \zeta^2$. 
For $\nu=0,$ $\pm 1,$ from \eqref{3.c} 
we derive $\| X_{\nu}(\zeta)\| \ll \zeta^2\Delta(t,\zeta) \ll
|t^{-1}\zeta^3|+|t^{-4/3}\zeta^2| \ll t^{-\delta'}$ as long as 
$|\zeta| \ll t^{1/3-\delta'/3}$ with $0<\delta'<1,$ 
and obtain the solutions $P_{\nu}(\zeta)=I+X_{\nu}(\zeta)
=I+O(t^{-\delta'})$ of \eqref{3.b} in $\Sigma_{\nu} \cap \{|\zeta|\ll t^{1/3
-\delta'/3}\}$. It remains to show that
there exists $Q(t,\zeta)=I+O(t^{-\delta'})$ in $|\zeta|\ll t^{1/3-\delta'/3}$
such that $V=Q(t,\zeta)W$ reduces \eqref{3.17} to \eqref{3.18} by using
$P_{\nu}(\zeta)$'s.
This uniform reduction is constructed by a parallel argument as in
\cite[Section 6.5]{Wasow} in which 
$\epsilon^{-2/3}x,$ $\diag[1,\epsilon^{-1/3}]z$, $\Sigma_{j}$ and 
``\,\,$\sim 1$\,'' correspond to our $\zeta,$ $W$, $\Sigma_{j-1}$ and
``\,\,$=I+O(t^{-\delta'})$\,'', respectively. Thus, for 
given $W(\zeta)$, we obtain a solution as in the proposition.
\end{proof}
%%%%%%%%%%%%%%%%%%%%%%%%%%%%%%%%%%%%%%%
%%%%%%%%%% Remark 3.8 %%%%%%%%%%%%%%%%%%%
\begin{rem}\label{rem3.100}
(1) Let $E(t):=\{\zeta\,|\,\, |\zeta|\ll t^{1/3-\delta'/3},\,\,
|\exp(\pm \frac 23 \zeta^{3/2})| \ll 1\}$. For every $g(\xi,\zeta)\in G_0\cup 
G_+\cup G_-$, we also have $\int_0^{\zeta} g(\xi,\zeta)d\zeta\ll \zeta^2$ 
uniformly in $E(t)$. Thus we may easily derive a solution of Proposition 
\ref{prop3.7} in $E(t)$ without using the reasoning of \cite{Wasow}. 
Since $E(t)$ contains all Stokes curves $\zeta=re^{\frac13(2k-1)i},$ $k=0,\pm1$, 
$r>0,$ this solution restricted to $E(t)$ is enough for our use, in which
matchings are along Stokes curves.
\par
(2) In the proof above, without the change of variables $\eta=\xi\varphi(\xi)$,
we have $\Delta(t,\zeta) \ll (|t^{-2/3}\zeta^2| +|t^{-1}\zeta| +|t^{-4/3}|)$ 
caused by the $\eta^2$ term of $h_*(\eta)$, and the estimates in the
proposition are $|\zeta|\ll t^{1/6-\delta'/6}$ and $|\lambda-\lambda_{\iota}|
\ll t^{-1/2-\delta'/6}.$ 
%% Moreover system \eqref{3.e} may be written in the form
%% $$
%% \epsilon \frac{dV}{dx}=\Biggl(\begin{pmatrix} 0 & 1 \\ x  & 0 \end{pmatrix}
%%  + \epsilon \sum_{j\ge 0}B_j^{**} x^j \Biggr)V
%% $$
%% with $x=\epsilon^{2/3} z,$ $\epsilon=t^{-1}$. By the uniform reduction
%% theorem \cite[Theorem 6.5.-1]{Wasow} the domain is extended to $|\zeta|
%% <\varepsilon_0|t|^{2/3}$, $|\lambda-\lambda_{\iota}|< \varepsilon'_0,$
%% for sufficiently small $\varepsilon_0$ and $\varepsilon'_0.$
\end{rem}
%%%%%%%%%%%%%%%%%%%%%%%%%%%%%%%%%%
%%%%%%%%%%%%%%%%%%%%%%%%%%%%%%
%%%%%%%%%%%%%%%%%%%%%%%%%%%%%%%
%%%% Section 4 %%%%%
\section{Monodromy matrices}\label{sc4}
%%%%%%%%%%%%%%%%%%%%%%%%%%%%%%%%%%
%%%%%%%%%%%%%%%%%%%%%%%%%%%%%%%%%%
%%%%%%%%%%%%%%%%%%%%%%%%%%%%%%%%%%%%%%
We would like to find the monodromy matrices $M^0,$ $M^1$ with respect to the 
matrix solution \eqref{3.8}. Let $M^0_*$ and $M^1_*$ be the monodromy matrices
in the case where \eqref{3.8} solves \eqref{3.6} with $\hat{u}\equiv 1.$
Since the gauge transformation $Y=\hat{u}^{\sigma_3/2}Y_* $
reduces \eqref{3.6} to the system with $\hat{u}\equiv 1,$ the monodromy matrices
$M^0,$ $M^1$ in the case of general $\hat{u}$ are given by
$$
M^0=\hat{u}^{\sigma_3/2} M^0_* \hat{u}^{-\sigma_3/2}, \quad
M^1=\hat{u}^{\sigma_3/2} M^1_* \hat{u}^{-\sigma_3/2}. 
$$
In this section we calculate $M^0_*$ and $M^1_*$ by using connection matrices
and $S^*_1$, $S^*_2$ such that
$$
S_1=\hat{u}^{\sigma_3/2} S_1^* \hat{u}^{-\sigma_3/2}, \quad 
S_2=\hat{u}^{\sigma_3/2} S_2^* \hat{u}^{-\sigma_3/2}. 
$$
%%%%%%%%%%%%%%
%%%% ssc 4.1 %%%%
\subsection{Stokes graph}\label{ssc4.1}
%%%%%%%%%%%%%%%%%%%%%%%%%%%
Recall the Riemann surface $\mathbb{P}_+^{\infty}
\cup \mathbb{P}_-^{\infty}$ glued along the cuts $[-e^{i\phi}, \lambda_2]$ and 
$[\lambda_1, e^{i\phi}]$ with $\lambda_{2,\,1}=\mp e^{i\phi}A_{\phi}^{1/2}$ 
for $t=\infty.$
As in Figure \ref{stokes} (a), (b), symbols are assigned to 
the Stokes curves of the limit Stokes graph on $\mathbb{P}_+^{\infty}$. 
For $-\pi/2< \phi<0,$ let $\l^{\infty}_2,$ $\hat{\l}^{\infty}_1,$ $\l_0$ be 
Stokes curves on $\mathbb{P}_+^{\infty}$ 
connecting $\lambda_2$ to $i \infty,$ $\lambda_1$ to $-i \infty,$ 
$\lambda_2$ to $\lambda_1$, respectively,
and, for $0<\phi<\pi/2$, $\l^{\infty}_1,$ $\hat{\l}^{\infty}_2,$ $\l_0$ 
connecting
$\lambda_1$ to $i \infty,$ $\lambda_2$ to $-i\infty$, $\lambda_2$ to
$\lambda_1$, respectively. 
%%%%%%%%%%%%%%%%%%%%%%%%%%%%%%%%%%%%%%%%%%%%%%%%%%
%%%%%%%%%%%%%%%%%%%%%%%%%%%%
%%%%%%% Figure 4.1 %%%%%%%%%%%
%%%%%%%%%%%%%%%%%%%%%%%%%%%%
%%%%%%%%%%%%%%%%%%%%%%%%%%%%
{\small
\begin{figure}[htb]
\begin{center}
\unitlength=0.73mm
%%%%%%%%%%%%%%%%%%%%%%%%%%%%%%%%%%
%%%%%%%%%%%%%%%%%%%%%%%%%%%%%%%%%%
%%%%%%%%%%%%%%%%%%%%%%%%%%%%%%%%%%
\begin{picture}(60,65)(-30,-35)
\put(18,-7){\circle*{1.5}}
\put(6,-8){\circle*{1.5}}
\put(-6,8){\circle*{1.5}}
\put(-18,7){\circle*{1.5}}
\thicklines
\qbezier (6,-8) (4.5,-4) (0, 0)
\qbezier (6,-8) (1,-20) (1, -30)
\qbezier (6,-8) (13,-8) (18, -7)
\qbezier (-6,8) (-4.5,4) (0, 0)
\qbezier (-6,8) (-1,20) (-1, 30)
\qbezier (-6,8) (-13,8) (-18, 7)
\put(20,-5.5){\makebox{$e^{i\phi}$}}
\put(-32,3){\makebox{$-e^{i\phi}$}}
\put(-1.5,-9.0){\makebox{$\lambda_1$}}
\put(-11.5,11){\makebox{$\lambda_2$}}
\put(1,24){\makebox{$\mathbf{c}_2^{\infty}$}}
\put(3,-27){\makebox{$\hat{\mathbf{c}}_1^{\infty}$}}
\put(0,2){\makebox{$\mathbf{c}_0$}}

\put(-22,-38){\makebox{(a) $-\pi/2 <\phi <0$}}
\end{picture}
%%%%%%%%%%%%%%%%%%
%%%%%%%%%%%%%%%%%%
%%%%%%%%%%%%%%%%%%
\qquad\quad
%%%%%%%%%%%%%%%%%%%%%%%%%%%%%%%%%%
\begin{picture}(60,60)(-30,-35)
\put(-18,-7){\circle*{1.5}}
\put(-6,-8){\circle*{1.5}}
\put(6,8){\circle*{1.5}}
\put(18,7){\circle*{1.5}}
\thicklines
\qbezier (-6,-8) (-4.5,-4) (0, 0)
\qbezier (-6,-8) (-1,-20) (-1, -30)
\qbezier (-6,-8) (-13,-8) (-18, -7)
\qbezier (6,8) (4.5,4) (0, 0)
\qbezier (6,8) (1,20) (1, 30)
\qbezier (6,8) (13,8) (18, 7)
\put(-30,-7.5){\makebox{$-e^{i\phi}$}}
\put(20,4.5){\makebox{$e^{i\phi}$}}
\put(-4,-10.0){\makebox{$\lambda_2$}}
\put(6,11){\makebox{$\lambda_1$}}
\put(-7,24){\makebox{$\mathbf{c}_1^{\infty}$}}
\put(1,-27){\makebox{$\hat{\mathbf{c}}_2^{\infty}$}}
\put(-5,2){\makebox{$\mathbf{c}_0$}}

\put(-22,-38){\makebox{(b) $0 <\phi <\pi/2$}}
\end{picture}
%%%%%%%%%%%%%%%%%%
%%%%%%%%%%%%%%%%%%%%%%%%%%%%%%%%
%%%%%%%%%%%%%%%%%%%%%%%%%%%%%%%%%%
\end{center}
\caption{Limit Stokes graph}
\label{stokes}
\end{figure}
}
%%%%%%%%%%%%%%%%%%%%%%%%%%%%%%%%%%%%%%%%%%%%%%%%%%%%%%%%%%%
%%%%%%%%%%%%%%%%%%%%%%%%%%%%%%%%%%%%%%%%%%%%%%%%%%%%%%%%%%%
%%%%%%%%%%%%%%%%%%%%%%%%%%%%%%%%%%%%%%%%%%%%%%%%%%%5
%%%%%%%%%%%%%%%%%%%%%%%%%%%
%%%% ssc 4.2 %%%%%%%%%%%%
\subsection{Connection matrices}\label{ssc4.2}
%%%%%%%%%%%%%%%%%%%%%%%%%%%%
Recall the canonical solutions $Y(t,\lambda)=Y_2(t,\lambda)$ and 
$Y_3(t,\lambda)$ in the sectors $|\arg\lambda-\pi/2|<\pi$ and
$|\arg\lambda-3\pi/2|<\pi$, respectively, near $\lambda=\infty$.
For \eqref{3.6} with $\hat{u}\equiv 1$, the Stokes matrix $S^*_2$ is given by
$Y_3=YS^*_2$ (cf. \eqref{1.3}). Set $\mathcal{D}^0_{-}=\mathbb{C}\setminus
[e^{i\phi},+\infty]$ with $0<\arg\lambda <2\pi.$ Let us consider the case, say,
$0<\phi<\pi/2.$ 
For the Stokes curve $\gamma_2:=(-\hat{\mathbf{c}}^{\infty}_2)\cup \mathbf{c}_0
\cup {\mathbf{c}}^{\infty}_1$ on $\mathbb{P}_+^{\infty}$,
the projection $\mathrm{pr}(\gamma_2)$ to $\mathcal{D}^0_-$ is 
a path issuing from $e^{3\pi i/2}\infty$ and ending in $e^{\pi i/2}\infty$ as 
in Figure \ref{comploops} (a). In what follows let us identify
$\mathrm{pr}(\gamma_2)$ with $\gamma_2$ and denote by the same symbol.  
Note that $Y=Y_2$ (respectively, $Y_3$) is analytic in the 
domain $\mathcal{D}_-(\pi/2):=\{|\arg\lambda-\pi/2|<\pi/4,\,\,|\lambda|>2025\}
\subset \mathcal{D}^0_-$ (respectively, $\mathcal{D}_-(3\pi/2):
=\{|\arg\lambda-3\pi/2|<\pi/4,\,\,|\lambda|>2025\}\subset\mathcal{D}^0_-$),
which is simply connected. 
The loop $\hat{l}_0$ of Figure \ref{loops0} with $\hat{p}_{\mathrm{st}}
\in \mathcal{D}_-(\pi/2)$ is decomposed into $\hat{l}_0=\gamma_2\circ \gamma_0$,
where $\gamma_0$ is an arc lying in $\{|\lambda|>2025\}\cap\mathcal{D}^0_-$ 
issuing from $\mathcal{D}_-(\pi/2)$ and ending in $\mathcal{D}_-(3\pi/2)$ 
as in Figure \ref{comploops} (a).
%%%%%%%%%%%%%%%%%%%%%%%%%%%%%%%%%%%%%%
%%%%%%%%%%%%%%%%%%%%%%%%%%%%%%%%%%%%
%%%%%%%%% Figure 4.2 %%%%%%%%%%%%
%%%%%%%%%%%%%%%%%%%%%%%%%%%%%%
%%%%%%%%%%%%%%%%%%%%%%%%%%%%%%%
{\small
\begin{figure}[htb]
\begin{center}
\unitlength=0.68mm
%%%%%%%%%%%%%%%%%%%%%%%%%%%%%%%%%%
%%%%%%%%%%%%%%%%%%%%%%%%%%%%%%%%%%
%%%%%%%%%%%%%%%%%%%%%%%%%%%%%%%%%%
\begin{picture}(60,85)(-30,-46)
\put(-18,-7){\circle*{2}}
\put(-6,-8){\circle*{1.5}}
\put(6,8){\circle*{1.5}}
\put(18,7){\circle*{2}}
\put(-41.2,-6){\vector (1,-1){0}}
\put(5.7,-0.2){\vector (2,3){0}}
\qbezier (18,8) (26,8) (38,8)
\qbezier (18,6) (26,6) (38,6)
\qbezier (-40.9,6) (-44.4,0) (-41.2,-6)
\qbezier (-1.3,-7.4) (3.1,-1.8) (5.7,-0.2)
\qbezier (16,20) (26,30) (26,30)
\qbezier (-16,20) (-26,30) (-26,30)
\qbezier (-16,20) (0,30) (16,20)
\qbezier (16,-20) (26,-30) (26,-30)
\qbezier (-16,-20) (-26,-30) (-26,-30)
\qbezier (-16,-20) (0,-30) (16,-20)
\thicklines
\qbezier (-7,30)  (-70,0)     (-9,-30)
\qbezier (-6,-8) (-4.5,-4) (0, 0)
\qbezier (-6,-8) (-1,-20) (-1, -30)
\qbezier[6] (-6,-8) (-13,-8) (-18, -7)
\qbezier (6,8) (4.5,4) (0, 0)
\qbezier (6,8) (1,20) (1, 30)
\qbezier[6] (6,8) (13,8) (18, 7)
\put(-27,-4.5){\makebox{$-e^{i\phi}$}}
\put(20,0){\makebox{$e^{i\phi}$}}
\put(-12,-13.5){\makebox{$\lambda_2$}}
\put(7,11){\makebox{$\lambda_1$}}
 \put(-5,33){\makebox{$Y_2$}}
 \put(16,33){\makebox{$\mathcal{D}_-(\pi/2)$}}
 \put(-7,-36){\makebox{$Y_3$}}
 \put(-43,-36){\makebox{$\mathcal{D}_-(3\pi/2)$}}
 \put(0,-11){\makebox{$\gamma_2$}}
 \put(-44,10){\makebox{$\gamma_0$}}
\put(-22,-50){\makebox{(a) $\mathcal{D}^0_-
=\mathbb{C}\setminus [e^{i\phi},+\infty]$}}
\end{picture}
%%%%%%%%%%%%%%%%%%
%%%%%%%%%%%%%%%%%%%%%%%%%%%%%%%%%%
\qquad\qquad\qquad
%%%%%%%%%%%%%%%%%%%%%%%%%%%%%%%%%%
\begin{picture}(60,67)(-30,-46)
\put(-18,-7){\circle*{2}}
\put(-6,-8){\circle*{1.5}}
\put(6,8){\circle*{1.5}}
\put(18,7){\circle*{2}}
\put(-5.7,0.2){\vector (-2,-3){0}}
\put(41.2,6){\vector (-1,1){0}}
\qbezier (-18,-8) (-28,-8) (-38,-8)
\qbezier (-18,-6) (-28,-6) (-38,-6)
\qbezier (40.9,-6) (44.4,0) (41.2,6)
\qbezier (1.3,7.4) (-3.3,2.0) (-5.7,0.2)
\qbezier (16,20) (26,30) (26,30)
\qbezier (-16,20) (-26,30) (-26,30)
\qbezier (-16,20) (0,30) (16,20)
\qbezier (16,-20) (26,-30) (26,-30)
\qbezier (-16,-20) (-26,-30) (-26,-30)
\qbezier (-16,-20) (0,-30) (16,-20)
\thicklines
\qbezier (7,-30)  (70,0)     (9,30)
\qbezier (-6,-8) (-4.5,-4) (0, 0)
\qbezier (-6,-8) (-1,-20) (-1, -30)
\qbezier[6] (-6,-8) (-13,-8) (-18, -7)
\qbezier (6,8) (4.5,4) (0, 0)
\qbezier (6,8) (1,20) (1, 30)
\qbezier[6] (6,8) (13,8) (18, 7)
\put(-27,-2.5){\makebox{$-e^{i\phi}$}}
\put(21,4.5){\makebox{$e^{i\phi}$}}
\put(-3,-10.0){\makebox{$\lambda_2$}}
\put(7,11){\makebox{$\lambda_1$}}
 \put(2,33){\makebox{$Y_2$}}
 \put(-37,33){\makebox{$\mathcal{D}_+(\pi/2)$}}
 \put(1,-36){\makebox{$Y_1$}}
 \put(16,-36){\makebox{$\mathcal{D}_+(-\pi/2)$}}
 \put(-3,10){\makebox{$\gamma_1$}}
 \put(40,-11){\makebox{$\gamma_0$}}
\put(-22,-50){\makebox{(b) $\mathcal{D}^0_+=\mathbb{C}\setminus [-\infty,-e^{i\phi}]$}}
\end{picture}
%%%%%%%%%%%%%%%%%%
%%%%%%%%%%%%%%%%%%%%%%%%%%%%%%%%
%%%%%%%%%%%%%%%%%%%%%%%%%%%%%%%%%%
%%%%%%%%%%%%%%%%%%%%%%%%%%%%%%%%%%
\end{center}
\caption{Composite loops in the case $0<\phi<\pi/2$} 
\label{comploops}
\end{figure}
}
%%%%%%%%%%%%%%%%%%%%%%%%%%%%%%%%%
%%%%%%%%%%%%%%%%%%%%%%%%%%%%%%%%%%%
The analytic continuation of $Y_3$ along $\gamma_2$ results in
$Y_3^{\gamma_2}\Gamma^{\infty}_{\infty\,2}=Y$ in $\mathcal{D}_-(\pi/2),$
where $\Gamma^{\infty}_{\infty\,2}\in SL_2(\mathbb{C})$ is a connection matrix.
On the other hand the relation $YS^*_2=Y_3$ is also written in the form
$Y^{\gamma_0}S^*_2=Y_3$ in $\mathcal{D}_-(3\pi/2)$, where $Y^{\gamma_0}$ 
denotes the analytic continuation of $Y$ along $\gamma_0.$ Then the analytic 
continuation of both sides along $\gamma_2$ yields $Y^{\hat{l}_0} S^*_2
=Y^{\gamma_2\circ\gamma_0} S^*_2=Y_3^{\gamma_2} =Y 
(\Gamma^{\infty}_{\infty\,2})^{-1}$, i.e., $Y^{\hat{l}_0}=Y
(\Gamma^{\infty}_{\infty\,2})^{-1}(S^*_2)^{-1}=YM^0_*$ in 
$\mathcal{D}_-(\pi/2),$ which implies
%%%% (4.a) %%%%%%%%%%%%%%
\begin{equation}\label{4.a}
\Gamma^{\infty}_{\infty\,2} M^0_*=(S_2^*)^{-1}.
\end{equation}
In the domain $\mathcal{D}^0_+=\mathbb{C}\setminus[-\infty,-e^{i\phi}]$ with
$|\arg\lambda|<\pi$, for the Stokes curve 
$\gamma_1=(-\mathbf{c}^{\infty}_1)\cup(-\mathbf{c}_0)\cup 
\hat{\mathbf{c}}^{\infty}_2$ on $\mathbb{P}_+^{\infty}$, 
considering the projection to $\mathcal{D}^0_+$ 
issuing from $e^{\pi i/2}\infty$ and ending in $e^{-\pi i/2}\infty,$
we analogously obtain
%%%% (4.b) %%%%%%%%%%%%%%
\begin{equation}\label{4.b}
 M^1_* \Gamma^{\infty}_{\infty\,1} =(S_1^*)^{-1},
\end{equation}
where $\Gamma^{\infty}_{\infty\,1}$ is a connection matrix such that
$Y^{\gamma_1}\Gamma^{\infty}_{\infty\,1}=Y_1$ in $\mathcal{D}_+(-\pi/2)
:=\{|\arg\lambda+\pi/2|<\pi/4,\,\,|\lambda|>2025\}\subset\mathcal{D}^0_+$. 
Our concern is finding
the connection matrices $\Gamma^{\infty}_{\infty\,1}$ and
$\Gamma^{\infty}_{\infty\,2}$. 
%%%%%%%%%%%%%%%%%%%%%%%%%%%%%%%%%%%%%%
%%%%%%%%%%%%%%%%%%%%%%%%%%%%%%%%%%%%
\par
Let us calculate $\Gamma^{\infty}_{\infty 2}$ in the case $0<\phi <\pi/2.$ 
To discuss according to the justification scheme of Kitaev \cite{Kitaev-1}, 
suppose that $a_{\phi}=a_{\phi}(t)$ is given by \eqref{3.12} with 
$(y,y^*)=(y(t), y^*(t))$ not necessarily solving (P$_{\mathrm{V}}$), and that
%%%%% (4.1) %%%%%%%%%%%
\begin{equation}\label{4.1}
a_{\phi}(t)=A_{\phi} + t^{-1}B_{\phi}(t), \quad  B_{\phi}(t)=O(1)
\end{equation}
%%%%%%%%%%%%%%%%%%%%%%%
for $t\in S_*(\phi,t'_{\infty},\kappa_0,\delta_1)$ with given $\kappa_0,$
given small $\delta_1$ and sufficiently large $t'_{\infty}$.
Here $A_{\phi}$ is a unique solution of the Boutroux equations \eqref{2.1},
and  
$$ 
S_*(\phi,t'_{\infty},\kappa_0,\delta_1)=\{t\,| \,\, \re t>t'_{\infty}, 
|\im t|<\kappa_0, |y^*(t)|+|y(t)|+|y(t)|^{-1}+|y(t)-1|^{-1}<\delta_1^{-1}\}.
$$
Let the limit Stokes graph be as in Figure \ref{stokes} (b).  
In what follows we use the following notation:  
\par
(1) for the WKB-solution of Proposition \ref{prop3.6}, write $\Lambda(\tau)$ 
in the component-wise form $\Lambda(\tau) =\Lambda_3(\tau) + \Lambda_I(\tau)$ 
with $\Lambda_3(\tau) \in \mathbb{C}\sigma_3,$ $\Lambda_I(\tau)\in \mathbb{C}I;$
\par
(2) $c_0=(c_1-ic_2)/c_3$ and $d_0=(d_1-i d_2)/d_3$, where $c_k=b_k(\lambda_2),$ 
$d_k=b_k(\lambda_1)$ for $k=1,2,3$. 
%%% \par
%%% (3) for large $t$, $\lambda_1$, $\lambda_2$, $\mathbf{c}^{\infty}_1,$ 
%%% $\hat{\mathbf{c}}_2^{\infty},$ $\mathbf{c}_0$ also denote turning points and
%%% curves approaching the limiting ones in Figure \ref{stokes} (ii).
\par
%%%%%%%%%%%%%%%%%%%%%%%%%%%%%%%%%%%%%%
In Propositions \ref{prop3.6} and \ref{prop3.7}, set $\delta=\delta'=1/4-
\varepsilon$, $0<\varepsilon<1/4$ being arbitrary. Then both propositions
apply to the annulus
$$
\mathcal{A}_{\varepsilon}: \quad |t|^{-\tfrac 23+\tfrac 23
\left(\tfrac14-\varepsilon\right)}
\ll |\lambda-\lambda_{\iota}|\ll |t|^{-\tfrac 5{12}-\tfrac13
\left(\tfrac14-\varepsilon\right)} 
\ll |t|^{-\tfrac 13-\tfrac 13\left(\tfrac 14-\varepsilon\right)} \quad
(\iota=1,2),
$$
in the choice of which evaluation of $\eta^{1/2}$ in, say, {\bf (b)} below is
also taken into account. 
%%%%%%%%%%%%% Remark 4.1 %%%%%%%%%%%%%%%%%%%%%%%%%%
\begin{rem}\label{rem4.1}
The annulus $|t|^{-14/27 -(2/3)\varepsilon'} \ll 
|\lambda-\lambda_{\iota}|\ll |t|^{-14/27+(1/3)\varepsilon'}$ of the early 
version, which is contained in $\mathcal{A}_{\varepsilon}$ with
$\varepsilon=1/36+\varepsilon'$ for
$\delta=\delta'=2/9-\varepsilon'=1/4-\varepsilon$, 
is also available in our calculation below, 
though the derivation of $-\frac{14}{27}-\frac 23\varepsilon'$,
$-\frac{14}{27}+\frac 13\varepsilon'$
is based on an inaccurate argument. 
\end{rem}
%%%%%%%%%%%%%%%%%%%%%%%%%%%%%%%%%%%
In what follows we set $\delta=1/4-\varepsilon.$
\par
Consider the Stokes graph on the
plane $\mathbb{C}\setminus [e^{i\phi}, +\infty]$ containing the sector
$0<\arg \lambda <2\pi,$ $|\lambda|>2025.$ 
The connection matrix along 
$\gamma_2=(- \hat{\mathbf{c}}^{\infty}_2)\cup  \mathbf{c}_0 \cup
\mathbf{c}^{\infty}_1$ 
consists of the following matrices 
$\Gamma_{\mathrm{a}}$, $\ldots$, $\Gamma_{\mathrm{i}}$.
\par
%%%%%% (a) %%%%%%%%%%%%%%%%%%%%%
{\bf (a)} Let $\Psi_{\infty}(\lambda)$ be the WKB solution along 
$\mathbf{c}^{\infty}_1$ with a base point 
$\tilde{\lambda}_1 \in \mathbf{c}^{\infty}_1,$ $|\tilde{\lambda}_1
-\lambda_1|\asymp t^{-1}$ near $\lambda_1$, 
and set $Y(\lambda)=\Psi_{\infty}(\lambda) \Gamma_{\mathrm{a}}$.
Then we have  
\begin{align*}
 \Gamma_{\mathrm{a}}
% =\Psi_{\infty}(\lambda)^{-1}Y(\lambda)
% \\
&= \exp\Bigl(-\int^{\lambda}
_{\tilde{\lambda}_1} \Lambda(\tau)d\tau \Bigr) T^{-1}(I+O(t^{-\delta}
+|\lambda|^{-1})) \exp\bigl(\tfrac 14 (t\lambda-2\theta_{\infty} \log \lambda)
\sigma_3\bigr)
\\
&=  C_3(\tilde{\lambda}_1) c_I(\tilde{\lambda}_1)(I+O(t^{-\delta}) )
%\\
%&\phantom{--}\times
\exp\biggl( -\lim_{\substack{ 
\lambda \to \infty \\[0.05cm] \lambda \in \mathbf{c}^{\infty}_1 }} 
\Bigl( \int^{\lambda}_{\lambda_1} \Lambda_3(\tau) d\tau - \frac 14 (t\lambda
-2\theta_{\infty}\log\lambda )\sigma_3 \Bigr)\biggr)
\end{align*}
with $ C_3(\tilde{\lambda}_1) =\exp (\int^{\tilde{\lambda}_1}_{\lambda_1}
\Lambda_3(\tau) d\tau ),$  
$c_I(\tilde{\lambda}_1) =\exp (-\int_{\tilde{\lambda}_1}^{\infty}
\Lambda_I(\tau) d\tau ).$
%%%%%%%%%%%%%%%%%%%%%%%%%%%%%%%%%
\par
%%%%%% (b) %%%%%%%%%%%%%%%%%%%%%%%
{\bf (b)} 
For $\Psi_{\infty}(\lambda)$ and $\Phi_1(\lambda)$ (cf. Proposition
\ref{prop3.7}) in the annulus $\mathcal{A}_{\varepsilon}$
set $\Psi_{\infty}(\lambda)=\Phi_1(\lambda)\Gamma_{\mathrm{b}}$
along $\l^{\infty}_1.$ We may suppose that the curve 
$(2\kappa)^{1/3} (\lambda-\tilde{\lambda}_1) =t^{-2/3}(\zeta+O(t^{-1/3}))$ 
with $\lambda \in \l^{\infty}_1$ enters the sector $\Sigma_1:$ 
$|\arg \zeta -\pi/3| <2\pi/3,$ and that $\Sigma_1$ does not intersect the
cut $[e^{i\phi},\lambda_1].$ 
Write $K^{-1}=2(2\kappa)^{-1/3}(d_1-id_2),$ where $d_k=b_k(\lambda_1)$ as
in {\bf (2)}. Then, 
by Propositions \ref{prop3.6} and
\ref{prop3.7},
\begin{align*}
\Gamma_{\mathbf{b}} &= \Phi_1(\lambda)^{-1} \Psi_{\infty}(\lambda)
\\
&= W(\zeta)^{-1} \begin{pmatrix} 1 & 0 \\ 0 & Kt^{-1/3} \end{pmatrix}^{\!\! -1}
(I+O(t^{-\delta}))
\begin{pmatrix}  1 & -{d_3}/({d_1+id_2}) \\ -{d_3}/({d_1-id_2}) & 1
\end{pmatrix}^{\!\! -1}
\\
& \phantom{---} \times
\begin{pmatrix}  1 & ({b_3-\mu})/({b_1+ib_2}) \\ 
({\mu-b_3})/({b_1-ib_2}) & 1
\end{pmatrix} (I+O(t^{-\delta}))
\exp\Bigl( \int^{\lambda}_{\tilde{\lambda}_1} \Lambda(\tau) d\tau \Bigr)
\\
&= W(\zeta)^{-1} 
\begin{pmatrix} 1 & {d_3}/({d_1+id_2}) \\ {\mu t^{1/3}}/({2K(d_1-id_2)})
& {\mu t^{1/3}}/({2Kd_3}) \end{pmatrix} (I+O(t^{-\delta}))
\exp\Bigl( \int^{\lambda}_{\tilde{\lambda}_1} \Lambda(\tau) d\tau \Bigr)
\end{align*}
for $\lambda \in \mathcal{A}_{\varepsilon} \cap \l^{\infty}_2$, 
where $(\mu-b_3)/(b_1\pm ib_2)=(\mu-d_3)/(d_1\pm id_2)+O(\eta),$ 
$\eta=\lambda-\tilde{\lambda}_1.$ Using
\begin{align*}
\mu &=(b_1^2+b_2^2+b_3^2)^{1/2}=(2(d_1d_1'+d_2d_2'+d_3d_3')(\eta+O(\eta^2)))
^{1/2} \\
&=(2\kappa)^{1/2}\eta^{1/2}(1+O(\eta))
=(2\kappa)^{1/3} t^{-1/3} \zeta^{1/2}(1+O(\eta))
\\
&=2K (d_1-id_2)t^{-1/3}\zeta^{1/2}(1+O(\eta)),
\end{align*}
we have
$$
\Gamma_{\mathrm{b}} = \exp\Bigl(\int^{\lambda}_{\tilde{\lambda}_1} \Lambda(\tau)
d\tau -\frac 23 \zeta^{3/2}\sigma_3 \Bigr) \zeta^{1/4} (I+O(t^{-\delta}))
\begin{pmatrix} 1 & 0 \\ 0 & -d_0 \end{pmatrix}.
$$
By Remark \ref{rem3.4}, $\Lambda_3(\lambda)= [(2\kappa)^{1/2} t\eta^{1/2}
(1+O(\eta))+O(\eta^{-1/2}) ] \sigma_3$ and 
$\Lambda_I(\lambda)=[-(\log\eta)_{\eta}/4+O(\eta^{-1/2})]I$
for $\eta=\lambda-\tilde{\lambda}_1$, $\lambda \in \mathcal{A}_{\varepsilon}
\cap \l^{\infty}_1 .$ Hence
$$
\Gamma_{\mathrm{b}}= (I+O(t^{-\delta})) \exp\Bigl( -\int^{\tilde{\lambda}_1}
_{\lambda_1} \Lambda_3(\tau)d\tau +O(\eta^{1/2})
\Bigr) (\tilde{\zeta}_1)^{1/4} \begin{pmatrix} 1 & 0 \\ 0 & -d_0
\end{pmatrix}
$$
with suitably chosen $\tilde{\zeta}_1 \asymp \tilde{\eta}_1
= \tilde{\lambda}_1-\lambda_1.$ Since $\eta^{1/2} \ll t^{-1/4+\varepsilon /6}
\ll t^{-\delta}$ in $\mathcal{A}_{\varepsilon},$
$$
\Gamma_{\mathrm{b}}=(\tilde{\zeta}_1)^{1/4} (I+O(t^{-\delta})) C_3(\tilde
{\lambda}_1)^{-1}   \begin{pmatrix} 1 & 0 \\ 0 & -d_0
\end{pmatrix}.
$$
%%%%%%%%%%%%%%%%%%%%%%%%%%%%%%%%%
\par
%%%%%%%% (c) %%%%%%%%%%%%%%%%%%%%%%%%%
{\bf (c)} Let $\Phi_{1-}(\lambda)$ be the solution by Proposition \ref{prop3.7} 
near $\mathbf{c}^{}_0$ in an annulus 
and set $\Phi_1(\lambda)=\Phi_{1-}(\lambda)
G_{\mathrm{c}}.$ Then, by Proposition \ref{prop3.7},
\begin{equation*}
G_{\mathrm{c}}= \Phi_{1-}(\lambda)^{-1}\Phi_1(\lambda) =(\Phi_1(\lambda)
G_1)^{-1}\Phi_1(\lambda)=G_1^{-1}=\begin{pmatrix} 1 & i \\ 0 & 1 \end{pmatrix}.
\end{equation*}
%%%%%%% (d) %%%%%%%%%
\par
{\bf (d)} Let $\Psi_{\infty 1}(\lambda)$ be the WKB solution along 
$\mathbf{c}^{}_0$ with the base point $\tilde{\lambda}'_1 \in  
\mathbf{c}_0$ near $\lambda_1$, 
and set $\Phi_{1-}(\lambda)=\Psi_{\infty 1}(\lambda) \Gamma_{\mathrm{d}}$.
By the same argument as in step {\bf (b)}, we have
\begin{equation*}
\Gamma_{\mathrm{d}}=(\tilde{\zeta}'_1)^{-1/4}(I+O(t^{-\delta}))\tilde{C}_3(\tilde
{\lambda}'_1)   \begin{pmatrix} 1 & 0 \\ 0 & -d_0^{-1} \end{pmatrix}   
\end{equation*}
with $\tilde{\zeta}'_1 \asymp  \tilde{\lambda}'_1-\lambda_1,$
$\tilde{C}_3(\tilde{\lambda}
_1')=\exp(\int^{\tilde{\lambda}'_1}_{\lambda_1} \Lambda_3(\tau)d\tau ).$ 
%%%%%%%% (e) %%%%%%%%%%%%%%%
\par
{\bf (e)} Let $\Psi_{\infty 2}(\lambda)$ be the WKB solution along 
$\mathbf{c}^{}_0$ with the base point 
$\lambda_2' \in \mathbf{c}_0$ near $\lambda_2$,  
and set $\Psi_{\infty 1}(\lambda)=\Psi_{\infty 2}(\lambda) \Gamma_{\mathrm{e}}$.
Then  
$$
\Gamma_{\mathrm{e}}= (I+O(t^{-\delta}))\tilde{C}_3(\tilde{\lambda}'_1)^{-1} 
\tilde{C}'_3(\lambda_2')\tilde{c}_I(\tilde{\lambda}'_1, \lambda_2')
\exp\Bigl(-\int_{\lambda_2}^{\lambda_1} \Lambda_3(\tau) d\tau \Bigr)
$$
with $\tilde{c}_I(\tilde{\lambda}'_1,\lambda_2')=\exp(\int^{\lambda_2'}_{\tilde
{\lambda}'_1} \Lambda_I(\tau)d\tau ),$ $\tilde{C}'_3(\lambda_2')=\exp
(\int_{\lambda_2}^{\lambda'_2}\Lambda_3(\tau) d\tau).$
\par
%%%%%%%%%%% (f) %%%%%%%%%%%%%%%%%%%
{\bf (f)} Let $\Phi_{ 2}(\lambda)$ be the solution by Proposition \ref{prop3.7} 
along $\mathbf{c}^{}_0$ near $\lambda_2$,
and set $\Psi_{\infty 2}(\lambda)=\Phi_{ 2}(\lambda) \Gamma_{\mathrm{f}}$.
Then
\begin{equation*}
\Gamma_{\mathrm{f}}=({\zeta}'_2)^{1/4} (I+O(t^{-\delta})) 
\tilde{C}'_3(\tilde{\lambda}'_2)^{-1} \begin{pmatrix} 1 & 0 \\ 0 & -c_0
\end{pmatrix}, 
\end{equation*}
where ${\zeta}'_2 \asymp  {\lambda}'_2-\lambda_2 $ and $c_k=b_k(\lambda_2).$ 
\par
%%%%%%%%%%%%%%%%%% (g) %%%%%%%%%%%%%%%%%%%%
{\bf (g)} Let $\Phi_{2-}(\lambda)$ be the solution by Proposition \ref{prop3.7}
near $\hat{\mathbf{c}}_2^{\infty}$ in the 
annulus $\mathcal{A}_{\varepsilon}$ around $\lambda_2$, and set 
$\Phi_2(\lambda)=\Phi_{2-}(\lambda)G_{\mathrm{g}}.$
Then 
$$
G_{\mathrm{g}}= \Phi_{2-}(\lambda)^{-1}\Phi_2(\lambda) =(\Phi_2(\lambda)G_0^{-1})^{-1}\Phi_2(\lambda)=G_0
=\begin{pmatrix} 1 & 0 \\ -i & 1  \end{pmatrix}.
$$
%%%%%%%%%%%%%%%%% (h) %%%%%%%%%%%%%%%%%%%%%
\par
{\bf (h)} For the WKB-solution $\hat{\Psi}_{\infty}(\lambda)$ along $\hat{\l}
^{\infty}_2$ set $\Phi_{2-}(\lambda) = \hat{\Psi}_{\infty}(\lambda) 
{\Gamma}_{\mathrm{h}}$ for $\lambda \in \mathcal{A}_{\varepsilon} \cap
\hat{\l}^{\infty}_2.$ 
Then
$$
{\Gamma}_{\mathrm{h}} = (\tilde{\zeta}'_2)^{-1/4} (I+O(t^{-\delta})) 
\hat{C}_3(\tilde{\lambda}'_2) \begin{pmatrix} 1 &  0 \\ 0 & -c_0^{-1} 
\end{pmatrix}
$$
with $\tilde{\zeta}'_2 \asymp \tilde{\lambda}'_2-\lambda_2$, 
$ \hat{C}_3(\tilde{\lambda}'_2) =\exp\Bigl
(\int^{\tilde{\lambda}'_2}_{\lambda_2}
\Lambda_3(\tau) d\tau \Bigr).$
%%%%%%% (i) %%%%%%%%%%%%%%%%%
\par
{\bf (i)} Set $\hat{\Psi}_{\infty}(\lambda)=Y_3(t,\lambda){\Gamma}_{\mathrm{i}}.$
Then
\begin{align*}
\hat{\Gamma}_{\infty}&= \hat{C}_3(\tilde{\lambda}'_2)^{-1} 
\hat{c}_I(\tilde{\lambda}'_2)(I+O(t^{-\delta}) )
\\
\notag
& \phantom{--}\times  \exp\biggl( \lim_{\substack{ 
\lambda \to \infty \\[0.05cm] \lambda \in \hat{\l}^{\infty}_2 }} 
\Bigl( \int^{\lambda}_{\lambda_2} \Lambda_3(\tau) d\tau - \frac 14 (t\lambda
-2\theta_{\infty}\log\lambda )\sigma_3 \Bigr)\biggr)
\end{align*}
with
$\hat{c}_I(\tilde{\lambda}'_2) =\exp \Bigl(\int_{\tilde{\lambda}'_2}^{\infty}
\Lambda_I(\tau) d\tau \Bigr). $
\par
%%%%%%%%%%%%%%%%%%%%%%%%%%%%%%%%%%%%%%%%%%%%%%%%%%%%%
Then, collecting the matrices above and using a symmetric property 
about the scalar part, we have
\begin{align*}
\Gamma^{\infty}_{\infty 2}& = {\Gamma}_{\mathrm{i}}{\Gamma}_{\mathrm{h}}
G_{\mathrm{g}} \Gamma_{\mathrm{f}} \Gamma_{\mathrm{e}}\Gamma_{\mathrm{d}} 
G_{\mathrm{c}}
\Gamma_{\mathrm{b}} \Gamma_{\mathrm{a}}
\\
&=(I+O(t^{-\delta}))\exp(\hat{J}_2\sigma_3) 
   \begin{pmatrix} 1 & 0 \\ 0 & -c_0^{-1}  \end{pmatrix}
   \begin{pmatrix} 1 & 0 \\ -i & 1  \end{pmatrix}
   \begin{pmatrix} 1 & 0 \\ 0 & -c_0  \end{pmatrix}
\\
& \phantom{--}\times 
  \exp( -J_0\sigma_3) 
   \begin{pmatrix} 1 & 0 \\ 0 & -d_0^{-1}  \end{pmatrix}
   \begin{pmatrix} 1 & i \\ 0 & 1  \end{pmatrix}
   \begin{pmatrix} 1 & 0 \\ 0 & -d_0  \end{pmatrix}
 \exp(-J_1 \sigma_3)
\\
&= (I+O(t^{-\delta}))
\begin{pmatrix} e^{\hat{J}_2 -J_0 -J_1} & -i d_0 e^{\hat{J}_2-J_0+J_1}  \\
i c^{-1}_0 e^{-\hat{J}_2-J_0-J_1 }   &
   e^{J_1-\hat{J}_2}(e^{J_0} + c^{-1}_0d_0 e^{-J_0} ) 
  \end{pmatrix}.
\end{align*}
Here 
%%%%%%%%%%%%%%%%%%%%%%%%%%%%%%%%%%%%
\begin{align*}
J_{j}=& \lim_{\substack{\lambda \to \infty \\ \lambda \in 
\mathbf{c}^{\infty}_j}}
\Bigl( \int^{\lambda}_{\lambda_{j}} \Lambda_3(\tau) d\tau -\frac 14(t\lambda -2
\theta_{\infty} \log \lambda) \sigma_3 \Bigr)\quad (j=1,2),
\\
\hat{J}_{2}=& \lim_{\substack{\lambda \to \infty \\ \lambda\in\hat{\mathbf{c}}
^{\infty}_2}}
\Bigl( \int^{\lambda}_{\lambda_2} \Lambda_3(\tau) d\tau -\frac 14(t\lambda -2
\theta_{\infty} \log \lambda) \sigma_3 \Bigr),
\quad
J_0= J_2-J_1 =\int^{\lambda_1}_{\lambda_2}\Lambda_3(\tau) d\tau.
\end{align*}
%%%%%%%%%%%%%%%%%%%%%%%%%%%%%%%%%%%%
\par
The matrix $\Gamma^{\infty}_{\infty 1}$ is calculated by using the same Stokes
graph of Figure \ref{stokes} (b) on the plane 
$\mathcal{D}_+^0=\mathbb{C}\setminus [-\infty, -e^{i\phi}]$
containing the sector $-\pi <\arg\lambda <\pi,$ $|\lambda|>2025.$ Note that,
in this case, the curve $\hat{\mathbf{c}}^{\infty}_2\subset \mathcal{D}_+^0$ 
tends to $e^{-\pi i/2}\infty$, and denote it by 
$\hat{\mathbf{c}}^{\infty}_{2*}.$ The calculation 
of $\Gamma^{\infty}_{\infty 1}$ begins with setting 
$Y_1(\lambda)=\hat{\Psi}^*_{\infty}
(\lambda)\Gamma^*_{\mathrm{a}},$ where $\hat{\Psi}^*_{\infty}(\lambda)$ is the
WKB solution along $\hat{\mathbf{c}}^{\infty}_{2*}$ tending to $e^{-\pi i/2}
\infty$. Note that $\hat{\Psi}^*_{\infty}(\lambda)$ has the same asymptotic 
form as of $\hat{\Psi}_{\infty}(\lambda)$. Repeating step by step matchings 
along $\gamma_1= (-\mathbf{c}_1^{\infty})\cup \mathbf{c}_0 
\cup \hat{\mathbf{c}}^{\infty}_{2*}$, we have
\begin{align*}
\Gamma^{\infty}_{\infty 1} 
&=(I+O(t^{-\delta}))\exp({J}_1\sigma_3) 
\begin{pmatrix} 1 & 0 \\ 0 & -d_0^{-1}\end{pmatrix} 
\begin{pmatrix}  1 &  -i \\ 0 & 1  \end{pmatrix}
\begin{pmatrix}  1 &  0 \\ 0 & -d_0  \end{pmatrix}
\\
&\phantom{--} \times
 \exp( J_0\sigma_3) 
\begin{pmatrix} 1 & 0 \\ 0 & -c_0^{-1}\end{pmatrix} 
\begin{pmatrix}  1 &  0 \\ i & 1  \end{pmatrix}
\begin{pmatrix}  1 &  0 \\ 0 & -c_0  \end{pmatrix}
 \exp(-\hat{J}_2^* \sigma_3)
\\
&= (I+O(t^{-\delta}))
\begin{pmatrix} 
   e^{J_1-\hat{J}^*_2}(e^{J_0} +c_0^{-1}d_0 e^{-J_0} ) 
& i d_0e^{{J}_1 -J_0 +\hat{J}^*_2}  
\\
 -i c_0^{-1} e^{-J_0-J_1-\hat{J}^*_2} & e^{\hat{J}^*_2-J_0-J_1 } \end{pmatrix}
\end{align*}
with 
$$
\hat{J}^*_2= \lim_{\substack{\lambda \to \infty \\
\lambda \in \hat{\mathbf{c}}^{\infty}_{2 *} }}
\Bigl(\int^{\lambda}_{\lambda_2} \Lambda_3(\tau) d\tau -\frac 14 (t\lambda
-2\theta_{\infty} \log\lambda) \sigma_3 \Bigr).
$$
Thus we have $\Gamma^{\infty}_{\infty 2}$ and $\Gamma^{\infty}_{\infty
1}$ for $0<\phi <\pi/2.$
In the case $-\pi/2<\phi<0$, using the Stokes graph of Figure \ref{stokes} (a), 
we have similarly
\begin{align*}
\Gamma^{\infty}_{\infty 2}&= (I+O(t^{-\delta}))
\begin{pmatrix} 
e^{\hat{J}^*_1-J_2}( e^{J_0} +c_0^{-1}d_0 e^{-J_0} ) &  
-i d_0 e^{J_2 -J_0 +\hat{J}^*_1}
\\
ic_0^{-1} e^{-J_2-J_0 -\hat{J}^*_1}    & 
e^{J_2-J_0 -\hat{J}^*_1}     \end{pmatrix},
\\
\Gamma^{\infty}_{\infty 1}&= (I+O(t^{-\delta}))
\begin{pmatrix} e^{J_2 -J_0 -\hat{J}_1} & id_0 e^{-J_0+J_2+\hat{J}_1} 
\\  -i c^{-1}_0 e^{-J_2-J_0-\hat{J}_1}  &
   e^{-J_2+\hat{J}_1}(e^{J_0} +c_0^{-1}d_0 e^{-J_0} ) 
  \end{pmatrix},
\end{align*}
where 
\begin{align*}
& \hat{J}_1= \lim_{\substack{\lambda \to \infty \\
\lambda \in \hat{\mathbf{c}}^{\infty}_{1 } }}
\Bigl(\int^{\lambda}_{\lambda_1} \Lambda_3(\tau) d\tau -\frac 14 (t\lambda
-2\theta_{\infty} \log\lambda) \sigma_3 \Bigr),
\\
& \hat{J}^*_1= \lim_{\substack{\lambda \to \infty \\
\lambda \in \hat{\mathbf{c}}^{\infty}_{1 *} }}
\Bigl(\int^{\lambda}_{\lambda_1} \Lambda_3(\tau) d\tau -\frac 14 (t\lambda
-2\theta_{\infty} \log\lambda) \sigma_3 \Bigr)
\end{align*}
with a curve $\hat{\mathbf{c}}^{\infty}_{1*}$ tending to $e^{3\pi i/2}\infty.$
\par
Let $M^0=(m_{ij}^0)$ and $M^1=(m_{ij}^1)$, and suppose that 
$0<\phi <\pi/2.$
The relations $\Gamma^{\infty}_{\infty 2} M^0_*=(S_2^*)^{-1}$, 
$M^1_* \Gamma^{\infty}_{\infty 1}= (S^*_1)^{-1}$ yield 
\begin{align*}
& e^{\hat{J}_2-J_0-J_1} m_{11}^0 - id_0 e^{\hat{J}_2-J_0+J_1} \hat{u}m_{21}^0=1,
\\
& ic^{-1}_0 e^{-\hat{J}_2-J_0-J_1} m_{11}^0 + (e^{J_0}+ c^{-1}_0 d_0e^{-J_0})
 e^{-\hat{J}_2+J_1}\hat{u} m_{21}^0=0,
\\
& id_0 e^{\hat{J}_2^*-J_0+J_1} m_{11}^1 + e^{\hat{J}^*_2-J_0-J_1}\hat{u}^{-1} 
m_{12}^1=0,
\\
& (e^{J_0}+ c^{-1}_0 d_0e^{-J_0})e^{-\hat{J}_2^*+J_1} m_{11}^1 
- ic^{-1}_0 e^{-\hat{J}_2^* -J_0-J_1}\hat{u}^{-1} m_{12}^1 =1
\end{align*}
up to the factor $1+O(t^{-\delta})$,
from which it follows that
\begin{align*}
&  \hat{u} m^0_{21}=-ic_0^{-1}e^{-\hat{J}_2-J_0-J_1}, 
\quad
  m^0_{11}= e^{J_1-\hat{J}_2}(e^{J_0}+c^{-1}_0d_0e^{-J_0}),
\\
 & \hat{u}^{-1}m^1_{12}=-id_0  e^{-J_0+J_1+\hat{J}^*_2}, \quad
m^1_{11}=   e^{-J_0-J_1+ \hat{J}^*_2}.
\end{align*}
Therefore entries of $M^0$ and $M^1$ satisfy
$$ 
\frac{m^0_{11}m^1_{11}}{m^0_{21}m^1_{12}}
= -(1+ c_0d^{-1}_0 e^{2J_0})  , \quad
m^0_{11}=e^{J_2-\hat{J}_2}(1+c^{-1}_0 d_0 e^{-2J_0}) .
$$
Note that $m^0_{11}m^1_{11}+m^0_{21}m^1_{12}
=e^{-\pi i\theta_{\infty}}$ by \eqref{3.1}. 
From the first equation we have
$$
e^{\pi i\theta_{\infty}} m^0_{21}m^1_{12} = -c_0^{-1}d_0 e^{-2J_0}(1+O(t
^{-\delta})),
$$
and from the second equation
$$
e^{J_2-\hat{J}_2} =\frac{m^0_{11}(1+O(t^{-\delta}))}{1+c_0^{-1}d_0e^{-2J_0}}
 =\frac{m^0_{11}(1+O(t^{-\delta}))}{1-m^0_{21} m^1_{12} (m^0_{11} m^1_{11}
+m^0_{21}m^1_{12})^{-1} } =\frac 1{e^{\pi i\theta_{\infty}} m^1_{11}}
(1+O(t^{-\delta})).
$$
In the case $-\pi/2 <\phi<0$, we have
\begin{align*}
&(e^{J_0}+c_0^{-1}d_0e^{-J_0})e^{\hat{J}^*_1-J_2}m^0_{11}-id_0e^{\hat{J}^*_1-J_0
+J_2}\hat{u}m^0_{21}=1,
\\
&ic_0^{-1}e^{-\hat{J}^*_1-J_0-J_2}m^0_{11}+ e^{-\hat{J}_1^*-J_0+J_2}\hat{u}
m^0_{21}=0,
\\
&e^{-\hat{J}_1-J_0+J_2}m^1_{11}-ic_0^{-1}e^{-\hat{J}_1-J_0-J_2}\hat{u}^{-1}
m^1_{12}=1,
\\
&id_0e^{\hat{J}_1-J_0+J_2}m^1_{11}+(e^{J_0}+c_0^{-1}d_0e^{-J_0})e^{\hat{J}_1
-J_2}\hat{u}^{-1}m^1_{12}=0
\end{align*} 
yielding
\begin{align*}
&\hat{u}m^0_{21}=-ic_0^{-1}e^{-\hat{J}^*_1-J_0-J_2}, \quad m^0_{11}
=e^{-\hat{J}^* _1-J_0+J_2},
\\
&\hat{u}^{-1}m^1_{12}=-id_0 e^{\hat{J}_1-J_0+J_2}, \quad m^1_{11}=
(e^{J_0}+c_0^{-1}d_0 e^{-J_0})e^{\hat{J}_1-J_2},
\end{align*}
from which we derive the first equality of the case above and
$m^0_{11}=e^{J_2-\hat{J}_2} (1+O(t^{-\delta})).$
Then, for $0<|\phi|<\pi/2$, 
%%%%% (4.4) %%%%%%%%%%%%%
\begin{equation}\label{4.2}
\begin{split}
&\mathfrak{m}_{\phi}=(1+O(t^{-\delta}))e^{J_2-\hat{J}_2}, 
\\
& \frac{m^0_{21}m^1_{12}}{m^0_{11}m^1_{11}
+m^0_{21}m^1_{12}} = e^{\pi i\theta_{\infty}}m^0_{21}m^1_{12}
= -(1+O(t^{-\delta})) c^{-1}_0d_0 e^{2J_1-2J_2}
\end{split}
\end{equation}
with $\mathfrak{m}_{\phi}$ as in Theorem \ref{thm2.1}. Here the contour
of the integral $J_2-\hat{J}_2$ on $\mathbb{C}\setminus [e^{i\phi},+\infty]$
corresponds to the cycle $\mathbf{b}$ as in Section \ref{ssc5.1}.
%%%%%%%%%%%%%%%%%%%%%%%%%%%%%%%%%%
%%%%% Section 5 %%%%%%%%%%%%
\section{Asymptotics of monodromy data}\label{sc5}
%%%%%%%%%%%%%%%%%%%%%%%%%%%%%%%%%%%%%%%%
To calculate the integrals $J_{1,2}$ and $\hat{J}_{2}$ we make a further
change of variables
%%%%%%%% (5.2) %%%%%%
\begin{equation}\label{5.2}
\lambda=\lambda(z)=  e^{i\phi} z.
\end{equation} 
%%%%%%%%%%%%%%%%%%
%%%%%% Proposition 5.1 %%%%%%%
\begin{prop}\label{prop5.1}
By \eqref{5.2}, the turning points $\lambda_1(t),$ $\lambda_2(t),$ 
$\lambda^0_1(t),$ $\lambda^0_2(t)$ are mapped to
\begin{align*}
z_1(t) &=a_{\phi}^{1/2} +2 e^{-i\phi} \theta_{\infty}t^{-1} + O(t^{-2}), \quad 
& z_2(t) &=-a_{\phi}^{1/2} +2 e^{-i\phi} \theta_{\infty}t^{-1}+ O(t^{-2}),    
\\
z_1^0(t) &= 1 +O(t^{-2}), \quad &  z_2^0(t) &= -1 +O(t^{-2}), 
\end{align*} 
respectively.
\end{prop}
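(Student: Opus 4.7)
The statement is essentially a direct corollary of Proposition \ref{prop3.5} under the linear change of variables \eqref{5.2}. The plan is to simply substitute $z = e^{-i\phi}\lambda$ into the asymptotic expressions already obtained for $\lambda_1, \lambda_2, \lambda_1^0, \lambda_2^0$ and read off the images.

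More precisely, Proposition \ref{prop3.5} gives $\lambda_1 = e^{i\phi}a_{\phi}^{1/2} + 2\theta_{\infty} t^{-1} + O(t^{-2})$, so multiplying by $e^{-i\phi}$ yields
\begin{equation*}
z_1 = e^{-i\phi}\lambda_1 = a_{\phi}^{1/2} + 2e^{-i\phi}\theta_{\infty} t^{-1} + O(t^{-2}),
\end{equation*}
and the same direct substitution produces $z_2, z_1^0, z_2^0$ in the stated forms. The remainder estimates $O(t^{-2})$ transfer unchanged under multiplication by the unimodular factor $e^{-i\phi}$ since $|\phi|<\pi/2$ is fixed.

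There is no real obstacle here: the only content is bookkeeping of the factor $e^{-i\phi}$. The proof is essentially a single line and may be presented simply as a verification by inserting \eqref{5.2} into the formulas of Proposition \ref{prop3.5}.
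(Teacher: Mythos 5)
Your proposal is correct and coincides with what the paper does: Proposition \ref{prop5.1} is stated without proof precisely because it follows immediately by substituting $z=e^{-i\phi}\lambda$ from \eqref{5.2} into the expansions of Proposition \ref{prop3.5}, with the error terms unaffected by the unimodular factor. Nothing further is needed.
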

%%%%%%%%%%%%%%%%%%%%%%%%%%%
By \eqref{5.2} the characteristic root \eqref{3.13} becomes 
%%%%%%%% (5.3) %%%%%%%
\begin{equation}\label{5.3}
\mu=\mu(z) =\frac 14 \sqrt{\frac{a_{\phi}-z^2}{1-z^2}}
 +\frac{e^{-i\phi}\theta_{\infty}z}{2w} t^{-1} +\tilde{g}_2(t,z) t^{-2},
\end{equation}
%%%%%%%%%%%%%%%%%%%%%%%%%%%%%%%%%%%%%%%%%%%5
where 
$$
w=w(z)=w(a_{\phi},z) =\sqrt{(1-z^2)(a_{\phi}-z^2)}
$$
and $\tilde{g}_2(t,z) \ll 1$ if $|z^2 - 1|^{-1}+|z^2-a_{\phi}|^{-1} \ll 1.$
The image of $\mathbb{P}=\mathbb{P}_+ \cup \mathbb{P}_-$ under the map 
\eqref{5.2} is given by 
$\Pi^* =\Pi^*_+\cup \Pi^*_-$ with $\Pi_{\pm}^*=z(\mathbb{P}_{\pm})$ glued 
along the cuts $[z_2^0(t),z_2(t)]$ and $[z_1(t),z_1^0(t)]$, $\mathbb{P}$ being
defined in Subsection \ref{ssc3.4}.
Then $\mu=\mu(z)$ is an algebraic function on $\Pi^*.$ 
The elliptic curve defined by $w(z)$ is given by the two-sheeted Riemann
surface $\Pi_{a_{\phi}}=\Pi_{a_{\phi},+}\cup \Pi_{a_{\phi},-}$ glued
along the cuts $[-1, -a_{\phi}^{1/2}]$, $[a_{\phi}^{1/2}, 1]$ satisfying
$z_{2,\,1}(t)=\mp a_{\phi}^{1/2}+O(t^{-1})$, 
$z_{2,\,1}^0(t)=\mp 1+O(t^{-2}).$ Each square root in \eqref{5.3} is such that
$\sqrt{(a_{\phi}-z^2)/(1-z^2)}\to 1,$ 
$z^{-2}w(z)\to -1$ as $z\to \infty$ on $\Pi_{a_{\phi},+}$, and then 
$a_{\phi}^{-1/2}\sqrt{(a_{\phi}-z^2)/(1-z^2)}$,\,\, $a_{\phi}^{-1/2}w(z)\to 1$ 
as $z\to 0$ on $\Pi_{a_{\phi},+}$. 
The elliptic curve $\Pi_{A_{\phi}}=\Pi_+\cup\Pi_-$ 
of Section \ref{sc2} is the image of 
$\mathbb{P}_+^{\infty} \cup \mathbb{P}_-^{\infty}=\lim_{t\to\infty}
\mathbb{P}_+\cup \mathbb{P}_-$ under the map \eqref{5.2}.
\par
%%%%%%%%%%%%%%%%%%%%%%%%%%%%%%%%%%%%%%%%%%%%%%%%%%%%%%%%%%%%%%%%%%%
%%%%%%%% Figure 5.1 %%%%%%%%%%%%%%%%%%%%%%%%%%%%%%%%%%
%%%%%%%%%%%%%%%%%%%%%%%%%%%%%%%%%%%%%%%%%%%
{\small
\begin{figure}[htb]
\begin{center}
\unitlength=0.8mm
%%%%%%%%%%%%%%%%%%%%%%%%%%%%%%%%%%
%%%%%%%%%%%%%%%%%%%%%%%%%%%%%%%%%%
%%%%%%%%%%%%%%%%%%
\begin{picture}(80,48)(0,2)
\put(0,17){\makebox{$-1$}}
\put(22,3){\makebox{$-a_{\phi}^{1/2}$}}
\put(73,25){\makebox{$1$}}
\put(52,39){\makebox{$a_{\phi}^{1/2}$}}
\thinlines
\put(10,25.5){\line(2,-1){17}}
\put(10,24.5){\line(2,-1){17}}
\put(70,25.5){\line(-2,1){17}}
\put(70,24.5){\line(-2,1){17}}
\qbezier(35,33.5) (40,37) (46,38)
\qbezier(9,31.5) (14,32.7) (19,30.5)
\put(46,38){\vector(4,1){0}}
\put(9,31.5){\vector(-4,-1){0}}
\put(31,31){\makebox{$\mathbf{a}$}}
\put(20,29){\makebox{$\mathbf{b}$}}
\put(65,8){\makebox{$\Pi_{a_{\phi},+}$}}
\thicklines
\put(10,25){\circle*{1}}
\put(27,16.5){\circle*{1}}
\put(53,33.5){\circle*{1}}
\put(70,25){\circle*{1}}
\qbezier(24,18) (25,25) (40,32.5)
\qbezier(40,32.5) (56,40) (56.5,31.7)
\qbezier[15](40,18) (54,25.5) (56,30)
\qbezier[15](24,16) (26,11) (40,18)
\qbezier(6,27) (10,32.5) (25.9,24.8)
\qbezier(29,23.2) (35.5,18) (34,13)
\qbezier(6,27) (3.5,21) (14,15)
\qbezier (14,15)(30,7)(34,13)
\end{picture}
\end{center}
\caption{Cycles $\mathbf{a},$ $\mathbf{b}$ on $\Pi_{a_{\phi}}$}
\label{cycles2}
\end{figure}
}
%%%%%%%%%%%%%%%%%%%%%%%%%%%%%%%%%%%%%%%%%%%%%%%
%%%%%%%%%%%%%%%%%%%%%%%%%%%%%%%%%%%%%%%%%%%%%%%%
Recall the cycles $\mathbf{a}$ and $\mathbf{b}$ on $\Pi_{A_{\phi}}$ descried 
in Figure \ref{cycles1}. 
We remark that $\mathbf{a}$ and $\mathbf{b}$ may also be regarded as 
those on $\Pi_{a_{\phi}}$ as in Figure \ref{cycles2}, if $t$ is sufficiently 
large and if the distance 
between
$\mathbf{a}\cup \mathbf{b}$ and $\{\pm 1 \} \cup \{ \pm a_{\phi}^{1/2} \}$ 
is $\gg 1$.
We use the same symbols $\mathbf{a}$ and $\mathbf{b}$ as in 
Figure \ref{cycles1}, provided that
$A^{1/2}_{\phi}=\lim_{t\to\infty} a^{1/2}_{\phi}(t),$ which will not
cause confusions.
%%%%%%%%%%%%%%%%%%%%%%%%%%%%%%%%%%%%%%%%%%%%%
%%%%%% ssc 5.1 %%%%%%%%%%%
\subsection{Expressions in terms of elliptic integrals}\label{ssc5.1}
%%%%%%%%%%%%%%%%%%%%%%%%%%%%%%%%%%%%%%%%%%%%%%%
Let $J^{\mu}_{\iota}$ and $\hat{J}^{\mu}_{\iota}$ be such that 
$J^{\mu}_{\iota}\sigma_3= J_{\iota}\sigma_3 |_{\Lambda_3(\tau) \mapsto  t\mu
(\tau)\sigma_3}$ and
$\hat{J}^{\mu}_{\iota}\sigma_3= \hat{J}_{\iota}\sigma_3 |_{\Lambda_3(\tau)
 \mapsto  t\mu(\tau)\sigma_3}$, respectively.
Note that $z_1(\infty)=A^{1/2}_{\phi},$ $z_2(\infty)=-A^{1/2}_{\phi},$ and 
that $z_{1,\,2}(\infty)-z_{1,\,2}(t)\ll t^{-1}$ by Proposition \ref{prop5.1}.
Recall that $\lambda_{1,\,2}=\lambda_{1,\,2}(\infty)$ in these integrals.
By use of $\int^{\lambda}
_{\lambda_{\iota}} (t\mu(\tau) -\frac 14 (t-2\theta_{\infty}\tau^{-1}))d\tau$,
we have
\begin{equation*}
J^{\mu}_1-J^{\mu}_2 
=\int^{\lambda_2}_{\lambda_1}t\mu(\tau)d\tau=e^{i\phi} t \int^{-A_{\phi}^{1/2}}
_{A_{\phi}^{1/2}} \mu(\lambda(z)) dz 
 = -e^{i\phi}t \int^{z_1(t)}_{z_2(t)}
\mu(\lambda(z)) dz + t(I_++I_-),
\end{equation*}
where
$$
|I_{\pm}| \ll \biggl| \int^{\pm a_{\phi}^{1/2}+2 e^{-i\phi}\theta_{\infty}t^{-1}
+O(t^{-2})}_{\pm A_{\phi}^{1/2}} \mu(\lambda(z)) dz \biggr| \ll t^{-3/2}
$$
with $A_{\phi}^{1/2}-a_{\phi}^{1/2}\ll t^{-1}.$
Hence
\begin{align*}
J^{\mu}_1-J^{\mu}_2 
&=  -\frac{e^{i\phi}}2 t \int_{\mathbf{a}}\mu(\lambda(z)) dz +O(t^{-1/2})
\\
&= -\frac{e^{i\phi}}8 t \int_{\mathbf{a}}\Bigl( \sqrt{\frac{a_{\phi}-z^2}
{1-z^2} }+ \frac{2e^{-i\phi}\theta_{\infty}t^{-1}z}w \Bigr)dz
-\frac{e^{i\phi}}2 t^{-1} \int_{\mathbf{a}} \tilde{g}_2(t,z)dz +O(t^{-1/2})
\\
&= -\frac{e^{i\phi}}8 t \int_{\mathbf{a}} \sqrt{\frac{a_{\phi}-z^2}{1-z^2} }dz
 +O(t^{-1/2}).
\end{align*}
Furthermore we have
\begin{align*}
 J^{\mu}_2 -\hat{J}^{\mu}_2 &= {e^{i\phi}} t \int_{\mathbf{b}} 
\mu(\lambda(z))dz +O(t^{-1/2})  
\\
&= \frac{e^{i\phi}}4 t\int_{\mathbf{b}} 
\Bigl( \sqrt{\frac{a_{\phi}-z^2}{1-z^2} }+
 \frac{2e^{-i\phi}\theta_{\infty}t^{-1}z}w \Bigr)dz
+{e^{i\phi}} t^{-1} \int_{\mathbf{b}} \tilde{g}_2(t,z)dz +O(t^{-1/2})
\\
&= \frac{e^{i\phi}}4 t\int_{\mathbf{b}}  \sqrt{\frac{a_{\phi}-z^2}{1-z^2} }dz
- \frac{\theta_{\infty}\pi i}2 +O(t^{-1/2}).
\end{align*} 
The relation $\sqrt{(a_{\phi}-z^2)/(1-z^2)} = (1/w)(a_{\phi}-z^2)
= -(w/z)' +a_{\phi}/w -z^{-2} a_{\phi}/w$ yields the following.
%%%%%%%%%%%%%%%%%%%%%%%%%%%%%%%%%%%%%%%%%%%
%%%%%%% Proposition 5.2 %%%%%
\begin{prop}\label{prop5.2} We have
\begin{align*}
J^{\mu}_1- J^{\mu}_2 &=
-\frac{e^{i\phi}} 8 t \int_{\mathbf{a}} \sqrt{\frac{a_{\phi}-z^2}{1-z^2}}dz
+O(t^{-1/2})
\\
&= -\frac{e^{i\phi}} 8 a_{\phi} t \int_{\mathbf{a}} \Bigl( \frac{1}{w}
-\frac{1}{z^2w} \Bigr)dz +O(t^{-1/2}),
\\
J^{\mu}_2-\hat{J}^{\mu}_2  &=
\frac{e^{i\phi}} 4 t \int_{\mathbf{b}} \sqrt{\frac{a_{\phi}-z^2}{1-z^2}}dz
- \frac{\theta_{\infty}\pi i}2 +O(t^{-1/2})
\\
&=  \frac{e^{i\phi}} 4 a_{\phi} t \int_{\mathbf{b}} \Bigl( \frac{1}{w}
-\frac{1}{z^2w} \Bigr)dz - \frac{\theta_{\infty}\pi i}2 +O(t^{-1/2}).
\end{align*}
\end{prop}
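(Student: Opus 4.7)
The plan is to substitute $\lambda = e^{i\phi} z$ from \eqref{5.2}, reduce both integrals to contour integrals on the elliptic surface $\Pi$, and then expand $\mu$ via \eqref{5.3} and handle each term individually. For $J^\mu_1 - J^\mu_2 = \int^{\lambda_2}_{\lambda_1} t\mu(\tau)\, d\tau$ along the Stokes curve $\mathbf{c}_0$, I would first translate to the $z$-variable and, using Proposition \ref{prop5.1} to place $z_{1,2}$ within $O(t^{-1})$ of $\pm a_\phi^{1/2}$, split the path into the symmetric segment $[-a_\phi^{1/2}, a_\phi^{1/2}]$ plus two short tails $I_\pm$. Since $\mu \asymp |z\mp a_\phi^{1/2}|^{1/2}$ near the turning points (Proposition \ref{prop3.5}), integrating over each tail of length $\asymp t^{-1}$ gives $|I_\pm| \ll t^{-3/2}$, and consequently $t(I_++I_-) \ll t^{-1/2}$. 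Because the segment between the branch points traversed on one sheet of $\Pi$ is exactly half of the cycle $\mathbf{a}$ (the return on the other sheet flips the sign of $\mu$), this yields $J^\mu_1 - J^\mu_2 = -\tfrac{e^{i\phi}}{2} t \int_{\mathbf{a}} \mu(\lambda(z))\, dz + O(t^{-1/2})$.

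Next I would substitute the asymptotic expansion \eqref{5.3} and integrate cycle-wise. The leading term $\tfrac{1}{4}\sqrt{(a_\phi-z^2)/(1-z^2)}$ produces the claimed main contribution $-\tfrac{e^{i\phi}}{8}t \int_\mathbf{a} \sqrt{(a_\phi-z^2)/(1-z^2)}\, dz$. The $t^{-1}$ correction, after multiplication by the prefactor $t$, contributes a constant multiple of $\int_\mathbf{a} z/w\, dz$; this vanishes because the prescribed branch makes $w$ invariant under $z\mapsto -z$ on each sheet, so $z/w\, dz$ is odd under this involution, which preserves the homology class of $\mathbf{a}$. The $\tilde{g}_2 t^{-2}$ term is uniformly bounded on $\mathbf{a}$ (which stays at positive distance from $\{\pm 1, \pm a_\phi^{1/2}\}$), hence contributes $O(t^{-1})$ and is absorbed into the error.

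For $J^\mu_2 - \hat{J}^\mu_2$ the concatenated path along $\mathbf{c}^\infty_2$ and $\hat{\mathbf{c}}^\infty_2$ closes up to the cycle $\mathbf{b}$ on $\Pi$; since $\mathbf{b}$ avoids the turning points uniformly, no boundary correction is needed. The principal term yields $\tfrac{e^{i\phi}}{4}t\int_\mathbf{b} \sqrt{(a_\phi-z^2)/(1-z^2)}\, dz$, and the $t^{-1}$ correction gives $\tfrac{\theta_\infty}{2}\int_\mathbf{b} z/w\, dz$. Unlike on $\mathbf{a}$, this period is not forced to vanish by symmetry; deforming $\mathbf{b}$ suitably and using the branch convention $z^{-2}w \to -1$ at infinity on $\Pi_+$, one obtains $\int_\mathbf{b} z/w\, dz = -\pi i$, producing the constant $-\pi i\theta_\infty/2$. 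The second form of each identity then follows from the algebraic identity
$$
\sqrt{\frac{a_\phi-z^2}{1-z^2}} = \frac{a_\phi-z^2}{w} = -\frac{d}{dz}\!\left(\frac{w}{z}\right) + \frac{a_\phi}{w} - \frac{a_\phi}{z^2 w},
$$
verified by direct differentiation using $ww' = -z(1+a_\phi-2z^2)$. Since $w/z$ is single-valued and meromorphic on $\Pi$, the exact differential integrates to zero over both cycles, leaving the stated combinations of $\int dz/w$ and $\int dz/(z^2 w)$.

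The main obstacle is the careful bookkeeping near the turning points: confirming the $O(t^{-3/2})$ bound on $I_\pm$ uniformly in $\phi\in(-\pi/2,0)\cup(0,\pi/2)$ using the local square-root behaviour of $\mu$, and verifying the odd-symmetry argument that kills the $\mathbf{a}$-period of $z/w\, dz$ under the prescribed branch conventions. The residue-type evaluation of $\int_\mathbf{b} z/w\, dz = -\pi i$ also requires a little care with the sheet structure of $\Pi$, but it is essentially a standard period calculation once the branch of $w$ at infinity is fixed.
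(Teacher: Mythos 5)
Your proposal follows essentially the same route as the paper: the substitution $\lambda=e^{i\phi}z$, the splitting of $\int_{\lambda_1}^{\lambda_2}t\mu\,d\tau$ into the symmetric segment (half the $\mathbf{a}$-period) plus tails $I_\pm\ll t^{-3/2}$, the expansion \eqref{5.3} with the period evaluations $\int_{\mathbf{a}}z\,dz/w=0$ and $\int_{\mathbf{b}}z\,dz/w=-\pi i$ producing the constant $-\theta_\infty\pi i/2$, and the identity $\sqrt{(a_\phi-z^2)/(1-z^2)}=-(w/z)'+a_\phi/w-a_\phi/(z^2w)$ for the second forms. Your explicit symmetry and residue arguments for the two periods are correct details that the paper leaves implicit, so the argument is sound and matches the paper's proof.
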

%%%%%%%%%%%%%%%%%%%%%%%%%%%%%%%%%%%
To calculate $J_{1,2},$ $\hat{J}_2$ it is necessary to know the integrals
of 
%%%%%%% 5.3a %%%%%%%%%%%%
\begin{equation}\label{5.3a}
\diag T^{-1}T_{\lambda} |_{\sigma_3} =\frac{e^{-i\phi}} 4 \Bigl( 1-\frac{b_3}
{\mu} \Bigr) \frac {d}{dz} \log\frac{b_1+ib_2}{b_1-ib_2}
=\frac{i e^{-i\phi} (b_1b_2' -b_1'b_2)}{2\mu(\mu+b_3)}.
\end{equation}
%%%%%%%%%%%%%%%%%%%%%%%%%
By \eqref{5.2}, $b_k$ are written in the form
%%%%%%%%%% 5.3aa %%%%%%%%%%%%%%%%%%%%
\begin{equation}\label{5.3aa}
\begin{split}
&(z^2-1)b_3= \frac 14 ( z^2-1-4\z_0)+O(t^{-1}),
\\
y &(z^2-1)(b_1+ib_2) =\frac 12 (y-1)\z_0(z+1) -y\z_0 +O(t^{-1}),
\\
 &(z^2-1)(b_1-ib_2) =\frac 12 (y-1)\z_0(z+1) +\z_0 +O(t^{-1})
\end{split}
\end{equation}
with $\z_0= -(e^{-i \phi}y^*-y)(y-1)^{-2}.$
Let $z_{\pm}$ be such that 
$$
b_1(z_+) +ib_2(z_+)=0, \quad   b_1(z_-) -ib_2(z_-)=0.
$$
It is easy to see that
%%%%%%%% (5.4) %%%%%%%%%
\begin{equation}\label{5.4}
z_+ = \frac{y+1}{y-1} +O(t^{-1}),  \quad  z_- =- \frac{y+1}{y-1} +O(t^{-1}),
\end{equation}
%%%%%%%%%%%%%%%%%%%
and that $\mu(z_{\pm})^2 =b_3(z_{\pm})^2.$ By \eqref{5.3a}, 
$\| \diag T^{-1}T_{\lambda}\| \ll |z\mp 1|^{-1/2}$ near $z=\pm 1.$ 
Furthermore,
$\diag T^{-1}T_{\lambda}$ has poles at $z_{\pm} \in \Pi_-$ and is holomorphic
around $z_{\pm} \in \Pi_+.$
From \eqref{3.11} combined with \eqref{5.3} it follows that
$$
(1-z^2)\mu = \frac 14 w(z) \Bigl(1+ \frac{2e^{-i\phi}\theta_{\infty}zt^{-1}}
{a_{\phi}-z^2}+ O(t^{-2}) \Bigr).
$$
Note that $b_3(z_{\pm})=e^{-i\phi}y^{-1}y^*/4+O(t^{-1})$ and $\mu(z_{\pm})^2
= e^{-2i\phi}y^{-2}(y^*)^2 /16 +O(t^{-1}).$ When $z_{\pm} \in \Pi_-$,
%%%%%%%%%% (5.5) %%%%%%%%%%%%%%%%
\begin{equation}\label{5.5}
((z_{\pm})^2-1) b_3(z_{\pm}) = -((z_{\pm})^2-1) \mu(z_{\pm})=\frac 14 w(z_{\pm})
(1+O(t^{-1})).
\end{equation}
%%%%%%%%%%%%%%%%%%%%%%%%%%%
The relations 
\begin{align*}
(z^2-1)b_3\Bigl(\frac 1{z-z_+} -\frac 1{z-z_-}\Bigr) =& \frac 14(z_+-z_-)
+\frac{((z_+)^2-1)b_3(z_+)}{z-z_+} -\frac{((z_-)^2-1)b_3(z_-)}{z-z_-}
\\
=& \frac 14 \Bigl(z_+ - z_- +\frac{w(z_+)}{z-z_+} -\frac{w(z_-)}{z-z_-}\Bigr)
+O(t^{-1})
\end{align*}
and 
$$
\diag T^{-1}T_{\lambda} |_{\sigma_3} =\frac{e^{-i\phi}}4 \Bigl(1-\frac{b_3}
{\mu} \Bigr) \Bigl(\frac 1{z-z_+} -\frac 1{z-z_-} \Bigr)
$$
yield
$$
\diag T^{-1}T_{\lambda} |_{\sigma_3} =\frac{e^{-i\phi}}4 \Bigl(\frac 1{z-z_+}
 -\frac 1{z-z_-} + \Bigl(z_+ - z_- +\frac{w(z_+)}{z-z_+} -\frac{w(z_-)}{z-z_-}
\Bigr)\frac 1{w(z)} +O(t^{-1}) \Bigr).
$$
Hence we have
\begin{align*}
& \Bigl(\int_{\l^{\infty}_1} -\int_{\l^{\infty}_2}\Bigr) 
\diag T^{-1}T_{\lambda} |_{\sigma_3} d\tau 
=\int^{\lambda_2}_{\lambda_1} \diag T^{-1}T_{\lambda} |_{\sigma_3} d\tau
= e^{i\phi} \int^{z_2}_{z_1} \diag T^{-1}T_{\lambda} |_{\sigma_3} dz
\\
&=\frac 14 \log \frac{(z_2 - z_+)(z_1-z_-)}{(z_2-z_-)(z_1 - z_+)}
\\
&\phantom{--}
-\frac 18  \int_{\mathbf{a}} \Bigl(\frac {z_+-z_-} w + \frac{w(z_+)}
{(z-z_+) w} - \frac{w(z_-)}{(z-z_-) w}\Bigr) dz +\pi i r_{\mathbf{a}}+O(t^{-1}), 
\end{align*}
and 
\begin{align*}
& \Bigl(\int_{\l^{\infty}_2} -\int_{\hat{\l}^{\infty}_2}\Bigr) 
\diag T^{-1}T_{\lambda}|_{\sigma_3} d\tau
= {e^{i\phi}} \int_{\mathbf{b}} \diag T^{-1}T_{\lambda}|_{\sigma_3} dz
\\
&= \frac 14  \int_{\mathbf{b}} \Bigl(\frac {z_+-z_-} w + \frac{w(z_+)}
{(z-z_+) w} - \frac{w(z_-)}{(z-z_-) w}\Bigr) dz +2\pi i r_{\mathbf{b}} 
+O(t^{-1}), 
\end{align*}
where $\pi i r_{\mathbf{a}}$, $2\pi i r_{\mathbf{b}}$ with $r_{\mathbf{a}},$
$r_{\mathbf{b}}=0,1$ are the contributions from the poles $z_{\pm}$ in
deforming the contours.
Since $(b_1(z)-ib_2(z))/(b_1(z)+ib_2(z))=y(z-z_-)/(z-z_+),$
$$
c_0^2=-\frac{c_1-ic_2}{c_1+ic_2}=- \frac{y(z_2-z_-)}{z_2-z_+},  
\quad d_0^2=-\frac{y(z_1-z_-)}{z_1-z_+}. 
$$
These combined with Proposition \ref{prop5.2} and \eqref{4.2} yield the 
following.
%%%%%%%%%%%%%%%%%%%%%%%%%%%%%%%%%%%%%%
%%%%%%% Proposition 5.3 %%%%%%
\begin{prop}\label{prop5.3}
Set
$$
W_0(z)=\frac{e^{i\phi}}4 a_{\phi}\Bigl(\frac 1w -\frac 1{z^2w}\Bigr),
\quad
W_1(z)=\frac 14 \Bigl(\frac{z_+-z_-}w +\frac{w(z_+)}{(z-z_+)w}
 - \frac{w(z_-)}{(z-z_-)w} \Bigr)
$$
with $w=w(z)=w(a_{\phi},z).$ Then
\begin{align*}
J_2 -\hat{J}_2& = \frac{e^{i\phi}}4 t \int_{\mathbf{b}} \sqrt{\frac
{a_{\phi}-z^2}{1-z^2} } dz -\int_{\mathbf{b}}W_1(z)dz -\frac {\theta_{\infty}
\pi i}2 -2\pi i r_{\mathbf{b}} +O(t^{-1/2})
\\
&= \int_{\mathbf{b}} (tW_0(z) -W_1(z) ) dz -\frac{\theta_{\infty}\pi i}2
- 2\pi i r_{\mathbf{b}} +O(t^{-1/2}),
\\
2(J_1-J_2) & + \log(c_0^{-1}d_0)  
= - \frac{e^{i\phi}}4 t \int_{\mathbf{a}} \sqrt{\frac{a_{\phi}-z^2}{1-z^2}} dz
 +\int_{\mathbf{a}}W_1(z)dz +2\pi i r_{\mathbf{a}}  +O(t^{-1/2})
\\
&=- \int_{\mathbf{a}} (tW_0(z) -W_1(z) ) dz +2\pi i r_{\mathbf{a}}+O(t^{-1/2}).
\end{align*}
\end{prop}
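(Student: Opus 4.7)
The plan is to decompose $\Lambda_3(\tau) = t\mu(\tau)\sigma_3 - \diag T^{-1}T_\lambda|_{\sigma_3}$ and handle each summand separately. Writing $J_\iota = J^\mu_\iota + R_\iota$ and $\hat{J}_\iota = \hat{J}^\mu_\iota + \hat{R}_\iota$, the $t\mu$-differences $J^\mu_1 - J^\mu_2$ and $J^\mu_2 - \hat{J}^\mu_2$ are precisely what Proposition~\ref{prop5.2} supplies; so the task reduces to computing the $R$-differences $R_1-R_2$ and $R_2-\hat{R}_2$, which are exactly the contour integrals $\int^{\lambda_2}_{\lambda_1}\diag T^{-1}T_\lambda|_{\sigma_3}\,d\tau$ (along a path through the interior of the Stokes graph) and $e^{i\phi}\int_\mathbf{b}\diag T^{-1}T_\lambda|_{\sigma_3}\,dz$ already evaluated in the passage preceding the proposition, up to errors $O(t^{-1/2})$ and $O(t^{-1})$ respectively.

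For these evaluations I will invoke the partial-fraction expansion of $\diag T^{-1}T_\lambda|_{\sigma_3}$ derived there from \eqref{5.3aa} and \eqref{5.5}, which splits the integrand into (i) a rational piece $\tfrac{e^{-i\phi}}{4}\bigl(\tfrac{1}{z-z_+}-\tfrac{1}{z-z_-}\bigr)$ and (ii) a $1/w$-type piece that reproduces $W_1(z)$ up to a scalar, plus an $O(t^{-1})$ remainder. In the $\mathbf{a}$-computation the rational piece integrated from $z_1$ to $z_2$ contributes the boundary term $\tfrac14\log\tfrac{(z_2-z_+)(z_1-z_-)}{(z_2-z_-)(z_1-z_+)}$, which by \eqref{5.4} together with the formulas $c_0^2=-y(z_2-z_-)/(z_2-z_+)$, $d_0^2=-y(z_1-z_-)/(z_1-z_+)$ equals $\tfrac12\log(c_0^{-1}d_0)$ modulo $\pi i$; doubling produces the isolated $\log(c_0^{-1}d_0)$ on the left of the second identity. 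The $W_1$-piece produces the cycle integrals $\int_{\mathbf{a},\mathbf{b}}W_1\,dz$, and the integer multiples $\pi i\,r_\mathbf{a},\,2\pi i\,r_\mathbf{b}$ arise as residue contributions when the original path is pulled across the simple poles of $\diag T^{-1}T_\lambda|_{\sigma_3}$ at $z_\pm\in\Pi_-$.

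To obtain the equivalent $W_0$ form from the squared-radical form, I will use the algebraic identity
\[
\sqrt{\frac{a_\phi-z^2}{1-z^2}} \;=\; \frac{a_\phi-z^2}{w} \;=\; -\Bigl(\frac{w}{z}\Bigr)' + \frac{a_\phi}{w} - \frac{a_\phi}{z^2 w},
\]
displayed just above Proposition~\ref{prop5.2} and checked directly from $w^2=(1-z^2)(a_\phi-z^2)$. Since $w/z$ is meromorphic and single-valued on the elliptic surface $\Pi$, the exact differential $(w/z)'$ has vanishing integral around the closed cycles $\mathbf{a}$ and $\mathbf{b}$, so $\tfrac{e^{i\phi}}{4}\int_{\mathbf{a},\mathbf{b}}\sqrt{(a_\phi-z^2)/(1-z^2)}\,dz = \int_{\mathbf{a},\mathbf{b}}W_0(z)\,dz$. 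Combining the three ingredients yields both identities. The principal obstacle will be careful bookkeeping of signs and branch data: orientations of $\mathbf{c}^\infty_j$ and $\hat{\mathbf{c}}^\infty_j$ in the two subcases $-\pi/2<\phi<0$ and $0<\phi<\pi/2$, consistent choices of $\sqrt{\,\cdot\,}$ across the sheets of $\Pi$, and the integer multiples $2\pi i\,r_{\mathbf{a},\mathbf{b}}$ picked up from the pole deformations at $z_\pm$.
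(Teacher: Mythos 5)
Your plan coincides with the paper's own derivation: Proposition \ref{prop5.2} supplies the $t\mu$-part, and the remaining contribution of $\diag T^{-1}T_{\lambda}|_{\sigma_3}$ is computed exactly as in the text preceding the proposition, via the partial-fraction form from \eqref{5.3aa}, \eqref{5.5}, the boundary logarithm identified with $\log(c_0^{-1}d_0)$ through \eqref{5.4} and the formulas for $c_0^2$, $d_0^2$, the residue terms $\pi i r_{\mathbf{a}}$, $2\pi i r_{\mathbf{b}}$ from the poles $z_{\pm}$, and the exact-differential identity $-(w/z)'+a_{\phi}/w-a_{\phi}/(z^2w)$ for the $W_0$-form. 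This is essentially the same proof; only the sign, orientation and branch bookkeeping you already flag remains to be carried out.
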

%%%%%%%%%%%%%%%%%%%%%%%%%%%%%%%%%
%%%% Corollary 5.4 %%%%%%%%%%%
\begin{cor}\label{cor5.4}
We have
\begin{align*}
\log \mathfrak{m}_{\phi} & = \frac{e^{i\phi}}4 t \int_{\mathbf{b}} \sqrt{\frac
{a_{\phi}-z^2}{1-z^2} } dz -\int_{\mathbf{b}}W_1(z)dz -\frac {\theta_{\infty}
\pi i}2 +O(t^{-\delta})
\\
&= \int_{\mathbf{b}} (tW_0(z) -W_1(z) ) dz -\frac{\theta_{\infty}\pi i}2
+O(t^{-\delta}),
\\
\log(m^0_{21}m^1_{12}) &    
= - \frac{e^{i\phi}}4 t \int_{\mathbf{a}} \sqrt{\frac{a_{\phi}-z^2}{1-z^2}} dz
 +\int_{\mathbf{a}}W_1(z)dz -(\theta_{\infty}+1)\pi i + O(t^{-\delta})
\\
&=- \int_{\mathbf{a}} (tW_0(z) -W_1(z) ) dz -(\theta_{\infty}+1)\pi i 
+ O(t^{-\delta}).
\end{align*}
\end{cor}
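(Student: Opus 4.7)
The plan is to combine the two identities in \eqref{4.2} with the asymptotic formulas for $J_2-\hat{J}_2$ and $2(J_1-J_2)+\log(c_0^{-1}d_0)$ already established in Proposition \ref{prop5.3}, and then take logarithms, keeping careful track of the branches modulo $2\pi i \mathbb{Z}$.

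First, from the first line of \eqref{4.2} we have $\mathfrak{m}_{\phi}=(1+O(t^{-\delta}))e^{J_2-\hat{J}_2}$. Taking the logarithm gives
$$\log\mathfrak{m}_{\phi}=J_2-\hat{J}_2+\log(1+O(t^{-\delta}))=J_2-\hat{J}_2+O(t^{-\delta}).$$
Substituting the expression for $J_2-\hat{J}_2$ from Proposition \ref{prop5.3} and absorbing the integer multiple $-2\pi i r_{\mathbf{b}}$ ($r_{\mathbf{b}}\in\{0,1\}$) into the choice of branch of the logarithm, we obtain the first formula of the corollary, since $O(t^{-1})\subset O(t^{-\delta})$ for $\delta=2/9-\varepsilon<1$.

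Next, the second line of \eqref{4.2} reads $e^{\pi i\theta_{\infty}}m^0_{21}m^1_{12}=-(1+O(t^{-\delta}))c_0^{-1}d_0 e^{2J_1-2J_2}$. Taking logarithms modulo $2\pi i$,
$$\pi i\theta_{\infty}+\log(m^0_{21}m^1_{12})\equiv \pi i+\log(c_0^{-1}d_0)+2(J_1-J_2)+O(t^{-\delta})\pmod{2\pi i\mathbb{Z}}.$$
Inserting the Proposition \ref{prop5.3} expression
$$2(J_1-J_2)+\log(c_0^{-1}d_0)=-\int_{\mathbf{a}}(tW_0(z)-W_1(z))\,dz+2\pi i r_{\mathbf{a}}+O(t^{-1/2})$$
and using $O(t^{-1/2})\subset O(t^{-\delta})$, the integer term $2\pi i r_{\mathbf{a}}$ again is absorbed into the branch, and we obtain
$$\log(m^0_{21}m^1_{12})=-\int_{\mathbf{a}}(tW_0(z)-W_1(z))\,dz-\pi i\theta_{\infty}+\pi i+O(t^{-\delta}),$$
which equals the stated expression since $-(\theta_{\infty}-1)\pi i\equiv -(\theta_{\infty}+1)\pi i\pmod{2\pi i\mathbb{Z}}$.

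The computation is essentially bookkeeping, so there is no serious obstacle; the only subtlety worth checking is that all of the error contributions---from the $O(t^{-\delta})$ in \eqref{4.2}, the $O(t^{-1})$ and $O(t^{-1/2})$ in Proposition \ref{prop5.3}, and the Taylor expansion of $\log(1+O(t^{-\delta}))$---are uniformly of order $O(t^{-\delta})$ with $\delta=2/9-\varepsilon$, and that the integer-valued residue contributions $r_{\mathbf{a}},r_{\mathbf{b}}\in\{0,1\}$ (arising from poles of $\diag T^{-1}T_{\lambda}$ at $z_{\pm}$) drop out once we pass to logarithms taken modulo $2\pi i$.
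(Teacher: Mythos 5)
Your proposal is correct and follows the same route the paper intends: Corollary \ref{cor5.4} is exactly the combination of the two relations in \eqref{4.2} with Proposition \ref{prop5.3}, taking logarithms, noting $O(t^{-1}),O(t^{-1/2})\subset O(t^{-\delta})$, and absorbing the sign $-1$ and the residue contributions $2\pi i r_{\mathbf{a}},2\pi i r_{\mathbf{b}}$ into the $2\pi i\mathbb{Z}$ ambiguity of the logarithms (harmless later, since a shift by $2\pi i$ only moves $x_0$ by a lattice period). Nothing further is needed.
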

%%%%%%%%%%%%%%%%%%%%%%
%%%%%% ssc 5.2 %%%%%%%%%%%%%%%%%%%%%%%%%%%%%%%%%%%%%%%%%%
\subsection{Expressions in terms of the $\vartheta$-function}\label{ssc5.2}
%%%%%%%%%%%%%%%%%%%%%%%%%%%%%%%%%%%%%%%%%%%
Under the supposition $a_{\phi}\not=0, 1,$ write
$$
\sn(u)=a_{\phi}^{1/2} \mathrm{sn}(u; a_{\phi}^{1/2}).
$$
Then $z=\sn(u)$ satisfies $(dz/du)^2=w(z)^2=(1-z^2)(a_{\phi}-z^2).$
Setting $z=\sn(u)$ we have, for a given $z_0$ on the elliptic curve 
$\Pi_{a_{\phi}}=\Pi_{a_{\phi},+} \cup \Pi_{a_{\phi},-}$ for $w(z)=w(a_{\phi},z)$,
$$
\frac{dz}{(z-z_0)w(z)}=\frac{du}{\sn(u)-\sn(u_0)}, \quad z_0=\sn(u_0).
$$
Let $u^{\pm}_0$ be such that $z_0^{(\pm)}=\sn(u^{\pm}_0),$ where 
$z^{(+)}_0 =(z_0, w(z_0^{(+)})),$ 
$z^{(-)}_0 =(z_0, - w(z_0^{(+)})).$ Since $\sn'(u^{\pm}_0)=
\pm w(z_0^{(+)}),$
\begin{align*}
\frac{1}{\sn(u)-z_0} &= \frac 1{w(z_0^{(+)})} (\zeta(u-u^{+}_0)
 -\zeta(u-u^{-}_0))
-\frac 1{w(z_0^{(+)})} 
\Bigl(\frac{w'(z_0^{(+)})}2 -\zeta(u_0^{+}-u_0^{-}) \Bigr)
\\
&= \frac 1{w(z_0^{(+)})}\frac d{du}\log\frac{\sigma(u-u^{+}_0)}
{\sigma(u-u^{-}_0)}
-\frac 1{w(z_0^{(+)})} \Bigl(\frac{w'(z_0^{(+)})}2 -\frac{\sigma'}{\sigma}
(u_0^{+}-u_0^{-}) \Bigr).
\end{align*}
This function may be written in terms of the $\vartheta$-function
$$
\vartheta(z,\tau) = \sum_{n\in \mathbb{Z}} e^{\pi i\tau n^2+2\pi izn},
\quad \im\tau >0
$$
coinciding with $\vartheta_3$ of Jacobi and having the properties:
$$
\vartheta(z\pm 1,\tau) =\vartheta(z,\tau),
\quad \vartheta(-z,\tau)=\vartheta(z,\tau), 
\quad \vartheta(z\pm\tau, \tau)=e^{-\pi i(\tau\pm 2z)} \vartheta(z,\tau)
$$
\cite{HC, WW}.
Let us write the fundamental periods of $\Pi_{a_{\phi}}$ as
$$
\omega_{\mathbf{a}}=\omega_{\mathbf{a}}(t)=\int_{\mathbf{a}}\frac{dz}
{w(z)},  \quad
\omega_{\mathbf{b}}= \omega_{\mathbf{b}}(t)=\int_{\mathbf{b}} \frac{dz}
{w(z)},  \quad w(z)=w(a_{\phi},z). 
$$
Then $\sn(u)$ with the modulus $k=a^{1/2}_{\phi}$ has the periods $\omega
_{\mathbf{a}}=4K,$ $\omega_{\mathbf{b}}=2iK'$, and
\begin{align*}
& d \log\frac{\sigma(u-u^{+}_0)}{\sigma(u-u^{-}_0)} =
\frac{2\zeta(\omega_{\mathbf{a}}/2)}{\omega_{\mathbf{a}}} 
(u^{-}_0 -u^{+}_0)du
+ d \log\frac{\vartheta(F(z^{(+)}_0, z) +\nu, \tau)}
{\vartheta(F(z^{(-)}_0, z) +\nu, \tau)},  
\\
& \frac{\sigma'}{\sigma}(u^{+}_0 -u^{-}_0)  = - 
\frac{2\zeta(\omega_{\mathbf{a}}/2)}{\omega_{\mathbf{a}}} 
(u^{-}_0 -u^{+}_0) +\frac{i\pi}
{\omega_{\mathbf{a}}}+\frac{1}{\omega_{\mathbf{a}}} \frac{\vartheta'}
{\vartheta}(F(z^{(-)}_0,z^{(+)}_0) + \nu,\tau)
\end{align*}
\cite{HC, WW}, where
%%%%%%%%%%%%%%%%%%%%%%%%%
%%%% (5.6) %%%%%%%%%%%%%
\begin{equation}\label{5.6}
\tau=\frac{\omega_{\mathbf{b}}}{\omega_{\mathbf{a}}}, \quad  \nu=\frac 12
(1+\tau), \quad F(z_*, z)=\frac 1{\omega_{\mathbf{a}}} \int^z_{z_*}
\frac{dz}{w(z)} =\frac 1{\omega_{\mathbf{a}}} (u-u_*)
\end{equation}
with $z=\sn(u),$ $z_*=\sn(u_*).$ Thus we have 
\begin{align*}
\frac{dz}{(z-z_0)w(z)}
&= \frac{1}{w(z_0^{(+)})} d \log\frac{\vartheta(F(z^{(+)}_0, z) +\nu, \tau)}
{\vartheta(F(z^{(-)}_0, z) +\nu, \tau)}
\\
& -\frac{1}{w(z_0^{(+)})} \Bigl(
\frac{w'(z_0^{(+)})}{2} -\frac{1}{\omega_{\mathbf{a}}} \Bigl(
 {i\pi}+  \frac{\vartheta'}{\vartheta}(F(z^{(-)}_0,z^{(+)}_0) + \nu,\tau)
\Bigr) \Bigr) \frac{dz}{w(z)} ,
\end{align*}
which yields
%%%%%%%%%%%%%%
%%%%% (5.7) %%%
\begin{align}\label{5.7}
& \int_{\mathbf{a}} \frac{dz}{(z-z_0)w(z)}
= - \frac{w'(z_0^{(+)})} {2w(z_0^{(+)})}\omega_{\mathbf{a}}
+\frac 1{w(z_0^{(+)})}\Bigl( {i\pi}
+  \frac{\vartheta'}{\vartheta}(F(z^{(-)}_0,z^{(+)}_0) + \nu,\tau)\Bigr) ,
\\  \label{5.8}
& \int_{\mathbf{b}} \frac{dz}{(z-z_0)w(z)}
= \frac{2\pi i} {w(z_0^{(+)})} F(z^{(-)}_0,z^{(+)}_0) + \tau
 \int_{\mathbf{a}} \frac{dz}{(z-z_0)w(z)}.
\end{align}
The integrals
\begin{align*}
& \int_{\mathbf{a}} \frac{dz}{z^2 w(z)} = \frac 12 (1+a^{-1}_{\phi})\omega
_{\mathbf{a}} -\frac 2{a_{\phi}\omega_{\mathbf{a}}} \Bigl(\frac{\vartheta''}
{\vartheta} -\Bigl(\frac{\vartheta'}{\vartheta}\Bigr)^{\!\! 2}\Bigr)
(\tau/2, \tau),
\\
& \int_{\mathbf{b}} \frac{dz}{z^2 w(z)} = \frac {4\pi i}
{a_{\phi}\omega_{\mathbf{a}}} +\tau \int_{\mathbf{a}} \frac{dz}{z^2 w(z)}  
\end{align*}
follow from $(\partial/\partial z_0)\eqref{5.7}$, 
$(\partial/\partial z_0)\eqref{5.8}$ with $z_0=0.$ Furthermore, 
we obtain 
$$
\int_{\mathbf{a}} \frac{w(z_0) dz}{(z-z_0)w(z)} = - z_0 \omega_{\mathbf{a}}
+ 2\frac{\vartheta'}
{\vartheta}\Bigl( \frac 12 F(z^{(-)}_0, z^{(+)}_0)-\frac 14, \tau \Bigr), 
$$
combining \eqref{5.7} with the relation
$$
\Bigl(\frac{z_0}2 -\frac{w'(z_0)}4 \Bigr)\omega_{\mathbf{a}}+ \frac 12
\frac{\vartheta'}{\vartheta}(F(z^{(-)}_0, z^{(+)}_0) +\nu, \tau) +\frac{\pi i}2
= \frac{\vartheta'}{\vartheta}\Bigl( \frac 12 F(z^{(-)}_0, z^{(+)}_0)-\frac 14,
 \tau \Bigr), 
$$
which is derived by comparing the poles $z_0=\pm 1,$ $\pm a_{\phi}^{1/2}$ 
and $\infty^+$ $(\in \Pi_{a_{\phi},+})$ on $\Pi_{a_{\phi}}$
(cf. \cite[p.513]{Kapaev-1}, \cite[(3.5)]{Kitaev-3}).\footnote[2]
%%%%%%%%%%%%%%%%% [2] %%%%%%%%%%%%%%%%%%%%%%%%%%%
{For $z_0=\infty^+$ by using $\int^{\infty}_0w(z)^{-1}dz=\omega_{\mathbf{b}}/2$, 
it is shown that $F(z_0^{(-)},z_0^{(+)})=\tau-1/2$ and the residues of 
the poles on both sides coincide. Similarly for each of  
$z_0=\pm 1$ and $z_0=\pm a_{\phi}^{1/2}$ the poles on the left-hand side are
cancelled out, since $F(z^{(-)}_0,z^{(+)}_0)=2\sqrt{2}(1-a_{\phi})^{-1/2}
(\omega_{\mathbf{a}})^{-1}t(1+o(1))$, say, for $z_0=1+t^2\to 1$.
The difference of both sides is a constant, and setting $z_0=0$ we obtain
this relation.} 
%%%%%%%%%%%%%%%%%%%%%%%%%%%%%%%%%%%%%%%%%%%%%%%
In what follows let us adopt the convention that the path of the integral
$\omega_{\mathbf{a}}F(z^{(-)}_0,z^{(+)}_0)
=\int^{z^{(+)}_0}_{z^{(-)}_0} w(z)^{-1}dz$ passes through $a_{\phi}^{1/2}$,
i.e. the left end of the cut $[a^{1/2}_{\phi},1]$. Then 
\begin{align*}
\int^{z_0^{(+)}}_{z_0^{(-)}} \frac{dz}{w(z)}=& 
2 \int^{z_0^{(+)}}_{a^{1/2}_{\phi}}
\frac {dz}{w(z)}= -2\int^{-z^{(+)}_0}_{-a^{1/2}_{\phi}} \frac{dz}{w(z)} 
\\
=& -2\biggl(\int^{-z^{(+)}_0}_{a^{1/2}_{\phi}} +\int^{a^{1/2}_0}
_{-a^{1/2}_{\phi}}\biggr) \frac{dz}{w(z)} 
=-\int^{-z^{(+)}_0}_{-z^{(-)}_0}  \frac{dz}{w(z)} -\omega_{\mathbf{a}},
\end{align*}
and hence $-F(-z^{(-)}_0,-z^{(+)}_0)=F(z^{(-)}_0,z^{(+)}_0)+1.$
From these relations with $z_0=z_+,$ $z_-$ such that $z_++z_- =O(t^{-1})$
(cf. \eqref{5.4}), we derive the following. 
%%%%%%%%%%%%%%%%%%%%%%%%%%%%%%%%%%%%%%%%
%%%%%%%%% Proposition 5.5 %%%%%%%%%%%%%%%%
\begin{prop}\label{prop5.5}
For $W_0(z)$ and $W_1(z)$ in Proposition $\ref{prop5.3}$,
\begin{align*}
&\int_{\mathbf{a}} W_0(z)dz = \frac{e^{i\phi}}{8} \Bigl( (a_{\phi}-1)\omega
_{\mathbf{a}} + 
\frac 4{\omega_{\mathbf{a}}} \Bigl(\frac{\vartheta''}
{\vartheta} -\Bigl(\frac{\vartheta'}{\vartheta}\Bigr)^{\!\! 2}\Bigr)
(\tau/2, \tau) \Bigr),
\\
&\int_{\mathbf{b}} W_0(z)dz -\tau \int_{\mathbf{a}} W_0(z)dz = 
- \frac {e^{i\phi} \pi i}{\omega_{\mathbf{a}}}, 
\\
&\int_{\mathbf{a}} W_1(z)dz = \frac{\vartheta'}
{\vartheta}\Bigl( \frac 12 F(z^{(-)}_+, z^{(+)}_+)-\frac 14, \tau \Bigr)+ 
O(t^{-1}),
\\
&\int_{\mathbf{b}} W_1(z)dz -\tau \int_{\mathbf{a}} W_1(z)dz = 
\pi i F(z_+^{(-)}, z_+^{(+)}) +\frac{\pi i}2 + O(t^{-1}),
\end{align*}
where $z_+=(y+1)/(y-1)+O(t^{-1})$, $z_+^{(+)}=(z_+,  w(z_+^{(+)}))$, 
$z_+^{(-)}=(z_+, - w(z_+^{(+)})).$
\end{prop}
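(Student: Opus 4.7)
The proof reduces to direct computation from the integral identities recorded just above the proposition statement, together with a symmetry argument for combining two $\vartheta'/\vartheta$ contributions into one.

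For $\int_{\mathbf{a,b}} W_0\,dz$ I would split $W_0 = \tfrac{e^{i\phi}}{4}a_\phi(1/w - 1/(z^2w))$ and invoke $\int_{\mathbf{a,b}} dz/w = \omega_{\mathbf{a,b}}$ together with the displayed evaluations of $\int_{\mathbf{a,b}} dz/(z^2w)$ obtained by differentiating \eqref{5.7} and \eqref{5.8} at $z_0=0$. A short algebraic manipulation using $\omega_{\mathbf{b}} = \tau\omega_{\mathbf{a}}$ yields the first identity; for the second, the $dz/w$ part cancels in the $\mathbf{b}-\tau\mathbf{a}$ combination and what survives is $\tfrac{e^{i\phi}}{4}a_\phi \cdot (-4\pi i/(a_\phi\omega_{\mathbf{a}})) = -e^{i\phi}\pi i/\omega_{\mathbf{a}}$.

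For $\int_{\mathbf{a}} W_1\,dz$, the plan is to apply the displayed identity $\int_{\mathbf{a}} w(z_0)\,dz/((z-z_0)w(z)) = -z_0\omega_{\mathbf{a}} + 2(\vartheta'/\vartheta)(\tfrac{1}{2}F(z_0^{(-)},z_0^{(+)})-\tfrac{1}{4},\tau)$ separately at $z_0 = z_+$ and $z_0 = z_-$. The $\tfrac{1}{4}(z_+-z_-)\omega_{\mathbf{a}}$ piece from the first term of $W_1$ cancels the $\tfrac{1}{4}(-z_+ + z_-)\omega_{\mathbf{a}}$ contributions arising from the two evaluations, leaving $\tfrac{1}{2}[(\vartheta'/\vartheta)(\alpha_+,\tau) - (\vartheta'/\vartheta)(\alpha_-,\tau)]$ with $\alpha_\pm := \tfrac{1}{2}F(z_\pm^{(-)},z_\pm^{(+)}) - \tfrac{1}{4}$. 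Since $z_- = -z_+ + O(t^{-1})$, the Jacobi parametrisation $z = \sn(u)$ (with the identities $\sn(-u)=-\sn(u)$ and $\sn(\omega_{\mathbf{a}}/2 - u)=\sn(u)$) lets me identify the preimages of $z_\pm$ in a fundamental domain and conclude that $F(z_-^{(-)},z_-^{(+)}) = -F(z_+^{(-)},z_+^{(+)}) - 1$ exactly; equivalently $\alpha_- = -\alpha_+ - 1$. Evenness of $\vartheta(\cdot,\tau)$ makes $\vartheta'/\vartheta$ odd, and together with the period relation $\vartheta'/\vartheta(z+1,\tau) = \vartheta'/\vartheta(z,\tau)$ this gives $\vartheta'/\vartheta(\alpha_-,\tau) = -\vartheta'/\vartheta(\alpha_+,\tau)$, collapsing the bracket to $\vartheta'/\vartheta(\alpha_+,\tau)$. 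The $\mathbf{b}$-counterpart is handled identically: use the consequence $\int_{\mathbf{b}}w(z_0)dz/((z-z_0)w) - \tau\int_{\mathbf{a}}w(z_0)dz/((z-z_0)w) = 2\pi i F(z_0^{(-)},z_0^{(+)})$ of \eqref{5.8} at $z_0 = z_\pm$; the $(z_+-z_-)(\omega_{\mathbf{b}}-\tau\omega_{\mathbf{a}})/4$ term vanishes, and the remaining $\tfrac{\pi i}{2}[F(z_+^{(-)},z_+^{(+)}) - F(z_-^{(-)},z_-^{(+)})]$ collapses to $\pi i F(z_+^{(-)},z_+^{(+)})$ after choosing the representative compatible with $F_- = -F_+ - 1$.

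The main obstacle I anticipate is reconciling sign and sheet conventions: the $w(z_\pm)$ entering $W_1$ are extracted via \eqref{5.5} on the sheet $\Pi_-$, whereas the formula for $\int_{\mathbf{a}} w(z_0)dz/((z-z_0)w)$ is phrased in terms of $w(z_0^{(+)})$; pinning down exact (not merely modulo-period) values of $F(z_\pm^{(-)},z_\pm^{(+)})$ so that the integer offset in $F_- = -F_+ - 1$ comes out correctly requires careful tracking through the Jacobi identifications. Once these conventions are fixed, every other step is routine substitution and the $O(t^{-1})$ error absorbs the higher-order corrections coming from \eqref{5.4} and \eqref{5.5}.
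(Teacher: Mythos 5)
Your computation of the two $W_0$-identities is correct and complete, and your strategy for the $W_1$-identities --- apply the displayed evaluation of $\int_{\mathbf a}w(z_0)\,dz/((z-z_0)w(z))$ and the consequence of \eqref{5.8} at $z_0=z_+$ and $z_0=z_-$, observe that the terms linear in $z_\pm$ cancel against $\tfrac14(z_+-z_-)\omega_{\mathbf a}$, and reduce everything to the relation between $F(z_-^{(-)},z_-^{(+)})$ and $F(z_+^{(-)},z_+^{(+)})$ --- is exactly the route the paper takes (its proof is the single sentence ``from these relations with $z_0=z_+,\,z_-$ it follows that\dots''), so in method you coincide with the source.

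The gap is at the point you yourself flag, and it is not merely a deferred routine check: your two displayed reductions are $\int_{\mathbf a}W_1\,dz=\tfrac12\bigl[\tfrac{\vartheta'}{\vartheta}(\tfrac{F_+}{2}-\tfrac14,\tau)-\tfrac{\vartheta'}{\vartheta}(\tfrac{F_-}{2}-\tfrac14,\tau)\bigr]$ and $\int_{\mathbf b}W_1\,dz-\tau\int_{\mathbf a}W_1\,dz=\tfrac{\pi i}{2}(F_+-F_-)$, where $F_\pm=F(z_\pm^{(-)},z_\pm^{(+)})$ are the determinations actually entering \eqref{5.7}--\eqref{5.8}. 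With your claimed exact relation $F_-=-F_+-1$ the first bracket does collapse to $\tfrac{\vartheta'}{\vartheta}(\tfrac{F_+}{2}-\tfrac14,\tau)$ (oddness plus $1$-periodicity, as you say), but the second becomes $\pi iF_++\tfrac{\pi i}{2}$, not $\pi iF_+$; conversely $F_-=-F_+$ fixes the $\mathbf b$-identity but turns the $\mathbf a$-integral into $\tfrac12\bigl[\tfrac{\vartheta'}{\vartheta}(\tfrac{F_+}{2}-\tfrac14,\tau)+\tfrac{\vartheta'}{\vartheta}(\tfrac{F_+}{2}+\tfrac14,\tau)\bigr]$, which is not the stated value. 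So ``choosing the representative compatible with $F_-=-F_+-1$'' cannot do both jobs at once: one and the same determination of $F_-$ enters \eqref{5.7} and \eqref{5.8}, and the two identities have different period-sensitivities (\eqref{5.7}, after the residue-comparison identity, which itself holds only for $F$ pinned mod $2$, is insensitive to $F\mapsto F+1$, while \eqref{5.8} pins $F$ mod $1$). To close the argument you must actually carry out the pinning --- the sheet assignment of $w(z_\pm)$ coming from \eqref{5.5}, the exact (not mod-period) values of $F_\pm$, and where the resulting half-integer multiple of $\pi i$ goes --- since an unaccounted $\pi i/2$ here propagates to a half-period shift of the phase $x_0$, which is precisely the delicate quantity this corrected version of the paper is about. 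As written, the $\mathbf b$-statement does not follow from your argument.
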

%%%%%%%%%%%%%%%%%%%%%%%%%%%%%%%%5
%%%%%% ssc 5.3 %%%%%%
%%%%%%%%%%%%%%%%%%%%%%%%%%%%%%%%%%%%%%
%%%%%%%%%%%%%%%%%%%%%%%%%%%%%%%%%%%%%
%%%%%%%% ssc 5.4 %%%%%
\subsection{Expression of $B_{\phi}(t)$}
\label{ssc5.4}
%%%%%%%%%%%%%%%%%%%
Recall that
$ a_{\phi}(t)=A_{\phi} + t^{-1} B_{\phi}(t),$ $ B_{\phi}(t)=O(1)$
%%%%%%%%%%%%%%%%%%%%%%%
in the domain $S_*(\phi, t'_{\infty}, \kappa_0, \delta_1)$,
where $A_{\phi}$ is a unique solution of the Boutroux equations \eqref{2.1}.
The quantity $B_{\phi}(t)$ is written in terms of
%%%%%%%%%%%%%%%%%%%%%%
\begin{equation*}
\Omega_{\mathbf{a}, \mathbf{b}}  = \int_{\mathbf{a},\mathbf{b}} 
\frac{dz}{w(A_{\phi}, z)},   \quad
% \Omega_{\mathbf{b}} & = \int_{\mathbf{b}} \frac{dz}{w(A_{\phi}, z)}, 
% \\
\mathcal{E}_{\mathbf{a}, \mathbf{b}} = \int_{\mathbf{a},\mathbf{b}} \sqrt{\frac{A_{\phi}-z^2}
{1-z^2} } dz, 
 \quad
\end{equation*}
where $w(A_{\phi}, z) =\sqrt{(A_{\phi}-z^2)(1-z^2)},$ and $\mathbf{a},$
$\mathbf{b}$ are basic cycles on $\Pi_{A_{\phi}}$. Observing that 
$$
\sqrt{\frac{a_{\phi}-z^2}{1-z^2}} - \sqrt{\frac{A_{\phi}-z^2}{1-z^2}}
=\frac 1{\sqrt{1-z^2}}(\sqrt{a_{\phi}-z^2} -\sqrt{A_{\phi} -z^2})
=\frac{t^{-1}B_{\phi}}{2w(A_{\phi},z)}(1+O(t^{-1}B_{\phi})),
$$
and combining this with Corollary \ref{cor5.4} and Proposition \ref{prop5.5},
we obtain
$$
\frac{e^{i\phi}}4 \Bigl(t\mathcal{E}_{\mathbf{a}} +\frac{\Omega_{\mathbf{a}}}
2 B_{\phi}(1+O(t^{-1}B_{\phi})) \Bigr) = \int_{\mathbf{a}} W_1(z)dz
-\log (m^0_{21}m^1_{12}) -\pi i(\theta_{\infty}+1) +O(t^{-\delta})
$$
with $\int_{\mathbf{a}} W_1(z)dz $ as in Proposition \ref{prop5.5}.
%%%%%%%%%%%%%%%%%%%%%%%%%%%%%%
%%% Proposition 5.6 %%%%%
%%%%%%%%%%%%%%%%%%%%%%%%%%%
\begin{prop}\label{prop5.7} 
In $S_*(\phi, t'_{\infty},\kappa_0, \delta_1)$, $B_{\phi}(t)$ is bounded, and 
%% if $\log \mathfrak{m}_{\phi}$ and $\log \mathfrak{m}_0$ are bounded, 
$$
\frac{e^{i\phi}}4 \Bigl(t\mathcal{E}_{\mathbf{a}} +\frac{\Omega_{\mathbf{a}}}
2 B_{\phi} \Bigr) =  \frac{\vartheta'}{\vartheta}\Bigl(\frac 12
F(z_+^{(-)}, z_+^{(+)})-\frac 14, \tau\Bigr) 
 -\log (m^0_{21}m^1_{12}) -\pi i(\theta_{\infty}+1) +O(t^{-\delta}).
$$
\end{prop}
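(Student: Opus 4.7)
The plan is to derive the stated identity directly from the computation that precedes the proposition, and then to establish boundedness of $B_\phi$. First I would substitute $a_\phi(t)=A_\phi+t^{-1}B_\phi(t)$ into the second equation of Corollary~\ref{cor5.4} and Taylor-expand
\[
\sqrt{\frac{a_\phi-z^2}{1-z^2}} = \sqrt{\frac{A_\phi-z^2}{1-z^2}} + \frac{t^{-1}B_\phi(t)}{2\,w(A_\phi,z)}\bigl(1+O(t^{-1}B_\phi)\bigr)
\]
uniformly on the cycle $\mathbf{a}$, which is bounded away from the branch points $\pm 1,\pm A_\phi^{1/2}$. Integrating then yields
\[
\frac{e^{i\phi}t}{4}\int_{\mathbf{a}}\sqrt{\frac{a_\phi-z^2}{1-z^2}}\,dz = \frac{e^{i\phi}t}{4}\mathcal{E}_{\mathbf{a}} + \frac{e^{i\phi}\Omega_{\mathbf{a}}}{8}B_\phi\bigl(1+O(t^{-1}B_\phi)\bigr).
\]
Substituting Proposition~\ref{prop5.5}'s formula $\int_{\mathbf{a}}W_1(z)\,dz = \frac{\vartheta'}{\vartheta}\bigl(\tfrac12 F(z_+^{(-)},z_+^{(+)})-\tfrac14,\tau\bigr)+O(t^{-1})$ and rearranging produces the claimed identity, with the error $O(t^{-\delta})$ absorbing both the $O(t^{-1}B_\phi^2)$ correction and the $O(t^{-1})$ discrepancy between the $t$-dependent $\tau=\omega_{\mathbf{b}}/\omega_{\mathbf{a}}$ of Proposition~\ref{prop5.5} and the limiting $\Omega_{\mathbf{b}}/\Omega_{\mathbf{a}}$, provided $B_\phi=O(1)$.

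For boundedness of $B_\phi$ on $S'(\phi,t'_\infty,\kappa_0,\delta_1)$, I would argue that the right-hand side of the resulting identity is bounded there. Indeed, $\log(m^0_{21}m^1_{12})$ is a constant by isomonodromy, and $\frac{\vartheta'}{\vartheta}$ remains bounded because its argument $\tfrac12 F(z_+^{(-)},z_+^{(+)})-\tfrac14$ is kept away from the zero-lattice of $\vartheta$ once the $\delta_1$-disks around the $1$-points of $y$ are excised (these being the only points at which $z_+=(y+1)/(y-1)+O(t^{-1})$ would collide with a branch point of $w$). The Boutroux equation \eqref{2.1} gives $\mathrm{Re}\,e^{i\phi}\mathcal{E}_{\mathbf{a}}=0$, so the linearly growing piece of $\frac{e^{i\phi}t}{4}\mathcal{E}_{\mathbf{a}}$ is purely imaginary for real $t$ and is absorbed by the branch indeterminacy of $\log(m^0_{21}m^1_{12})$. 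Since $\Omega_{\mathbf{a}}\neq 0$ on the non-degenerate elliptic curve associated to $A_\phi$ (cf.\ Proposition~\ref{propA.17}), the identity may be solved for $B_\phi$ to give $B_\phi(t)=O(1)$ on $S'$; the induced $O(t^{-1}B_\phi)$ correction then collapses into the $O(t^{-\delta})$ error term, closing the argument self-consistently.

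The principal technical hurdle is propagating the boundedness from the smaller domain $S_*(\phi,t'_\infty,\kappa_0,\delta_1)$---where \eqref{4.1} posits $B_\phi=O(1)$ a priori---to the cheese-like strip $S'$, in which no uniform control on $|y|$, $|y|^{-1}$ or $|y-1|^{-1}$ is available. This is handled by invoking the symmetry of Remark~\ref{rem3.6} (which swaps $y\leftrightarrow y^{-1}$ and exchanges the roles of the $0$-points and $\infty$-points of $y$); combining the original derivation with its image under this involution covers the full domain $S'$, confirms that the displayed identity persists throughout, and thereby transfers the boundedness of $B_\phi$ from $S_*$ to $S'$.
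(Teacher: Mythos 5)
Your derivation of the displayed identity is essentially the paper's own: expand $\sqrt{(a_{\phi}-z^2)/(1-z^2)}-\sqrt{(A_{\phi}-z^2)/(1-z^2)}=\tfrac{t^{-1}B_{\phi}}{2w(A_{\phi},z)}(1+O(t^{-1}B_{\phi}))$ on $\mathbf{a}$, insert the second formula of Corollary \ref{cor5.4} and Proposition \ref{prop5.5}, and absorb the $O(t^{-1}B_{\phi}^2)$ and $O(t^{-1})$ discrepancies into $O(t^{-\delta})$. The gap is in your boundedness argument, which rests on the claim that the right-hand side is bounded and uses only the $\mathbf{a}$-cycle relation. The $\vartheta'/\vartheta$ term is defined only up to the quasi-period jumps $\mp 2\pi i$ (equivalently, up to the winding integers $p_+(t),q_+(t)$ appearing in the proof of Proposition \ref{prop5.8}), and these grow linearly in $t$ together with $\im(e^{i\phi}t\mathcal{E}_{\mathbf{a}})$; your device of absorbing the growing purely imaginary piece into ``the branch indeterminacy of $\log(m^0_{21}m^1_{12})$'' concedes exactly this, and after that absorption the imaginary part of the relation carries no information. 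What survives is the single real constraint that $\re\,(e^{i\phi}\Omega_{\mathbf{a}}B_{\phi})$ is bounded (here the Boutroux equation kills $\re(e^{i\phi}t\mathcal{E}_{\mathbf{a}})$ and $\re(\vartheta'/\vartheta)$ is lattice-invariant, hence bounded away from the poles). One real linear functional cannot bound the complex quantity $B_{\phi}$. The paper needs \emph{both} cycle relations: the $\mathbf{b}$-analogue involving $\mathfrak{m}_{\phi}$ (Remark following Proposition \ref{prop5.7}), together with $\im(\Omega_{\mathbf{b}}/\Omega_{\mathbf{a}})>0$, so that boundedness of $\re(e^{i\phi}\Omega_{\mathbf{a}}B_{\phi})$ and $\re(e^{i\phi}\Omega_{\mathbf{b}}B_{\phi})$ forces $B_{\phi}=O(1)$; the same two-cycle mechanism reappears in the proof of Proposition \ref{prop5.8}. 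Note also that the relation itself was derived under supposition \eqref{4.1}, so boundedness cannot be bootstrapped from the identity alone without addressing this circularity (in the paper it is resolved only later, through Kitaev's justification scheme).

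Two further points are inaccurate. First, your localisation of the dangerous points is wrong: at the $1$-points of $y$ one has $z_+=(y+1)/(y-1)\to\infty$, which is \emph{not} a branch point of $w$; $z_+$ collides with the branch points $\mp 1$ at the zeros and poles of $y$, and with $\pm a_{\phi}^{1/2}$ at the points of $\mathcal{Q}$. The poles of the $\vartheta'/\vartheta$ term do correspond to $y\to 1$, but because its argument then reaches the zero set of $\vartheta$ (i.e. $u\equiv\omega_{\mathbf{b}}/2$, where $\sn=\infty$), not because of a branch-point collision. Second, the proposed extension from $S_*(\phi,t'_{\infty},\kappa_0,\delta_1)$ to $S'(\phi,t'_{\infty},\kappa_0,\delta_1)$ via the involution of Remark \ref{rem3.6} does not achieve its goal: $y\mapsto y^{-1}$ interchanges the zeros and poles of $y$ (and sends $\phi$ to $\phi\pm\pi$, hence to a differently parametrised problem), so the transformed domain excludes essentially the same neighbourhoods as $S_*$ and gives no control near $Z_0$. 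In the paper the passage to the full cheese-like strip is made in the justification step, by the maximum modulus principle applied on the small disks around the points of $Z_0\cup Z_{\pm1}$, not by this symmetry.
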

%%%%%%%%%%%%%%%%%%%%%
%%%% Remark 5.1 %%%%%
\begin{rem}
In the argument above, the substitution $(\mathbf{a},
\mathbf{b}) \mapsto (\mathbf{b},-\mathbf{a})$ yields
$$
 \frac{e^{i\phi}}4 \Bigl(t\mathcal{E}_{\mathbf{b}} +\frac{\Omega_{\mathbf{b}}}
 2 B_{\phi}(1+O(t^{-1}B_{\phi})) \Bigr) = \int_{\mathbf{b}} W_1(z)dz+\frac
 {\theta_{\infty}}2 \pi i +\log \mathfrak{m}_{\phi} +O(t^{-\delta})
 $$
with
 $$
 \int_{\mathbf{b}} W_1(z)dz =\frac{\vartheta'}{\vartheta}\Bigl(\frac 12
 \hat{F}(z_+^{(-)}, z_+^{(+)})+\frac {\hat{\tau}}4, \hat{\tau}\Bigr) +O(t^{-1}),
 $$
in which $\hat{F}$ denotes $F$ corresponding to 
$\hat{\tau}=(-\omega_{\mathbf{a}})/
\omega_{\mathbf{b}}.$ Since $\re \int_{\mathbf{a},\mathbf{b}} W_1(z)dz$ are 
bounded in $S_*(\phi,t'_{\infty},\kappa_0, \delta_1),$ the Boutroux equations
\eqref{2.1} with $A_{\phi}$ are equivalent to the boundedness of  
$\re (e^{i\phi}\Omega_{\mathbf{a}}B_{\phi})$ and 
$\re (e^{i\phi} \Omega_{\mathbf{b}} B_{\phi}) $, 
namely, the boundedness of $B_{\phi}(t).$ 
\end{rem}
% The following is obtained by using the boundedness of $B_{\phi}(t).$ 
%%%%%%%%%%%%%%%%%%%%%%%%%%%
%%%%% Proposition 5.7 %%%%
\begin{prop}\label{prop5.8}
We have
$$
 \int^{z^{(+)}_+}_{z^{(-)}_+} \frac {dz}{w(A_{\phi},z)} 
= \int^{z^{(+)}_+}_{z^{(-)}_+} \frac {dz}{w(z)} + O(t^{-1}), \quad
w(z)=w(a_{\phi},z).  
$$
\end{prop}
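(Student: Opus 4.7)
The plan is to parametrise each integral by the Jacobi sn-function attached to the respective elliptic surface and then exploit the analytic dependence of sn on its modulus.

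Set $k=a_\phi^{1/2}$ and $k_0=A_\phi^{1/2}$. From Proposition~\ref{prop5.7} (boundedness of $B_\phi$) we have $k-k_0=(a_\phi-A_\phi)/(k+k_0)=O(t^{-1})$. Write $\sn(u)=k\,\mathrm{sn}(u;k)$ on $\Pi$ and, in analogous notation, $\sn^*(u)=k_0\,\mathrm{sn}(u;k_0)$ on $\Pi^*$ (cf.\ Section~\ref{ssc5.2}). Since $dz=\pm w(z)\,du$ with sign matching the sheet, the integral
\[
\int_{z_+^{(-)}}^{z_+^{(+)}}\frac{dz}{w(z)}
\]
equals $u_+^+-u_+^-$ modulo $\omega_\mathbf{a}\mathbb Z+\omega_\mathbf{b}\mathbb Z$, where $u_+^\pm$ are the preimages under $\sn$ of the two sheet-points $z_+^{(\pm)}$, fixed by $\sn(u_+^\pm;k)=z_+$ and $\sn'(u_+^\pm;k)=\pm w(z_+^{(+)})$. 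Analogously on $\Pi^*$ the second integral equals $u_+^{*+}-u_+^{*-}$ modulo $\Omega_\mathbf{a}\mathbb Z+\Omega_\mathbf{b}\mathbb Z$. The period ambiguity itself is $O(t^{-1})$, because $\omega_{\mathbf{a},\mathbf{b}}-\Omega_{\mathbf{a},\mathbf{b}}=O(t^{-1})$ (apply the pointwise estimate $1/w(z)-1/w(A_\phi,z)=O(t^{-1})$ along cycles deformed to stay bounded away from the branch points).

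It thus suffices to bound $u_+^\pm-u_+^{*\pm}$. Implicit differentiation of $\sn(u_+^\pm(k);k)=z_+$ yields
\[
\frac{\partial u_+^\pm}{\partial k}=-\frac{\partial_k\sn(u_+^\pm;k)}{\sn'(u_+^\pm;k)}=\mp\frac{\partial_k\sn(u_+^\pm;k)}{w(z_+)},
\]
with the numerator bounded by the $\vartheta$-quotient representation of sn (analytic in the modulus, uniform on compacta away from the lattice of poles), and the denominator $w(z_+)$ bounded away from $0$ as long as $z_+$ stays at positive distance from the branch points $\pm 1,\pm k_0$. This non-degeneracy holds in the restricted domain $\check S(\phi,t_\infty,\kappa_0,\delta_0)$ of Theorem~\ref{thm2.1a}: since $z_+=(y+1)/(y-1)+O(t^{-1})$, the small discs around $y$-values that would send $z_+$ to $\pm 1$ or $\pm k_0$ are precisely those excluded via Remark~\ref{rem2.2} and the set $\mathcal{Q}$. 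The mean value theorem then gives $|u_+^\pm-u_+^{*\pm}|\le C|k-k_0|=O(t^{-1})$, completing the proof.

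The main technical obstacle is exactly this uniformity of $|\partial u_+^\pm/\partial k|=O(1)$: it degenerates when $z_+$ approaches a branch point of the elliptic surface, which is what motivates the additional exclusions built into $\check S$ beyond those of the larger domain $S'$.
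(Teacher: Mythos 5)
Your argument treats $\int_{z_+^{(-)}}^{z_+^{(+)}}dz/w$ as an integral over a path of bounded length, so that the two sides differ only by the discrepancy of the endpoint parameters plus a period ambiguity $m(\omega_{\mathbf a}-\Omega_{\mathbf a})+n(\omega_{\mathbf b}-\Omega_{\mathbf b})$ with \emph{bounded} $m,n$. That is not the situation here. The quantity $F(z_+^{(-)},z_+^{(+)})$ entering Proposition \ref{prop5.5} and the monodromy formulas is a specific branch of the Abelian integral, and the balance \eqref{5.11} forces its winding numbers to grow linearly in $t$: the contour decomposes as $2p_+(t)\mathbf a\cup 2q_+(t)\mathbf b\cup\mathbf a_+\cup\mathbf a_-$ with $\pi i q_+(t)=-e^{i\phi}\mathcal E_{\mathbf a}t/8+O(1)$, $\pi i p_+(t)=e^{i\phi}\mathcal E_{\mathbf b}t/8+O(1)$, only the arcs $\mathbf a_\pm$ being of bounded length. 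Your implicit-function/mean-value step controls the contribution of $\mathbf a_\pm$ (the ``fractional part''), but the period contribution is $2p_+(\omega_{\mathbf a}-\Omega_{\mathbf a})+2q_+(\omega_{\mathbf b}-\Omega_{\mathbf b})$, which on the naive estimate $\omega-\Omega\ll t^{-1}$ that you invoke is only $O(t\cdot t^{-1})=O(1)$ --- exactly the borderline the proposition must beat.

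The missing idea is a cancellation. Since $\omega_{\mathbf a,\mathbf b}-\Omega_{\mathbf a,\mathbf b}=-\tfrac12 B_\phi t^{-1}j_{\mathbf a,\mathbf b}+O(t^{-2})$ with $j_{\mathbf a,\mathbf b}$ as in the paper, the dangerous term is proportional to $t^{-1}(p_+j_{\mathbf a}+q_+j_{\mathbf b})$, i.e.\ to $\mathcal E_{\mathbf b}j_{\mathbf a}-\mathcal E_{\mathbf a}j_{\mathbf b}$ up to $O(t^{-1})$; and this combination vanishes because it is, up to a constant factor, the derivative in $A_\phi$ of the Legendre-type identity $\mathcal E_{\mathbf a}\Omega_{\mathbf b}-\mathcal E_{\mathbf b}\Omega_{\mathbf a}=4\pi i$ of Lemma \ref{lem5.9}. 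Without this identity no argument of the kind you give can yield better than $O(1)$. A secondary point: the proposition is already needed for Theorem \ref{thm2.1} in $S_*(\phi,t'_\infty,\kappa_0,\delta_1)$, so you cannot retreat to the smaller domain $\check S$ of Theorem \ref{thm2.1a} to keep $z_+$ away from the branch points; the paper's proof sidesteps this by pushing the long part of the contour onto the cycles $\mathbf a,\mathbf b$, which stay at distance $\gg 1$ from $\pm1,\pm A_\phi^{1/2}$.
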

%%%%%%%%%%%%%%%%%%%%
To show this proposition, we note the following lemma, which is verified by
combining
$$
3 w(A_{\phi},z) =(zw(A_{\phi},z))'+(A_{\phi}+1)\sqrt{\frac{A_{\phi}-z^2}
{1-z^2}} -A_{\phi}(A_{\phi}-1)\frac 1{w(A_{\phi},z)}
$$
with
$$
J_{\mathbf{a}} \Omega_{\mathbf{b}}- J_{\mathbf{b}} \Omega_{\mathbf{a}} 
=\frac 43(1+A_{\phi})\pi i,   \quad
J_{\mathbf{a,\, b}} =\int_{\mathbf{a, \, b}} w(A_{\phi}, z) dz.
$$
The derivation of the last equality is similar to that of Legendre's 
relation \cite{HC, WW}.
%%%%%%%%%%%%%%%%%%%%%%
%%%%% Lemma 5.8 %%%%%%%%%%
\begin{lem}\label{lem5.9}
$\mathcal{E}_{\mathbf{a}}\Omega_{\mathbf{b}} -
\mathcal{E}_{\mathbf{b}}\Omega_{\mathbf{a}} = 4 \pi i.$
\end{lem}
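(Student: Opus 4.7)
The plan is to use the algebraic identity and the Legendre-type relation stated in the preceding paragraph, and to combine them to eliminate $J_{\mathbf{a,\,b}}$. First, I would integrate the identity
\begin{equation*}
3 w(A_{\phi},z) =(zw(A_{\phi},z))'+(A_{\phi}+1)\sqrt{\frac{A_{\phi}-z^2}{1-z^2}} -A_{\phi}(A_{\phi}-1)\frac 1{w(A_{\phi},z)}
\end{equation*}
around each of the cycles $\mathbf{a}$ and $\mathbf{b}$. Since $zw(A_{\phi},z)$ is single-valued on $\Pi^*$, the exact-differential term contributes zero after integrating over either closed cycle, and we obtain
\begin{equation*}
3 J_{\mathbf{a}} =(A_{\phi}+1)\mathcal{E}_{\mathbf{a}} -A_{\phi}(A_{\phi}-1)\Omega_{\mathbf{a}}, \qquad 3 J_{\mathbf{b}} =(A_{\phi}+1)\mathcal{E}_{\mathbf{b}} -A_{\phi}(A_{\phi}-1)\Omega_{\mathbf{b}}.
\end{equation*}

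Next I would form the bilinear combination $J_{\mathbf{a}}\Omega_{\mathbf{b}}-J_{\mathbf{b}}\Omega_{\mathbf{a}}$. The term involving $A_{\phi}(A_{\phi}-1)$ cancels because it yields $\Omega_{\mathbf{a}}\Omega_{\mathbf{b}}-\Omega_{\mathbf{b}}\Omega_{\mathbf{a}}=0$, leaving
\begin{equation*}
3\bigl(J_{\mathbf{a}}\Omega_{\mathbf{b}}-J_{\mathbf{b}}\Omega_{\mathbf{a}}\bigr) =(A_{\phi}+1)\bigl(\mathcal{E}_{\mathbf{a}}\Omega_{\mathbf{b}}-\mathcal{E}_{\mathbf{b}}\Omega_{\mathbf{a}}\bigr).
\end{equation*}
Substituting the Legendre-type relation $J_{\mathbf{a}}\Omega_{\mathbf{b}}-J_{\mathbf{b}}\Omega_{\mathbf{a}}=\tfrac{4}{3}(1+A_{\phi})\pi i$ into the left-hand side gives $4(1+A_{\phi})\pi i$ on the left, and dividing by the factor $1+A_{\phi}$ (nonzero since $0\le\re A_{\phi}\le 1$ by property (i) of $A_{\phi}$) yields the desired $\mathcal{E}_{\mathbf{a}}\Omega_{\mathbf{b}}-\mathcal{E}_{\mathbf{b}}\Omega_{\mathbf{a}}=4\pi i$.

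The only steps requiring real verification are the algebraic identity and the Legendre-type relation. The former is a direct computation: using $2ww'=-2z(A_{\phi}+1-2z^2)$, one checks $(zw)'=(A_{\phi}-2(A_{\phi}+1)z^2+3z^4)/w$, and then $(A_{\phi}+1)(A_{\phi}-z^2)/w-A_{\phi}(A_{\phi}-1)/w$ completes the numerator to $3w^2/w=3w$. The latter is precisely the analogue of Legendre's classical relation for the periods of the first and second kinds, and is proved in exactly the same manner as in \cite{HC}, \cite{WW}, e.g. by applying the Riemann bilinear identity to the 1-forms $dz/w$ and $w\,dz$ on $\Pi^*$ and evaluating the residue contributions coming from the pole of $w\,dz$ at infinity on each sheet.

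The main obstacle, if one wished to make the argument fully self-contained, would be the derivation of the Legendre-type relation $J_{\mathbf{a}}\Omega_{\mathbf{b}}-J_{\mathbf{b}}\Omega_{\mathbf{a}}=\tfrac{4}{3}(1+A_{\phi})\pi i$, since it requires a careful cut-and-paste argument on the Riemann surface $\Pi^*$ together with the residue calculation for $w(A_{\phi},z)\,dz$ near $\infty$; but once this is granted, Lemma \ref{lem5.9} follows by the short algebraic manipulation above.
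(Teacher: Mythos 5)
Your proposal is correct and follows exactly the route the paper indicates: integrate the stated algebraic identity over the cycles $\mathbf{a}$ and $\mathbf{b}$ (the exact term $(zw)'$ dropping out), form the bilinear combination so that the $\Omega_{\mathbf{a}}\Omega_{\mathbf{b}}$ terms cancel, and substitute the Legendre-type relation $J_{\mathbf{a}}\Omega_{\mathbf{b}}-J_{\mathbf{b}}\Omega_{\mathbf{a}}=\tfrac{4}{3}(1+A_{\phi})\pi i$. Your verification of the identity and the remark that $1+A_{\phi}\neq 0$ are the right details to supply; the remaining reliance on the Riemann bilinear/residue argument for the $J$-relation is exactly what the paper itself defers to \cite{HC}, \cite{WW}.
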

%%%%%%%%%%%%%%%%%
\begin{proof}
By the boundedness of $B_{\phi}(t),$ $\omega_{\mathbf{a},\, \mathbf{b}}=
\Omega_{\mathbf{a}, \, \mathbf{b}} +O(t^{-1})$ for $\mathbf{a},$ $\mathbf{b} 
\subset \Pi_{a_{\phi}} \cap \Pi_{A_{\phi}}.$ 
From Proposition \ref{prop5.5} and Corollary \ref{cor5.4}, it follows that
\begin{align*}
&\biggl( \int_{\mathbf{b}} - \tau \int_{\mathbf{a}}\biggr) (tW_0(z)-W_1(t)) dz
= -\frac{e^{i\phi}\pi i}{\omega_{\mathbf{a}}}t -\pi i F(z^{(-)}_+, z^{(+)}_+)
-\frac{\pi i}2 +O(t^{-1}) 
\\
&= - \frac{e^{i\phi}\pi i}{\omega_{\mathbf{a}}}t -2\pi i \Bigl( p_+(t) +\frac
{\omega_{\mathbf{b}}}{\omega_{\mathbf{a}}}q_+(t)\Bigr) + O(1) \ll 1
\end{align*}
with $p_+(t),$ $q_+(t) \in \mathbb{Z}$.
Write $e^{i\phi}t\mathcal{E}_{\mathbf{a}}/8 +\pi i q_+(t)= Xi$ and
$e^{i\phi}t\mathcal{E}_{\mathbf{b}}/8 -\pi i p_+(t)= Yi,$ where, by \eqref{2.1},
$|\im X|, |\im Y|\ll 1$. Then, by Lemma \ref{lem5.9}, the last line becomes
$$
-\frac{e^{i\phi}\pi i}{\Omega_{\mathbf{a}}}t - \frac{e^{i\phi}}4 \Bigl(
\mathcal{E}_{\mathbf{b}} -\frac{\Omega_{\mathbf{b}}}{\Omega_{\mathbf{a}}}
\mathcal{E}_{\mathbf{a}} \Bigr) t -2\Bigl(\frac{\omega_{\mathbf{b}}}
{\omega_{\mathbf{a}}} X-Y \Bigr) i +O(1) = - 2\Bigl(\frac{\omega_{\mathbf{b}}}
{\omega_{\mathbf{a}}} X-Y \Bigr) i +O(1) \ll 1,
$$
where $\im (\omega_{\mathbf{b}}/\omega_{\mathbf{a}}) \to  
\im (\Omega_{\mathbf{b}}/\Omega_{\mathbf{a}}) >0$ as $t \to \infty$. 
This implies $|X|$, $|Y| \ll 1$, and hence
%%%%%%% (5.11) %%%%%%
\begin{equation}\label{5.11}
\pi i q_+(t) = - e^{i\phi}\mathcal{E}_{\mathbf{a}} t/8 +O(1), \quad
\pi i p_+(t) = e^{i\phi}\mathcal{E}_{\mathbf{b}} t/8 +O(1)
\end{equation}
as $t\to \infty.$ We would like to evaluate
$$
\Upsilon= \biggl | \int^{z_+^{(+)}}_{z_+^{(-)}} \Bigl(
\frac 1{w(z)} -\frac 1{w(A_{\phi},z)} \Bigr) dz \biggr|,
$$
in which the integrand is
$$
\frac 1{w(z)} -\frac 1{w(A_{\phi},z)} =\frac {-B_{\phi}(t) t^{-1}}{2(A_{\phi}
-z^2) w(A_{\phi},z) } +O(t^{-2}).
$$
Observe that the contour $[z_+^{(-)}, z_+^{(+)}]^{\sim}$ on $\Pi_{a_{\phi}}
\cap \Pi_{A_{\phi}}$ may be 
decomposed into $2 p_+(t)\mathbf{a} \cup 2q_+(t)\mathbf{b} \cup \mathbf{a}_+
\cup \mathbf{a}_-,$ where the length of $\mathbf{a}_{\pm}$ is $\ll 1$. 
Using \eqref{5.11} and Lemma \ref{lem5.9}, we have
\begin{align*}
\Upsilon \ll & \biggl| \int_{z_+^{(-)}}^{z_+^{(+)}} \frac{B_{\phi}(t)t^{-1}}
{(A_{\phi}-z^2)w(A_{\phi},z)} dz \biggr| +O(t^{-1}) \ll |B_{\phi} t^{-1}|
|p_+(t) j_{\mathbf{a}} + q_+(t) j_{\mathbf{b}} | +O(t^{-1})
\\
 \ll & |\mathcal{E}_{\mathbf{b}} j_{\mathbf{a}} -\mathcal{E}
_{\mathbf{a}} j_{\mathbf{b}} | + O(t^{-1}) = 2 \Bigl| \frac{\partial}
{\partial A_{\phi}}(\mathcal{E}_{\mathbf{a}} \Omega_{\mathbf{b}} - 
\mathcal{E}_{\mathbf{b}} \Omega_{\mathbf{a}} ) \Bigr| +O(t^{-1}) \ll t^{-1}
\end{align*}
with
$$
j_{\mathbf{a, \, b}} = \int_{\mathbf{a, \, b}} \frac{dz}{(A_{\phi}-z^2)  
w(A_{\phi},z)},
$$
which completes the proof of the proposition.
\end{proof}
%%%%%%%%%%%%%%%%%%%%%%%%%%%%%%%%%%%%%%%%%%%%%%%%%
%%%%%%%%%%%%%%%%%%%%%%%%%%%%%%%%%%%%%%%%%%%%%%
%%%%%%%%%%%%% Section 6 %%%%%%%%%%%%%
\section{Proofs of Theorems \ref{thm2.1} and \ref{thm2.2}}\label{sc6}
%%%%%%%%%%%%%%%%%%%%%%%%%%%%%%%%%%%%%%%%%%%%%%%%%
Let $y(t)$ be a function satisfying \eqref{4.1}, and let
$(M^0,M^1)\in \mathcal{M}_{(\theta_0,\theta_1,\theta_{\infty})}$ be such that 
$\mathfrak{m}_{\phi} m^0_{21} m^1_{12}\not=0.$ Suppose that $0<|\phi|<\pi/2.$
%%%%%%%%%%%%%%%%%%%%%%%%%%%
%%%%% ssc 6.1 %%%%%%%%%%%%%
\subsection{Proof of Theorem \ref{thm2.1}}\label{ssc6.1}
%%%%%%%%%%%%%%%%%%%%%%%%%%%
Note that, by our convention, 
%%%%%%% (6.1) %%%%%%%%
\begin{equation}\label{6.1}
F(z^{(-)}_+, z^{(+)}_+) = \frac 1{\omega_{\mathbf{a}}} \int_{z^{(-)}_+} 
^{z^{(+)}_+}  \frac{dz}{w(z)}  
= \frac 2{\omega_{\mathbf{a}}}\int_{0^{(+)}}^{z^{(+)}_+}\frac{dz}{w(z)}-\frac 12
+O(t^{-1}) 
\end{equation}
on $\Pi_{a_{\phi}}.$ By Propositions \ref{prop5.5}, \ref{prop5.8} and 
Corollary \ref{cor5.4},
%%%%%%% (6.2) %%%%%%%%%%%
\begin{align}\label{6.2}
\omega_{\mathbf{a}} \biggl(\int_{\mathbf{b}} &-\tau \int_{\mathbf{a}} \biggr)
 (tW_0(z)- W_1(z)) dz
\\
\notag
& = \omega_{\mathbf{a}} \Bigl(\log \mathfrak{m}_{\phi} +\tau
(\log (m^0_{21}m^1_{12}) +\pi i 
(\theta_{\infty}+1) )
+\frac{\pi i\theta_{\infty}} 2 +O(t^{-\delta})\Bigr)
\\
\notag
& = \Omega_{\mathbf{a}} \Bigl(\log \mathfrak{m}_{\phi}
 +\frac{\Omega_{\mathbf{b}}}{\Omega_{\mathbf{a}}}
(\log (m^0_{21}m^1_{12}) +\pi i 
(\theta_{\infty}+1) )
+\frac{\pi i\theta_{\infty}} 2 +O(t^{-\delta})\Bigr)
\\
\notag
&= - e^{i\phi} \pi i t- \pi i \omega_{\mathbf{a}} ( F(z_+^{(-)}, z_+^{(+)})
+\tfrac 12)+O(t^{-\delta})
\\
\notag
&= - e^{i\phi} \pi i t- 2\pi i \int_{0^{(+)}}^{z_+^{(+)}} \frac{dz}{w(z)} 
 +O(t^{-\delta})
\\
\notag
&= - e^{i\phi} \pi i t- 2\pi i \int_{0^{(+)}}^{z_{+0}^{(+)}} 
\frac{dz}{w(A_{\phi},z)} 
 +O(t^{-\delta})
\end{align}
with $z^{(+)}_{+0} =(y(t)+1)/(y(t)-1).$
From \eqref{6.2} with $\omega_{\mathbf{a, \, b}}=\Omega_{\mathbf{a,\,
b}} +O(t^{-1}),$ it follows that 
$$ 
 \int_{0^{(+)}}^{z_{+0}^{(+)}} \frac{dz}{w(A_{\phi},z)} = -\frac{1}2 
(e^{i\phi}t-\tilde{x}_0) +O(t^{-\delta}),
$$
where $\tilde{x}_0=x_0 +\Omega_{\mathbf{a}}$ and
%%%%%%%%%%%%%%%%%%%%%%%%%%%%%%%
\begin{equation*}
 x_0 \equiv  \frac {-1}{\pi i}
 \Bigl(\Omega_{\mathbf{b}}\log (m^0_{21}m^1_{12}) 
+\Omega_{\mathbf{a}} \log \mathfrak{m}_{\phi} \Bigr)
-\Bigl(\frac{\Omega_{\mathbf{a}}} 2 +\Omega_{\mathbf{b}}\Bigr) 
( \theta_{\infty} + 1) -\frac{\Omega_{\mathbf{a}}}2  
 \mod 2\Omega_{\mathbf{a}}\mathbb{Z} + 2\Omega_{\mathbf{b}}\mathbb{Z}.
\end{equation*}
%%%%%%%%%%%%%%%%%%%%%%%%%%%
This gives 
%%%%%%%%%%%% (6.4) %%%%%%%%%%%%%%%
\begin{equation}\label{6.4}
\frac{y(t)+1}{y(t)-1} = A_{\phi}^{1/2} \mathrm{sn} ((e^{i\phi} t-x_0)/2
+ O(t^{-\delta});A_{\phi}^{1/2} ) 
\end{equation}
as $t\to \infty$ through $S_*(\phi, t'_{\infty}, \kappa_0, \delta_1 ).$ 
Thus we obtain the desired asymptotic form.
%%%%%%%%%%%%%%%%%%%%%%%%%%%%%%%%%%%%%%%%%%%%%%%%%%%%%%%%
%%%%%%%%%%%%%%%%%%%%%%%%%%
\par
Let $W_1^*(z)$ be the result of replacement of $w(z)$ with $w(A_{\phi},z)$
in $W_1(z)$, that is,
$$
W_1^*(z)=\frac 14 \Bigl(\frac{z_+-z_-}{w(A_{\phi},z )}+\frac{w(A_{\phi},z_+)}
{(z-z_+)w(A_{\phi},z)} -\frac{w(A_{\phi},z_-)}{(z-z_-)w(A_{\phi},z)} \Bigr),
$$
which differs from the early $W_1(z)$ by $O(t^{-1})$ along $\mathbf{a}$, since
$B_{\phi}(t) \ll 1.$ Then, by Proposition \ref{prop5.5}
$$
\int_{\mathbf{a}} W_1(z)= \int_{\mathbf{a}} W_1^*(z)dz+O(t^{-1}) 
= \frac{\vartheta'}{\vartheta} \Bigl(\frac 12 F^*(z_+^{(-)}, z_+^{(+)})-
\frac 14 , \frac{\Omega_{\mathbf{b}}}{\Omega_{\mathbf{a}}} \Bigr) +O(t^{-1})
$$
with 
$$
F^*(z^{(-)}_+, z^{(+)}_+) =\frac 1{\Omega_{\mathbf{a}}} \int^{z_+^{(+)}}
_{z_+^{(-)}} \frac {dz}{w(A_{\phi},z)}
 =\frac 2{\Omega_{\mathbf{a}}} \int^{z_+^{(+)}}
_{0^{(+)}} \frac {dz}{w(A_{\phi},z)} -\frac 12 +O(t^{-1}).
$$
The same argument as in the derivation of Proposition 
\ref{prop5.7} leads the following.
%%%%%%%%%%%%%%%%%%%%%%%%%%%%%
%%%%%%%% Corollary 6.1 %%%%%%%
%%%%%%%%%%%%%%%%%%%%%%%%%%%%%%%%%%
\begin{cor}\label{cor6.1}
In $S_*(\phi, t'_{\infty},\kappa_0, \delta_0),$
$$
\frac{e^{i\phi}}4 \Bigl(t\mathcal{E}_{\mathbf{a}} +\frac{\Omega_{\mathbf{a}}}
2 B_{\phi} \Bigr) = 
- \frac{\vartheta'}{\vartheta}\Bigl(\frac 1{2\Omega_{\mathbf{a}}}
(e^{i\phi}t -x_0), \frac{\Omega_{\mathbf{b}}}{\Omega_{\mathbf{a}}}  \Bigr) 
 -\log(m^0_{21}m^1_{12}) -\pi i(\theta_{\infty}+1) +O(t^{-\delta}).
$$
\end{cor}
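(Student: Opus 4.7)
The plan is to start from Proposition \ref{prop5.7} and replace every $\Pi$-object by the corresponding $\Pi^*$-object, using the bound $B_{\phi}(t)=O(1)$, which implies $a_{\phi}=A_{\phi}+O(t^{-1})$, and consequently $\omega_{\mathbf{a,b}}=\Omega_{\mathbf{a,b}}+O(t^{-1})$, $\tau=\Omega_{\mathbf{b}}/\Omega_{\mathbf{a}}+O(t^{-1})$. In particular $\int_{\mathbf{a}}W_1(z)\,dz=\int_{\mathbf{a}}W_1^*(z)\,dz+O(t^{-1})$ as noted in the paragraph preceding the corollary, so the argument of $\vartheta'/\vartheta$ in Proposition \ref{prop5.7} can be rewritten in terms of $F^*$ and $\Omega_{\mathbf{b}}/\Omega_{\mathbf{a}}$ at the cost of $O(t^{-1})$.

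The key step is to express $F^*(z^{(-)}_+,z^{(+)}_+)$ in terms of $x_0$ and $e^{i\phi}t$ by combining two inputs already established in the excerpt. First, from the derivation leading to \eqref{6.4} in the proof of Theorem \ref{thm2.1},
$$
\int_{0^{(+)}}^{z_{+0}^{(+)}}\frac{dz}{w(A_{\phi},z)} = -\tfrac{1}{2}(e^{i\phi}t-\tilde{x}_0)+O(t^{-\delta}),\qquad \tilde{x}_0=x_0+\Omega_{\mathbf{a}},
$$
and second, from the formula for $F^*$ in the excerpt,
$$
\tfrac 12 F^*(z^{(-)}_+,z^{(+)}_+)-\tfrac 14
= \frac{1}{\Omega_{\mathbf{a}}}\int^{z^{(+)}_{+0}}_{0^{(+)}}\frac{dz}{w(A_{\phi},z)}-\tfrac 12+O(t^{-1}).
$$
Combining these and using $\tilde{x}_0-\Omega_{\mathbf{a}}=x_0$ yields
$$
\tfrac 12 F^*(z^{(-)}_+,z^{(+)}_+)-\tfrac 14
= -\frac{1}{2\Omega_{\mathbf{a}}}(e^{i\phi}t-x_0)+O(t^{-\delta}).
$$

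The final step is to substitute this into the improved version of Proposition \ref{prop5.7} and use the fact that $\vartheta'/\vartheta$ is odd in its first argument, since $\vartheta(\cdot,\tau)$ is even, which produces the minus sign appearing on the right-hand side of the corollary. A Taylor expansion around the leading argument absorbs the $O(t^{-\delta})$ correction (the function $(\vartheta'/\vartheta)(z,\tau^*)$ has bounded derivative on the locus traced out as $t$ varies in $S_*(\phi,t'_{\infty},\kappa_0,\delta_0)$, after reduction modulo periods, because we stay away from the zeros of $\vartheta$ in $\check{S}(\phi,\ldots)$). Assembling everything yields exactly the claimed formula.

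The only subtle point I expect is bookkeeping of the additive constants: the choice of representative of $x_0$ modulo $2\Omega_{\mathbf{a}}\mathbb{Z}+2\Omega_{\mathbf{b}}\mathbb{Z}$ and the shift $\tilde{x}_0=x_0+\Omega_{\mathbf{a}}$ must be tracked consistently, and one must verify that the resulting $\vartheta'/\vartheta$-value is insensitive to these period ambiguities---this follows from the quasi-periodicity relations $\vartheta(z\pm 1,\tau)=\vartheta(z,\tau)$ and $\vartheta(z\pm\tau,\tau)=e^{-\pi i(\tau\pm 2z)}\vartheta(z,\tau)$ recalled in Section \ref{ssc5.2}, which imply that $\vartheta'/\vartheta$ is invariant under $z\mapsto z+1$ and shifts by $-2\pi i$ under $z\mapsto z+\tau$, both of which are harmless here after dividing arguments by $2\Omega_{\mathbf{a}}$.
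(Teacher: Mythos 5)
Your proposal is correct and follows essentially the same route as the paper: Proposition \ref{prop5.7} with the replacements $w(z)\mapsto w(A_{\phi},z)$, $W_1\mapsto W_1^*$, $\tau\mapsto\tau_0$ (justified by $B_{\phi}(t)\ll 1$), the identity $\tfrac12F^*(z_+^{(-)},z_+^{(+)})-\tfrac14=-\tfrac1{2\Omega_{\mathbf a}}(e^{i\phi}t-x_0)+O(t^{-\delta})$ extracted from \eqref{6.2} in the proof of Theorem \ref{thm2.1}, and the oddness of $\vartheta'/\vartheta$ to produce the sign, with the $O(t^{-\delta})$ shift absorbed by Taylor expansion away from the zeros of $\vartheta$. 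The only cosmetic slip is invoking $\check S$ rather than $S_*$ for the bounded-derivative remark; since the zeros of $\vartheta$ correspond to the $y=1$ points already excluded by the definition of $S_*(\phi,t'_{\infty},\kappa_0,\delta_1)$, the argument stands as written.
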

%%%%%%%%%%%%%%%%%%%%%%%%%%%%%%%%%%%%%%
{\bf Justification}\,\,  The justification of \eqref{6.4} as a solution 
of (P$_{\mathrm{V}}$) is 
carried out along the argument in \cite[pp.~105--106, pp.~120--121]{Kitaev-3}.
Let $\mathcal{M}=(M^0,M^1)\in \mathcal{M}_{(\theta_0,\theta_1,\theta{\infty})}$
be such that $\mathfrak{m}_{\phi}m^0_{21}m^1_{12}\not=0$ 
(cf.~Remark \ref{rem2.4}). 
Relation \eqref{6.4} and Corollary \ref{cor6.1} provide the leading terms
$y_{\mathrm{as}}=y_{\mathrm{as}}(\mathcal{M},t)$ and
$(B_{\phi})_{\mathrm{as}}=(B_{\phi})_{\mathrm{as}}(\mathcal{M},t)$ without
$O(t^{-\delta})$. Viewing \eqref{4.1}, we set $2y^*_{\mathrm{as}}
=2y^*_{\mathrm{as}}(\mathcal{M},t)=\sqrt{\varphi(t,y_{\mathrm{as}},
(B_{\phi})_{\mathrm{as}})}$ with
\begin{align*}
\varphi(t,y,B) =& e^{2i\phi} y(4y +(1-A_{\phi})(y-1)^2 )
\\
& + e^{i\phi} y(y-1) (4(\theta_0+\theta_1)(y+1)-e^{i\phi}(y-1)B) t^{-1}
\\
& +(y-1)^3((\theta_0-\theta_1+\theta_{\infty})^2y-(\theta_0-\theta_1-\theta
_{\infty})^2)t^{-2},
\end{align*}
where the branch of the square root is chosen in such a way that the leading
term of $y^*_{\mathrm{as}}$ is compatible with $(d/dt)y_{\mathrm{as}}$.
Then $(y_{\mathrm{as}},y^*_{\mathrm{as}})
=(y_{\mathrm{as}}(\mathcal{M},t),y^*_{\mathrm{as}}(\mathcal{M},t))$
satisfies \eqref{4.1} with $B_{\phi}(t)=(B_{\phi})_{\mathrm{as}}(\mathcal{M},t)$ 
in $\hat{S}(\phi,t_{\infty}, \kappa_0,\delta_2)$, 
and the relation
$\mathcal{M}=\mathcal{M}(t,y_{\mathrm{as}},y^*_{\mathrm{as}})$ \cite
[(2.28)]{Kitaev-3}. Here 
$\hat{S}(\phi,t_{\infty},\kappa_0,\delta_2)
=\{t\,|\,\,\re t>t_{\infty}, |\im t|<\kappa_0 \}\setminus \bigcup_{\sigma
\in  Z_{\infty}\cup Z_{0}} \{|t-e^{-i\phi}\sigma|\le \delta_2\}$
with
%% $Z_{\pm 1}=x_0+\Omega_{\mathbf{a}}(\mathbb{Z}+\tfrac 12)+2\Omega
%% _{\mathbf{b}} \mathbb{Z},$
$Z_{\infty}=\{e^{i\phi}t \,|\,\, y_{\mathrm{as}}(t)=1\}=
x_0+\Omega_{\mathbf{a}}\mathbb{Z}+2\Omega_{\mathbf{b}}( \mathbb{Z}+\tfrac 12),$ 
$Z_0=\{e^{i\phi}t \,|\,\,y_{\mathrm{as}}(t)=0,\infty \}=x_0+\Omega_{\mathbf{a}}
(\mathbb{Z}+\tfrac 12)+2\Omega_{\mathbf{b}}( \mathbb{Z}+\tfrac 12),$ 
and
$\mathcal{M}(t,y,y^*)$ is a collection of explicit
functions $(t,y,y^*)$ resulting from the WKB procedure.\footnote[3] 
%%%%%%%%%%%%%%%%%%%%% [3]%%%%%%%%%%%%%%%%%%%%%%%%%%%%%%%%%%%%%
{In our case  $(y_{\mathrm{as}},y^*_{\mathrm{as}})$ may be replaced with
$(y_{\mathrm{as}},(B_{\phi})_{\mathrm{as}})$, and then  
$\mathcal{M}(t,y_{\mathrm{as}},y^*_{\mathrm{as}})$ consists of, say, 
\par
$-e^{i\phi}\pi it-\pi i \Omega_{\mathbf{a}}(F^*(z^{(-)}_{\mathrm{as}},
z^{(+)}_{\mathrm{as}})+\frac 12)
-\Omega_{\mathbf{a}}\pi i(\frac 32\theta_{\infty}+1),$
\par
$\frac 14 e^{i\phi}(t\mathcal{E}_{\mathbf{a}}+\frac 12\Omega_{\mathbf{a}}
(B_{\phi})_{\mathrm{as}})-(\vartheta'/\vartheta)(\frac 12 F^*
(z^{(-)}_{\mathrm{as}}, z^{(+)}_{\mathrm{as}})-\frac 14,\Omega_{\mathbf{b}}/
\Omega_{\mathbf{a}}) +\pi i(\theta_{\infty}+1)$,
\par
$z^{(+)}_{\mathrm{as}}=(y_{\mathrm{as}}+1)/(y_{\mathrm{as}}-1),$
\par\noindent
which are main parts of \eqref{6.2} and of the equation of Proposition 
\ref{prop5.7} without $O(t^{-\delta})$.}
%%%%%%%%%%%%%%%%%%%%%%%%%%%%%%%%%%%%%%%%%%%%%%%%%%%%%%%%%%%%
The monodromy data $\mathcal{M}_{\mathrm{as}}(t)$ for system \eqref{3.6}
with $\mathcal{B}(t,\lambda)$ containing $(y_{\mathrm{as}},y^*_{\mathrm{as}})$
is given by $\mathcal{M}_{\mathrm{as}}(t) =\mathcal{M}(t, y_{\mathrm{as}},
y^*_{\mathrm{as}})+O(t^{-\delta})$ \cite[(2.27)]{Kitaev-3} as a result of the
repeated WKB procedure. Thus, for $|t|\ge T_0$, we have 
$\|\mathcal{M}_{\mathrm{as}}(t)-\mathcal{M} \|\le C t^{-\delta}$ valid uniformly 
in a neighbourhood of $\mathcal{M}$, where $C$ and $T_0$ are independent of 
$\mathcal{M}$ \cite[(2.26)]{Kitaev-3}.  
Then the justification scheme \cite{Kitaev-1} applies 
to our case (see also \cite[Theorem 5.5]{FIKN}). 
This justification combined with the maximal modulus principle
in each neighbourhood of $\sigma\in Z_{0}$ leads us 
Theorem \ref{thm2.1} in $S(\rho,t_{\infty},\kappa_0,\delta_0).$
%%%%%%%%%%%%%%%%%%%%%%%%%%%%%%%%%%%%%%%%%%%%%%
\par The justification scheme is described as follows.
Let $\mathcal{M}_0 $ be a given monodromy data, and
$K(\varepsilon_0):$ $\|\mathcal{M}-\mathcal{M}_0\| \le \varepsilon_0$
a given compact ball with centre at $\mathcal{M}_0$. By the property above
combined with the compactness,
there exists positive numbers $T_{\infty}$ and $C_0$ such that, for every 
$\mathcal{M}\in K(\varepsilon_0)$, 
$\| \mathcal{M} -(\mathcal{M})_{\mathrm{as}} \|\le C_0|t|^{-\delta}$ if 
$|t|\ge T_{\infty}.$ 
Note that $f(t,\mathcal{M}):=\mathcal{M}_0-(\mathcal{M})_{\mathrm{as}}(t)
+\mathcal{M}$ is a map $f$: $K(\varepsilon_0) \to K(\varepsilon_0)$ 
continuous in $\mathcal{M}\in K(\varepsilon_0)$ if $\|\mathcal{M}-(\mathcal{M})
_{\mathrm{as}}(t) \|\le \varepsilon_0,$ i.e. if $|t|^{\delta} \ge C_0/
\varepsilon_0$.
Then by Brouwer's fixed point theorem, 
for each $|t|\ge \max\{T_{\infty},(C_0/\varepsilon_0)^{1/\delta}\}$,
there exists a fixed point $\mathcal{M}_*=\mathcal{M}_*(t) \in 
K(\varepsilon_0)$ such that $f(t,\mathcal{M}_*)=\mathcal{M}_*$ 
i.e. $(\mathcal{M}_*)_{\mathrm{as}}(t)=\mathcal{M}_0$. 
(As will be shown later $\mathcal{M}_*$ is a unique fixed point.)
This implies that $\|\mathcal{M}_*-\mathcal{M}_0 \|
=\|\mathcal{M}_* -(\mathcal{M}_*)_{\mathrm{as}}(t) \| \le C_0|t|^{-\delta}$
and hence
$\mathcal{M}_* =\mathcal{M}_0 +O(t^{-\delta})$
for $|t|\ge \max\{T_{\infty},(C_0/\varepsilon_0)^{1/\delta}\}$.
Then $(\mathcal{M}_*)_{\mathrm{as}}(t)=\mathcal{M}_0$ leads us to 
\begin{align*}
 (y_{\mathrm{as}}(\mathcal{M}_*,t),(B_{\phi})_{\mathrm{as}}(\mathcal{M}_*,t))
&= (y_{\mathrm{as}}(\mathcal{M}_0+O(t^{-\delta}),t),
(B_{\phi})_{\mathrm{as}}(\mathcal{M}_0+O(t^{-\delta}),t))
\\
&= (y_{\mathrm{as}}(\mathcal{M}_0,t+O(t^{-\delta})),
(B_{\phi})_{\mathrm{as}}(\mathcal{M}_0,t+O(t^{-\delta}))),
\end{align*}
which realises isomonodromy deformation with the invariant monodromy data
$\mathcal{M}_0$, and then $y^*_{\mathrm{as}}=(d/dt)y_{\mathrm{as}}$.
This provides the desired expression $y_{\mathrm{as}}(\mathcal{M}_0,t+O(t
^{-\delta}))$ in our main result.
It remains to show the uniqueness of $\mathcal{M}_*.$ To do so suppose that 
there exist $\mathcal{M}_{*1}$ and $\mathcal{M}_{*2}$ such that 
$\mathcal{M}_0=(\mathcal{M}_{*1})_{\mathrm{as}}(t)
=(\mathcal{M}_{*2})_{\mathrm{as}}(t),$ and set 
$y_1=y_{\mathrm{as}}(\mathcal{M}_{*1},t),$ 
$y_2=y_{\mathrm{as}}(\mathcal{M}_{*2},t)$. Let $V_1(\lambda)$ and 
$V_2(\lambda)$ be matrix solutions solving \eqref{3.6} with
$\mathcal{B}(t,\lambda)$ containing $(y_1,y_1')$ and $(y_2,y_2')$, respectively.
Since $V_1(\lambda)$ and $V_2(\lambda)$ yield the same monodromy, 
by the uniqueness of Proposition \ref{prop3.a}
we have $(dV_1/d\lambda)V_1^{-1}=(dY^{(2)}/d\lambda)V_2^{-1}$, 
implying $y_1=y_2$, which contradicts the supposition.
\par
Thus Theorem \ref{thm2.1} is proved.
%%%%%%%%%%%%%%%%%%%%%%%%%%
%%%%%%%%%%%%%%%%%%%%%%%%%%%%%%%%%%%%%%%%%%%%%%%%
%%%%%%%%%%% ssc 6.2 %%%%%%%%%%
\subsection{Proof of Theorem \ref{thm2.2}}\label{ssc6.5}
%%%%%%%%%%%%%%%%%%%%%%%%%%%%%%%%%%%%%
Suppose that $0<|\phi-\pi|<\pi/2$, i.e. $\pi/2<\phi <3\pi/2.$ Recall that 
$\mu(\lambda)$ is on the 
Riemann surface $\mathbb{P}_+\cup \mathbb{P}_-$ glued along the cuts
$[\lambda_1^0, \lambda_1]$ and $[\lambda_2, \lambda_2^0]$. Note that $A_{\phi}
=A_{\phi-\pi}$ by Lemma \ref{lemA.14}.  
Let $\mathcal{S}(\pi/2, 3\pi/2)$ on $\mathbb{P}_+^{\infty}$ be the limit 
Stokes graph as described in Figure \ref{stokes2} (a), (b), in which 
$\hat{\mathbf{c}}^{\infty}_3$ joins $\lambda_1$ or $\lambda_2$ to $e^{5\pi i
/2} \infty$,
and $\mathbf{c}^{\infty}_3$ joins $\lambda_2$ or $\lambda_1$ to $e^{3\pi i/2}
\infty$. 
The anticlockwise $\pi$-radian rotation of the Stokes 
graph $\mathcal{S}(-\pi/2,\pi/2)$ for $0<|\phi|<\pi/2$ as in Figure 
\ref{stokes} results in $\mathcal{S}(\pi/2,3\pi/2)$.
The curve $\mathbf{c}^{\infty}_3$ corresponds to $\mathbf{c}^{\infty}_1$ or
$\mathbf{c}^{\infty}_2$.  
Let the loops $\breve{l}_0$, $\breve{l}_1$ be the results of the 
same rotation of $\hat{l}_0$, $\hat{l}_1$ in Figure \ref{loops0}. 
The loops $\breve{l}_0$,
$\breve{l}_1$ and the starting point $\breve{p}_{\mathrm{st}}$ are as in Figure 
\ref{stokes2} (c), and $\arg(\breve{p}_{\mathrm{st}})=3\pi/2$. 
%%%%%%%%%%%%%%%%%%%%%%%%%%%%%%%%%%%%%%%%%%%%%%%%%%%
%%%%%%%%%%%%%%%%%%% Figure 6.1 %%%%%%%%%%%%%%%%%%%%%%%%%%%
%%%%%%%%%%%%%%%%%%%%%%%%%%%%%%%%%%%%%%
%%%%%%%%%%%%%%%%%%%%%%%%%%%%
{\small
\begin{figure}[htb]
\begin{center}
\unitlength=0.73mm
%%%%%%%%%%%%%%%%%%%%%%%%%%%%%%%%%%
%%%%%%%%%%%%%%%%%%%%%%%%%%%%%%%%%%
%%%%%%%%%%%%%%%%%%%%%%%%%%%%%%%%%%
%%%%%%%%%%%%%%%%%%%%%%%%%%%%%%%%%%
\begin{picture}(60,63)(-30,-35)
\put(18,-7){\circle*{1.5}}
\put(6,-8){\circle*{1.5}}
\put(-6,8){\circle*{1.5}}
\put(-18,7){\circle*{1.5}}
\thicklines
\qbezier (6,-8) (4.5,-4) (0, 0)
\qbezier (6,-8) (1,-20) (1, -30)
\qbezier (6,-8) (13,-8) (18, -7)
\qbezier (-6,8) (-4.5,4) (0, 0)
\qbezier (-6,8) (-1,20) (-1, 30)
\qbezier (-6,8) (-13,8) (-18, 7)
\put(20,-5.5){\makebox{$-e^{i\phi}$}}
\put(-27,3){\makebox{$e^{i\phi}$}}
\put(-1.5,-9.0){\makebox{$\lambda_2$}}
\put(-11.5,11){\makebox{$\lambda_1$}}
\put(1,24){\makebox{$\hat{\mathbf{c}}_3^{\infty}$}}
\put(3,-27){\makebox{$\mathbf{c}_3^{\infty}$}}
\put(0,2){\makebox{$\mathbf{c}_0$}}
\put(-22,-38){\makebox{(a) $\pi/2 <\phi <\pi$}}
\end{picture}
%%%%%%%%%%%%%%%%%%
%%%%%%%%%%%%%%%%%%
%%%%%%%%%%%%%%%%%%
\quad
%%%%%%%%%%%%%%%%%%%%%%%%%%%%%%%%%%
\begin{picture}(60,60)(-30,-35)
\put(-18,-7){\circle*{1.5}}
\put(-6,-8){\circle*{1.5}}
\put(6,8){\circle*{1.5}}
\put(18,7){\circle*{1.5}}
\thicklines
\qbezier (-6,-8) (-4.5,-4) (0, 0)
\qbezier (-6,-8) (-1,-20) (-1, -30)
\qbezier (-6,-8) (-13,-8) (-18, -7)
\qbezier (6,8) (4.5,4) (0, 0)
\qbezier (6,8) (1,20) (1, 30)
\qbezier (6,8) (13,8) (18, 7)
\put(-27,-7.5){\makebox{$e^{i\phi}$}}
\put(21,4.5){\makebox{$-e^{i\phi}$}}
\put(-3,-10.0){\makebox{$\lambda_1$}}
\put(7,11){\makebox{$\lambda_2$}}
\put(-7,24){\makebox{$\hat{\mathbf{c}}_3^{\infty}$}}
\put(1,-27){\makebox{$\mathbf{c}_3^{\infty}$}}
\put(-5,2){\makebox{$\mathbf{c}_0$}}
\put(-22,-38){\makebox{(b) $\pi <\phi <3\pi/2$}}
\end{picture}
%%%%%%%%%%%%%%%%%%
%%%%%%%%%%%%%%%%%%%%%%%%
\unitlength=0.65mm
\begin{picture}(63,45)(-55,-50)
\thicklines
\put(-16,-40){\circle*{1}}
\put(-16,-40){\line(2,5){14.28}}
\put(-16,-40){\line(-1,2){13.57}}
\put(0,0){\circle{9}}
\put(0,0){\circle*{1}}
\put(-31.8, -8.2){\circle{9}}
\put(-31.8, -8.2){\circle*{1}}
%%%%%%%%%%%%%%%%%%%%%%%%%%%%
\thinlines
\put(-12,-25){\line(2,5){3}}
\put(-12,-25){\vector(-1,-3){0}}
\put(-6,-20.2){\line(-2,-5){3}}
\put(-6,-20.2){\vector(1,3){0}}
\qbezier(2,7)(0,8)(-2,7)
\put(-2,7){\vector(-3,-2){0}}
\put(-22,-23.5){\line(1,-2){3}}
\put(-23,-30){\line(-1,2){3}}
\put(-22,-23.5){\vector(-1,2){0}}
\put(-23,-30){\vector(1,-2){0}}
\qbezier(-34.7,-1.9)(-36.8,-2.6)(-37.6,-4.7)
\put(-37.6,-4.7){\vector(-1,-4){0}}
%%%%%%%%%%%%%%%%%%
\put(-14,-43){\makebox{$\breve{p}_{\mathrm{st}}$}}
\put(-44,-16){\makebox{$e^{i\phi}$}}
\put(6,-4){\makebox{$-e^{i\phi}$}}
\put(-24.5,-18){\makebox{$\breve{l}_{1}$}}
\put(-3,-17){\makebox{$\breve{l}_{0}$}}

\put(-37,-53){\makebox{(c) Loops $\breve{l}_0$ and $\breve{l}_1$}}
\end{picture}
%%%%%%%%%%%%%%%%%%%%%%%%%%%%%%%%
%%%%%%%%%%%%%%%%%%%%%%%%%%%%%%%%%%
\end{center}
\caption{Limit Stokes graph and loops} 
\label{stokes2}
\end{figure}
}
%%%%%%%%%%%%%%%%%%%%%%%%%%%%%%%%%%%%%%%%%%%%%%%%%%%%
%%%%%%%%%%%%%%%%%%%%%%%%%%%%%%%%%%%%%%%%%%%%%%%%%%%%%
Let $\breve{M}^0$ and $\breve{M}^1$ be the monodromy matrices defined by
the analytic continuation of $Y_3(t,\lambda)$ along the loops
$\breve{l}_0$ and $\breve{l}_1$, respectively.
Recalling that 
$Y_3(t,\lambda)=Y(t,\lambda) S_2$, and that the analytic continuation of 
$Y(t,\lambda)=Y_2(t,\lambda)$ along $\hat{l}_0$, $\hat{l}_1$ are 
$Y(t,\lambda) M^0,$ 
$Y(t,\lambda)M^1$, respectively, we have $S^{-1}_2M^0S_2=\breve{M}^0,$ 
$S^{-1}_2 M^1 S_2=\breve{M}^1.$
\par
%%%%%%%%%%%%%%%%%%%%%%%%%%%%%%%%%
In the calculation of $\breve{M}^0$ and $\breve{M}^1$ this Stokes graph is
used. Suppose that $\pi <\phi< 3\pi/2.$ Let $Y_4(t,\lambda)$ be the matrix
solution admitting the same asymptotic representation as \eqref{3.8} in the
sector $|\arg\lambda -5\pi/2|<\pi.$ Denote by $\Gamma_3$ a connection 
matrix such that $Y_3=Y_4\Gamma_3$ along $(-\hat{\mathbf{c}}_3^{\infty}) 
\cup \mathbf{c}_0 \cup \mathbf{c}_3^{\infty}$ joining $e^{5\pi i/2}\infty$ 
to $e^{3\pi i/2}\infty$. The Stokes matrix $S_3= e^{\pi i\theta_{\infty}
\sigma_3} S_1 e^{-\pi i\theta_{\infty}\sigma_3}$ is given by $Y_4=Y_3 S_3$. 
Then $\Gamma_3 \breve{M}^0=S_3^{-1}$. The WKB analysis with the Stokes graph  
in Figure \ref{stokes2} (b) on $\mathbb{C}\setminus [-\infty, e^{i\phi}]$ 
leads us to
$$
\Gamma_3 =(I+O(t^{-\delta})) \begin{pmatrix}
e^{\hat{J}_3-J_3}(e^{-J_0}+c_0d_0^{-1} e^{J_0}) & ic_0 e^{J_0+\hat{J}_3+J_3}
\\
-id_0^{-1} e^{J_0-\hat{J}_3 -J_3}  &  e^{J_0 -\hat{J}_3 +J_3} 
\end {pmatrix},
$$
where $J_0=\int_{\lambda_2}^{\lambda_1}\Lambda_3(\tau)d\tau$, 
\begin{align*}
&J_3=\lim_{\substack{\lambda \to \infty \\ \lambda \in \mathbf{c}^{\infty}_3}}
\biggl(\int^{\lambda}_{\lambda_1} \Lambda_3(\tau)d\tau -\tfrac 14 (t\lambda
-2\theta_{\infty}\log \lambda)\sigma_3 \biggr),
\\
&\hat{J}_3=\lim_{\substack{\lambda \to \infty \\ \lambda \in \hat{\mathbf{c}}
^{\infty}_3}}
\biggl(\int^{\lambda}_{\lambda_2} \Lambda_3(\tau)d\tau -\tfrac 14 (t\lambda
-2\theta_{\infty}\log \lambda)\sigma_3 \biggr).
\end{align*}
Recall $\Gamma^{\infty}_{\infty 2}$ such that
$Y_3\Gamma^{\infty}_{\infty 2} =Y=Y_2$ along a path joining $e^{3\pi i/2}\infty$
to $e^{\pi i/2}\infty$ on the right-hand side of $e^{i\phi}$ in Figure 
\ref{stokes2} (b). Then $\breve{M}^1 \Gamma^{\infty}_{\infty 2}=S^{-1}_2,$
and, by the use of the Stokes graph on $\mathbb{C} \setminus [-e^{i\phi},
+\infty]$ with $\mathbf{c}^{\infty}_2$ joining
$\lambda_2$ to $e^{\pi i/2}\infty$ in place of $\hat{\mathbf{c}}^{\infty}_3$,
$$
\Gamma^{\infty}_{\infty 2}=(I+O(t^{-\delta})) \begin{pmatrix}
e^{J_3-J_2+J_0}    &  -i c_0 e^{J_0+J_3 +J_2}  \\
id_0^{-1} e^{J_0-J_3-J_2}  &  e^{-J_3+J_2}(e^{-J_0}+c_0 d_0^{-1} e^{J_0})
\end{pmatrix}
$$
with $J_2$ as in Section \ref{sc4}.
Note that $\breve{m}^0_{12}\breve{m}^1_{21}+\breve{m}^0_{22}
\breve{m}^1_{22}= e^{\pi i\theta_{\infty}}$ follows from 
$\breve{M}^1\breve{M}^0=S_2^{-1}S_1^{-1} e^{-\pi i\theta_{\infty}\sigma_3}$.
From the relations $\Gamma_3 \breve{M}^0 =S_3^{-1}$ and $\breve{M}^1\Gamma
^{\infty}_{\infty 2}=S_2^{-1},$ it follows that 
\begin{align*}
& \breve{m}^0_{12} =-ic_0 e^{\hat{J}_3+J_3+J_0}, \quad
 \breve{m}^0_{22} = e^{\hat{J}_3-J_3-J_0}(1+c_0d_0^{-1} e^{2J_0}), 
\\
& \breve{m}^1_{22} =e^{{J}_3+J_0-J_2}, \quad
 \breve{m}^1_{21} = -i d_0^{-1}e^{-J_3+J_0-J_2} 
\end{align*}
up to the factor $1+O(t^{-\delta})$, which implies
$\breve{m}^0_{22}\breve{m}^1_{22} (\breve{m}^0_{12}\breve{m}^1_{21})^{-1}
=-1-c_0^{-1}d_0 e^{-2J_0}.$ Then we derive
$-c_0^{-1}d_0 e^{-2J_0}(1+O(t^{-\delta}))= e^{\pi i\theta_{\infty}}
(\breve{m}^0_{12}\breve{m}^1_{21} )^{-1} $, and
$$
e^{J_3+J_0-\hat{J}_3}=\frac {1+c_0d_0^{-1} e^{2J_0}} {\breve{m}^{0}_{22}}
=\frac {-1-c_0^{-1}d_0 e^{-2J_0}} {\breve{m}^{0}_{22} \, 
( -c_0^{-1}d_0 e^{-2J_0})} 
=e^{-\pi i\theta_{\infty}}\breve{m}^1_{22} ,
$$ 
in which the contour of $J_3+J_0-\hat{J}_3$ on $\mathbb{C}\setminus 
[-\infty, e^{i\phi}]$ corresponds to the cycle $\mathbf{b}.$
These relations leads to the conclusion for $\pi<\phi <3\pi/2.$
In the case $\pi/2<\phi <\pi$, denoting $J_3|_{\lambda_1 \mapsto \lambda_2}$
and $\hat{J}_3|_{\lambda_2 \mapsto \lambda_1}$ by the same symbols
$J_3$ and $\hat{J}_3$, respectively, we have, by using  
the Stokes graph in Figure \ref{stokes2} (a),
%%%%%%%%%%%%%%%%%%%%%%%%%%%%%%%%%%%%%%%
\begin{align*}
&\Gamma_3  =(I+O(t^{-\delta})) \begin{pmatrix}  
e^{J_0 +\hat{J}_3 -J_3}  &  ic_0 e^{J_0+\hat{J}_3+J_3}
\\
-id_0^{-1} e^{J_0-\hat{J}_3 -J_3}  &
e^{-\hat{J}_3+J_3}(e^{-J_0}+c_0d_0^{-1} e^{J_0}) 
\end {pmatrix},
\\
&\Gamma^{\infty}_{\infty 2}=(I+O(t^{-\delta})) \begin{pmatrix}
 e^{J_3-J_1}(e^{-J_0}+c_0 d_0^{-1} e^{J_0}) & -i c_0 e^{J_0+J_3 +J_1}  \\
id_0^{-1} e^{J_0-J_3-J_1}  & e^{-J_3+J_1+J_0}    
\end{pmatrix}.
\end{align*}
These relation yields
$$
e^{J_3-J_0-\hat{J}_3}(1+O(t^{-\delta}))= ( \breve{m}^{0}_{22})^{-1},
\quad
-c_0^{-1}d_0 e^{-2J_0}(1+O(t^{-\delta}))=  e^{\pi i\theta_{\infty}}
({\breve{m}^0_{12}\breve{m}^1_{21} })^{-1},
$$ 
the first of which immediately follows from $\Gamma_3 \breve{M}^0=S^{-1}_3$.
From these relations the phase shift $\breve{x}_0$ is derived
by a procedure analogous to that for $x_0$.
Thus Theorem \ref{thm2.2} is obtained.
%%%%%%%%%%%%%%%%%%%%%%%%%%%%%%%%%%%%%%%%%%%%%%%%
\par
For $\phi$ such that $|\phi-k\pi|<\pi/2$ $(k\in \mathbb{Z})$, 
denote by $\hat{l}^{(k)}_0,$ $\hat{l}^{(k)}_1$ 
and $\mathcal{S}(k\pi -\pi/2,k\pi +\pi/2)$ the $k\pi$-rotation 
of $\hat{l}_0,$ $\hat{l}_1$ and $\mathcal{S}(-\pi/2, \pi/2)$, respectively.
Let $\tilde{Y}^p(t,\lambda)$ be the canonical solution of \eqref{3.6} admitting
the same form as of \eqref{3.8} in the sector $|\arg \lambda
- 2p\pi-\pi/2|<\pi,$ and let $M^0_p$ and $M^1_p$ be the monodromy matrices given
by the analytic continuations of $\tilde{Y}^p(t,\lambda)$ along $\hat{l}^{(2p)}
_0$ and $\hat{l}^{(2p)}_1$, respectively. Especially, $\tilde{Y}^0(t,\lambda)=
Y(t,\lambda),$ $\hat{l}^{(0)}_0=\hat{l}_0,$ $\hat{l}^{(0)}_1=\hat{l}_1,$  
$M^0_0=M^0,$ $M^1_0=M^1.$
Then $M^0_p$ and $M^1_p$ are as in Remark $\ref{rem2.5}$, and 
$M^0_p,$ $M^1_p$ are calculated on $\mathcal{S}(2p\pi-\pi/2,2p\pi+\pi/2)$. 
Thus this is reduced to the situation, to which Theorem \ref{thm2.1} applies. 
For the canonical solution $\breve{Y}^p(t,\lambda)$ in the sector 
$|\arg \lambda-(2p+1)\pi -\pi/2|<\pi$ such that $\breve{Y}^0(t,\lambda)=Y_3
(t,\lambda)$, the analytic continuations  
along $\hat{l}^{(2p+1)}_0$ and
$\hat{l}^{(2p+1)}_1$ yield $\breve{M}^0_p$ and $\breve{M}^1_p$ as in Remark \ref
{rem2.5}. Calculation of $\breve{M}^0_p$ and $\breve{M}^1_p$ on
$\mathcal{S}((2p+1)\pi-\pi/2,(2p+1)\pi+\pi/2)$  
leads to the results corresponding to Theorem \ref{thm2.2} 
(cf. Remark \ref{rem2.5}).
%%%%%%%%%%%%%%%%%%%%%%%%%%%%%%%%%%%%%%%%%%%
%%%%%%%%%%%%%%%%%%%%%%%%%%%%%%%%%
%%%%%%% Section 7 %%%%%%%%%%%%%%%%%%%%%%
\section{System equivalent to $(\mathrm{P}_{\mathrm{V}})$ and 
another approach to $B_{\phi}(t)$}\label{sc7}
%%%%%%%%
Multiplying both sides of (P$_{\mathrm{V}}$) by $2(dy/dx) y^{-1}(y-1)^{-2},$
we write (P$_{\mathrm{V}}$) in the form
%%%%% (3.3) %%%%%%
\begin{equation}\label{3.3}
\frac d{dx}L= -2x^{-1}L -\frac{2x^{-1} y}{(y-1)^2} +2(1-\theta_0-\theta_1)\frac
{x^{-2}}{y-1},
\end{equation}
where
\begin{align*}%%%%% \label{3.4}
L=L(x) :=& \frac{(y')^2}{y(y-1)^2} -\frac y{(y-1)^2} +2(1-\theta_0-\theta_1)
\frac{x^{-1}}{y-1}
\\
\notag
& -\frac{x^{-2}}4 \Bigl((\theta_0-\theta_1 +\theta_{\infty})^2 y +
(\theta_0-\theta_1-\theta_{\infty})^2\frac 1y \Bigr).
\end{align*}
Furthermore $L=L(x)$ is written in terms of $\psi:=(y+1)/(y-1)$:
%%%%%%% (6.3) %%%%%%%%%%%%%%%%%%%%%%
\begin{align}\label{6.3}
L(x) = & \frac{(\psi')^2}{\psi^2 -1} -\frac 14(\psi^2-1)
 -(1-\theta_0 -\theta_1) x^{-1} (1-\psi)
\\
\notag
& +\frac{x^{-2}}4 \Bigl( (\theta_0-\theta_1 +\theta_{\infty})^2 \frac{1+\psi}
{1-\psi} +(\theta_0 -\theta_1 -\theta_{\infty})^2 \frac{1-\psi}{1+\psi}
\Bigr)
\end{align}
%%%%%%%%%%%%%%%%%%%%%%%%5
with $\psi'=d\psi/dx.$ Then \eqref{3.3} equivalent to (P$_{\mathrm{V}}$) 
becomes
%%%%%%% (6.5) %%%%%%
\begin{equation}\label{6.5}
\frac{d}{dx} L = -2 x^{-1} L
 -\frac 12(\psi^2-1) x^{-1} +(\theta_0+\theta_1 -1)(1-\psi) x^{-2}.
\end{equation}
%%%%%%%%%%%%%%%%%%%%
The quantity $a_{\phi}$ defined by \eqref{3.12} with 
$y^*=y_t(t)=e^{i\phi}y'(x)$ is rewritten in the form
\begin{equation*}
a_{\phi} =1- 4\frac{(y')^2-y^2}{y(y-1)^2} +4(\theta_{0}+\theta_1) x^{-1}
\frac{y+1}{y-1} + x^{-2} \frac{y-1}y ((\theta_0-\theta_1 +\theta_{\infty})^2y
-(\theta_0-\theta_1-\theta_{\infty})^2 ),
\end{equation*}
and then
%%%%%%% (6.6) %%%%%
\begin{align}\label{6.6}
4(\psi')^2 =& (1-\psi^2)(a_{\phi}-\psi^2) - 4(\theta_0+\theta_1)x^{-1}\psi
(1-\psi^2) 
\\
 \notag
&\phantom{------} +4 x^{-2} (2(\theta_0 -\theta_1) \theta_{\infty}  \psi
+(\theta_0-\theta_1)^2 +\theta_{\infty}^2).
\end{align}
From \eqref{6.3} and \eqref{6.6} it follows that
%%%%% (6.7) %%%%%%%%%%%%%%
\begin{equation}\label{6.7}
L= \frac 14 (1-a_{\phi}) + (\theta_0+\theta_1 -1 +\psi)x^{-1} 
-\frac 12 ( (\theta_0-\theta_1)^2+\theta_{\infty}^2) {x^{-2}}.
\end{equation}  
The system consisting of \eqref{6.5} and \eqref{6.6} with \eqref{6.7}
may be regarded as one with respect to $\psi$ and $a_{\phi}=A_{\phi}+x^{-1}
b(x)$ that is equivalent to (P$_{\mathrm{V}}$). 
%%%%%%%%%%%%%%%%%%%%%%%%%%%%%%%%%%%%%%%%%%%%%%%%%%%%%
%%%%%%%%%%%%%%%%%%%%%%%%%%%%%%%%%%%%%%%%%%%%%%%%%%%%%%%%%
The system of equations \eqref{6.5} and \eqref{6.6} is also written in the form
%%%%%%%% (6.8), (6.9) %%%%%%%%%%%%%%%%%%
\begin{align}\label{6.8}
 4(\psi')^2 = &(1-\psi^2)(A_{\phi}-\psi^2) -(1-\psi^2)(4(\theta_0+\theta_1)\psi
-b)x^{-1}
\\
\notag
&  \phantom{---} + 4(2(\theta_0- \theta_1)\theta_{\infty} \psi +(\theta_0-
\theta_1)^2 +\theta_{\infty}^2) x^{-2},
\\
\label{6.9}
 b' = & -2(A_{\phi}-\psi^2) +4\psi' +(4(\theta_0+\theta_1)\psi -b) x^{-1},
\end{align}
which follows from the substitution $a_{\phi}(x) \mapsto A_{\phi} + x^{-1}b(x)$
in \eqref{6.5} and \eqref{6.6} with \eqref{6.7}.
%%%%%%%%%%%%%%%%%%%%%%%%%%%%%%%%%%%%%%%%%%%%%%%%%%%%%%%%
%%%%%%%%%%%%%%%%%%%%%%%%%%%%%%%%%%%%%%%%%%%%%%%%%%%%
%%%%%%%%%%%%%%%%%%%%%%%%%%%%%%%%%%%%%%%%%%%%%%%%%%%%
%%%%% ssc 6.3 %%%%%%%%%%%%%%%%%%%%%%%%%%%%%%%
%%%%%%%%%%%%%%%%%%%%%%%%%%%%%%
Neglecting the terms with the multiplier $x^{-1}$ in \eqref{6.8} and \eqref{6.9},
we have 
\begin{equation*}
 4(\tilde{\psi}')^2 = (1-\tilde{\psi}^2)(A_{\phi}-\tilde{\psi}^2),
\quad
 \tilde{b}' = -2(A_{\phi}-\tilde{\psi}^2) +4\tilde{\psi}'. 
\end{equation*}
The first equation admits the solution
$$
\psi_0(x)=A_{\phi}^{1/2} \mathrm{sn}((x-x_0)/2; A_{\phi}^{1/2}),
\quad   
 4(\psi_0')^2 = (1-\psi_0^2)(A_{\phi}-\psi_0^2)
$$ 
expressed by the Jacobi $\mathrm{sn}$-function with
$\Omega_{\mathbf{a}}=4K,$ $ \Omega_{\mathbf{b}}=2iK',$ $ A_{\phi}^{1/2}=k.$ 
\par
Let us seek a function $b_0(x)$ that solves
$ \tilde{b}' = -2(A_{\phi}-\psi_0^2) +4\psi_0'$ 
and is consistent with $B_{\phi}(t)$ for \eqref{6.4}. 
Put $u=(x-x_0)/2.$ Then this becomes 
%%%%%% (6.10) %%%%%%%%%%%%%%
\begin{equation}\label{6.10}
(b_0)_u = 4(\psi_0)_u + 4(\psi_0^2-A_{\phi}) = 4(\psi_0)_u +4A_{\phi}
(\mathrm{sn}^2 u -1).
\end{equation}
Comparison of double poles of doubly periodic functions yields
$$
(\psi_0)_u + A_{\phi}(\mathrm{sn}^2u -1) +\frac{2}{\Omega_{\mathbf{a}}}
\frac d{du} \Bigl( \frac{\vartheta'}{\vartheta}\Bigl(\frac{u}{\Omega_{\mathbf
{a}}} ,\tau_0 \Bigr) \Bigr) \equiv c_0 \in \mathbb{C}
$$
with $\tau_0=\Omega_{\mathbf{b}}/\Omega_{\mathbf{a}}.$ Integrating this with
\eqref{6.10} along 
$[0,u]$ and putting $u=2K=\Omega_{\mathbf{a}}/2,$ we have
$$
b_0(x)=b_0(x_0) - \frac{2\mathcal{E}_{\mathbf{a}}}{\Omega_{\mathbf{a}}}
(x-x_0)
- \frac{8}{\Omega_{\mathbf{a}}} \frac {\vartheta'}{\vartheta} \Bigl( \frac 1
{2\Omega_{\mathbf{a}}}(x-x_0), \tau_0 \Bigr),
$$
since $2c_0 = -2 \mathcal{E}_{\mathbf{a}}/\Omega_{\mathbf{a}}$ follows from 
$$
A_{\phi} \int^K_0 (\mathrm{sn}^2u -1) du = - \int^{A^{1/2}_{\phi}}_0
\sqrt{\frac{A_{\phi}-z^2}{1-z^2} } dz = - \frac{\mathcal{E}_{\mathbf{a}}}4
\qquad (z= A^{1/2}_{\phi} \mathrm{sn}\, u).
$$
This is consistent with Corollary \ref{cor6.1} if
%%%%%%%%%%%% (6.10*) %%%%%%%%%%%%%%%%%%%%%%5
\begin{equation}\label{6.10*}
b_0(x_0)=\beta_0-\frac{2\mathcal{E}_{\mathbf{a}}} {\Omega_{\mathbf{a}}} x_0
= - \frac 8{\Omega_{\mathbf{a}}}\bigl( \log ( m^0_{21} m^1_{12})
 +\pi i (\theta_{\infty} +1)\bigr) -\frac{2\mathcal{E}_{\mathbf{a}}}
{\Omega_{\mathbf{a}}} x_0.
\end{equation}
Therefore $b_0(x)$ satisfies 
%%%%%% (6.11) %%%%%%%%%%%%%
\begin{equation}\label{6.11}
b'_0(x) = 2(\psi_0(x)^2-A_{\phi}) + 4\psi'_0(x) 
\end{equation} 
and $b_0(e^{i\phi}t)-e^{i\phi}
B_{\phi}(t) \ll t^{-\delta}$ in $S(\phi, t_{\infty}, \kappa_{0},\delta_0)$.
%%%%%%%%%%%%%%%%%%%%%%%%%%%%%%%%%%%%
%%%%%%%%%%%%%%%%%%%%%%%%%%%%%%%%%%%
%%%%%%%%%%%%%%%%%%%%%%%%%%%%%%%%%%%%%%%%%%%%%
%%%%%%%%%%%%%%%%%%%%%%%%%%%%%%%%%%%%%%%%%%%%%%%
%%%% Section A %%%%%%%%%%%%%%%%%%%%%%
\section{Modulus $A_{\phi}$ and the Boutroux equations}\label{scA}
%%%%%%%%%%%%%%%%%%%%%%%%%%%%%%%%%%%%%%%%
We examine a solution $A\in \mathbb{C}$ of the Boutroux equations. 
Let the branch of
$A^{1/2}$ $(\not=0)$ be fixed in such a way that 
$
\re A^{1/2} \ge 0, \quad \text{and} \,\,\,\,  \im A^{1/2} >0 \quad \text{if
$\re A^{1/2}=0.$}
$
In accordance with \cite[Appendix I]{Novokshenov-2} set
$$
I_{\mathbf{a}}(A)=\int_{\mathbf{a}} \sqrt{\frac{A-z^2}{1-z^2}} dz, \quad
I_{\mathbf{b}}(A)=\int_{\mathbf{b}} \sqrt{\frac{A-z^2}{1-z^2}} dz, \quad
\mathcal{I}(A)= \frac{I_{\mathbf{a}}(A)}{I_{\mathbf{b}}(A)},
$$
in which the cycles $\mathbf{a}$ and $\mathbf{b}$ are as in Figure \ref{cycles1}
with $A_{\phi}=A$.
%%%%%%%%%%%%%%%%%%%%%%%%%%%%%%%%%%%%%%%%%
%%% Lemma A1 %%%%%%%%%
\begin{lem}\label{lemA.1}
Let $A\in \mathbb{C}.$ Then $\mathcal{I}(A) \in \mathbb{R}$ if and only if, 
for some $\phi \in \mathbb{R},$ $A$ solves the Boutroux equations
$(\mathrm{BE})_{\phi} :$ $\re e^{i\phi} I_{\mathbf{a}}(A)= \re e^{i\phi} 
I_{\mathbf{b}}(A)=0.$
\end{lem}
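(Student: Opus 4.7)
The plan is to reduce the lemma to an elementary observation about pairs of complex numbers: two nonzero complex numbers $z_1,z_2$ have real ratio $z_1/z_2$ if and only if they lie on a common line through the origin, which happens precisely when there exists $\phi\in\mathbb R$ such that both $e^{i\phi}z_1$ and $e^{i\phi}z_2$ are purely imaginary. Applied to $z_1=I_{\mathbf a}(A)$ and $z_2=I_{\mathbf b}(A)$, this is exactly the stated equivalence between $\mathcal I(A)\in\mathbb R$ and $(\mathrm{BE})_\phi$.

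For the forward direction, assume $\mathcal I(A)=I_{\mathbf a}(A)/I_{\mathbf b}(A)$ is a well-defined real number $r$. Then $I_{\mathbf b}(A)\neq 0$, and I choose $\phi\in\mathbb R$ so that $\arg(e^{i\phi}I_{\mathbf b}(A))=\pi/2$, i.e.\ $e^{i\phi}I_{\mathbf b}(A)\in i\mathbb R$. From $I_{\mathbf a}(A)=r\,I_{\mathbf b}(A)$ it follows that $e^{i\phi}I_{\mathbf a}(A)=r\cdot e^{i\phi}I_{\mathbf b}(A)\in i\mathbb R$ as well, giving $\re e^{i\phi}I_{\mathbf a}(A)=\re e^{i\phi}I_{\mathbf b}(A)=0$. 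For the converse, suppose $(\mathrm{BE})_\phi$ holds for some $\phi$. Then $e^{i\phi}I_{\mathbf a}(A),\,e^{i\phi}I_{\mathbf b}(A)\in i\mathbb R$, so whenever $I_{\mathbf b}(A)\neq 0$ the ratio $\mathcal I(A)=(e^{i\phi}I_{\mathbf a}(A))/(e^{i\phi}I_{\mathbf b}(A))$ is a quotient of two purely imaginary numbers, hence real.

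The only delicate point is handling the possible degeneracy $I_{\mathbf b}(A)=0$, since then $\mathcal I(A)$ is ill-defined. I would dispose of this by noting that $I_{\mathbf a}(A)$ and $I_{\mathbf b}(A)$ cannot both vanish: writing $w(A,z)=\sqrt{(1-z^2)(A-z^2)}$, the differentials $dz/w(A,z)$ and $\sqrt{(A-z^2)/(1-z^2)}\,dz$ together with a period relation (as in Lemma~\ref{lem5.9}) prevent simultaneous vanishing unless $A\in\{0,1\}$ where the elliptic curve degenerates; those degenerate $A$ are handled separately by direct inspection. If $I_{\mathbf a}(A)=0$ with $I_{\mathbf b}(A)\neq 0$, then $\mathcal I(A)=0\in\mathbb R$ and one simply picks $\phi$ making $e^{i\phi}I_{\mathbf b}(A)\in i\mathbb R$, which automatically yields $(\mathrm{BE})_\phi$.

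The main obstacle is thus not the elementary equivalence itself but verifying the non-degeneracy of $I_{\mathbf a}(A),I_{\mathbf b}(A)$ so that the ratio $\mathcal I(A)$ is a meaningful object on the whole parameter set under consideration; once that is in hand, the proof collapses to the one-line complex-number observation above.
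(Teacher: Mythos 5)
Your proof is correct and takes essentially the same route as the paper's: the paper writes $I_{\mathbf a}(A)=u+iv$, $I_{\mathbf b}(A)=U+iV$, observes $u/v=U/V=\tan\phi$ for some $\phi$, and concludes $\re e^{i\phi}I_{\mathbf a}(A)=\re e^{i\phi}I_{\mathbf b}(A)=0$, which is exactly your rotation argument (the trivial converse is left implicit there). Your additional care about the degenerate case $I_{\mathbf b}(A)=0$ goes slightly beyond the paper's proof, which passes over it; the non-simultaneous vanishing you invoke appears in the paper only later, as Corollary \ref{corA.5}.
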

%%%%%%%%%%%%%%%
\begin{proof}
Suppose that $\mathcal{I}(A)=\rho \in \mathbb{R},$ and write $I_{\mathbf{a}}(A)
= u+iv,$ $I_{\mathbf{b}}(A)=U+iV$ with $u,$ $v,$ $U,$ $V \in \mathbb{R}.$ Then
$u=\rho U,$ $v=\rho V,$ and hence $u/v= U/V =\tan \phi$ for some $\phi\in 
\mathbb{R},$ which implies $\re e^{i\phi} I_{\mathbf{a}}(A)= \re e^{i\phi} 
I_{\mathbf{b}}(A)=0.$ 
\end{proof}
By Lemma \ref{lem5.9},
\begin{align*}
&\mathcal{I}'(A) =\frac 1{2I_{\mathbf{b}}(A)^2}(\omega_{\mathbf{a}}(A)
I_{\mathbf{b}}(A)-\omega_{\mathbf{b}}(A)I_{\mathbf{a}}(A)) = - \frac{2\pi i}
{I_{\mathbf{b}}(A)^2},
\\
& \omega_{\mathbf{a,\, b}}(A) = \int_{\mathbf{a,\, b}} \frac{dz}
{\sqrt{(A-z^2)(1-z^2)}},
\end{align*}
which implies the following.
%%%%%%%%%%%%%%%%%%%%%%%%%%%%%
%%%%% Lemma A2 %%%%%%%%%%%%%%%%%%%
\begin{lem}\label{lemA.2}
The map $\mathcal{I}(A)$ is conformal on $\mathbb{C}$ as long as $I_{\mathbf
{b}}(A) \not= 0, \infty.$
\end{lem}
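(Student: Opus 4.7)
The plan is to observe that the two-line display immediately preceding the statement already does essentially all of the work: it evaluates $\mathcal{I}'(A) = -2\pi i/I_{\mathbf{b}}(A)^2$, and this expression is manifestly both holomorphic and nonvanishing on the set where $I_{\mathbf{b}}(A) \neq 0, \infty$. Since a conformal map is a holomorphic map with nonvanishing derivative, all that remains is to justify rigorously the two inputs feeding into that display, namely the holomorphy of $I_{\mathbf{a},\mathbf{b}}$ in $A$ with derivatives $\tfrac{1}{2}\omega_{\mathbf{a},\mathbf{b}}(A)$, and the Legendre-type identity asserted by Lemma \ref{lem5.9}.

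For the first input, I would fix $A_0 \in \mathbb{C}\setminus\{0,1\}$ and choose smooth closed-curve representatives of the homology classes $\mathbf{a}, \mathbf{b}$ on $\Pi^*(A_0)$ that avoid a small fixed neighbourhood of the four branch points $\pm 1,\pm A_0^{1/2}$. For all $A$ in a sufficiently small disk around $A_0$ the same curves remain admissible representatives on $\Pi^*(A)$, and on them the integrand $\sqrt{(A-z^2)/(1-z^2)}$ together with its $A$-derivative $\tfrac{1}{2}w(A,z)^{-1}$ is jointly holomorphic in $(A,z)$ and uniformly bounded. Differentiation under the integral sign is therefore legitimate and gives $I_{\mathbf{a},\mathbf{b}}'(A)=\tfrac{1}{2}\omega_{\mathbf{a},\mathbf{b}}(A)$. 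Applying the quotient rule to $\mathcal{I}=I_{\mathbf{a}}/I_{\mathbf{b}}$ then produces
$$
\mathcal{I}'(A) \;=\; \frac{\omega_{\mathbf{a}}(A)\, I_{\mathbf{b}}(A) - \omega_{\mathbf{b}}(A)\, I_{\mathbf{a}}(A)}{2\, I_{\mathbf{b}}(A)^2}.
$$

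The second input is Lemma \ref{lem5.9}, which, applied with $A_\phi$ replaced by the complex variable $A$, asserts $I_{\mathbf{a}}(A)\,\omega_{\mathbf{b}}(A) - I_{\mathbf{b}}(A)\,\omega_{\mathbf{a}}(A) = 4\pi i$. Substituting this identity into the numerator above collapses the expression to $-2\pi i / I_{\mathbf{b}}(A)^2$, which is holomorphic and nonzero precisely when $I_{\mathbf{b}}(A)$ is finite and nonzero; this proves conformality.

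The only delicate step is the choice and $A$-continuity of the cycle representatives $\mathbf{a}, \mathbf{b}$: the branch structure of $\Pi^*(A)$ degenerates at $A\in\{0,1,\infty\}$, but on the complement of these three exceptional values the homology classes propagate unambiguously by continuation, so the proof is uniform in $A$ away from $\{0,1,\infty\}$. I expect that formalising this continuation argument, rather than the (essentially completed) computation of $\mathcal{I}'$, is the only point requiring care.
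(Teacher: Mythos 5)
Your proposal is correct and follows essentially the same route as the paper: the paper's entire argument is the displayed computation $\mathcal{I}'(A)=\frac{1}{2I_{\mathbf{b}}(A)^2}\bigl(\omega_{\mathbf{a}}(A)I_{\mathbf{b}}(A)-\omega_{\mathbf{b}}(A)I_{\mathbf{a}}(A)\bigr)=-2\pi i/I_{\mathbf{b}}(A)^2$, obtained from $I'_{\mathbf{a},\mathbf{b}}=\tfrac12\omega_{\mathbf{a},\mathbf{b}}$ and the Legendre-type relation of Lemma \ref{lem5.9}, exactly as you do. Your additional remarks on differentiating under the integral and on continuation of the cycles away from $A\in\{0,1,\infty\}$ are sound fillings-in of details the paper leaves implicit.
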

%%%%%%%%%%%%%
Near $A=\infty$, observing that
$$
I_{\mathbf{a}}(A) =4A^{1/2} \int^1_0 \sqrt{\frac{1-z^2/A}{1-z^2}} dz
=2\pi A^{1/2}(1+O(A^{-1})),
$$
and that
\begin{align*}
I_{\mathbf{b}}(A) =& 2A^{1/2} \int^{A^{1/2}}_1 \sqrt{\frac{1-z^2/A}{1-z^2}} dz
\\
=& -2i A^{1/2} \int^{A^{1/2}}_1  \Bigl( \frac 1 {\sqrt{z^2-1}}+\frac{-z^2/A}
 {\sqrt{z^2-1}(1+\sqrt{1-z^2/A})} \Bigr) dz
\\
=& -i A^{1/2} \log A \,\,(1+O(|\log A|^{-1})),
\end{align*}
we have $\im (1/\mathcal{I}(A)) = - (2\pi)^{-1} \log|A| \,\, (1+o(1))$ as
$A\to \infty.$ 
%%%%%%%%%%%%%%%%%%
%%%% Lemma A3 %%%%
\begin{lem}\label{lemA.3}
The set
$$
\mathcal{R}(\mathrm{BE})=
\mathcal{I}^{-1}(\mathbb{R}) =\{ A\in \mathbb{C}\, ; \,\, \text{$A$ solves
$(\mathrm{BE})_{\phi}$ for some $\phi \in \mathbb{R}$} \}
$$
is bounded. 
\end{lem}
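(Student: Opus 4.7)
The plan is to argue by contradiction, exploiting the asymptotic expansion of $\mathcal{I}(A)$ at infinity that is already in hand from the paragraph preceding the lemma. The key observation is that the estimates
\[
I_{\mathbf{a}}(A)=2\pi A^{1/2}(1+O(A^{-1})),\qquad I_{\mathbf{b}}(A)=-iA^{1/2}\log A\,(1+O(|\log A|^{-1}))
\]
as $A\to\infty$ (with $\operatorname{Re} A^{1/2}\ge 0$) immediately give
\[
\frac{1}{\mathcal{I}(A)}=\frac{I_{\mathbf{b}}(A)}{I_{\mathbf{a}}(A)}=-\frac{i\log A}{2\pi}(1+o(1)),
\]
so that $\operatorname{Im}(1/\mathcal{I}(A))=-(2\pi)^{-1}\log|A|\,(1+o(1))\to -\infty$ as $|A|\to\infty$, which is the estimate quoted just above the lemma.

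Now suppose for contradiction that $\mathcal{R}(\mathrm{BE})$ is unbounded, and pick a sequence $\{A_n\}\subset\mathcal{R}(\mathrm{BE})$ with $|A_n|\to\infty$. By Lemma \ref{lemA.1}, $\mathcal{I}(A_n)\in\mathbb{R}$ for every $n$. For $|A_n|$ sufficiently large the leading asymptotics above make both $I_{\mathbf{a}}(A_n)$ and $I_{\mathbf{b}}(A_n)$ nonzero, so $\mathcal{I}(A_n)\in\mathbb{R}\setminus\{0\}$ and hence $1/\mathcal{I}(A_n)\in\mathbb{R}$. This forces $\operatorname{Im}(1/\mathcal{I}(A_n))=0$ for all large $n$, contradicting the divergence to $-\infty$ established above. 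Therefore $\mathcal{R}(\mathrm{BE})$ must be bounded.

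The only point requiring slight care, rather than a real obstacle, is branch consistency: the expansion of $I_{\mathbf{b}}(A)$ involves $\log A$, which is only defined up to $2\pi i\mathbb{Z}$, while $A^{1/2}$ is fixed by the convention $\operatorname{Re} A^{1/2}\ge 0$. However, any choice of branch of $\log A$ merely shifts $\operatorname{Im}(1/\mathcal{I}(A))$ by a bounded real constant depending on $A^{1/2}$, and the dominant term $-(2\pi)^{-1}\log|A|$ still drives $\operatorname{Im}(1/\mathcal{I}(A))$ to $-\infty$ as $|A|\to\infty$. Hence the contradiction is insensitive to the branch choice, and the proof is complete in two or three lines once the asymptotic already displayed in the text is invoked.
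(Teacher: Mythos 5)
Your proposal is correct and follows essentially the same route as the paper: the text immediately preceding the lemma derives exactly the asymptotics $I_{\mathbf{a}}(A)=2\pi A^{1/2}(1+O(A^{-1}))$, $I_{\mathbf{b}}(A)=-iA^{1/2}\log A\,(1+O(|\log A|^{-1}))$ and the estimate $\im(1/\mathcal{I}(A))=-(2\pi)^{-1}\log|A|\,(1+o(1))$, and the lemma is stated as an immediate consequence via Lemma \ref{lemA.1}, which is precisely the contradiction you spell out. Your additional remarks on the nonvanishing of $I_{\mathbf{a}},I_{\mathbf{b}}$ for large $|A|$ and on branch consistency of $\log A$ only make explicit what the paper leaves implicit.
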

%%%%%%%%%%%%%%%%%%%%%%
Let us observe the dependence of $A\in \mathcal{R}(\mathrm{BE})$ on $\phi$ or
$t=\tan \phi.$ 
\par
Since $I_{\mathbf{a}}(0) =0$ and $I_{\mathbf{b}}(0)=2i,$ 
$A=0$ solves (BE)$_{\phi=0}$. Conversely we may give the uniqueness lemma, 
which is crucial in
discussing (BE)$_{\phi}$. This is proved by an argument similar to that in 
\cite[\S 7]{Kitaev-2}.
%%%%%%%%%%%%%%%%%%%%%%%%%%%%%%%
%%%%% Lemma A4 %%%%%
\begin{lem}\label{lemA.4}
If $A$ solves $(\mathrm{BE})_{\phi=0}$, then $A=0.$
\end{lem}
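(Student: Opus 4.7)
\emph{Plan.} My approach combines a local Taylor--log expansion of $I_{\mathbf{a}}(A)$ and $I_{\mathbf{b}}(A)$ near $A=0$ with the global structural results of Lemmas \ref{lemA.2} and \ref{lemA.3}. The local step will rule out nonzero solutions of $(\mathrm{BE})_{\phi=0}$ in a punctured neighbourhood of $0$ via a logarithmic obstruction, and the global step will propagate uniqueness along the (bounded) branch of $\mathcal{R}(\mathrm{BE})$ containing $A=0$.

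\emph{Local step.} First I would compute the expansions as $A\to 0$. The substitution $z=A^{1/2}\sin\theta$ recasts the $\mathbf{a}$-integral as
$$
I_{\mathbf{a}}(A) = 4A\int_0^{\pi/2}\frac{\cos^2\theta}{\sqrt{1-A\sin^2\theta}}\,d\theta = \pi A + O(A^2),
$$
while on the cut $[A^{1/2},1]$ the identity $\sqrt{(z^2-A)/(1-z^2)} = z(1-A/(2z^2)+O(A^2/z^4))/\sqrt{1-z^2}$, together with $\int_{A^{1/2}}^{1}dz/(z\sqrt{1-z^2}) = \log 2 - \tfrac{1}{2}\log A + O(A)$, yields
$$
I_{\mathbf{b}}(A) = 2i + \tfrac{i}{2}A\log A + O(A).
$$
If $A\ne 0$ is small and solves $(\mathrm{BE})_{\phi=0}$, then $\re I_{\mathbf{a}}(A) = \pi\re A + O(|A|^2) = 0$ forces $A = i\alpha(1+o(1))$ with $\alpha\in\mathbb{R}\setminus\{0\}$. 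Substituting this into the $I_{\mathbf{b}}$ expansion and computing $\re(iA\log A)/2 = -\tfrac{1}{2}\alpha\log|\alpha|$ for both signs of $\alpha$, I obtain
$$
\re I_{\mathbf{b}}(A) = -\tfrac{1}{2}\alpha\log|\alpha| + O(|\alpha|),
$$
which is nonzero for every sufficiently small $\alpha\ne 0$. Hence $A=0$ is an isolated solution in a punctured neighbourhood of the origin.

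\emph{Global step and main obstacle.} For uniqueness in the whole plane, observe that every solution lies in the bounded set $\mathcal{R}(\mathrm{BE})$ by Lemma \ref{lemA.3}, and since $I_{\mathbf{b}}(0)=2i\ne 0,\infty$, Lemma \ref{lemA.2} shows $\mathcal{I}$ is conformal near $A=0$; thus the component of $\mathcal{R}(\mathrm{BE})$ through $0$ is a smooth real-analytic 1-manifold that may be parametrised by $s=\mathcal{I}(A)\in\mathbb{R}$. I would extend this parametrisation along the whole branch and track the real-analytic map $s\mapsto\re I_{\mathbf{b}}(A(s))$ until it reaches its boundary. The main obstacle will be controlling $\re I_{\mathbf{b}}$ globally: one must rule out both that the branch returns to produce a further zero of $\re I_{\mathbf{b}}$ and that a disjoint component of $\mathcal{R}(\mathrm{BE})$ supports a solution. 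The expected resolution is a monotonicity argument based on $dI_{\mathbf{b}}/ds = (\omega_{\mathbf{b}}/2)\,dA/ds$ combined with sign-definiteness of $\im(\omega_{\mathbf{a}}\overline{\omega_{\mathbf{b}}})$ on the elliptic curve, together with an analysis of the boundary behaviour at the degenerate modulus $A=1$ (where the branch terminates with $\omega_{\mathbf{b}}$ developing a logarithmic singularity that prevents $\re I_{\mathbf{b}}$ from returning to zero). Possible disjoint components correspond to $\mathrm{SL}_2(\mathbb{Z})$-related choices of cycle basis $(\mathbf{a}',\mathbf{b}')$, for which the same local--global scheme applies mutatis mutandis.
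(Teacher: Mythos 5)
Your local step is fine as far as it goes: the expansions $I_{\mathbf{a}}(A)=\pi A+O(A^2)$ and $I_{\mathbf{b}}(A)=2i+\tfrac i2A\log A+O(A)$ agree with what the paper itself computes (in the proof of Lemma \ref{lemA.9}), and the logarithmic obstruction correctly shows that $A=0$ is an \emph{isolated} solution of $(\mathrm{BE})_{\phi=0}$. But the lemma is a global statement over all of $\mathbb{C}$, and that is exactly the part you have not proved. Your ``global step'' is a programme, not an argument: the monotonicity of $\re I_{\mathbf{b}}$ along the branch of $\mathcal{I}^{-1}(\mathbb{R})$ through $0$ is asserted as an ``expected resolution'', the behaviour at the degenerations $A=1$ and along the real segment (where the cycles pinch and $\im(\omega_{\mathbf{a}}\overline{\omega_{\mathbf{b}}})$ ceases to be controlled) is only waved at, and --- most seriously --- nothing excludes a solution of $(\mathrm{BE})_{\phi=0}$ lying on a component of $\mathcal{R}(\mathrm{BE})$ \emph{not} containing $A=0$. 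Your closing remark that disjoint components ``correspond to $\mathrm{SL}_2(\mathbb{Z})$-related choices of cycle basis'' is unjustified and beside the point: the cycles $\mathbf{a},\mathbf{b}$ are fixed, $\mathcal{R}(\mathrm{BE})=\mathcal{I}^{-1}(\mathbb{R})$ is a fixed level set, and a priori it may have components far from the origin on which $\re I_{\mathbf{a}}$ and $\re I_{\mathbf{b}}$ vanish simultaneously. Boundedness (Lemma \ref{lemA.3}) and local conformality (Lemma \ref{lemA.2}) do not close this gap; indeed the parametrisation by $s=\mathcal{I}(A)$ already presupposes $I_{\mathbf{b}}\neq0,\infty$ along the whole branch, which is part of what must be shown.

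For contrast, the paper's proof is genuinely global and avoids any continuation argument: if $\re I_{\mathbf{a}}(A)=\re I_{\mathbf{b}}(A)=0$ then $I_{\mathbf{b}}(A)$ is pure imaginary, whence $I_{\mathbf{b}}(A)=I_{\mathbf{b}}(\overline A)$ by Schwarz reflection; writing $I_{\mathbf{b}}(A)-I_{\mathbf{b}}(\overline A)=(A-\overline A)\int_{\mathbf{b}}\frac{dz}{\sqrt{1-z^2}\,(\sqrt{A-z^2}+\sqrt{\overline A-z^2})}$ and splitting the contour, one shows the kernel integral has nonvanishing real part (the piece $H^{-1}_{-\alpha}$) whenever $0\le\re A^{1/2}<1$ or $\re A^{1/2}>1$, forcing $A\in\mathbb{R}$; the borderline case $\re A^{1/2}=1$ is settled by a direct lower bound giving $\re I_{\mathbf{b}}(A)\neq0$; and finally real $A$ is checked by inspection, leaving only $A=0$. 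If you want to salvage your route, you would have to actually establish the global monotonicity of $\re I_{\mathbf{b}}$ along the branch through the degenerations and, separately, prove that no other component of $\mathcal{I}^{-1}(\mathbb{R})$ meets $\{\re I_{\mathbf{a}}=\re I_{\mathbf{b}}=0\}$; as written, the proof does not exist beyond a neighbourhood of $A=0$.
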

%%%%%%%%%%%%%%%%%%%%%
\begin{proof}
Suppose that $\re I_{\mathbf{a}}(A) =\re I_{\mathbf{b}}(A) =0.$ Then $I_
{\mathbf{b}}(A)$ is pure imaginary, and $I_{\mathbf{b}}(A)=
- \overline{I_{\mathbf{b}}(A)} = - I_{\overline{\mathbf{b}}}(\overline{A})
=I_{\mathbf{b}}(\overline{A}),$ that is,
$ I_{\mathbf{b}}(A)-I_{\mathbf{b}}(\overline{A})= 0.$
\par
%%%%%%%%%%%%%%%%%%%%%%%%%%%%%%%%%%%%%%%%%%%%%%%%%%%%%%%%%%%%%%%%%%%%%%%
%%%%%%%%%%%%%%%%%%%%%%%%%%%%%%%%%%%%%%%%%%%%%%%%%%%%%%%%%%%%%%%%%%%%%%
%%%%%%%%%%%%%%%%% Figure 7.1 %%%%%%%%%%%%%%%%%%%%%%
%%%%%%%%%%%%%%%%%%%%%%%%%%%%%%%%%%%%%%%%%%%%%%%%%%%%%%%%%%%%%%%%%%%%%%
%%%%%%%%%%%%%%%%%%%%%%%%%%%%
{\small
\begin{figure}[htb]
\begin{center}
\unitlength=0.75mm
%%%%%%%%%%%%%%%%%%%%%%%%%%%%%%%%%%
%%%%%%%%%%%% (a)  %%%%%%%%%%%%%%%%%%%
%%%%%%%%%%%%%%%%%%%%%%%%%%%%%%%%
\begin{picture}(60,65)(-35,-35)
\put(20,0){\circle*{1.5}}
\put(-20,0){\circle*{1.5}}
\put(10,0){\circle*{1.5}}
\put(10,20){\circle*{1.5}}
\put(10,-20){\circle*{1.5}}
\put(27,0){\line(-1,0){17}}
\put(-20,0){\line(-1,0){13}}
\put(10.5,-20){\line(0,1){40}}
\put(9.5,-20){\line(0,1){40}}
\put(10,0.5){\line(-1,0){30}}
\put(10,-0.5){\line(-1,0){30}}
\put(-5,7){\vector(-1,0){8}}
\put(-2,7){\makebox{$\mathbf{b}$}}
\put(20,-6){\makebox{$0$}}
\put(-3,-8){\makebox{$-\alpha$}}
\put(-28,-9){\makebox{$-1$}}
\put(-15,-24){\makebox{$-\alpha-\beta i$}}
\put(-15,22){\makebox{$-\alpha+\beta i$}}

\put(-20,-36){\makebox{(a) $0\le \re A^{1/2} <1$}}
\thicklines
\put(14,-20){\line(0,1){40}}
\put(6,-20){\line(0,1){16}}
\put(6,4){\line(0,1){16}}
\put(6,-4){\line(-1,0){26}}
\put(6,4){\line(-1,0){26}}
\qbezier (14,-20) (13.5, -23.5) (10,-24) 
\qbezier (6,-20) (6.5,-23.5) (10,-24) 
\qbezier (14,20) (13.5, 23.5) (10,24) 
\qbezier (6,20) (6.5,23.5) (10,24) 
\qbezier (-20,-4) (-23.5, -3.5) (-24,0) 
\qbezier (-20,4) (-23.5, 3.5) (-24,0) 
\end{picture}
%%%%%%%%%%%%%%%%%%%%%%%%%%%%%%
%%%%%%%%%%%%%%%%%%
\qquad
%%%%%%%%%%%%%%%%%%%%%%%%%%%%%%%%%%
%%%%%%%%%  (b)  %%%%%%%%%%%%%%%%%%%%%%
%%%%%%%%%%%%%%%%%%%%%%%%%%%%%%%
\begin{picture}(50,55)(-45,-35)
\put(-18,0){\circle*{1.5}}
\put(-18,-20){\circle*{1.5}}
\put(-18.5,0){\line(0,-1){20}}
\put(-17.5,0){\line(0,-1){20}}
\put(-35,0){\line(1,0){40}}
\put(-11,-11){\vector (0,1){8}}
\put(-9,-14){\makebox{$\mathbf{b}$}}
\put(-17,5){\makebox{$-1$}}
\put(-14,-25){\makebox{$-1-\beta i$}}
\thicklines
\put(-14,0){\line (0,-1) {20}}
\put(-22,0){\line (0,-1) {20}}
\qbezier (-14,-20) (-14.5,-23.5) (-18, -24)
\qbezier (-18,-24) (-22.5,-23.5) (-22, -20)
\qbezier (-14,0) (-14.5,3.5) (-18, 4)
\qbezier (-18,4) (-22.5,3.5) (-22, 0)
\put(-36,-36){\makebox{(b) $\re A^{1/2}= 1$}}
\end{picture}
%%%%%%%%%%%%%%%%%%%%%%%%%%%%%%
\qquad
%%%%%%%%%%%%%%%%%%%%%%%%%%%%%%%%%%
%%%%%%%%%%%% (c)  %%%%%%%%%%%%%%%%%%%
%%%%%%%%%%%%%%%%%%%%%%%%%%%%%%%%
\begin{picture}(60,65)(-35,-35)
\put(20,0){\circle*{1.5}}
\put(-10,0){\circle*{1.5}}
\put(-10,20){\circle*{1.5}}
\put(-10,-20){\circle*{1.5}}
\put(-25,0){\line(1,0){15}}
\put(20,0){\line(1,0){13}}
\put(-10.5,-20){\line(0,1){40}}
\put(-9.5,-20){\line(0,1){40}}
\put(-10,0.5){\line(1,0){30}}
\put(-10,-0.5){\line(1,0){30}}
\put(10,7){\vector(-1,0){8}}
\put(13,7){\makebox{$\mathbf{b}$}}
%%%%%%  \put(-20,-6){\makebox{$0$}}
\put(-23,-5){\makebox{$-\alpha$}}
\put(20,-9){\makebox{$-1$}}
\put(-4,-24){\makebox{$-\alpha-\beta i$}}
\put(-4,22){\makebox{$-\alpha+\beta i$}}

\put(-20,-36){\makebox{(c) $\re A^{1/2} >1$}}
\thicklines
\put(-14,-20){\line(0,1){40}}
\put(-6,-20){\line(0,1){16}}
\put(-6,4){\line(0,1){16}}
\put(-6,-4){\line(1,0){26}}
\put(-6,4){\line(1,0){26}}
\qbezier (-14,-20) (-13.5, -23.5) (-10,-24) 
\qbezier (-6,-20) (-6.5,-23.5) (-10,-24) 
\qbezier (-14,20) (-13.5, 23.5) (-10,24) 
\qbezier (-6,20) (-6.5,23.5) (-10,24) 
\qbezier (20,-4) (23.5, -3.5) (24,0) 
\qbezier (20,4) (23.5, 3.5) (24,0) 
\end{picture}
%%%%%%%%%%%%%%%%%%%%%%%%%%%%%%
%%%%%%%%%%%%%%%%%%
\qquad
%%%%%%%%%%%%%%%%%%%%%%%%%%%%%%%%%
\end{center}
\caption{Cycle $\mathbf{b}$} 
\label{A}
\end{figure}
}
%%%%%%%%%%%%%%%%%%%%%%%%%%%%%%%%%%%%%%%%%%%%%%%%%%%%%%%%%%%
%%%%%%%%%%%%%%%%%%%%%%%%%%%%%%%%%%%%%%%%%
%%%%%%%%%%%%%%%%%%%%%%%%%%%%%%%%%%%%%%%%%%%%%%%%%%%%%%%%%%%
%%%%%%%%%%%%%%%%%%%%%%%%%%%%%%%%%%%%%%%%%%%%%%%%%%%%%%%%%
(a) {\it Case where $0\le \re A^{1/2} <1$}:  Write $A^{1/2}=\alpha +i\beta$
with $0\le \alpha <1,$ say, $\beta \ge 0.$ Then the cycle $\mathbf{b}$ may be 
deformed in such a way that $\mathbf{b}$ surrounds anticlockwise the cuts 
$[-\alpha-i\beta, -\alpha+ i\beta] \cup [-1,-\alpha]$, where $-\alpha -i\beta
=-A^{1/2},$ $-\alpha +i\beta = -\overline{A}^{1/2}$ (Figure \ref{A}, (a)). 
The function
$\sqrt{(A-z^2)(1-z^2)}$ (respectively, $\sqrt{(\overline{A}-z^2)(1-z^2)}$\,)
may be treated on the plane with the cuts 
$ [-1,-\alpha]\cup [-\alpha, -\alpha-i\beta]$
(respectively, $[-1,-\alpha] \cup [-\alpha,-\alpha+i\beta ]$). We have
$$
I_{\mathbf{b}}(A) -I_{\mathbf{b}}(\overline{A}) = \int_{\mathbf{b}}
\Bigl( \sqrt{\frac{A-z^2}{1-z^2}} -\sqrt{\frac{\overline{A}-z^2}{1-z^2}}\,\Bigr)
dz =(A-\overline{A})I_{\mathbf{b}}(A,\overline{A}) =0,
$$
where
$$
I_{\mathbf{b}}(A,\overline{A}) = \int_{\mathbf{b}} \frac{dz}{\sqrt{1-z^2}
(\sqrt{A-z^2}+\sqrt{\overline{A}-z^2}) }.
$$
To show $A\in \mathbb{R}$, suppose the contrary $A-\overline{A} \not=0.$
Dividing $\mathbf{b}$ into five parts, we have
$$
I_{\mathbf{b}}(A,\overline{A}) = I_0^{\beta} + \tilde{I}^0_{\beta} + H_{-\alpha}
^{-1} +\tilde{J}^{-\beta}_0 + J^0_{-\beta},
$$
in which
\begin{align*}
 I^{\beta}_0 &= \int^{\beta}_0 \frac{i dt}{\sqrt{1-(-\alpha+it)^2}
(\sqrt{A-(-\alpha+it)^2}+\sqrt{\overline{A}-(-\alpha+it)^2}) },
\\
 \tilde{I}^0_{\beta} &= \int^{0}_{\beta} \frac{i dt}{\sqrt{1-(-\alpha+it)^2}
(\sqrt{A-(-\alpha+it)^2}-\sqrt{\overline{A}-(-\alpha+it)^2}) },
\\
 H^{-1}_{-\alpha} &= \int^{-1}_{-\alpha} \frac{2 dt}{\sqrt{1-t^2}
(\sqrt{A-t^2}-\sqrt{\overline{A}-t^2}) },
\\
 \tilde{J}^{-\beta}_0 &= \int^{-\beta}_0 \frac{i dt}{-\sqrt{1-(-\alpha+it)^2}
(\sqrt{A-(-\alpha+it)^2}-\sqrt{\overline{A}-(-\alpha+it)^2}) },
\\
 J^0_{-\beta} &= \int^{0}_{-\beta} \frac{i dt}{-\sqrt{1-(-\alpha+it)^2}
(-\sqrt{A-(-\alpha+it)^2}-\sqrt{\overline{A}-(-\alpha+it)^2}) }.
\end{align*}
Then
$$
( I_0^{\beta} + \tilde{I}^0_{\beta} ) +(\tilde{J}^{-\beta}_0 + J^0_{-\beta})
= \frac{2i}{A-\overline{A}} \int^{\beta}_0 \biggl(\sqrt{\frac{A-(\alpha +it)^2}
{1-(\alpha+it)^2} } 
- \overline {\sqrt{\frac{A-(\alpha +it)^2}{1-(\alpha+it)^2} } } \biggr)  dt
 \in i \mathbb{R}
$$
(for the branch of $\sqrt{(A-z^2)/(1-z^2)}$ see Section \ref{sc5}).
The remaining integral $H^{-1}_{-\alpha}$ is
\begin{align*}
-\frac 12 H^{-1}_{-\alpha} &=
\frac{i}{A-\overline{A}} \int^1_{\alpha} \frac{\sqrt{t^2-(\alpha+i\beta)^2 } 
+ \sqrt{t^2-(\alpha-i\beta)^2 } }
{\sqrt{1-t^2 }}  dt
\\
&=\frac{2i}{A-\overline{A}} \int^1_{\alpha} \frac{\re \sqrt{t^2-\alpha^2
+\beta^2 -2i\alpha\beta } } {\sqrt{1-t^2} } dt 
\\
&=\frac{\sqrt{2}i}{A-\overline{A}} \int^1_{\alpha} \frac{\sqrt{t^2-\alpha^2
+\beta^2 +\sqrt{(t^2-\alpha^2+\beta^2)^2+4\alpha^2\beta^2}}}{\sqrt{1-t^2}} dt 
\in \mathbb{R} \setminus\{0\}.
\end{align*}
Hence $I_{\mathbf{b}}(A,\overline{A}) \not= 0,$ yielding the contradiction
$A-\overline{A} =0.$ In this case we have $A\in \mathbb{R}.$
\par 
(b) {\it Case where $ \re A^{1/2} = 1$}:  
Write $A^{1/2}=1+i \beta$, say $\beta \ge 0$ (cf. Figure \ref{A}, (b)). Then
%%%%%%%%%%%%%%%%%%%%%%%%%%%%%%%%%%%%%%
\begin{align*}
I_{\mathbf{b}}(A) &= \int_{\mathbf{b}} \sqrt{\frac{A-z^2}{1-z^2} } dz
=2i \int^0_{-\beta}\sqrt{ \frac{(-1-i \beta)^2-(-1+it)^2}{1-(-1+it)^2}}\,\, dt
\\
=&2i \int^{\beta}_0 \sqrt{ \frac{t^2-\beta^2 -2(t-\beta)i}{t^2-2ti} }\,\,dt
= -2 \int^{\beta}_0 \sqrt{\frac{\beta-t}{t(4+t^2)}  }
\sqrt{t^2+\beta t + 4 + 2\beta i  } \, dt
\end{align*}
%%%%%%%%%%%%%%%%%%%%%%%
with
$$
\re \sqrt{t^2+\beta t + 4 +2\beta i}=\sqrt{ t^2+\beta t +4 + 
\sqrt{(t^2+\beta t +4)^2 +4\beta^2 } } \ge 2\sqrt{2},
$$
which implies $\re I_{\mathbf{b}}(A) \not=0.$
\par
(c) {\it Case where $ \re A^{1/2} > 1$}:  
It is shown that $\re I_{\mathbf{b}}(A) =0$ implies $A\in \mathbb{R}$ by an
argument similar to that in the case (a).
\par
Thus in every case we have shown $A \in \mathbb{R}$ or $\re I_{\mathbf{b}}(A)
\not=0.$ We may examine $I_{\mathbf{a}}(A)$ and $I_{\mathbf{b}}(A)$ for each 
$A\in \mathbb{R}$ to conclude that $\re I_{\mathbf{a}}(A)=\re I_{\mathbf{b}}
(A)=0$ if and only if $A=0.$ This completes the proof. 
\end{proof}
%%%%%%%%%%%%%%%%%%%%%%%%%%%
%%%% Corollary A5 %%%%%
\begin{cor}\label{corA.5}
For every $A\in \mathbb{C}$, $(I_{\mathbf{a}}(A), I_{\mathbf{b}}(A))\not=
(0,0).$
\end{cor}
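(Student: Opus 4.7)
The plan is to argue by contradiction and reduce the claim to Lemma \ref{lemA.4}, which has already done the substantive work. Suppose that there exists $A \in \mathbb{C}$ with
\begin{equation*}
I_{\mathbf{a}}(A) = I_{\mathbf{b}}(A) = 0.
\end{equation*}
Then in particular $\re I_{\mathbf{a}}(A) = \re I_{\mathbf{b}}(A) = 0$, which means that $A$ is a solution of the Boutroux equations $(\mathrm{BE})_{\phi}$ with $\phi = 0$.

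By Lemma \ref{lemA.4}, the only such solution is $A = 0$. It then remains to rule out $A = 0$, and for this I would appeal to the direct computation already recorded just before the statement of Lemma \ref{lemA.4}, namely $I_{\mathbf{a}}(0) = 0$ and $I_{\mathbf{b}}(0) = 2i$. Since $I_{\mathbf{b}}(0) = 2i \neq 0$, the assumption $I_{\mathbf{b}}(A) = 0$ cannot hold at $A = 0$, giving the required contradiction.

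Strictly speaking, the evaluation $I_{\mathbf{b}}(0) = 2i$ should be checked with care, since at $A = 0$ the branch points $\pm A^{1/2}$ coalesce with $0$ and the cuts $[-1,-A^{1/2}]$, $[A^{1/2},1]$ merge into $[-1,1]$; the cycle $\mathbf{b}$ then degenerates to a small loop around $z = -1$. Using the branch convention fixed in Section \ref{sc5} (so that $\sqrt{(A-z^2)/(1-z^2)} \to 1$ as $z \to \infty$ on $\Pi_+^*$), one has $\sqrt{-z^2/(1-z^2)} = iz/\sqrt{1-z^2}$ on the relevant sheet, whose antiderivative is $-i\sqrt{1-z^2}$; the residue-type contribution at $z = -1$ then gives $I_{\mathbf{b}}(0) = 2i$, consistent with the value used in the proof of Lemma \ref{lemA.4}. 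This is the only computational point in the argument and is the closest thing to an ``obstacle''; once it is in place, the corollary follows immediately from Lemma \ref{lemA.4}.
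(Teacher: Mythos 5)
Your proposal is correct and is exactly the argument the paper intends: the corollary is stated without proof immediately after Lemma \ref{lemA.4}, and the implicit reasoning is that $(I_{\mathbf{a}}(A),I_{\mathbf{b}}(A))=(0,0)$ forces $A$ to solve $(\mathrm{BE})_{\phi=0}$, hence $A=0$ by the lemma, contradicting $I_{\mathbf{b}}(0)=2i\neq 0$ (a value the paper records just before the lemma and which also follows from the expansion $I_{\mathbf{b}}(A)=\tfrac{i}{2}(4+A\log A+O(A))$ in Lemma \ref{lemA.9}). Your extra care in checking the degenerate evaluation of $I_{\mathbf{b}}(0)$ is sound and consistent with the paper's computation $I_{\mathbf{b}}(A)=2i\int_{A^{1/2}}^{1}\sqrt{(z^2-A)/(1-z^2)}\,dz$.
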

%%%%%%%%%%%%%%%%%%%%%%%%%%%%%
%%%% Corollary A6 %%%%%
\begin{cor}\label{corA.6}
If $\re I_{\mathbf{b}}(A)=0$, then $A=0.$
\end{cor}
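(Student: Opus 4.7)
The plan is to strip down the proof of Lemma~\ref{lemA.4} to reuse only those parts that rely on the single equation $\re I_{\mathbf b}(A)=0$, and then complete the argument with a short direct analysis on the real axis. The corollary is a one-sided strengthening of Lemma~\ref{lemA.4}, and much of its proof is already contained in the three-case dichotomy developed there; only the residual real-axis case requires supplementary bookkeeping.

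First I would repeat the conjugation step: assuming $\re I_{\mathbf b}(A) = 0$, the integral $I_{\mathbf b}(A)$ is purely imaginary, and so
\[
I_{\mathbf b}(A) \;=\; -\overline{I_{\mathbf b}(A)} \;=\; -I_{\overline{\mathbf b}}(\overline A) \;=\; I_{\mathbf b}(\overline A),
\]
which yields $(A-\overline A)\,I_{\mathbf b}(A,\overline A) = 0$ with $I_{\mathbf b}(A,\overline A)$ the auxiliary integral introduced in the proof of Lemma~\ref{lemA.4}. Now the key observation is that cases~(a), (b), (c) of that proof — covering $0\le\re A^{1/2}<1$, $\re A^{1/2}=1$, and $\re A^{1/2}>1$ respectively — use \emph{only} the $\mathbf b$-equation and never invoke $\re I_{\mathbf a}(A) = 0$. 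Cases~(a), (c) exhibit the real nonvanishing piece $H^{-1}_{-\alpha}$ of $I_{\mathbf b}(A,\overline A)$ and thereby force $A = \overline A$, while case~(b) shows $\re I_{\mathbf b}(A) \ne 0$ directly by parametrising the vertical cut. In every instance the hypothesis forces $A \in \mathbb R$.

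Second, I would eliminate nonzero real $A$ by a direct period calculation: parametrise a representative of the cycle $\mathbf b$ — which now encloses a slanted, real, or degenerate cut depending on whether $A < 0$, $A \in (0,1)\cup(1,\infty)$, or $A = 0$ — and evaluate $I_{\mathbf b}(A)$ as an explicit elliptic-type integral. A short sign-estimate modelled on the $H^{-1}_{-\alpha}$ computation of case~(a) of Lemma~\ref{lemA.4} then shows that $\re I_{\mathbf b}(A)$ is nonzero for any such $A$.

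The main obstacle is the second step: the positions of the branch points $\pm A^{1/2}$ migrate between the real and imaginary axes as $A$ crosses $0$, so the cycle $\mathbf b$ must be deformed consistently across these degenerate configurations, and the branch of $\sqrt{(A-z^2)/(1-z^2)}$ must be tracked carefully through the migration in order to identify which piece of the contour produces the nonvanishing real part. By contrast, Step~1 is essentially a citation of Lemma~\ref{lemA.4}, and once the branch bookkeeping in Step~2 is in place the estimate is short.
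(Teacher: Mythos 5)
Your Step 1 is sound and is exactly the route the paper intends: the conjugation identity $I_{\mathbf b}(A)=I_{\mathbf b}(\overline A)$ and the three cases (a), (b), (c) of the proof of Lemma~\ref{lemA.4} use only the hypothesis $\re I_{\mathbf b}(A)=0$, and they yield the dichotomy ``$A\in\mathbb R$ or $\re I_{\mathbf b}(A)\neq0$''. (Note, though, that case (b) kills only $\beta\neq0$; the point $A=1$, where $I_{\mathbf b}(1)=0$, survives into your real-axis step.)

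The genuine gap is in Step 2. The asserted sign-estimate does not exist: for real $A\in(0,1)$ the branch points $\pm A^{1/2}$ are real, the cycle $\mathbf b$ encircles the real cut between $A^{1/2}$ and $1$ (equivalently between $-1$ and $-A^{1/2}$), and on that cut $(A-z^2)/(1-z^2)<0$, so the boundary values of $\sqrt{(A-z^2)/(1-z^2)}$ are $\pm i$ times a real function and
$$
I_{\mathbf b}(A)=2i\int_{A^{1/2}}^{1}\sqrt{\frac{z^2-A}{1-z^2}}\,dz\in i\mathbb R .
$$
Hence $\re I_{\mathbf b}(A)=0$ identically on $(0,1)$ (and likewise on $(1,\infty)$, where the cut $[1,A^{1/2}]$ is again real with a negative radicand). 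No computation modelled on $H^{-1}_{-\alpha}$ can produce a nonvanishing real part there, because there is none. This is why the paper's own closing sentence in the proof of Lemma~\ref{lemA.4} examines \emph{both} $I_{\mathbf a}(A)$ and $I_{\mathbf b}(A)$ on the real axis: on $\mathbb R$ the single condition $\re I_{\mathbf b}(A)=0$ does not isolate $A=0$, and one must bring in $\re I_{\mathbf a}(A)=0$ (for $A\in(0,1)$ one has $I_{\mathbf a}(A)>0$, which is what actually excludes these points in Lemma~\ref{lemA.4}), or else the extra information available where the corollary is invoked (in Lemma~\ref{lemA.16} the quantity $I_{\mathbf b}(A_{\phi_0})$ is simultaneously constrained by the rotated Boutroux relation $\re e^{i\phi_0}I_{\mathbf b}(A_{\phi_0})=0$ with $\sin\phi_0\neq0$). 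As written, your Step 2 cannot be completed, so the real-axis case remains open in your argument.
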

%%%%%%%%%%%%%%%%%%%%%%%%%%%%
Since $I_{\mathbf{a}}(1)=4,$ $I_{\mathbf{b}}(1)=0,$ the number $A=1$ solves 
(BE)$_{\phi=\pm\pi/2}$. Observe that $\re iI_{\mathbf{b}}(A)=0$ implies
$I_{\mathbf{b}}(A)= -I_{\mathbf{b}}(\overline{A}).$ Then, similarly we have 
the following.
%%%%%%%%%%%%%%%%%%%%%%%%
%%%%%% Lemma A7 %%%%%
\begin{lem}\label{lemA.7}
If $A$ solves $(\mathrm{BE})_{\phi=\pm \pi/2}$, then $A=1.$
\end{lem}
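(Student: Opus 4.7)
The proof will parallel that of Lemma~\ref{lemA.4} with the roles of the real and imaginary parts swapped. The conditions $(\mathrm{BE})_{\phi=\pm\pi/2}$ read $\im I_{\mathbf{a}}(A) = \im I_{\mathbf{b}}(A) = 0$, i.e., both periods must be real. Mirroring the opening observation of Lemma~\ref{lemA.4}, the reality of $I_{\mathbf{b}}(A)$ combined with the reflection identity $\overline{I_{\mathbf{b}}(A)} = I_{\overline{\mathbf{b}}}(\overline{A}) = -I_{\mathbf{b}}(\overline{A})$ gives the announced relation $I_{\mathbf{b}}(A) + I_{\mathbf{b}}(\overline{A}) = 0$. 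Rationalising the sum of square roots, I would rewrite this as
\begin{equation*}
0 = (A - \overline{A})\,\tilde{I}_{\mathbf{b}}(A, \overline{A}), \qquad \tilde{I}_{\mathbf{b}}(A, \overline{A}) := \int_{\mathbf{b}} \frac{dz}{\sqrt{1 - z^2}\bigl(\sqrt{A - z^2} - \sqrt{\overline{A} - z^2}\bigr)},
\end{equation*}
so either $A \in \mathbb{R}$ or $\tilde{I}_{\mathbf{b}}(A, \overline{A}) = 0$.

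The plan is to exclude the latter possibility by replaying the decomposition argument of Lemma~\ref{lemA.4} in the three subcases $0 \le \re A^{1/2} < 1$, $\re A^{1/2} = 1$, $\re A^{1/2} > 1$. In cases (a) and (c) I would deform $\mathbf{b}$ as in Figure~\ref{A}, split $\tilde{I}_{\mathbf{b}}(A, \overline{A})$ into the four vertical segments through $-\alpha \pm i\beta$ (whose pairwise sums combine into a purely imaginary quantity) plus a horizontal contribution along $[-1, -\alpha]$ (respectively $[-\alpha, -1]$), and then use the identity $\sqrt{t^2 - (\alpha + i\beta)^2} - \sqrt{t^2 - (\alpha - i\beta)^2} = 2i\,\im \sqrt{t^2 - \alpha^2 + \beta^2 - 2i\alpha\beta}$ to show that this horizontal piece is a nonzero purely imaginary number. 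Thus $\tilde{I}_{\mathbf{b}}(A, \overline{A}) \neq 0$, forcing $A \in \mathbb{R}$. Case (b), with $A = (1 + i\beta)^2$ and $\beta \neq 0$, is handled by repeating verbatim the direct contour computation of $I_{\mathbf{b}}(A)$ from the proof of Lemma~\ref{lemA.4}; the resulting integrand $\sqrt{(\beta - t)/(t(4+t^2))}\,\sqrt{t^2 + \beta t + 4 + 2\beta i}$ has an imaginary part of constant sign in $t$, so $\im I_{\mathbf{b}}(A) \neq 0$, contradicting the hypothesis.

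It remains to enumerate real $A$ and verify that $\im I_{\mathbf{a}}(A) = \im I_{\mathbf{b}}(A) = 0$ forces $A = 1$. For $A \in [0, 1)$ the cycle $\mathbf{b}$ encircles the real cut $[-1, -A^{1/2}]$, where $(A - z^2)/(1 - z^2) > 0$, so $I_{\mathbf{b}}(A)$ is purely imaginary and nonzero (e.g.\ $I_{\mathbf{b}}(0) = 2i$), while it vanishes only in the limit $A \to 1$. For $A > 1$ the cuts $[A^{1/2}, 1]$ and $[-1, -A^{1/2}]$ are replaced by $[1, A^{1/2}]$ and $[-A^{1/2}, -1]$, the roles of $\mathbf{a}$ and $\mathbf{b}$ are effectively exchanged, and the analogous argument shows $I_{\mathbf{a}}(A)$ is purely imaginary and nonzero. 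For $A < 0$, one has $A^{1/2} \in i\mathbb{R}_{>0}$ and the cuts leave the real axis, so both $I_{\mathbf{a}}(A)$ and $I_{\mathbf{b}}(A)$ acquire nontrivial imaginary parts that cannot simultaneously vanish. The only real $A$ with both periods real is therefore $A = 1$, at which $(I_{\mathbf{a}}(1), I_{\mathbf{b}}(1)) = (4, 0)$, establishing the lemma.

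The hard part will be the sign tracking in the case-analysis step: unlike the sum of square roots appearing in the proof of Lemma~\ref{lemA.4}, the \emph{difference} $\sqrt{A - z^2} - \sqrt{\overline{A} - z^2}$ in the denominator of $\tilde{I}_{\mathbf{b}}(A, \overline{A})$ may become small near the branch points, and one must fix consistent branches along the deformed cycle and verify that the horizontal contribution lands in the \emph{imaginary} part (rather than the real part as in Lemma~\ref{lemA.4}) of $\tilde{I}_{\mathbf{b}}(A, \overline{A})$ with an unambiguous sign. Once this sign-tracking is controlled, the positivity argument transfers mutatis mutandis.
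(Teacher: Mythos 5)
Your overall strategy is exactly the paper's: the paper proves Lemma \ref{lemA.7} by observing that $\im I_{\mathbf{b}}(A)=0$ forces $I_{\mathbf{b}}(A)=-I_{\mathbf{b}}(\overline{A})$ and then says ``similarly'' to Lemma \ref{lemA.4}, so mirroring that proof with $\tilde{I}_{\mathbf{b}}(A,\overline{A})$ (difference in the denominator) is the intended route. However, as written your key non-cancellation step has a genuine gap: you assert that the four vertical segments combine into a \emph{purely imaginary} quantity and that the horizontal piece is a \emph{nonzero purely imaginary} number, and conclude $\tilde{I}_{\mathbf{b}}(A,\overline{A})\neq 0$. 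Two purely imaginary numbers can cancel, so this conclusion does not follow from your stated claims. When the sum$\leftrightarrow$difference swap is carried through the decomposition correctly, the roles of real and imaginary parts swap for \emph{both} groups of pieces: pairing the two sides of each vertical cut now produces $\frac{2i}{A-\overline{A}}\int_0^\beta\bigl(g(t)+\overline{g(t)}\bigr)\,dt=\frac{2}{\im A}\int_0^\beta\re g(t)\,dt\in\mathbb{R}$ (a sum of conjugates, not the difference appearing in Lemma \ref{lemA.4}), while the horizontal piece is purely imaginary and nonzero because $\im\sqrt{t^2-A}$ has constant sign when $\im A\neq 0$. It is precisely the structure ``real $+$ nonzero imaginary'' that rules out cancellation; your version (``imaginary $+$ nonzero imaginary'') does not, so the vertical-piece bookkeeping must be corrected, not just the horizontal one you flagged.

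There are also slips in the final enumeration over real $A$. For $A\in(0,1)$ the ratio $(A-z^2)/(1-z^2)$ is \emph{negative} on the cut (that is why $I_{\mathbf{b}}(A)$ is purely imaginary and nonzero, consistent with $I_{\mathbf{b}}(0)=2i$), so your parenthetical sign is wrong though the conclusion stands. For real $A>1$ it is not true that $I_{\mathbf{a}}(A)$ is purely imaginary: collapsing $\mathbf{a}$ onto $[-A^{1/2},A^{1/2}]$ gives a nonzero real contribution $2\int_{-1}^{1}\sqrt{(A-z^2)/(1-z^2)}\,dz$; the correct observation is that $I_{\mathbf{b}}(A)=2i\int_{A^{1/2}}^{1}\sqrt{(z^2-A)/(1-z^2)}\,dz$ remains purely imaginary and nonzero, which already contradicts $\im I_{\mathbf{b}}(A)=0$. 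Likewise for $A<0$, $I_{\mathbf{a}}(A)$ is in fact real, so ``both periods acquire nontrivial imaginary parts'' is not accurate; what saves the case is that $\im I_{\mathbf{b}}(A)\neq 0$ (deform the path through $0$ and note the segment $[0,1]$ contributes $2i\int_0^1\sqrt{(t^2+|A|)/(1-t^2)}\,dt$). With the vertical/horizontal bookkeeping repaired and the real-$A$ cases argued via $I_{\mathbf{b}}$ as above, the proof goes through along the paper's lines.
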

%%%%%%%%%%%%%%%%%%%%%%%
%%%% Corollary A8 %%%%%
\begin{cor}\label{corA.8}
If $\re iI_{\mathbf{b}}(A)=0$, then $A=1.$
\end{cor}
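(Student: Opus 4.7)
My plan is to mimic the proof of Lemma~\ref{lemA.4} (which actually proves Corollary~\ref{corA.6}) almost step for step, with only one structural modification. The hypothesis $\re iI_{\mathbf{b}}(A)=0$ is equivalent to $I_{\mathbf{b}}(A)\in\mathbb{R}$, and Schwarz reflection together with orientation reversal gives $\overline{I_{\mathbf{b}}(A)}=-I_{\mathbf{b}}(\overline{A})$, so that
\[
I_{\mathbf{b}}(A)+I_{\mathbf{b}}(\overline{A})=0,
\]
in contrast to the identity $I_{\mathbf{b}}(A)-I_{\mathbf{b}}(\overline{A})=0$ exploited in Lemma~\ref{lemA.4}. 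This is precisely the observation that the author records in the sentence immediately preceding Lemma~\ref{lemA.7}.

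First I would factor
\[
I_{\mathbf{b}}(A)+I_{\mathbf{b}}(\overline{A})=(A-\overline{A})\,\widetilde I_{\mathbf{b}}(A,\overline{A}),\qquad
\widetilde I_{\mathbf{b}}(A,\overline{A})=\int_{\mathbf{b}}\frac{dz}{\sqrt{1-z^{2}}\bigl(\sqrt{A-z^{2}}-\sqrt{\overline{A}-z^{2}}\bigr)},
\]
by rationalising $\sqrt{A-z^{2}}+\sqrt{\overline{A}-z^{2}}=(A-\overline{A})/(\sqrt{A-z^{2}}-\sqrt{\overline{A}-z^{2}})$. Then, exactly as in Lemma~\ref{lemA.4}, I would split into the three cases $0\le\re A^{1/2}<1$, $\re A^{1/2}=1$, $\re A^{1/2}>1$ and deform the cycle $\mathbf{b}$ as in Figure~\ref{A}. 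The sign change in the denominator swaps which of the vertical-segment and horizontal-segment contributions to $\widetilde I_{\mathbf{b}}$ are pure real and which are pure imaginary, but by the same kind of inequality as the bound $\re\sqrt{t^{2}+\beta t+4+2\beta i}\ge 2\sqrt{2}$ used in the Lemma~\ref{lemA.4} computation, one surviving piece retains a definite nonzero value in each case. This forces $\widetilde I_{\mathbf{b}}(A,\overline{A})\ne 0$ unless $A=\overline{A}$, i.e.\ $A\in\mathbb{R}$.

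For real $A$ the identity reduces to $I_{\mathbf{b}}(A)=0$, since $I_{\mathbf{b}}(\overline{A})=I_{\mathbf{b}}(A)$. A direct evaluation on the real cut shows $I_{\mathbf{b}}(A)\in i\mathbb{R}\setminus\{0\}$ for $0\le A<1$ and for $A>1$, because the integrand $\sqrt{(A-z^{2})/(1-z^{2})}$ is pure imaginary and of constant sign on $[-1,-A^{1/2}]$ or $[-A^{1/2},-1]$ respectively; for $A<0$ a parametrisation of the complex cut $[-1,-i|A|^{1/2}]$ together with separation of its real and imaginary parts (or, alternatively, the same type of sign analysis already developed in the proof of Lemma~\ref{lemA.7}) rules out the vanishing $I_{\mathbf{b}}(A)=0$. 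Since $I_{\mathbf{b}}(1)=0$, the unique real solution is $A=1$.

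The main obstacle is the sign-tracking in the three-case analysis of $\widetilde I_{\mathbf{b}}(A,\overline{A})$: after trading the $+$ sign for a $-$ sign in $\sqrt{A-z^{2}}\pm\sqrt{\overline{A}-z^{2}}$ one must verify that in each of the three geometric configurations the surviving piece still produces a definite, nonzero contribution of one fixed sign, with only its real/imaginary character swapped from that of the analogous piece in the proof of Lemma~\ref{lemA.4}. Once this bookkeeping is carried out, the rest is routine, justifying the word ``similarly'' used by the author.
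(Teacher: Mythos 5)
Your proposal is correct and is essentially the paper's own (largely implicit) argument: the paper proves Corollary \ref{corA.8} exactly by the observation $\re iI_{\mathbf{b}}(A)=0\Rightarrow I_{\mathbf{b}}(A)=-I_{\mathbf{b}}(\overline{A})$ followed by "similarly" to Lemma \ref{lemA.4}, i.e.\ the same five-piece contour analysis with $\sqrt{A-z^2}-\sqrt{\overline{A}-z^2}$ in the denominator and then a check over real $A$. The sign-bookkeeping you defer does go through: the vertical pieces become real while the horizontal piece $H^{-1}_{-\alpha}$ becomes pure imaginary and nonzero, since $\im\sqrt{t^2-\alpha^2+\beta^2-2i\alpha\beta}$ has a fixed nonzero sign for $\alpha\beta\neq 0$ (and in case $\re A^{1/2}=1$ one uses $\im\sqrt{t^2+\beta t+4+2\beta i}\neq0$), which is the exact analogue of the paper's bound $\re\sqrt{t^2+\beta t+4+2\beta i}\ge 2\sqrt{2}$.
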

%%%%%%%%%%%%%%%%%%%%%%%%
%%%%%% Lemma A9 %%%%%
\begin{lem}\label{lemA.9}
If $|\phi|$ is sufficiently small, equations $(\mathrm{BE})_{\phi}$ admit 
a solution
$A_{\phi}=x(\phi)+iy(\phi)$ such that
$$
x(\phi)= -\frac{4\phi^2}{\log \phi} (1+o(1)), \quad 
y(\phi)= -\frac{4\phi}{\log \phi} (1+o(1)), 
$$
which is unique around $A=0$.
\end{lem}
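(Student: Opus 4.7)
The plan is to expand the Boutroux integrals $I_{\mathbf{a}}(A)$, $I_{\mathbf{b}}(A)$ near $A=0$, reduce $(\mathrm{BE})_\phi$ to a transcendental system in the real coordinates $(x,y)=(\re A,\im A)$, and solve it asymptotically via the analytic implicit function theorem followed by a single bootstrap on a scalar transcendental equation. By the symmetry $A_{-\phi}=\overline{A_\phi}$ (property (ii)) it suffices to treat $0<\phi\ll 1$.

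First I would derive the local expansions. Using $\partial_A I_{\mathbf{a},\mathbf{b}}(A)=\tfrac12\omega_{\mathbf{a},\mathbf{b}}(A)$, the substitution $z=A^{1/2}s$ on $\mathbf{a}$ gives $\omega_{\mathbf{a}}(A)=2\pi+O(A)$, while splitting the $\mathbf{b}$-integral between a vanishing lower portion near $A^{1/2}$ and a regular part yields $\omega_{\mathbf{b}}(A)=i\log A+c+O(A\log A)$ for an explicit constant $c$. Integrating from $A=0$ (using $I_\mathbf{a}(0)=0$, $I_\mathbf{b}(0)=2i$, already noted in the proof of Lemma \ref{lemA.4}) gives
\[
I_{\mathbf{a}}(A)=\pi A+O(A^2), \qquad I_{\mathbf{b}}(A)=2i+\tfrac{i}{2}A\log A+c_0 A+O(A^2\log A).
\]
Here the first remainder is analytic in $A$, and in the second the entire non-analytic piece is carried by the $A\log A$ term; the branch of $\log A$ is chosen with $\arg A\in(0,\pi)$, compatible with $\im A_\phi\ge 0$ for $\phi\ge 0$ (property (iii)).

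With $A=x+iy$, the first Boutroux equation reads
\[
F_1(x,y,\phi)=\pi(x\cos\phi-y\sin\phi)+O(|A|^2)=0,
\]
and is real-analytic with $\partial_x F_1|_{(0,0,0)}=\pi\ne 0$, so the analytic implicit function theorem gives a unique solution $x=y\tan\phi+O(y^2)$ in a neighbourhood of the origin in $(y,\phi)$-space. Substituting into the second equation $F_2(x,y,\phi)=\re(e^{i\phi}I_{\mathbf{b}}(A))=0$ and using $|A|=y(1+O(\phi^2)+O(y))$, $\arg A=\tfrac{\pi}{2}-\phi+O(\phi^3)$ (valid for $y>0$, $0<\phi\ll 1$), a direct computation of $\re(\tfrac{i}{2}e^{i\phi}A\log A)$ reduces $F_2=0$ to the scalar transcendental equation
\[
y\log y=-4\phi\,(1+o(1)) \qquad (\phi\to 0^+).
\]
Inverting by iteration: with the ansatz $y=-4\phi/\log\phi\cdot(1+\eta)$ one computes $\log y=\log\phi-\log(-\log\phi)+\log(1+\eta)+O(1)=\log\phi\,(1+o(1))$, which forces $\eta=o(1)$. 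This yields $y(\phi)=-4\phi/\log\phi\,(1+o(1))$ and then $x(\phi)=y\tan\phi\,(1+o(1))=-4\phi^2/\log\phi\,(1+o(1))$, as claimed. Uniqueness in a small neighbourhood of $A=0$ is furnished by the strict monotonicity of $y\mapsto y\log y$ on $(0,e^{-1})$ combined with the analytic IFT applied to $F_1=0$.

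The main obstacle is the non-analyticity of $I_{\mathbf{b}}$ at $A=0$, which prevents a direct two-variable application of the implicit function theorem to $(F_1,F_2)$: $F_2$ fails to be differentiable at the origin. The remedy embedded in the plan is to exploit the asymmetry between the two equations---only $F_2$ carries the $A\log A$ singularity---by eliminating $x$ from the analytic equation $F_1=0$ first and confronting the transcendental behaviour only in the resulting scalar equation. A secondary technical point is keeping track of the branch of $\log A$ so that the derived $(x(\phi),y(\phi))$ sits in the quadrant dictated by property (iii); this is what fixes the sign in the leading-order asymptotic.
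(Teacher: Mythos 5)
Your proposal is correct and follows essentially the same route as the paper: the paper's proof likewise expands $I_{\mathbf{a}}(A)=\pi A+O(A^2)$ and $I_{\mathbf{b}}(A)=\tfrac{i}{2}(4+A\log A+O(A))$ near $A=0$ (by direct evaluation of the integrals rather than via $\partial_A I=\tfrac12\omega$) and then reads off the asymptotics from $\re(e^{i\phi}I_{\mathbf{a}})=\re(e^{i\phi}I_{\mathbf{b}})=0$ with $A_\phi=x(\phi)+iy(\phi)$. Your elimination of $x$ by the analytic implicit function theorem and the bootstrap inversion of $y\log y=-4\phi(1+o(1))$ simply make explicit the step the paper summarises as ``the conclusion follows.''
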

%%%%%%%%%%%%%%%%%%%%%%%
\begin{proof}
Suppose that $|A|$ is small and $\re A^{1/2} \ge 0.$ Then
\begin{align*}
I_{\mathbf{a}}(A) &= \int_{\mathbf{a}} \sqrt{\frac{A-z^2}{1-z^2} }dz
= 2\int_{-A^{1/2}}^{A^{1/2}} \sqrt{\frac{A-z^2}{1-z^2} }dz
= 2A\int_{-1}^{1} \sqrt{\frac{1-t^2}{1-At^2} }dt =\pi A +O(A^2),
\\
I_{\mathbf{b}}(A) &= \int_{\mathbf{b}} \sqrt{\frac{A-z^2}{1-z^2} }dz
= 2i \int_{A^{1/2}}^{1} \sqrt{\frac{z^2-A}{1-z^2} }dz
\\
&= 2i \biggl( \int_{A^{1/2}}^{1} \frac{zdz}{\sqrt{1-z^2}}
-A \int_{A^{1/2}}^{1} \frac{dz}{\sqrt{1-z^2}(z+\sqrt{z^2-A})} \biggr)
\\
&=\frac i2(4 +A\log A +O(A)).
\end{align*}
From $\re e^{i\phi}I_{\mathbf{a}}(A_{\phi})= \re e^{i\phi}I_{\mathbf{b}}
(A_{\phi})=0,$ that is, 
\begin{equation*}
\re((A_{\phi}+O(A_{\phi}^2))(\cos\phi+i\sin\phi) )
=\re(i(4+A_{\phi}\log A_{\phi}+O(A_{\phi}))(\cos\phi+i\sin\phi) ) 
=0
\end{equation*}
with $A_{\phi}=x(\phi)+iy(\phi),$ the conclusion follows.
\end{proof}
%%%%%%%%%%%%
Similarly we have the following.
%%%%%%%%%%%%%%%%%%%%
%%%% Lemma A.10 %%%%%
\begin{lem}\label{lemA.10}
If $|\phi \mp \pi/2|$ is sufficiently small, equations $(\mathrm{BE})_{\phi}$ 
admit a solution $A_{\phi}=x(\phi)+i y(\phi)$ such that
$$
x(\phi)= 1+ \frac{4\tilde{\phi}_{\pm}^2}{\log\tilde{\phi}_{\pm}}(1+o(1)),
\quad
y(\phi)= \frac{4\tilde{\phi}_{\pm}}{\log\tilde{\phi}_{\pm}}(1+o(1))
$$
with $\phi=\pm \pi/2 +\tilde{\phi}_{\pm}.$
\end{lem}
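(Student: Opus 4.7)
The plan is to follow the template of Lemma \ref{lemA.9}, expanding $A$ around $1$ (instead of $0$) and $\phi$ around $\pm\pi/2$ (instead of $0$). As $A\to 1$ the cycle $\mathbf{b}$ surrounding $[-1,-A^{1/2}]$ degenerates, so that $I_{\mathbf{b}}(1)=0$ while $I_{\mathbf{a}}(1) = 4$; the task is to perturb around the known solution $A=1$ of $(\mathrm{BE})_{\pm\pi/2}$ established in Lemma \ref{lemA.7}.

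The first step is to derive leading-order expansions
\begin{align*}
I_{\mathbf{a}}(A) &= 4 + C_1(A-1)\log(A-1) + O(A-1),\\
I_{\mathbf{b}}(A) &= iC_2(A-1) + O((A-1)^2),
\end{align*}
as $A\to 1$, with nonzero real constants $C_1,C_2$. For $I_{\mathbf{a}}$ the substitution $z = A^{1/2}t$ gives $2A\int_{-1}^1\sqrt{(1-t^2)/(1-At^2)}\,dt$; the logarithmic singularity arises from the endpoint behaviour at $t=\pm 1$ when $A\to 1$ and can be isolated by rewriting
$$\sqrt{(1-t^2)/(1-At^2)} - 1 = (A-1)t^2/\bigl(\sqrt{(1-t^2)(1-At^2)}+(1-At^2)\bigr).$$
For $I_{\mathbf{b}}$ the shrinking cut is parametrised by $z = -A^{1/2}-(1-A^{1/2})\sigma$ with $0\le\sigma\le 1$, yielding $I_{\mathbf{b}}(A) \sim 2i(1-A^{1/2})\int_0^1\sqrt{\sigma/(1-\sigma)}\,d\sigma$ to leading order, which fixes the linear term.

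Writing $\phi = \pm\pi/2 + \tilde\phi_\pm$, so that $e^{i\phi} = \pm i - \tilde\phi_\pm + O(\tilde\phi_\pm^2)$, and $A_\phi = 1+\tilde A$ with $\tilde A = (x(\phi)-1)+iy(\phi)$, substitution into $\re e^{i\phi}I_{\mathbf{a}}(A_\phi) = \re e^{i\phi}I_{\mathbf{b}}(A_\phi) = 0$ produces the leading-order relations
\begin{align*}
C_1\,\im(\tilde A\log\tilde A) &= -4\tilde\phi_\pm + o(\tilde\phi_\pm),\\
\re\tilde A - \tilde\phi_\pm\im\tilde A &= o(|\tilde A|).
\end{align*}
The second relation forces $\tilde A \approx iy(\phi)$, so $\log\tilde A \approx \log|y(\phi)| + i\pi/2$ and $\im(\tilde A\log\tilde A) \approx y(\phi)\log|y(\phi)|$; the first relation becomes the scalar transcendental equation $C_1 y(\phi)\log|y(\phi)| = -4\tilde\phi_\pm(1+o(1))$, whose asymptotic inversion via $\log|y(\phi)| \sim \log|\tilde\phi_\pm|$ yields $y(\phi) = 4\tilde\phi_\pm/\log\tilde\phi_\pm(1+o(1))$ (once the correct value of $C_1$ is verified). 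Substituting back gives $x(\phi) - 1 = \tilde\phi_\pm y(\phi)(1+o(1)) = 4\tilde\phi_\pm^2/\log\tilde\phi_\pm(1+o(1))$, matching the stated asymptotics. Uniqueness near $A=1$ follows from Lemma \ref{lemA.2} (conformality of $\mathcal{I}$), which guarantees non-degeneracy of the Jacobian for the Boutroux system.

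The main obstacle will be pinning down the precise coefficient $C_1$ (with the correct sign) of the logarithmic term in $I_{\mathbf{a}}(A)$ at $A=1$ and carefully tracking the branch of $\log(A-1)$ for complex $A$; these determine the normalisation of the final asymptotic formulas. Once the expansions of $I_{\mathbf{a}}$ and $I_{\mathbf{b}}$ are established, the remaining inversion of $y\log y \sim c\tilde\phi_\pm$ is routine and parallels Lemma \ref{lemA.9}.
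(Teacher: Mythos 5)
Your proposal is correct and follows essentially the paper's own route: the paper proves Lemma \ref{lemA.9} by expanding $I_{\mathbf{a}}(A)$ and $I_{\mathbf{b}}(A)$ at the degenerate value $A=0$ and then disposes of Lemma \ref{lemA.10} with ``similarly'', which is precisely your expansion at $A=1$ (vanishing cycle $\mathbf{b}$ analytic with a simple zero, dual period $I_{\mathbf{a}}$ carrying the $(A-1)\log(A-1)$ term), followed by substitution of $\phi=\pm\pi/2+\tilde\phi_{\pm}$ and inversion of $y\log|y|\sim c\,\tilde\phi_{\pm}$. Your expansions do come out as needed ($I_{\mathbf{a}}(A)=4+(1-A)\log(1-A)+O(1-A)$, $I_{\mathbf{b}}(A)=\tfrac{i\pi}{2}(1-A)+O((1-A)^2)$, so the normalising constant is $4=I_{\mathbf{a}}(1)/|C_1|$), leaving only the branch/sign bookkeeping you flag — note in passing that $e^{i(-\pi/2+\tilde\phi_-)}=-i+\tilde\phi_-+O(\tilde\phi_-^2)$, not $-i-\tilde\phi_-$, and that uniqueness (not claimed in the statement) cannot be read off Lemma \ref{lemA.2} at $A=1$ since $I_{\mathbf{b}}(1)=0$ there.
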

%%%%%%%%%%%%%%%%%%%%%%%%%%%
%%%% Lemma A.11 %%%%%%
\begin{lem}\label{lemA.11}
Suppose that $0<|\phi_0|<\pi/2$ and that $A(\phi_0)$ solves 
$(\mathrm{BE})_{\phi=\phi_0}$. 
Then there exists a curve $\Gamma(\phi_0)$ given by $A={A}(\phi_0,\phi)$ 
for $|\phi|\le \pi/2$, where ${A}(\phi_0,\phi)$ has the properties $:$
\par
$(\mathrm{i})$ ${A}(\phi_0,\phi_0) = A(\phi_0)$, $A(\phi_0,0)=0,$
$A(\phi_0, \pm \pi/2)=1;$ 
\par
$(\mathrm{ii})$ ${A}(\phi_0,\phi)$ is continuous in $\phi$ for $|\phi|\le
\pi/2$ and smooth for $0<|\phi|<\pi/2;$
\par
$(\mathrm{iii})$ ${A}(\phi_0,\phi)$ solves $(\mathrm{BE})_{\phi}$ for 
$|\phi|\le \pi/2$.
\end{lem}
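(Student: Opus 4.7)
The plan is to exploit the conformality of the ratio map $\mathcal{I}$ (Lemma \ref{lemA.2}) together with the uniqueness at the endpoints $\phi = 0, \pm\pi/2$ (Lemmas \ref{lemA.4}, \ref{lemA.7}) to extract a smooth arc $\Gamma(\phi_0) \subset \mathcal{R}(\mathrm{BE})$ through $A(\phi_0)$ whose endpoints are $A=0$ and $A=1$, and then parametrise it by $\phi$. First I would let $\Gamma(\phi_0)$ be the connected component of $\mathcal{R}(\mathrm{BE}) = \mathcal{I}^{-1}(\mathbb{R})$ that contains $A(\phi_0)$. By Lemma \ref{lemA.2}, $\mathcal{I}$ is a local biholomorphism off $A=1$ (the only value where $I_{\mathbf{b}}$ vanishes and at which $\mathcal{I}$ has a pole), so $\Gamma(\phi_0) \setminus \{0,1\}$ is a real-analytic $1$-submanifold of $\mathbb{C}$; by Lemma \ref{lemA.3} it is bounded.

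Next I would show $\Gamma(\phi_0)$ is a simple arc rather than a closed Jordan curve. For $A \in \Gamma(\phi_0)$, Lemma \ref{lemA.1} identifies the associated angle as $\phi(A) \equiv \pi/2 - \arg I_{\mathbf{b}}(A) \pmod \pi$, continuous on $\Gamma(\phi_0)\setminus\{1\}$. If $\Gamma(\phi_0)$ were a closed loop, continuity of $\phi(A)$ around it would produce at least two distinct points on $\Gamma(\phi_0)$ with $\phi(A) \equiv 0$, contradicting Lemma \ref{lemA.4}, which forces $A=0$ to be the unique Boutroux solution at $\phi=0$.

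I would then identify the endpoints of the arc. Since $\mathcal{I}|_{\Gamma(\phi_0)}$ is a real-analytic injection into $\mathbb{R}$ and $\Gamma(\phi_0)$ is bounded, the only admissible limit points of the arc are $A \to 0$ (where $\mathcal{I}(A) \to 0$) and $A \to 1$ (where $\mathcal{I}$ has a pole). The local branches furnished by Lemmas \ref{lemA.9} and \ref{lemA.10} give real-analytic parametrisations in $\phi$ near $A=0$ and near $A=1$ respectively, and these attach to $\Gamma(\phi_0)$ by the uniqueness statements in Lemmas \ref{lemA.4} and \ref{lemA.7}. This yields the endpoint conditions $A(\phi_0,0)=0$ and $A(\phi_0,\pm\pi/2)=1$, while $A(\phi_0,\phi_0)=A(\phi_0)$ holds by construction.

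The main obstacle is proving that $\phi$ is a monotone, hence globally injective, function along $\Gamma(\phi_0)$, so that the inverse parametrisation $\phi \mapsto A(\phi_0,\phi)$ is well-defined on all of $[-\pi/2,\pi/2]$. Parametrising $\Gamma(\phi_0)$ by $\rho = \mathcal{I}(A) \in \mathbb{R}$, one has $dA/d\rho = 1/\mathcal{I}'(A) = iI_{\mathbf{b}}(A)^2/(2\pi)$ and $I'_{\mathbf{b}}(A) = \tfrac{1}{2}\omega_{\mathbf{b}}(A)$, which yield
\[
\frac{d\phi}{d\rho} = -\frac{d}{d\rho}\arg I_{\mathbf{b}}(A(\rho)) = -\frac{1}{4\pi}\,\re\bigl(\omega_{\mathbf{b}}(A)\,I_{\mathbf{b}}(A)\bigr).
\]
Establishing that this quantity has constant sign along $\Gamma(\phi_0)$ — combining the Legendre-type relation $I_{\mathbf{a}}\omega_{\mathbf{b}} - I_{\mathbf{b}}\omega_{\mathbf{a}} = -4\pi i$ with the reality of $\rho$ and the sign data on $\im A^{1/2}$ coming from Lemmas \ref{lemA.9}--\ref{lemA.10} — is the delicate analytic step. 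Once monotonicity is in hand, the interior parametrisation and the endpoint charts patch together to give the continuous curve $A(\phi_0,\phi)$ on $|\phi|\le\pi/2$, smooth for $0<|\phi|<\pi/2$, as required.
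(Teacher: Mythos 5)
Your proposal leaves the decisive step unproven. The entire content of the lemma is that the solution locus can be \emph{parametrised by} $\phi$ over all of $[-\pi/2,\pi/2]$, and this is exactly the monotonicity you defer as ``the delicate analytic step'': you reduce it to showing that $\re\bigl(\omega_{\mathbf{b}}(A)I_{\mathbf{b}}(A)\bigr)$ has constant sign along the curve, and then stop. The paper closes this gap by a different and much cleaner computation: writing the Boutroux system as $u(A)-v(A)t=U(A)-V(A)t=0$ with $t=\tan\phi$ and differentiating with respect to $(\re A,\im A)$ for fixed $t$, the Cauchy--Riemann relations give the Jacobian
$J=-\tfrac14(1+t^2)\,|\omega_{\mathbf{a}}(A)|^2\,\im\bigl(\omega_{\mathbf{b}}(A)/\omega_{\mathbf{a}}(A)\bigr)<0$
for $A\neq 0,1$ (formula \eqref{A.2}), i.e.\ nondegeneracy follows from the classical positivity of the period ratio rather than from any sign analysis of $\re(\omega_{\mathbf{b}}I_{\mathbf{b}})$. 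With the implicit function theorem in $\phi$ available at every $A\neq 0,1$, a maximal-extension argument (using the boundedness from Lemma \ref{lemA.3} and the endpoint uniqueness Lemmas \ref{lemA.4} and \ref{lemA.7}, plus the local branches of Lemmas \ref{lemA.9} and \ref{lemA.10}) produces the curve directly. If you want to keep your $\rho=\mathcal{I}(A)$ parametrisation, you would still have to convert \eqref{A.2} into your sign statement; as written, nothing is proved.

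There is also a structural error in your topological analysis. The connected component of $\mathcal{I}^{-1}(\mathbb{R})$ through $A(\phi_0)$ is \emph{not} a simple arc with endpoints $0$ and $1$: by Proposition \ref{propA.15} the full solution locus is a closed Jordan curve through both $0$ and $1$ (upper half for $0<\phi<\pi/2$, lower half for $-\pi/2<\phi<0$), and since $\mathcal{I}$ is conformal at $A=0$ the locus passes straight through $A=0$ rather than terminating there. Your argument excluding the closed-loop case fails for two reasons: $\phi(A)$ is valued in $\mathbb{R}/\pi\mathbb{Z}$ and a degree-one circle map attains the value $0$ exactly once (so no second point with $\phi\equiv 0$ is forced, and indeed $A=0$ is the unique such point on the actual curve, consistent with Lemma \ref{lemA.4}); and $\phi(A)$ is not even continuous at $A=1$, which lies on the loop. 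The correct statement is only that the sub-arc corresponding to $0\le\phi\le\pi/2$ (resp.\ $-\pi/2\le\phi\le 0$) runs from $0$ to $1$, which is what the paper's one-sided extension argument establishes.
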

%%%%%%%%%%%%%%%%%%%%%%%%
\begin{proof}
Set 
$$
A=x+iy, \quad I_{\mathbf{a}}(A)=u(A)+iv(A), \quad I_{\mathbf{b}}(A)=U(A)+iV(A) 
$$
with $x,$ $y,$ $u(A),$ $v(A),$ $U(A),$ $V(A) \in \mathbb{R}.$ Then $A$ solves
(BE)$_{\phi}$ if and only if 
$$
\re e^{i\phi}I_{\mathbf{a}}(A) =u(A)\cos\phi -v(A)\sin\phi=
\re e^{i\phi}I_{\mathbf{b}}(A) =U(A)\cos\phi -V(A)\sin\phi=0,
$$
that is, 
%%%%%%% (A.1) %%%%%%%
\begin{equation}\label{A.1}
u(A)-v(A) t=  U(A)-V(A)t =0  \quad \text{with  $t=\tan \phi.$}
\end{equation}
%%%%%%%%%%%%%%%%%%%%%%%%
\par
By the Cauchy-Riemann relations the Jacobian for \eqref{A.1} is
%%%%%% (A.2) %%%%%%
\begin{align}\label{A.2}
J(v, & V,t; A)=\det \begin{pmatrix} v_y -tv_x & -v_x-tv_y \\
                               V_y -tV_x & -V_x-tV_y \\
\end{pmatrix}=(1+t^2)(v_xV_y -v_yV_x)
\\
\notag
&=-\frac i8 (1+t^2)(\omega_{\mathbf{a}}(A)\overline{\omega_{\mathbf{b}}(A)} -
\overline{\omega_{\mathbf{a}}(A)} \omega_{\mathbf{b}}(A)) 
=-\frac 14 (1+t^2) \im
(\overline{\omega_{\mathbf{a}}(A)} \omega_{\mathbf{b}}(A)) 
\\
\notag
&=-\frac 14 (1+t^2) |\omega_{\mathbf{a}}(A)|^2 \im \frac 
{\omega_{\mathbf{b}}(A)} {\omega_{\mathbf{a}}(A)}  <0, \,\, \not=\infty, 
\end{align}
provided that $A\not=0,1.$ By supposition, since $A(\phi_0)\not=0,1$, 
there exists a function $A_0(\phi)$ with the properties:
\par
(a) $A_0(\phi_0)=A(\phi_0);$
\par
(b) $A_0(\phi)$ is smooth for $|\phi-\phi_0|<
\varepsilon_*$, $\varepsilon_*$ being sufficiently small;
\par
(c) $A_0(\phi)$ solves \eqref{A.1}, i.e. (BE)$_{\phi}$ for $|\phi-\phi_0|<
\varepsilon_*$ and is a unique solution in a small neighbourhood of $A(\phi_0)$.
\par
Let us consider the case $0<\phi_0<\pi/2$. Denote by $\mathcal{F}(\phi_0)$ 
the family of functions $\hat{A}_{\nu}(\phi)$ with the properties:
\par
(a$_{\nu}$) $\hat{A}_{\nu}(\phi_0)=A(\phi_0);$
\par
(b$_{\nu}$) $\hat{A}_{\nu}(\phi)$ is smooth for $\phi_0
-\varepsilon_* <\phi <\phi_{\nu} <\pi/2;$
\par
(c$_{\nu}$) $\hat{A}_{\nu}(\phi)$ solves \eqref{A.1} for $\phi_0-\varepsilon_*
<\phi <\phi_{\nu}.$
\par
Then $A_0(\phi) \in \mathcal{F}(\phi_0)$. For any $\hat{A}_{\nu}(\phi),$
$\hat{A}_{\nu'}(\phi) \in \mathcal{F}(\phi_0)$ with $\phi_{\nu} > \phi_{\nu'}$, 
$\hat{A}_{\nu}(\phi)\equiv \hat{A}_{\nu'}(\phi)$ holds if $\phi_0-\varepsilon_* 
<\phi <\phi_{\nu'}.$ 
Let $\hat{A}_{\infty}(\phi)$ be the maximal extension of all $\hat{A}_{\nu}(\phi)
\in \mathcal{F}(\phi_0)$, and set $\phi_{\infty}=\sup_{\nu} \phi_{\nu}$. 
Then $\hat{A}_{\infty}(\phi)$ solves \eqref{A.1} and is smooth 
for $\phi_0-\varepsilon_* <\phi<\phi_{\infty}.$
Suppose that $\phi_{\infty}<\pi/2.$ Since $\hat{A}_{\infty}(\phi_{\infty})\not
=0,1$, the Jacobian $J(v,V,\tan \phi_{\infty}; \hat{A}_{\infty}(\phi_{\infty}))$
does not vanish, which implies that $\hat{A}_{\infty}(\phi)$ may be extended 
beyond
$\phi_{\infty}.$ This is a contradiction. Hence we have $\phi_{\infty}=\pi/2,$
and by Lemma \ref{lemA.7}, $\hat{A}_{\infty}(\pi/2)=1.$ 
Similarly we may construct a lower extension $\hat{A}_{\infty}^*(\phi)$ for
$0\le \phi < \phi_0+\varepsilon_*$ satisfying $\hat{A}_{\infty}^*(0)=0,$
and then have the extension $A^+(\phi_0,\phi)$ for $0\le \phi \le \pi/2.$ 
The case $-\pi/2 <\phi_0< 0$ may be treated in the same way to obtain 
$A^-(\phi_0,\phi)$ for $-\pi/2 \le \phi \le 0.$   
For a given $\phi_0$ satisfying, say, $0<\phi_0<\pi/2,$ combining $A^+(\phi_0,
\phi)$ with $A^-(\phi_-,\phi)$, where $\phi_-<0$ close to
$0$ is such that $A_{\phi_-}$ is a solution given in Lemma \ref{lemA.9}, 
we obtain the desired extension $A(\phi_0, \phi)$ for $|\phi|\le \pi/2.$ 
Thus the lemma is proved.
\end{proof}
%%%%%%%%%%%%%%%%%%%%%%%%%%%%%
%%%%% Corollary A.12 %%%%%%%%%%%
\begin{cor}\label{corA.12}
Under the same supposition as in Lemma $\ref{lemA.11}$, $(d/d\phi)A(\phi_0,\phi)
\not=0$ for $0<|\phi|<\pi/2.$ Furthermore, $(d/d\phi)\mathcal{I}(A(\phi_0,\phi))
>0$ or $<0$ for $0<\phi<\pi/2,$ and so for $-\pi/2<\phi < 0.$
\end{cor}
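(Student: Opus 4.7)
The plan is to handle the two claims in sequence, via implicit differentiation of the Boutroux equations and then the chain rule with Lemma \ref{lem5.9}.

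First I would differentiate $\re(e^{i\phi}I_{\mathbf{a}}(A)) = \re(e^{i\phi}I_{\mathbf{b}}(A)) = 0$ with respect to $\phi$, using $\partial_A I_{\mathbf{a,b}}(A) = \tfrac12\omega_{\mathbf{a,b}}(A)$, to obtain
\begin{equation*}
\re\bigl(ie^{i\phi}I_{\mathbf{a,b}}(A)\bigr) + \tfrac12\re\bigl(e^{i\phi}\omega_{\mathbf{a,b}}(A)\dot A\bigr) = 0,
\end{equation*}
where $\dot A:=(d/d\phi)A(\phi_0,\phi)$. If $\dot A=0$ at some interior $\phi$, then $\re(ie^{i\phi}I_{\mathbf{a,b}}(A)) = -\im(e^{i\phi}I_{\mathbf{a,b}}(A)) = 0$; combined with $\re(e^{i\phi}I_{\mathbf{a,b}}(A))=0$, this forces $I_{\mathbf{a}}(A)=I_{\mathbf{b}}(A)=0$, contradicting Corollary \ref{corA.5}. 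Hence $\dot A\neq 0$ on $0<|\phi|<\pi/2$.

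Next, the chain rule together with the identity $\mathcal{I}'(A)=-2\pi i/I_{\mathbf{b}}(A)^2$ (stated just above Lemma \ref{lemA.2}, a direct consequence of Lemma \ref{lem5.9}) yields
\begin{equation*}
\frac{d}{d\phi}\mathcal{I}(A(\phi_0,\phi)) = -\frac{2\pi i\,\dot A}{I_{\mathbf{b}}(A)^2}.
\end{equation*}
By Lemma \ref{lemA.1} the left-hand side is real-valued along the curve. Granting that $I_{\mathbf{b}}(A(\phi_0,\phi))\neq 0$ for $0<|\phi|<\pi/2$, the right-hand side is continuous, real, and (by the first step) nowhere zero on each of $(0,\pi/2)$ and $(-\pi/2,0)$, so its sign is constant on each interval by the intermediate value theorem.

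The main obstacle is this last nonvanishing claim. A zero $I_{\mathbf{b}}(A)=0$ at an interior point would in particular give $\re I_{\mathbf{b}}(A)=0$, and the case analysis in the proof of Lemma \ref{lemA.4} (sharpening Corollary \ref{corA.6} to allow $A\in\{0,1\}$) leaves only $A=0$ or $A=1$. Lemma \ref{lemA.4} excludes $A(\phi_0,\phi)=0$ for $\phi\neq 0$, while $A(\phi_0,\phi)=1$ would force $\re(4e^{i\phi})=0$, i.e.\ $\phi=\pm\pi/2$; both possibilities lie at the endpoints of the intervals, not in their interiors. Once this is verified, the construction in Lemma \ref{lemA.11} keeps the curve inside the region where $\mathcal{I}$ is conformal, and the constant-sign conclusion follows.
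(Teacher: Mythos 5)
Your argument is correct and is essentially the paper's own proof: the paper likewise obtains $(d/d\phi)A(\phi_0,\phi)\neq 0$ by differentiating the Boutroux relations (in the real form \eqref{A.1}, via the Cauchy--Riemann relations and the Jacobian \eqref{A.2}) and invoking Corollary \ref{corA.5}, and then concludes exactly as you do from $\mathcal{I}'(A)=-2\pi i/I_{\mathbf{b}}(A)^2$ together with the reality of $\mathcal{I}(A(\phi_0,\phi))$ (Lemma \ref{lemA.1}). The only point to tidy is your auxiliary claim that $I_{\mathbf{b}}(A(\phi_0,\phi))\neq0$ at interior $\phi$ (which the paper leaves implicit): argue it from the full vanishing $I_{\mathbf{b}}(A)=0$ rather than from $\re I_{\mathbf{b}}(A)=0$ alone (the latter also holds for every real $A\in[0,1]$, so it does not force $A\in\{0,1\}$); with $I_{\mathbf{b}}(A)=0$ the case analysis in the proof of Lemma \ref{lemA.4} gives $A\in\mathbb{R}\cup\{1\}$, a direct check of $I_{\mathbf{b}}$ on the reals leaves only $A=1$, and $A=1$ is excluded at interior $\phi$ because $I_{\mathbf{a}}(1)=4$ would force $\phi=\pm\pi/2$.
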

%%%%%%%%%%%%%%%%%%%%
\begin{proof}
From \eqref{A.1} it follows that 
$$
J(v,V,\tan\phi; A(\phi_0,\phi)) \begin{pmatrix}  x'(t) \\ y'(t) 
\end{pmatrix} - \begin{pmatrix} v(A(\phi_0,\phi)) \\ V(A(\phi_0,\phi))
\end{pmatrix} \equiv \mathbf{o},
$$
where $A(\phi_0,\phi)=x(t)+iy(t).$
By Corollary \ref{corA.5} and \eqref{A.2}, $(x'(t),y'(t))\not= (0,0)$
if $0<|\phi|<\pi/2,$ i.e. $t \in \mathbb{R}\setminus \{0\},$ and then
$ (d/d\phi)A(\phi_0,\phi)= (x'(t)+iy'(t))/\cos^2\phi \not=0.$ 
Since $\mathcal{I}(A(\phi_0,\phi)) \in \mathbb{R}$ by Lemma \ref{lemA.1},
we have 
$$
\frac d{d\phi}\mathcal{I}(A(\phi_0,\phi)) =\frac d{d\phi}A(\phi_0,\phi)
\frac{-2\pi i}{I_{\mathbf{b}}(A(\phi_0,\phi))^2} \in 
\mathbb{R} \setminus \{0\}
$$
for $0<|\phi|<\pi/2,$ from which the conclusion follows.
\end{proof}
%%%%%%%%%%%%%%%%%%%%%%%%%%%%%%%%%%%
%%%% Proposition A.13 %%%%%
\begin{prop}\label{propA.13}
For each $\phi_*$ such that $|\phi_*|\le \pi/2,$ equations 
$(\mathrm{BE})_{\phi=\phi_*}$
admit a unique solution $A_{\phi_*} \in \mathbb{C}.$
\end{prop}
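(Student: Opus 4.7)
The plan is to combine existence via continuation from a small starting angle with a uniqueness argument built on the non-vanishing Jacobian \eqref{A.2} together with the initial uniqueness near $A=0$ from Lemma \ref{lemA.9}.

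For existence, the boundary cases $\phi_*=0$ and $\phi_*=\pm\pi/2$ are handled directly by $A_{\phi_*}=0$ and $A_{\phi_*}=1$, which solve the relevant Boutroux equations by the direct computations already used in Lemmas \ref{lemA.4} and \ref{lemA.7}. For $0<|\phi_*|<\pi/2$, I first pick $\phi_0$ with $|\phi_0|$ small enough so that Lemma \ref{lemA.9} produces a solution $A(\phi_0)$ of $(\mathrm{BE})_{\phi_0}$ near $A=0$; then Lemma \ref{lemA.11} extends $A(\phi_0)$ to a continuous curve $A(\phi_0,\phi)$ on $|\phi|\le\pi/2$ solving $(\mathrm{BE})_{\phi}$ throughout, and $A_{\phi_*}:=A(\phi_0,\phi_*)$ is the desired solution.

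For uniqueness, the endpoint cases $\phi_*=0$ and $\phi_*=\pm\pi/2$ are already covered by Lemmas \ref{lemA.4} and \ref{lemA.7}. Suppose then that $0<|\phi_*|<\pi/2$, say $\phi_*\in(0,\pi/2)$, and let $A_1,A_2$ both solve $(\mathrm{BE})_{\phi_*}$. Applying Lemma \ref{lemA.11} with $\phi_0=\phi_*$ and starting solutions $A_1,A_2$ respectively, I obtain two continuous curves $\gamma_j(\phi)$ on $|\phi|\le\pi/2$ (smooth in $0<|\phi|<\pi/2$) solving $(\mathrm{BE})_\phi$, with $\gamma_j(0)=0$, $\gamma_j(\pm\pi/2)=1$, and $\gamma_j(\phi_*)=A_j$. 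Consider
\[
E=\{\phi\in[0,\pi/2]\,;\,\gamma_1(\phi)=\gamma_2(\phi)\}.
\]
The set $E$ is nonempty and contains a right-neighbourhood of $0$ by the local uniqueness near $A=0$ furnished by Lemma \ref{lemA.9}, and it is closed in $[0,\pi/2]$ by continuity of $\gamma_j$. For openness in $(0,\pi/2)$, fix $\phi_0\in E\cap(0,\pi/2)$ and set $A_0=\gamma_1(\phi_0)=\gamma_2(\phi_0)$; by Lemma \ref{lemA.4}, $A_0\ne 0$, and by Lemma \ref{lemA.7}, $A_0\ne 1$. Hence by \eqref{A.2} the Jacobian $J(v,V,\tan\phi_0;A_0)$ is nonzero, so the implicit function theorem applied to the system \eqref{A.1} gives a unique smooth local solution, forcing $\gamma_1\equiv\gamma_2$ in a neighbourhood of $\phi_0$. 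By connectedness of $(0,\pi/2)$, $E\supset(0,\pi/2)$, and then $E=[0,\pi/2]$. In particular $A_1=\gamma_1(\phi_*)=\gamma_2(\phi_*)=A_2$. The case $\phi_*\in(-\pi/2,0)$ is identical.

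The main technical obstacle is excluding that one of the curves $\gamma_j$ could revisit the singular values $A=0$ or $A=1$ at an interior angle, where the Jacobian \eqref{A.2} degenerates and the implicit function theorem breaks down. This is precisely prevented by the global uniqueness content of Lemmas \ref{lemA.4} and \ref{lemA.7}, which confine $A=0$ and $A=1$ to the angles $\phi=0$ and $\phi=\pm\pi/2$ respectively; once the interior of $(0,\pi/2)$ is free of these values, the open-closed connectedness argument proceeds without difficulty.
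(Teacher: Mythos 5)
Your proof is correct and takes essentially the same route as the paper: existence by continuation from a small angle (Lemmas \ref{lemA.9} and \ref{lemA.11}), and uniqueness by noting that the two curves through $A=0$ supplied by Lemma \ref{lemA.11} must coincide because the Jacobian \eqref{A.2} is nonvanishing away from $A=0,1$ — you simply make the paper's terse final step ``$\Gamma(\phi_*)=\Gamma'(\phi_*)$'' explicit as an open–closed connectedness argument. One minor mis-citation: the exclusion $A_0\neq 0,1$ at interior angles does not follow from Lemmas \ref{lemA.4} and \ref{lemA.7} (which state the converse implications) but from the direct evaluations $\re e^{i\phi}I_{\mathbf{b}}(0)=-2\sin\phi\neq 0$ and $\re e^{i\phi}I_{\mathbf{a}}(1)=4\cos\phi\neq 0$ for $0<|\phi|<\pi/2$, which the paper records just before those lemmas.
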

%%%%%%%%%%%%%%%%%%%%%%%
\begin{proof}
Let $\hat{\phi}_0$ be so close to $0$ 
that $A_{\hat{\phi}_0}$ is a solution given in Lemma \ref{lemA.9}. 
Lemma \ref{lemA.11} with $\phi_0=\hat{\phi}_0$ provides a curve 
$\Gamma(\hat{\phi}_0)$ containing a solution of (BE)$_{\phi=\phi_*}$ 
for each fixed $\phi_*$. It remains to show
the uniqueness of a solution for $\phi_*\not=0, \pm\pi/2.$ Suppose that 
$A_{\phi_*}$ and $A'_{\phi_*}$ solve (BE)$_{\phi=\phi_*}$. Then, by Lemmas
\ref{lemA.4} and
\ref{lemA.11}, there exist curves $\Gamma(\phi_*)$ and $\Gamma'(\phi_*)$ such
that $\Gamma(\phi_*) \ni 0, A_*,$ $\Gamma'(\phi_*)\ni 0, A'_*.$ Then, by 
\eqref{A.2} (or the conformality of Lemma \ref{lemA.2}),
we have $\Gamma(\phi_*)=\Gamma'(\phi_*)\ni A_{\phi_*}=A'_{\phi_*},$ which
completes the proof.
\end{proof}
%%%%%%%%%%%%%%%%%%%%%%%%%%%%%
By the uniqueness above we easily have the following.
%%%%%%%%%%%%%%%%%%%%%%%%%%%%%%%%%%%%%
%%%%%%% Lemma A.14 %%%%%
\begin{lem}\label{lemA.14}
For $\phi \in\mathbb{R}$, $(\mathrm{BE})_{\phi}$ admit a unique solution
$A_{\phi}$, which satisfies 
$$
A_{\phi\pm \pi}=A_{\phi}, \quad A_{-\phi}=\overline{A_{\phi}}.
$$
\end{lem}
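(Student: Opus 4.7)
The plan is to leverage the existence and uniqueness on $|\phi|\le\pi/2$ already established in Proposition~\ref{propA.13}, and extend to all $\phi\in\mathbb{R}$ using the two obvious symmetries of the Boutroux system.

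First I would handle the periodicity. Since $e^{i(\phi\pm\pi)}=-e^{i\phi}$, the conditions $\re e^{i(\phi\pm\pi)}I_{\mathbf{a}}(A)=\re e^{i(\phi\pm\pi)}I_{\mathbf{b}}(A)=0$ are literally the same system as $(\mathrm{BE})_\phi$. Hence a complex number $A$ solves $(\mathrm{BE})_{\phi\pm\pi}$ iff it solves $(\mathrm{BE})_\phi$. Combined with Proposition~\ref{propA.13}, this immediately gives existence and uniqueness of $A_\phi$ for every $\phi\in\mathbb{R}$, together with $A_{\phi\pm\pi}=A_\phi$. Consistency of this extension at the endpoints $\phi=\pm\pi/2$ is guaranteed by Lemma~\ref{lemA.7}, which pins down $A_{\pi/2}=A_{-\pi/2}=1$.

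Next I would establish the conjugation symmetry $A_{-\phi}=\overline{A_\phi}$. By the uniqueness already proved, it suffices to show that $\overline{A_\phi}$ solves $(\mathrm{BE})_{-\phi}$. The key is the identity
\begin{equation*}
\overline{I_{\mathbf{a}}(A)}=\varepsilon_{\mathbf{a}}\,I_{\mathbf{a}}(\overline{A}),\qquad
\overline{I_{\mathbf{b}}(A)}=\varepsilon_{\mathbf{b}}\,I_{\mathbf{b}}(\overline{A}),\qquad \varepsilon_{\mathbf{a}},\varepsilon_{\mathbf{b}}\in\{\pm1\},
\end{equation*}
which is obtained by making the substitution $z\mapsto\bar z$ in the defining integrals and noting that (i) the branch convention $\re A^{1/2}\ge 0$ yields $\overline{A^{1/2}}=(\overline{A})^{1/2}$ when $\re A^{1/2}>0$, and (ii) the image of the cycle $\mathbf{a}$ (resp.~$\mathbf{b}$) under conjugation is, up to orientation, the cycle $\mathbf{a}$ (resp.~$\mathbf{b}$) on the surface $\Pi^*$ attached to $\overline A$. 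The signs $\varepsilon_{\mathbf{a}},\varepsilon_{\mathbf{b}}$ play no role because the Boutroux conditions only ask the real part to vanish: taking complex conjugate of $\re e^{i\phi}I_{\mathbf{a},\mathbf{b}}(A_\phi)=0$ and applying the displayed identity yields $\re e^{-i\phi}I_{\mathbf{a},\mathbf{b}}(\overline{A_\phi})=0$, so $\overline{A_\phi}$ solves $(\mathrm{BE})_{-\phi}$, and uniqueness gives $A_{-\phi}=\overline{A_\phi}$.

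The only delicate step is the branch/orientation bookkeeping leading to the displayed identity; this is where I expect the main bookkeeping obstacle to lie. The convention fixed just before Lemma~\ref{lemA.1} together with the cycle description in Figure~\ref{cycles1} makes it transparent for $A\notin\{0,1\}$ and $\re A^{1/2}>0$, which covers all cases $0<|\phi|<\pi/2$; the boundary values $\phi=0,\pm\pi/2$ give $A_\phi\in\{0,1\}\subset\mathbb{R}$, for which both claimed symmetries hold trivially.
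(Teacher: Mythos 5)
Your proposal is correct and follows essentially the same route as the paper: the paper states the lemma with the remark ``By the uniqueness above we easily have,'' i.e.\ it relies on the uniqueness of Proposition \ref{propA.13} together with the trivial identity $e^{i(\phi\pm\pi)}=-e^{i\phi}$ and the conjugation relation $\overline{I_{\mathbf{a},\mathbf{b}}(A)}=\pm I_{\mathbf{a},\mathbf{b}}(\overline{A})$ (the same relation the paper already invokes in the proof of Lemma \ref{lemA.4}). Your write-up simply makes explicit the cycle/branch bookkeeping that the paper leaves implicit, and correctly notes that the orientation signs are harmless because only the vanishing of real parts is at stake.
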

%%%%%%%%%%%%%%%%%%%%%%%%
%%%%%%%%%%%%%%%%%%%%%%%%%%%%%%%%%%%%%%%
%%% Lemma A.16 %%%%%%%
\begin{lem}\label{lemA.16}
Each $A_{\phi}$ given in Lemma $\ref{lemA.14}$ satisfies $0 \le \re A_{\phi} 
\le 1.$
For $0<\phi <\pi/2$ $($respectively, $-\pi/2 <\phi<0)$, $(d/d\phi) \re A_{\phi}
>0$ $($respectively, $<0)$.
\end{lem}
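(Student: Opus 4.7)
The plan is to prove the monotonicity statement (ii) via an explicit formula for $A'_\phi$, and then deduce the bounds in (i) by integration together with the boundary values $A_0 = 0$ (Lemma~\ref{lemA.9}) and $A_{\pm\pi/2} = 1$ (Lemma~\ref{lemA.7}, Proposition~\ref{propA.13}).

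First, since $e^{i\phi}I_{\mathbf{a,b}}(A_\phi) \in i\mathbb{R}$ by the Boutroux equations, I would write
\begin{equation*}
e^{i\phi}I_{\mathbf{a}}(A_\phi) = ip_{\mathbf{a}}(\phi),\qquad e^{i\phi}I_{\mathbf{b}}(A_\phi) = ip_{\mathbf{b}}(\phi),\qquad p_{\mathbf{a,b}}\in\mathbb{R}.
\end{equation*}
Differentiate in $\phi$, use $dI_{\mathbf{a,b}}/dA = \omega_{\mathbf{a,b}}(A)/2$, and take real parts (noting $\re(ie^{i\phi}I_{\mathbf{a,b}})=-p_{\mathbf{a,b}}$) to obtain the real linear system
\begin{equation*}
\re\bigl(e^{i\phi}\omega_{\mathbf{a}}(A_\phi)\,A'_\phi\bigr) = 2p_{\mathbf{a}}(\phi),\qquad \re\bigl(e^{i\phi}\omega_{\mathbf{b}}(A_\phi)\,A'_\phi\bigr) = 2p_{\mathbf{b}}(\phi).
\end{equation*}

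Next, I invoke the Legendre-type relation (Lemma~\ref{lem5.9} applied at $A=A_\phi$), namely $I_{\mathbf{a}}(A_\phi)\omega_{\mathbf{b}}(A_\phi) - I_{\mathbf{b}}(A_\phi)\omega_{\mathbf{a}}(A_\phi) = 4\pi i$. Substituting $I_{\mathbf{a,b}}(A_\phi) = ip_{\mathbf{a,b}}e^{-i\phi}$ reduces this to $p_{\mathbf{a}}\omega_{\mathbf{b}} - p_{\mathbf{b}}\omega_{\mathbf{a}} = 4\pi e^{i\phi}$, whose conjugate gives $p_{\mathbf{a}}\overline{\omega_{\mathbf{b}}} - p_{\mathbf{b}}\overline{\omega_{\mathbf{a}}} = 4\pi e^{-i\phi}$. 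Setting $W := e^{i\phi}A'_\phi$, and multiplying the two equations above by $\overline{\omega_{\mathbf{b}}}$ and $\overline{\omega_{\mathbf{a}}}$ respectively and subtracting, yields
\begin{equation*}
(\omega_{\mathbf{a}}\overline{\omega_{\mathbf{b}}} - \overline{\omega_{\mathbf{a}}}\omega_{\mathbf{b}})\,W \;=\; 4(p_{\mathbf{a}}\overline{\omega_{\mathbf{b}}} - p_{\mathbf{b}}\overline{\omega_{\mathbf{a}}}) \;=\; 16\pi e^{-i\phi}.
\end{equation*}
Since the modular ratio $\tau = \omega_{\mathbf{b}}(A_\phi)/\omega_{\mathbf{a}}(A_\phi)$ satisfies $\im\tau>0$ (this is the convention adopted throughout Section~\ref{ssc5.2}), one has $\omega_{\mathbf{a}}\overline{\omega_{\mathbf{b}}} - \overline{\omega_{\mathbf{a}}}\omega_{\mathbf{b}} = -2i|\omega_{\mathbf{a}}|^2\im\tau$, and hence
\begin{equation*}
A'_\phi \;=\; e^{-i\phi}W \;=\; \frac{8\pi i\,e^{-2i\phi}}{|\omega_{\mathbf{a}}(A_\phi)|^2\,\im\tau}.
\end{equation*}

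Taking real parts, $\re A'_\phi = 8\pi\sin(2\phi)\big/(|\omega_{\mathbf{a}}(A_\phi)|^2\im\tau)$, which is strictly positive on $(0,\pi/2)$ and strictly negative on $(-\pi/2, 0)$; this is (ii). Integrating from $\phi = 0$ to $\phi=\pi/2$ then forces $\re A_\phi$ to increase monotonically from $0$ to $1$, and symmetry $\re A_{-\phi} = \re A_\phi$ (Lemma~\ref{lemA.14}) plus periodicity $A_{\phi\pm\pi}=A_\phi$ extend $0\le \re A_\phi\le 1$ to all $\phi\in\mathbb{R}$, giving (i). The main technical check will be ensuring that the orientation of the cycles $\mathbf{a},\mathbf{b}$ on $\Pi^*$ is preserved as $\phi$ varies (so that $\im\tau>0$ and Lemma~\ref{lem5.9} apply with the stated signs throughout the deformation); this is a consistency verification rather than a substantive obstacle, since the cycles deform continuously with $A_\phi$ which, by Corollary~\ref{corA.12} and Proposition~\ref{propA.13}, avoids the singular values $A=0,1$ on the open interval.
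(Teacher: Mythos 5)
Your proposal is correct, but it proves the lemma by a genuinely different route than the paper. The paper's own proof is indirect: it works with $t=\tan\phi$, notes via Corollary~\ref{corA.12} that $(d/dt)\mathcal{I}(A_{\phi})=(x'+iy')(-2\pi i)I_{\mathbf{b}}(A_{\phi})^{-2}$ is real and nonzero, extracts the relation $x'(U_*^2-V_*^2)-2y'U_*V_*=0$ with $I_{\mathbf{b}}(A_{\phi})^{-1}=U_*+iV_*$, and then rules out an interior zero of $x'$ by showing it would force $U_*V_*=0$, hence $\re I_{\mathbf{b}}(A_{\phi})=0$ or $\re iI_{\mathbf{b}}(A_{\phi})=0$, hence $A_{\phi}=0$ or $1$ by Corollaries~\ref{corA.6} and \ref{corA.8} --- a contradiction for $0<|\phi|<\pi/2$; the sign and the bound $0\le\re A_{\phi}\le 1$ then come from the boundary data. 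You instead differentiate the Boutroux equations directly in $\phi$ (legitimate, since smoothness of $\phi\mapsto A_{\phi}$ on $0<|\phi|<\pi/2$ is supplied by Lemma~\ref{lemA.11} together with the uniqueness of Proposition~\ref{propA.13}), and solve the resulting real linear system for $W=e^{i\phi}A'_{\phi}$ using the Legendre relation of Lemma~\ref{lem5.9}; your algebra checks out (the $\overline{W}$ terms do cancel), giving the closed form $A'_{\phi}=8\pi i\,e^{-2i\phi}\big/\bigl(|\omega_{\mathbf{a}}(A_{\phi})|^2\im\tau\bigr)$ and hence $\re A'_{\phi}=8\pi\sin 2\phi\big/\bigl(|\omega_{\mathbf{a}}|^2\im\tau\bigr)$. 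What your approach buys is an explicit, quantitative derivative: the sign of $\re A'_{\phi}$ is immediate, and as a byproduct $\im A'_{\phi}\propto\cos 2\phi$ recovers Remark~\ref{remA.1} ($\im A_{\phi}$ stationary exactly at $\phi=\pm\pi/4,\pm 3\pi/4$), all without the contradiction argument or Corollaries~\ref{corA.6}--\ref{corA.8}. What it costs is the standing hypothesis $\im\bigl(\omega_{\mathbf{b}}(A_{\phi})/\omega_{\mathbf{a}}(A_{\phi})\bigr)>0$ along the whole trajectory together with $A_{\phi}\neq 0,1$ on the open interval; both are also used by the paper (the former is exactly what makes the Jacobian in \eqref{A.2} negative, the latter follows from $I_{\mathbf{b}}(0)=2i$, $I_{\mathbf{a}}(1)=4$), so your ``consistency verification'' is indeed the only point needing care, and it is covered by the same facts the paper already relies on.
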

%%%%%%%%%%%%%%%%%%%%%%%%%%%%%%%%%%%%%%%
\begin{proof}
Let $A_{\phi}=x(t)+iy(t),$ $t=\tan\phi.$ Then, by Corollary \ref{corA.12},
$$
(d/dt)\mathcal{I}(A_{\phi})=(x'(t)+iy'(t))(-2\pi i)I_{\mathbf{b}}(A_{\phi})^{-2}
\in \mathbb{R} \setminus \{0\}
$$
for $0<|\phi|<\pi/2.$ This yields $x'(t)(U_*^2-V_*^2)-2y'(t) U_*V_*=0,$ where
$I_{\mathbf{b}}(A_{\phi})^{-1}=U_*+iV_*.$ Suppose that, $x'(t_0)=0$ and 
$0< \re A_{\phi_0} <1,$ for some $t_0=\tan \phi_0 \not=0, \pm\infty.$
Since $y'(t_0)\not=0,$ $U_*V_*=0.$ If $U_*=0,$ then $\re I_{\mathbf{b}}( A
_{\phi_0})=0,$ and hence $A_{\phi_0}=0,$ i.e. $\phi_0=0$ by Corollary 
\ref{corA.6}.
If $V_*=0$, then $\re iI_{\mathbf{b}}(A_{\phi_0})=0,$ and hence $A_{\phi_0}=1,$ 
i.e.
$\phi_0=\pm \pi/2$ by Corollary \ref{corA.8}. Thus we have shown that $x'(t)
>0$ or $x'(t)<0$ for $0<|\phi|<\pi/2,$ $t=\tan\phi,$ which implies 
$0 \le \re A_{\phi} \le 1.$
\end{proof}
%%%%%%%%%%%%%%%%%%%%%%%%%
%%%%% Remark A.1 %%%
\begin{rem}\label{remA.1}
In the proof above, it is easy to see that $y'(t)=0$ occurs if and only if
$U_*=\pm V_*$, that is, $\phi=\pm \pi/4,$ $\pm 3\pi/4.$
\end{rem}
%%%%%%%%%%%%%%%%%%%%%%%%%%%%%%%%
Lemmas \ref{lemA.9}, \ref{lemA.11}, \ref{lemA.16}, Proposition 
\ref{propA.13} and Remark \ref{remA.1} leads to the following.
%%%%%%%%%%%%%%%%%%%%%%%%%%%%%%
%%%%% Proposition A.15 %%%%%%
\begin{prop}\label{propA.15}
There exists a Jordan closed curve $\Gamma_0=\{A_{\phi}\,;\,\,|\phi|\le \pi/2\}$
with the properties$:$
\par
$(\mathrm{i})$ $A_0=0,$ $A_{\pm\pi/2}=1;$
\par
$(\mathrm{ii})$ $A_{\phi}$ is smooth for $0<|\phi|<\pi/2;$ 
\par
$(\mathrm{iii})$ for every $\phi, |\phi|\le \pi/2,$ $A_{\phi}$ solves $(\mathrm
{BE})_{\phi}.$ 
\end{prop}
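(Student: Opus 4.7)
\noindent\textit{Proof plan.} The plan is to assemble the preceding lemmas into the three claims. Uniqueness of the solution $A_\phi$ for each $|\phi|\le \pi/2$ is supplied by Proposition \ref{propA.13}, which immediately yields (iii). The boundary values $A_0=0$ and $A_{\pm\pi/2}=1$ in (i) are given by Lemmas \ref{lemA.4} and \ref{lemA.7}; in particular $A_{-\pi/2}=A_{\pi/2}=1$, so the parametrisation closes up at $A=1$. For smoothness (ii) on $0<|\phi|<\pi/2$, I would apply the implicit function theorem to the real system \eqref{A.1} for $(x,y)=(\re A_\phi, \im A_\phi)$, using that the Jacobian \eqref{A.2} is strictly negative (in particular non-zero) whenever $A_\phi\neq 0,1$. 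By Lemma \ref{lemA.16} this exceptional set is hit only at $\phi=0,\pm\pi/2$.

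The remaining work is to verify that $\Gamma_0$ is a simple closed curve. Continuity of $\phi\mapsto A_\phi$ on $[-\pi/2,\pi/2]$ is part of Lemma \ref{lemA.11}, and closedness was noted above. For injectivity on the open arcs, the key intermediate step is to show that $\im A_\phi>0$ throughout $(0,\pi/2)$. Lemma \ref{lemA.9} already gives $\im A_\phi>0$ for sufficiently small $\phi>0$. If one had $\im A_{\phi_*}=0$ for some $\phi_*\in(0,\pi/2)$, then $A_{\phi_*}\in\mathbb{R}$ and by Lemma \ref{lemA.16} one would have $A_{\phi_*}\in[0,1]$; but in that range $I_{\mathbf{a}}(A)\in\mathbb{R}$ and $I_{\mathbf{b}}(A)\in i\mathbb{R}$, so $(\mathrm{BE})_{\phi_*}$ forces $\cos\phi_*\cdot u=0$ and $\sin\phi_*\cdot V=0$ with $(u,V)\ne(0,0)$ unless $A_{\phi_*}\in\{0,1\}$ (the latter fact being precisely what is extracted at the end of the proofs of Lemmas \ref{lemA.4} and \ref{lemA.7}). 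This forces $\phi_*\in\{0,\pm\pi/2\}$, contradicting $\phi_*\in(0,\pi/2)$. By continuity, $\im A_\phi>0$ on $(0,\pi/2)$, and by the conjugation symmetry $A_{-\phi}=\overline{A_\phi}$ of Lemma \ref{lemA.14}, $\im A_\phi<0$ on $(-\pi/2,0)$. Hence the two open arcs lie in disjoint open half-planes.

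Within each open arc, injectivity follows from Lemma \ref{lemA.16}: $(d/d\phi)\re A_\phi$ has a constant sign there, and the boundary values $\re A_\phi\to 0$ as $\phi\to 0$ and $\re A_\phi\to 1$ as $\phi\to\pm\pi/2$ pin down that sign (positive on $(0,\pi/2)$, negative on $(-\pi/2,0)$), making $\re A_\phi$ strictly monotonic on each arc. Combined with the previous paragraph, this yields that $\phi\mapsto A_\phi$ is a continuous injection on $[-\pi/2,\pi/2)$ with matching endpoints at $\phi=\pm\pi/2$, so $\Gamma_0$ is a Jordan closed curve.

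The main obstacle I anticipate is the step $\im A_\phi\ne 0$ on $(0,\pi/2)$; every other item is essentially a direct consequence of a single previously proved lemma. The required classification of real solutions of $(\mathrm{BE})_\phi$ with $A\in[0,1]$ is implicit in the computations of $I_{\mathbf{a}}(A)$ and $I_{\mathbf{b}}(A)$ carried out in the proof of Lemma \ref{lemA.4} (and its analogue Lemma \ref{lemA.7}), and making this extraction precise is the only nontrivial bookkeeping.
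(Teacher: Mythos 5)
Your proposal is correct and follows essentially the same route as the paper, which proves Proposition \ref{propA.15} precisely by assembling Lemmas \ref{lemA.9}, \ref{lemA.11}, \ref{lemA.16}, Proposition \ref{propA.13} and Remark \ref{remA.1}. Your only addition is to spell out the Jordan-curve (injectivity) step — showing $\im A_\phi\neq 0$ on the open arcs by classifying real solutions $A\in[0,1]$ of $(\mathrm{BE})_\phi$, together with the strict monotonicity of $\re A_\phi$ from Lemma \ref{lemA.16} and the symmetry $A_{-\phi}=\overline{A_\phi}$ — which the paper leaves implicit, and that argument is sound.
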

%%%%%%%%%%%%%%%%%%%%%%%%
\par
By the properties above the trajectory of $A_{\phi}$
for $|\phi|\le \pi/2$ is as in Figure \ref{trajectories}, (a).
%%%%%%%%%%%%%%%%%%%%%%%%%%%%%%%%%%%%%%%%%%%%%%%%%%%%%%%%%%%%
%%%%%%%%%%%%%%%%%%%%%%%%%%%%%%%%%%%%%%%%%%%%%%%%%%%%%%%%
%%%%%%%%%%%%%%%% Figure 7.2 %%%%%%%%%%%%%%%%%%%%%%%%%%
%%%%%%%%%%%%%%%%%%%%%%%%%%%%%%%%%%%%%%%%%%%%%%%%
%%%%%%%%%%%%%%%%%%%%%%%%%%%%%%%%%%%%%%%%%%
%%%%%%%%%%%%%%%%%%%%%%%%%%%%
{\small
\begin{figure}[htb]
\begin{center}
\unitlength=0.75mm
%%%%%%%%%%%%%%%%%%%%%%%%%%%%%%%%%%
%%%%%%%%%%%%%%%%%%%%%%%%%%%%%%%
\begin{picture}(70,65)(-35,-35)
\put(20,0){\circle*{1.5}}
\put(-20,0){\circle*{1.5}}
\put(-30,0){\line(1,0){60}}

\qbezier (-15,15) (-10,17.8) (-4,18)
\put(-4,18){\vector (1,0){0}}
\qbezier (15,-15) (10,-17.8) (4,-18)
\put(4,-18){\vector (-1,0){0}}

\put(-25,-6){\makebox{$0$}}
\put(22,-6){\makebox{$1$}}
\put(-34,3){\makebox{$\phi=0$}}
\put(22,3){\makebox{$\phi=\pm\pi/2$}}
\put(0,20){\makebox{$0<\phi<\pi/2$}}
\put(-32,-22){\makebox{$-\pi/2<\phi<0$}}

\put(-10,-36){\makebox{(a) $A_{\phi}$}}
\thicklines
\qbezier (-20,0) (-20, 15) (0,15)
\qbezier (20,0) (20, 15) (0,15)
\qbezier (-20,0) (-20, -15) (0,-15)
\qbezier (20,0) (20, -15) (0,-15)
\end{picture}
%%%%%%%%%%%%%%%%%%%%%%%%%%%%%%
%%%%%%%%%%%%%%%%%%
%%%%%%%%%%%%%%%%%%
\qquad\qquad\qquad
%%%%%%%%%%%%%%%%%%%%%%%%%%%%%%%
\begin{picture}(70,65)(-35,-35)
\put(20,0){\circle*{1.5}}
\put(-20,0){\circle*{1.5}}
\put(-30,0){\line(1,0){60}}

\put(0,16.5){\makebox{$0<\phi<\pi/2$}}
\put(-32,-19){\makebox{$-\pi/2<\phi<0$}}
\put(-25,-6){\makebox{$0$}}
\put(22,-6){\makebox{$1$}}
\put(-34,3){\makebox{$\phi=0$}}
\put(22,3){\makebox{$\phi=\pm\pi/2$}}
\put(-10,-36){\makebox{(b) $A^{1/2}_{\phi}$}}

\qbezier (15,-12.6) (10,-15.4) (4,-13.6)
\put(4,-13.6){\vector (-4,1){0}}
\qbezier (-15,7.2) (-10,10.2) (-4,12.2)
\put(-4,12.2){\vector (3,1){0}}
\thicklines
\qbezier  (10,11) (20, 9)(20,0)
\qbezier  (-5,8.5) (5, 12)(10,11)
\qbezier  (-20,0)(-17,3) (-5, 8.5)
\qbezier  (10,-11) (20, -9)(20,0)
\qbezier  (-5,-8.5) (5, -12)(10,-11)
\qbezier  (-20,0)(-17,-3) (-5, -8.5)
\end{picture}
%%%%%%%%%%%%%%%%%%%%%%%%%%%%%%
%%%%%%%%%%%%%%%%%%%%%%%%%%%%%%%%%
\end{center}
\caption{Rough drawings of the trajectories $A_{\phi}$ and $A^{1/2}_{\phi}$} 
\label{trajectories}
\end{figure}
}
%%%%%%%%%%%%%%%%%%%%%%%%%%%%%%%%%%%%%%%%%%%
%%%%%%%%%%%%%%%%%%%%%%%%%%%%%%%%%%%%%%%%%%%
%%%%%%%%%%%%%%%%%%%%%%%%%%%%%%%%%%%%%%%%%%%%
\par
The properties of $A_{\phi}$ are summarised as follows.
%%%%%%%%%%%%%%%%%%%%%%%%
%%%%% Proposition A.17 %%%%%%
\begin{prop}\label{propA.17}
$(1)$ For $|\phi|\le \pi/2,$ the Boutroux equations $(\mathrm{BE})_{\phi}$
have a unique solution $A_{\phi}$ with the properties$:$
\par
$(\mathrm{i})$ $A_0=0,$ $A_{\pm \pi/2}=1;$
\par
$(\mathrm{ii})$ $A_{\phi}$ is smooth in $\phi$ such that
$0<|\phi|<\pi/2;$  
\par
$(\mathrm{iii})$ for $0<\phi <\pi/2$ $($respectively, $-\pi/2<\phi <0)$,
$x(t)=\re A_{\phi},$ $t=\tan\phi$ satisfies $x'(t)>0$ $($respectively,
$x'(t)<0 )$, and $y(t)=\im A_{\phi}$ satisfies
$y'(t)=0$ if and only if $\phi=\pm \pi/4,$ $\pm 3\pi/4;$
\par
$(\mathrm{iv})$ $0<\re A_{\phi} <1$ for $0<|\phi|<\pi/2,$ and
$\im A_{\phi} >0$ for $0<\phi<\pi/2$ and $<0$ for $-\pi/2<\phi<0.$
\par
$(2)$ For $\phi\in \mathbb{R},$ $A_{\phi}$ may be extended by using the 
relations $A_{-\phi}=\overline{A_{\phi}},$ $A_{\phi\pm \pi}=A_{\phi}.$
\end{prop}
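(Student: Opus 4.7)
The plan is to assemble Proposition \ref{propA.17} by collating the preceding lemmas of this appendix; no new analytical work is needed, only careful bookkeeping of what has already been established.

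For part (1), existence and uniqueness of $A_{\phi}$ solving $(\mathrm{BE})_{\phi}$ for $|\phi|\le\pi/2$ is exactly Proposition \ref{propA.13}. Property (i), that $A_0 = 0$ and $A_{\pm\pi/2}=1$, is read off from Lemma \ref{lemA.4} and Lemma \ref{lemA.7} combined with this uniqueness. Smoothness (property (ii)) is built into the construction in Lemma \ref{lemA.11}: the extension $A(\phi_0,\phi)$ produced there is smooth on $0<|\phi|<\pi/2$ because the Jacobian \eqref{A.2} never vanishes as long as the curve stays away from $A=0,1$, so the implicit function theorem applies along the curve; by the uniqueness of Proposition \ref{propA.13}, $A(\phi_0,\phi)$ coincides with $A_\phi$.

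For (iii) and the bounded part of (iv), the inequality $0\le \re A_\phi\le 1$ together with the monotonicity $x'(t)>0$ for $0<\phi<\pi/2$ (respectively $x'(t)<0$ for $-\pi/2<\phi<0$) is the content of Lemma \ref{lemA.16}. The strict inequality $0<\re A_\phi <1$ on $0<|\phi|<\pi/2$ then follows from (i) by uniqueness: if $\re A_\phi$ attained the boundary value $0$ or $1$ at an interior angle $\phi$, Corollaries \ref{corA.6} and \ref{corA.8} would force $A_\phi=0$ or $A_\phi=1$, whence $\phi\in\{0,\pm\pi/2\}$, a contradiction. The assertion that $y'(t)=0$ exactly at $\phi=\pm\pi/4,\pm 3\pi/4$ is the content of Remark \ref{remA.1}.

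For the sign of $\im A_\phi$ in (iv), the plan is as follows. The local expansion in Lemma \ref{lemA.9} gives $\im A_\phi = -4\phi/\log\phi\,(1+o(1))$, which is strictly positive for small $\phi>0$ since $\log\phi<0$. If $\im A_{\phi_*}=0$ for some $\phi_*\in(0,\pi/2)$, then $A_{\phi_*}\in\mathbb{R}$, and the case-by-case analysis of real $A$ carried out in the proof of Lemma \ref{lemA.4} (combined with Lemma \ref{lemA.7} at the endpoint) forces $A_{\phi_*}\in\{0,1\}$, contradicting the strict inequality for $\re A_\phi$ just obtained. Hence $\im A_\phi>0$ throughout $0<\phi<\pi/2$; the corresponding assertion for $-\pi/2<\phi<0$ follows from $A_{-\phi}=\overline{A_\phi}$ in Lemma \ref{lemA.14}. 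Part (2) is then Lemma \ref{lemA.14} verbatim. The only mildly subtle step is the passage from the non-strict bounds of Lemma \ref{lemA.16} to the strict inequalities required in (iv), but even this reduces immediately to uniqueness via Corollaries \ref{corA.6} and \ref{corA.8}; elsewhere the task is simply matching statements in the preceding lemmas with clauses of the proposition.
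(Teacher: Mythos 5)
Your proposal takes exactly the route the paper does: Proposition \ref{propA.17} carries no separate proof there beyond ``Thus we have,'' following Proposition \ref{propA.15}, which is itself assembled from Lemmas \ref{lemA.9}, \ref{lemA.11}, \ref{lemA.16}, Proposition \ref{propA.13} and Remark \ref{remA.1} --- precisely the collation you carry out. Two of your citations are, however, off target. For the strict bound $0<\re A_{\phi}<1$ you appeal to Corollaries \ref{corA.6} and \ref{corA.8}, but those concern the vanishing of $\re I_{\mathbf{b}}(A)$ and $\re iI_{\mathbf{b}}(A)$, not of $\re A$ or $\re A-1$, so they do not give what you claim; the strict inequality follows instead directly from Lemma \ref{lemA.16}, since $x(t)$ is strictly monotone on $0<\phi<\pi/2$ with endpoint values $x=0$ at $\phi=0$ and $x=1$ at $\phi=\pi/2$. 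Likewise, for $\im A_{\phi}>0$ you invoke the case-by-case analysis of Lemma \ref{lemA.4}, but that analysis is tailored to $(\mathrm{BE})_{\phi=0}$; what you actually need is the elementary observation that for real $A\in(0,1)$ one has $I_{\mathbf{a}}(A)\in\mathbb{R}\setminus\{0\}$ and $I_{\mathbf{b}}(A)\in i\mathbb{R}$, so that $\re e^{i\phi}I_{\mathbf{a}}(A)=I_{\mathbf{a}}(A)\cos\phi\not=0$ for $0<\phi<\pi/2$ and $(\mathrm{BE})_{\phi}$ cannot hold at a real interior value. (The paper instead reads off the sign of $\im A_{\phi}$ from Remark \ref{remA.1}: $y(t)$ vanishes at $\phi=0$ and $\phi=\pi/2$, is positive for small $\phi>0$ by Lemma \ref{lemA.9}, and has its only interior critical point at $\phi=\pi/4$, hence stays positive.) With these two repairs your argument is complete and coincides with the paper's.
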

%%%%%%%%%%%%%%%%%%%%%%%%%%%%%%%%%%
%%%%% Remark A.2 %%%%%%%%%%%%%%%%
%%%%%%%%%%%%%%%%%%%%%%%%%%%%%%%%%%
\begin{rem}\label{remA.2}
By Lemma \ref{lemA.9}, around $\phi=0,$ $A_{\phi}=x(\phi)+iy(\phi)$ is 
expressed as $A_{\phi}=\pm i 2^{3/2}(xL(x)^{-1})^{1/2}(1+o(1))$
and $A_{\phi}^{1/2}=\pm e^{\pi i/4}2^{3/4}(xL(x)^{-1})^{1/4}(1+o(1))$ 
as $x=x(\phi)=4\phi^2L(\phi)^{-1}(1+o(1))\to 0$, where $L(x)=|\log x|.$
By Lemma \ref{lemA.10}, around $\phi=\pm \pi/2$, $A_{\phi}=1-\frac 14 y^2L(y)
(1+o(1))+iy$ and $A_{\phi}^{1/2}=1-\frac 18 y^2L(y)(1+o(1))+iy/2$ 
as $y=y({\phi})=-4\tilde{\phi}L(\tilde{\phi})^{-1}(1+o(1))\to 0$ 
$(\tilde{\phi}=\phi\mp\pi/2)$. Taking these local shapes around $\phi=0,$ 
and $\pm \pi/2$ into account, which are important in finding the Stokes graph,
we have a rough drawing of the trajectory $A_{\phi}^{1/2}$ as in Figure 
\ref{trajectories}, (b).
%% It is easily verified that $0<\re A^{1/2}_{\phi} <1$ for $0<|\phi|<\pi/2,$
%% if the trajectory $\Gamma_0 =\{A_{\phi}; \, |\phi|\le \pi/2 \}$ is contained
%% in $P_0 =\{x+ iy ;\, y^2 \le 4(1-x) \}.$ For some $\varepsilon_1 \ge
%% \varepsilon_0 >0,$ by Lemma \ref{lemA.10}, the set $P_0$ contains the arc
%% $\{A_{\phi};\, |\phi-\pi/2|<\varepsilon_0 \}$, and $\{A_{\phi};\, |\phi|
%% <\pi/2-\varepsilon_1 \}$ as well. It is likely that the trajectory of
%% $A^{1/2}_{\phi}$ is as in Figure \ref{trajectories}, (b). It remains, however,
%% to exclude the possibility of the existence of $A_{\phi}\in \Gamma_0$ 
%% such that $\re A_{\phi}^{1/2} >1$ and 
%% $|\im A^{1/2}_{\phi}|>\varepsilon_2 >0.$ To do so
%% it is necessary to show that, say, $\im \mathcal{I}(A) \not=0$
%% on $\{A; \re A^{1/2}=1 \}\setminus\{A=1 \}=\partial P_0\setminus\{(1,0)\}.$
\end{rem}
%%%%%%%%%%%%%%%%%%%%%%%%%
%%%%%%%%%%%%%%%%%%%%%%%%%%%%%%%%%%%%%%%%%%%%
%%%%%%%%%%%%%%%%%%%%%%%%%%%%%%%%%
%%%%% References %%%%%%%%%%%%%%%%%%%%%%%%%
%%%%%%%%%%%%%%%%%%%%%%%%%

\end{document}